\subjclass[2020]{18N10,18M05,18D15,19D23}
\newcommand\genatop[2]{\genfrac{}{}{0pt}{}{#1}{#2}}
\newcommand{\bilin}{\mathpzc{Bilin}}
\newcommand{\permcat}{\mathpzc{PermCat}}
\newcommand{\permcats}{\mathpzc{PermCat}_s}
\newcommand{\txcoprod}{{\textstyle\coprod}}
\newcommand{\bincoprod}{\mathbin{\txcoprod}}
\let\oldboxplus\boxplus
\renewcommand{\boxplus}{\mathbin{\scalebox{.75}{$\mathbf{\oldboxplus}$}}}
\newcommand{\internalStrict}[2]{\bigl[#1 , #2\bigr]} 
\NewDocumentCommand \strict
\newcommand{\internalBHOM}[2]{\Hom\bigl(#1 , #2\bigr)} 
\NewDocumentCommand \BHom
\newcommand{\andspace}{\quad \text{and} \quad}
\newcommand{\ifspace}{\quad \text{if} \quad}
\newcommand{\inspace}{\quad \text{in} \quad}
\newcommand{\forspace}{\quad \text{for} \quad}
\newcommand{\withspace}{\quad \text{with} \quad}
\newcommand{\To}{\Rightarrow}
\newcommand{\tsum}{\textstyle\sum}
\newcommand{\ssum}{\scriptstyle\sum}
\newcommand{\tlprod}[2]{{\textstyle\prod\limits_{#1}^{#2}}}
\newcommand{\cpre}{c_{\mathrm{pre}}}
\newcommand{\cpost}{c_{\mathrm{post}}}
\newcommand{\Mdot}{M^\bullet}
\newcommand{\Wdot}{W^\bullet}
\newcommand{\patil}{\wt{\pa}}
\newcommand{\bigT}{\mathbf{T}}
\newcommand{\bigP}{\mathbf{P}}
\newcommand{\grtimes}{\mathbin{\scalebox{.7}{$\boxtimes$}}} 
\newcommand{\Phidot}{\Phi^\bullet}
\newcommand{\dL}{\mathpzc{L}}
\newcommand{\dLdot}{\mathpzc{L}^\bullet}
\newcommand{\dR}{\mathpzc{R}}
\newcommand{\dRdot}{\mathpzc{R}^\bullet}
\newcommand{\dA}{\mathpzc{A}}
\newcommand{\dAdot}{\mathpzc{A}^\bullet}
\newcommand{\etaa}{\eta}
\newcommand{\etaainv}{\eta^{-1}}
\newcommand{\epza}{\epz}
\newcommand{\epzainv}{\epz^{-1}}
\newcommand{\dB}{\mathpzc{B}}
\newcommand{\B}{\mathsf{B}} 
\newcommand{\C}{\mathsf{C}} 
\newcommand{\syl}{\nu} 
\newcommand{\omdot}{\om^\bullet}
\newcommand{\smallnodes}{\tikzset{every node/.style={scale=.6}}}
\newcommand{\smallzero}{.7}
\tikzset{anno/.style={color=green!60!black}}
\newcommand{\annoarg}[1]{{\color{green!60!black}#1}}
\newcommand{\natA}{\text{nat\ }\dA}
\newcommand{\natAdot}{\text{nat\ }\dAdot}
\newcommand{\natB}{\text{nat\ }\dB}
\newcommand{\uni}{u} 
\newcommand{\II}{\mathcal{I}}
\newcommand{\JJ}{\mathcal{J}}
\title{The symmetric monoidal 2-category of permutative categories}
\date{15 November 2023}
\begin{document}

\begin{abstract}
  We define a tensor product for permutative categories and prove a number of key properties.
We show that this product makes the 2-category of permutative categories closed symmetric monoidal as a bicategory.

\end{abstract}

\maketitle
\tableofcontents 

\section{Introduction}
The category of commutative monoids is complete and cocomplete, it has an internal hom given by pointwise operations on monoid homomorphisms, and it has a tensor product constructed in analogy to that of abelian groups. This tensor product can be defined explicitly via formal sums of simple tensors modulo distributivity relations, or via a universal property using bilinear maps. 
A category with these features (complete, cocomplete, closed symmetric monoidal) is called a B\'enabou cosmos \cite{Str74Cosmoi}.

Symmetric monoidal categories admit a categorified version of this structure.
Such structure has been investigated at varying levels of detail \cite{HP02Pseudo, Sch07Tensor, GGN15Universality, Bou17Skew}.
As with most higher-categorical analogues of well-known algebraic structures, the construction of this categorified B\'enabou cosmos structure is more work than one usually anticipates, often tedious, but nevertheless layered with curious surprises. 

The primary goal of this paper is to provide a detailed and comprehensive construction of the closed symmetric monoidal 2-category of permutative categories, symmetric monoidal functors, and monoidal transformations. 
We have also included an exploration of direct sums, in the bicategorical sense, in this 2-category, as the interaction between tensors and sums is an essential ingredient in classic constructions and in arguments from the theory of abelian groups.

\subsection*{Previous work on such tensor products}

In many closed monoidal categories of interest, the internal hom has a simpler construction than the tensor product: it is often given by equipping the hom-sets of the underlying category with some additional structure. 
The same is true for the 2-category of permutative categories, but proceeding any further into the theory reveals as many differences as similarities. 

There is a notion of bilinear or multilinear symmetric monoidal functor, going back to work of May \cite{may1980pairings} and Elmendorf and Mandell \cite{EM2006Rings}. In both cases, their multilinear functors are strictly unital because they have stable homotopy theoretic applications in mind, so basepoints need to be preserved strictly. Furthermore, neither work discusses the existence of a universal object for bilinear functors.

Hyland and Power \cite{HP02Pseudo} describe bilinearity and a corresponding universal object in the more general context of $T$-algebras over a 2-monad, including the closed, sometimes symmetric monoidal structure that arises. In this current work, we give a direct definition in the context of permutative categories. In particular, we provide many details omitted from that paper, and in several instances the resulting descriptions or arguments look very little like those hinted at by Hyland and Power. 

Later work by Bourke \cite{Bou17Skew} rigorously completed the project begun by Hyland-Power, but had a different aim from what we pursue here and so, unsurprisingly, sidestepped the detailed constructions we present. 
Having those details worked out explicitly is useful for applications, necessary for extending the theory to other related structures like Picard categories or graded versions, and often reveals interesting features that would otherwise be hidden. 
The structure we detail has the same unit and internal hom as Bourke's, and we believe the rest of our structure agrees with his, but we have not checked the remaining details.

We take this opportunity to comment on two other related approaches to the same or similar problems. 
The first appears in a preprint by Schmitt \cite[Section~12]{Sch07Tensor} and only addresses the case of \emph{lax} symmetric monoidal functors. 
The resulting structure has more in common with a skew monoidal category \cite{Szl2012Skew}, and only yields a symmetric monoidal \emph{category} after quotienting out all 2-cells. 

The second approach is from the $(\infty, 1)$-categorical literature in \cite{GGN15Universality}. 
Once again, the methods employed there were aimed at a different problem that had a natural home in the world of quasicategories, whereas our interests lie in studying objects and constructions of a 2-dimensional (or perhaps 3-dimensional) nature. 
Given that $(\infty, 1)$- and 2-categorical universal properties do not, as a general principle, coincide, we believe it best to view our work and that of \cite{GGN15Universality} as related in spirit but fundamentally different in practice.

\subsection*{Our main result}

Throughout this paper, we say that $\B$ is a \emph{symmetric monoidal 2-category} if it is a 2-category that is symmetric monoidal as a bicategory (see \cref{sec:smb-defn} for the definition of symmetric monoidal bicategory).
We define closed symmetric monoidal 2-category also in the bicategorical sense, as follows. 
\begin{defn}\label{defn:closed-smb}
  A symmetric monoidal bicategory $\bigl(\B, \otimes\bigr)$ is \emph{closed} if it is equipped with a pseudofunctor
  \[
    \BHom{-,-}\cn \B^\op \times \B \to \B
  \]
  and equivalences
  \[
    \B \bigl( X \otimes Y, Z \bigr) \fto{\hty} \B \bigl( X, \BHom{Y,Z} \bigr)
  \]
  that are pseudonatural in $X$, $Y$, and $Z$.
  We say that $\B$ is a \emph{closed symmetric monoidal 2-category} if it is a 2-category that is closed symmetric monoidal as a bicategory.
\end{defn}
Thus, \cref{thm:biadj,thm:permcat-smb} show the following.
\begin{thm}\label{cor:permcat-clsmb}
  The 2-category $\permcat$, consisting of small permutative categories, strong symmetric monoidal functors, and monoidal transformations, is a closed symmetric monoidal 2-category.
\end{thm}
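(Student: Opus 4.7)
The plan is to show that the two ingredients named in the statement — the symmetric monoidal bicategory structure of \cref{thm:permcat-smb} and the biadjunction of \cref{thm:biadj} — provide exactly the data required by \cref{defn:closed-smb}, so that the result reduces to a short bookkeeping argument plus one bicategorical assembly step.

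First, I would invoke \cref{thm:permcat-smb} to fix a symmetric monoidal bicategory structure $(\permcat, \otimes)$. Since $\permcat$ is already a 2-category, this makes it a symmetric monoidal 2-category in the sense used here, and the only remaining task is to produce the internal hom pseudofunctor $\BHom{-,-}$ together with the pseudonatural equivalences required by \cref{defn:closed-smb}.

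Next, I would extract from \cref{thm:biadj} — which provides a biadjunction $- \otimes Y \dashv \BHom{Y,-}$ for each $Y$ — the pseudofunctors $\BHom{Y,-}\cn \permcat \to \permcat$ together with the equivalences
\[
  \permcat\bigl( X \otimes Y, Z \bigr) \fto{\hty} \permcat\bigl( X, \BHom{Y,Z} \bigr),
\]
which are automatically pseudonatural in $X$ and $Z$ by definition of a biadjunction. Assembling this family across $Y$ into a single pseudofunctor $\BHom{-,-}\cn \permcat^\op \times \permcat \to \permcat$ then proceeds via the bicategorical Yoneda lemma: a 1-cell $f\cn Y' \to Y$ yields a pseudonatural transformation $- \otimes f$ of representables, which transports through the biadjunction equivalences to a uniquely determined 1-cell $\BHom{Y,-} \to \BHom{Y',-}$, and the pseudofunctoriality data in $Y$ are obtained by transporting composites and identities.

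The main obstacle will be verifying pseudonaturality of the hom-equivalence in the third variable $Y$ and checking the coherence axioms for the assembled $\BHom{-,-}$ as a pseudofunctor of two variables, rather than merely as two separate one-variable pseudofunctors. In principle this is the standard parameter-biadjunction machinery for bicategories, but it requires the units, counits, and invertible modifications produced by \cref{thm:biadj} to be sufficiently natural in $Y$ to transport cleanly; once this is confirmed, all the data of \cref{defn:closed-smb} are in place and \cref{cor:permcat-clsmb} follows.
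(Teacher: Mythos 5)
Your proposal is correct in outline but takes a more roundabout route than the paper. The paper's proof of \cref{cor:permcat-clsmb} is essentially a citation: \cref{thm:permcat-smb} supplies the symmetric monoidal 2-category structure, and \cref{thm:biadj} supplies everything needed for closedness, because the internal hom there is the \emph{concrete} 2-functor $(Y,Z) \mapsto \permcat(Y,Z)$ equipped with the pointwise permutative structure of \cref{prop:permcat_SM} --- already a 2-functor $\permcat^\op \times \permcat \to \permcat$ via pre- and post-composition, with no assembly required --- and the statement of \cref{thm:biadj} explicitly asserts that the equivalence $W \cn \permcat(X \otimes Y, Z) \to \permcat\bigl(X, \permcat(Y,Z)\bigr)$ is 2-natural in all three variables, including the parameter $Y$. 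Your proposed detour through the bicategorical Yoneda lemma and parametrized-biadjunction machinery --- extracting $\BHom{Y,-}$ abstractly for each $Y$ and transporting 1-cells $Y' \to Y$ across the equivalences to build functoriality in $Y$ --- would also work, but it constructs the hom only up to equivalence and forces you to verify the two-variable pseudofunctor coherence and the compatibility of the units, counits, and modifications with the parameter by hand; these are precisely the verifications the paper avoids by working with the explicit hom and proving three-variable naturality directly inside \cref{thm:biadj}. In short, what you flag as the main obstacle (pseudonaturality in $Y$) is already part of the cited theorem's statement, so the assembly step you describe is unnecessary.
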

The relevant data are defined in \cref{sec:unit,sec:B,sec:A,sec:Adot,sec:2Ddata} and are not entirely trivial, but still considerably simpler than those of a general symmetric monoidal 2-category.
Each of the associator $\dA$, left unitor $\dL$, and right unitor $\dR$ is an equivalence, not an isomorphism, and there is a nontrivial left 2-unitor $\la$.
But the other 2-dimensional data, $\pi$, $\mu$, $\rho$, $R_{-|--}$, $R_{--|-}$, and $\syl$, are all identities.
A key technical ingredient for our proof of \Cref{thm:permcat-smb} is a coherence theorem~\ref{thm:iterated-tensor-coherence} for structure morphisms in iterated tensor products.

This notion of symmetric monoidal structure for 2-categories is weaker and generally distinct from the $\Cat$-enriched notion described in, e.g., \cite[Section~1.5]{JYringIII}.
For example, it has been noted in \cite[p.~166]{EM2006Rings}, that small permutative categories do not appear to support the structure of a closed symmetric monoidal 1-category.
In \cite[5.7.23 and~10.2.17]{JYringIII} it is shown that neither the smash product of pointed multicategories, nor that on the subcategory of $\cM\underline{1}$-modules, restricts to a symmetric monoidal 1-category structure for small permutative categories.
In both cases, the corresponding monoidal unit is a multicategory that is not equivalent to a permutative category.
Since a $\Cat$-enriched symmetric monoidal structure has an underlying symmetric monoidal 1-category, negative results about 1-categorical symmetric monoidal structure imply negative results about $\Cat$-enriched variants.

\subsection*{A discussion of choices: permutative categories and symmetric monoidal functors}

From the perspective of symmetric monoidal bicategory theory, restricting to permutative categories rather than studying all symmetric monoidal categories makes little difference: the inclusion of the 2-category of permutative categories into the 2-category of symmetric monoidal categories is a biequivalence, so long as the 1-cells in both consist of all symmetric monoidal functors.
See \cite[Section~5.1]{Gur2012Biequivalences} for transport of monoidal structure along biequivalences.

The question of which morphisms to focus on is more subtle.
Beginning in \cref{defn:smfunctor}, and throughout the paper, we use the term \emph{symmetric monoidal functor} for the strong case, with invertible monoidal constraints.
In \cref{defn:bilinear} we introduce a corresponding notion of bilinear maps, also of the strong variety, and prove a characterization in \cref{prop:curry}.
For both symmetric monoidal functors and bilinear maps, there are generalizations to a lax variant, where the monoidal and unit constraints are not assumed invertible.
In these cases, one has a generalization of \cref{prop:curry} corresponding to lax symmetric monoidal functors.
However, our further constructions and results depend in several ways on the restriction to the strong case.

Firstly, the comparison with functors out of a tensor product in \cref{prop:otimes-bilin-iso} depends on the definition of bilinear maps with invertible monoidal constraints, because these correspond to the (invertible) structure morphisms in the tensor product.
Invertibility of these structure morphisms is used in a number of places, notably:
\begin{itemize}
\item for the unit and counit of the adjoint equivalence $(\dA,\dAdot)$ in \cref{prop:AAdot} and
\item for the 2-unitor $\la$ in \cref{defn:lambda}.
\end{itemize} 

Secondly, our proof of the $\otimes-\Hom$ biadjunction in \cref{thm:biadj} depends, via \cref{prop:tensor-cofib-adj-equiv}, on invertibility of the monoidal constraints for symmetric monoidal functors.
See \cref{rmk:strong-needed} for further details of this point.

Thirdly, and independently of the above, the pseudonaturality constraints for the left unitor $\dL$ in \cref{prop:Lpsnat} have components that depend on the monoidal and unit constraints of symmetric monoidal functors $F$.  If these constraints are not invertible, then the corresponding lax naturality constraint for $\dL$ will also not be invertible.  A similar conclusion holds for $\dR$ in \cref{defn:R,lem:R=LB}. Thus we see that considering lax rather than strong symmetric monoidal functors forces a skew-type monoidal structure \cite{Szl2012Skew} rather than a genuine one.

A further, and we believe important, feature of our attention to detail is the recognition that the structure we construct is far easier to work with than a generic symmetric monoidal bicategory.
This ease comes as a result of many of the functors and transformations satisfying stronger-than-expected properties. 
As an example, the tensor product we construct always produces what would be called cofibrant permutative categories in the homotopy-theoretic literature (see \cref{prop:tensor-cofib-adj-equiv}) and can therefore be used to construct 2-monadic pseudomorphism classifiers \cite{BKP1989Two}.

\subsection*{A list of main results}
We list our main results below.
\begin{enumerate}
  \renewcommand{\labelenumi}{(\arabic{enumi})}
\item The tensor product is 2-functorial: \cref{prop:tensor-functor}.
\item The tensor product corepresents bilinear symmetric monoidal functors: \cref{prop:otimes-bilin-iso}.
\item The categories of strong, respectively strict, symmetric monoidal functors out of a tensor product are equivalent: \cref{prop:tensor-cofib-adj-equiv}.
\item The tensor product is left biadjoint to the internal hom: \cref{thm:biadj}.
\item Equipped with the tensor product and internal hom, the 2-category of permutative categories and strong symmetric monoidal functors is closed symmetric monoidal as a bicategory: \cref{thm:biadj,thm:permcat-smb}.
\item The free permutative category construction is symmetric monoidal, as a 2-functor between symmetric monoidal bicategories, with respect to the cartesian product of categories and the tensor product of permutative categories: \cref{thm:P-sm2fun}.
\item There is a direct sum for permutative categories that is both a bicategorical coproduct and bicategorical product: \cref{thm:oplus=bicat_directsum}.
\item The tensor product distributes, via a strict symmetric monoidal equivalence, over direct sums: \cref{cor:oplus-otimes-dist}.
\end{enumerate}

\section{Background}
\label{sec:defs}

This section recalls basic definitions and results for permutative categories and general 2-/bicategorical notions.

\subsection*{Permutative categories}
Here we fix terminology and notation for permutative categories and related notions.

\begin{defn}\label{defn:pc}
  A \textit{permutative category} $C$ consists of a strict monoidal
  category $(C, \oplus, e)$ together with a natural isomorphism,
  \[
    \begin{tikzpicture}[x=18mm,y=10mm]
      \draw[0cell] 
      (0,0) node (xy) {C \times C}
      (1,-1.25) node (yx) {C \times C}
      (2,0) node (xyagain) {C}
      ;
      \draw[1cell] 
      (xy) edge node {\oplus} (xyagain)
      (xy) edge[swap] node {\tau} (yx)
      (yx) edge[swap] node {\oplus} (xyagain)
      ;
      \draw[2cell]
      (1,-.5) node[rotate=0] {\be \Downarrow};
    \end{tikzpicture}
  \]
  where $\tau \cn C \times C \to C \times C$ is the symmetry
  isomorphism in $\Cat$, such that the following axioms hold for all
  objects $x,y,z$ of $C$.
  \begin{itemize}
  \item $\beta_{y,x} \beta_{x,y} = 1_{x \oplus y}$
  \item $\beta_{x, y \oplus z} = (1_y \oplus \beta_{x,z}) \circ (\beta_{x,y} \oplus 1_z)$
  \end{itemize}
  Note that these imply $\beta_{e,x} = 1_{x} = \beta_{x,e}$ for each object $x$ in $C$.
\end{defn}

\begin{notn}\label{notn:plus-zero}
  Here and throughout \cref{sec:bilin} we use $\oplus$ and $e$ to denote the monoidal sum and unit of a permutative category.
  Beginning in \cref{sec:otimes} and continuing through the rest of the document, we use the simpler notation $+$ and $0$, reserving $\oplus$ for the direct sum of permutative categories, defined in \cref{sec:oplus}.
\end{notn}

\begin{defn}\label{defn:smfunctor}
 Let $(C,e)$ and $(C',e')$ be permutative categories.
 A \emph{symmetric monoidal functor} between them consists of a functor $F\colon C \to C'$, a natural isomorphism $F_2$ called the \emph{monoidal constraint} with components
 \[
 \bigl(F_2\bigr)_{x,y}\colon Fx \oplus Fy \to F(x \oplus y), 
 \]
 and an isomorphism
 \[F_0\colon e' \to Fe,\]
 called the \emph{unit constraint}.
 These data satisfy the following associativity, unity, and symmetry axioms for all objects $x,y,z$ in $C$:
 \begin{align*}
   \bigl(F_2\bigr)_{x,y \oplus z} \, \circ \,  \bigl( 1_{Fx} \oplus \bigl(F_2\bigr)_{y,z}\bigr)
   & = \bigl(F_2\bigr)_{x \oplus y,z} \, \circ \,  \bigl( \bigl(F_2\bigr)_{x,y} \oplus 1_{Fz}\bigr),\\
   \bigl( F_2 \bigr)_{e,x} \,\circ\, \bigl( F_0 \oplus 1_{Fx} \bigr)
   & = 1_{Fx}, \\
   \bigl( F_2 \bigr)_{x,e} \,\circ\, \bigl( 1_{Fx} \oplus F_0 \bigr)
   & = 1_{Fx}, \andspace \\
   \bigl( F_2 \bigr)_{y,x} \,\circ\, \beta_{Fx,Fy}
   & = F\bigl(\beta_{x,y}\bigr) \,\circ\, \bigl( F_2 \bigr)_{x,y}.
 \end{align*}
 Note that in the presence of symmetries, either of the unit axioms implies the other.

 In the literature, what we have called a symmetric monoidal functor is sometimes also called a \emph{strong symmetric monoidal functor}, indicating that the monoidal and unit constraints are isomorphisms.
 If $F_2$ and $F_0$ are identities, we say $F$ is a \emph{strict symmetric monoidal functor}.
\end{defn}

\begin{convention}\label{convention:F2}
  For a symmetric monoidal functor $(F,F_2,F_0)$, the coherence theorem for monoidal functors \cite{JS1993Braided} shows that any two parallel composites of morphisms constructed using the monoidal product $\oplus$ of instances of $F_2$ and identity morphisms are equal.
  We will often write $F_2$ to denote any such composite.
\end{convention}

\begin{defn}\label{defn:montransf}
  Let $F,G \cn A \to B$ be symmetric monoidal functors between permutative categories.
  A \emph{monoidal transformation} $\phi \cn F \To G$ consists of a natural transformation $\phi$ such that the following monoidal naturality and unit equalities hold for each pair of objects $x,y \in A$:
  \begin{align*}
    \phi_{x \oplus y} \circ \bigl(F_2\bigr)_{x,y}
    & = \bigl(G_2\bigr)_{x,y} \circ \bigl( \phi_x \oplus \phi_y \bigr) \andspace \\
    \phi_{e} \circ F_0
    & = G_0.
  \end{align*}
  A monoidal transformation between strict symmetric monoidal functors is defined in the same way.
\end{defn}

\begin{notn}\label{notn:permcat}
  We write $\permcat$ for the 2-category of permutative categories, symmetric monoidal functors, and monoidal transformations.
  We write $\permcats$ for the sub 2-category whose 1-cells are strict symmetric monoidal functors, and
  \begin{equation}\label{eq:incl-j}
    j \cn \permcats \to \permcat
  \end{equation}
  for the inclusion 2-functor that is the identity on 0-, 1-, and 2-cells.
  In \cref{sec:bilin-otimes} we also write
  \[
    \strict{A,B} = \permcats(A,B)
  \]
  for the category of strict symmetric monoidal functors and monoidal transformations between $A$ and $B$.
\end{notn}

\begin{defn}\label{defn:oplus_fun_trans}
  For permuative categories $A$ and $B$, the \emph{pointwise permutative structure} on
  \[
    \permcat(A,B)
  \]
  is defined as follows.

  For symmetric monoidal functors 
  \[
    F,F'\cn A \to B
  \]
  we define a symmetric monoidal functor $F \oplus F' \cn A \to B$ with
  \begin{itemize}
  \item $(F \oplus F')(a) = Fa \oplus F'a$,
  \item $(F \oplus F')_2 =  (F_2\oplus F'_2)\circ (1 \oplus \beta \oplus 1)$, \andspace
  \item $(F \oplus F')_0 = F_0 \oplus F'_0$.
  \end{itemize}
  To verify that $F \oplus F'$ is a symmetric monoidal functor, one uses the symmetric monoidal functor axioms of $F$ and $F'$ together with the structure axioms for the symmetry $\beta$ in $B$.
  
  Given monoidal transformations $\phi \cn F \impl G$ and $\phi' \cn F'\impl G'$, the monoidal transformation $\phi \oplus \phi'\cn F\oplus F'\impl G \oplus G'$ is defined by
  \[(\phi \oplus \phi')_a=\phi_a\oplus \phi'_a.\]
  The monoidal naturality axioms for $\phi$ and $\phi'$ imply those for $\phi \oplus \phi'$.
\end{defn}
 
The following proposition is standard.
We note that the strictness of the monoidal structure on $\permcat(A,B)$ follows directly from the corresponding strictness on $B$.
\begin{prop}\label{prop:permcat_SM}
  For permutative categories $A$ and $B$, the category of symmetric monoidal functors $\permcat(A,B)$ has the structure of a permutative category under the pointwise sum of \cref{defn:oplus_fun_trans}.
  The monoidal unit is the constant functor at the monoidal unit of $B$.
  The symmetry isomorphism is given pointwise by that of $B$.
\end{prop}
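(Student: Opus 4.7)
The plan is to verify each defining condition of a permutative category (\cref{defn:pc}) for $\permcat(A,B)$ by unwinding the pointwise definitions in \cref{defn:oplus_fun_trans} and reducing to the corresponding strict condition in $B$. First I would confirm that $\oplus$ is a bifunctor on $\permcat(A,B)$. For objects this is asserted in \cref{defn:oplus_fun_trans}. For the monoidal naturality axiom on $\phi \oplus \phi'$, the twists $1 \oplus \beta \oplus 1$ appearing in $(F \oplus F')_2$ and $(G \oplus G')_2$ can be commuted past the pointwise components using naturality of $\beta$ in $B$, and the remainder reduces to the sum of the monoidal naturality axioms for $\phi$ and $\phi'$. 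Preservation of identities and composition of 2-cells is immediate from bifunctoriality of $\oplus$ in $B$.

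Strict associativity holds on objects because
\[
  ((F \oplus F') \oplus F'')(a) = Fa \oplus F'a \oplus F''a = (F \oplus (F' \oplus F''))(a)
\]
by strict associativity in $B$. On monoidal constraints, both $((F \oplus F') \oplus F'')_2$ and $(F \oplus (F' \oplus F''))_2$ are parallel composites of $F_2, F'_2, F''_2$ and instances of $\beta$ in $B$ effecting the same shuffle from $Fa \oplus F'a \oplus F''a \oplus Fa' \oplus F'a' \oplus F''a'$ to $F(a \oplus a') \oplus F'(a \oplus a') \oplus F''(a \oplus a')$, and hence are equal by coherence for symmetric monoidal categories \cite{JS1993Braided} applied to $B$ (cf.\ \cref{convention:F2}). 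The analogous argument handles 2-cells. Strict unitality is simpler: with $E \cn A \to B$ the constant functor at $e_B$ equipped with identity constraints, the twists $\beta_{Fa, e_B}$ and $\beta_{e_B, Fa}$ are identities (the last line of \cref{defn:pc}), giving $E \oplus F = F = F \oplus E$ on the nose, and similarly for 2-cells.

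Finally, defining the symmetry pointwise by $(\beta_{F, F'})_a = \beta_{Fa, F'a}$, the monoidal naturality identity
\[
  \beta_{F(a \oplus a'), F'(a \oplus a')} \circ (F \oplus F')_2 = (F' \oplus F)_2 \circ \bigl(\beta_{Fa, F'a} \oplus \beta_{Fa', F'a'}\bigr)
\]
reduces, after applying naturality of $\beta$ to $F_2 \oplus F'_2$, to an identity of composites of $\beta$'s in $B$ effecting the same shuffle on both sides, which holds by coherence. The unit axiom for $\beta_{F,F'}$ and the two permutative axioms on $\beta_{F,F'}$ transfer pointwise from the corresponding identities in $B$. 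The main obstacle throughout is entirely bookkeeping: tracking how the twist $1 \oplus \beta \oplus 1$ in the monoidal constraints reshuffles factors so that what remains is a diagram in $B$ to which its own permutative axioms apply.
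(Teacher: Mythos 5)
Your proof is correct: the direct pointwise verification, with coherence for the symmetry of $B$ handling the reshuffling in the monoidal constraints, is exactly the argument the paper has in mind. The paper itself omits the proof, declaring the result standard and only remarking that strictness is inherited from $B$, so your write-up simply supplies the details that \cref{prop:permcat_SM} leaves implicit.
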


\subsection*{2-categorical background}

We begin by recalling two forms of the notion of biadjunction for 2-categories.
These are special cases of more general notions of biadjunction in a general tricategory $\mathsf{T}$.

\begin{defn}\label{defn:biadjunction}
  Consider the tricategory $\iicatps$ consisting of 2-categories, pseudofunctors, pseudonatural transformations, and modifications.
  A \emph{biadjunction} in $\iicatps$ between 2-categories $K$ and $L$ is a tuple $(F,G,\eta,\epz,\Ga,\De)$ consisting of
  \begin{itemize}
  \item pseudofunctors $F \cn K \to L$ and $G \cn L \to K$,
  \item pseudonatural transformations $\eta\cn 1_{L} \To GF$ and $\epz \cn FG \To 1_{K}$, and
  \item invertible modifications $\Ga\cn (\epz * F)\circ(F*\eta) \To 1_F$ and $\De \cn (G * \epz)\circ(\eta * G) \To 1_G$.
  \end{itemize}
  These data satisfy the two \emph{swallowtail equalities}, which require that certain pastings depicted in 
  \cite[Figures~1 and ~2]{Gur2012Biequivalences} are equal to identities.
\end{defn}

The details of the swallowtail equalities in \cref{defn:biadjunction} will not be needed here.
Instead, we will use the following result to recognize the tensor-hom biadjunction in \cref{thm:biadj}.
\begin{prop}\label{prop:biadj-loceq}
  Suppose given 2-categories $K$ and $L$ together with 2-functors $F\cn K \to L$ and $G \cn L \to K$.
  Then $(F,G)$ are members of a biadjunction in the tricategory $\iicatps$ if and only if there is an equivalence of categories
  \begin{equation}\label{eq:biadj-loceq}
    a_{X,Y} \cn L(FX,Y) \fto{\hty} K(X,GY) \forspace X \in K, Y \in L,
  \end{equation}
  such that $a_{X,Y}$ is pseudonatural in $X$ and $Y$.
\end{prop}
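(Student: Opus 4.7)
My plan is to prove both implications separately, treating the forward direction as a structural unpacking of the biadjunction data and concentrating the real work on the reverse direction.

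For the forward direction, given a biadjunction $(F, G, \eta, \epz, \Ga, \De)$ in $\iicatps$, I would define $a_{X,Y}(f) = Gf \circ \eta_X$ for $f \cn FX \to Y$, extending to 2-cells by whiskering with $\eta_X$. A pseudoinverse is $b_{X,Y}(g) = \epz_Y \circ Fg$, and the invertible natural transformations $ba \cong 1$ and $ab \cong 1$ are constructed from $\Ga$, $\De$, and the pseudonaturality constraints of $\eta$ and $\epz$. Since $G$ is a 2-functor, $a_{X,Y}$ is strictly natural in $Y$, and pseudonaturality in $X$ is inherited directly from that of $\eta$ by whiskering with $G$.

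For the reverse direction, I would set $\eta_X = a_{X, FX}(1_{FX})$ and, after choosing a pseudoinverse $b_{X,Y}$ of $a_{X,Y}$ with invertible unit and counit, set $\epz_Y = b_{GY, Y}(1_{GY})$. The pseudonatural structure of $a$ in $X$, evaluated at $1_{FX}$, together with pseudonaturality in $Y$ evaluated at $Fh$ for $h \cn X \to X'$, supplies the 2-cell comparisons required to make $\eta$ pseudonatural; the same strategy applied to $b$ produces the pseudonatural structure on $\epz$. The modifications $\Ga$ and $\De$ then arise by evaluating the invertible unit and counit of the adjoint equivalence $a \dashv b$ at identity 1-cells in the hom-categories and transporting through the pseudonaturality constraints.

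The principal obstacle is verifying the two swallowtail equalities for the constructed $\Ga$ and $\De$. For this I would invoke a standard 3-dimensional improvement result: given biadjunction data satisfying only the invertibility and modification conditions in a tricategory, one can always modify $\Ga$ or $\De$ by composition with a suitable isomorphism so that the swallowtail equalities hold, in direct analogy with the promotion of a plain equivalence of categories to an adjoint equivalence. This technique appears in \cite{Gur2012Biequivalences} and related work. Because $F$ and $G$ are 2-functors rather than general pseudofunctors, many of the constraint 2-cells that would otherwise appear in the swallowtail pastings are identities, and the remaining equalities between pseudonatural transformations can be checked by a Yoneda-style argument: they correspond to equalities of endomorphisms of $a_{X,Y}(1_{FX})$ in the hom-categories, which hold because they are precisely the unit/counit compatibilities of the adjoint equivalence $a \dashv b$.
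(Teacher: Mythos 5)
The paper does not actually prove \cref{prop:biadj-loceq}: as the remark following it explains, both directions are deferred to the literature (Gray, Street, Kelly, Fiore, and Pstr\k{a}gowski). Your sketch reconstructs the standard argument contained in those references, and its overall shape is correct: the forward direction via $f \mapsto Gf \circ \eta_X$ (strictly $2$-natural in $Y$ because $G$ is a $2$-functor, pseudonatural in $X$ via the constraint of $\eta$), and the reverse direction via $\eta_X = a_{X,FX}(1_{FX})$ and $\epz_Y = b_{GY,Y}(1_{GY})$, with the swallowtail identities arranged by the improvement theorem for biadjoint data. Two points are worth tightening. First, to make $\epz$ pseudonatural you need the pointwise pseudoinverses $b_{X,Y}$ to assemble into a pseudonatural transformation in both variables; this is not automatic from choosing adjoint pseudoinverses one hom-category at a time, but follows from the fact (used elsewhere in the paper, citing Kelly) that a pseudonatural transformation whose components are equivalences is a pseudonatural equivalence, so you should say you are inverting $a$ as a pseudonatural transformation rather than componentwise. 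Second, your treatment of the swallowtails conflates two sufficient strategies: if you invoke the improvement theorem (which is exactly the role of \cite[Theorem~3.14]{Pst22Dualizable} in the paper's citation), no further Yoneda-style verification is required; if instead you verify the swallowtails directly by transporting to endomorphisms of $a_{X,FX}(1_{FX})$, the improvement theorem is unnecessary. Either route completes the proof, but you should commit to one. With those adjustments your proposal is a faithful, if terse, account of the argument the paper outsources.
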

\begin{rmk}
  \Cref{prop:biadj-loceq} is well-known to experts, and similar formulations appear in \cite[Section~I,7]{Gra74Formal}.
  Local equivalences such as \cref{eq:biadj-loceq} are taken as a definition in, e.g., \cite{street1980fb,Kel89Elementary}.
  The introduction to Chapter 9 of \cite{Fio06Pseudo} gives further overview of the literature.
  For the specific formulation of \cref{prop:biadj-loceq}, one direction follows from \cite[Lemma~3.13]{Pst22Dualizable} and the other follows from \cite[Theorems~9.5 and~9.16]{Fio06Pseudo} together with \cite[Theorem~3.14]{Pst22Dualizable}.
\end{rmk}

In \cref{sec:oplus} we define a direct sum for permutative categories and show that it is a bicategorical coproduct and product.
Here we recall these general terms.
\begin{defn}\label{defn:bicat-2cat-prod}
  Let $a, b$ be objects of a 2-category $K$.
  \begin{enumerate}
  \item A \emph{bicategorical product} of $a$ and $b$ consists of an object $a \times b$ and projection 1-cells
    \[
      a \times b \fto{\pi_a} a \andspace a \times b \fto{\pi_b} b,
    \]
    such that each of the functors
    \[
      K(x, a \times b) \to K(x, a) \times K(x,b),
    \]
    given by composition and whiskering with $\pi_a$ and $\pi_b$, is an equivalence of categories, pseudonaturally in $x$.

  \item A \emph{bicategorical coproduct} of $a$ and $b$ is a bicategorical coproduct in $K^{op}$, the 2-category in which the direction of the 1-cells (and 1-cells only) is reversed.

  \item A \emph{2-categorical product} of $a$ and $b$ consists of an object $a \times b$ and projection 1-cells
    \[
      a \times b \fto{\pi_a} a \andspace a \times b \fto{\pi_b} b,
    \]
    such that each of the functors
    \[
      K(x, a \times b) \to K(x, a) \times K(x,b),
    \]
    given by composition and whiskering with $\pi_a$ and $\pi_b$, is an isomorphism of categories, 2-naturally in $x$.

  \item A \emph{2-categorical coproduct} of $a$ and $b$ is a 2-categorical coproduct in $K^{op}$, the 2-category in which the direction of the 1-cells (and 1-cells only) is reversed.
  \end{enumerate}
\end{defn}

\begin{rmk}\label{rmk:2cat-to-bicat}
  We note that any 2-categorical product or coproduct is a bicategorical one, by direct comparison of definitions.
  This is not true for all types of limits or colimits, but is true when the weight is appropriately cofibrant, as is the case here.
  We also note that a 2-category $K$ might admit bicategorical products or coproducts without admitting 2-categorical ones.
\end{rmk}

The cartesian product of permutative categories, with symmetric monoidal structure induced componentwise, is an example of 2-categorical products \cite{BKP1989Two,LS2012Enhanced}.
\begin{prop}\label{prop:prods-exist}
  Let $A$ and $B$ be permutative categories. Then the product of the underlying categories, $A \times B$, admits the structure of a permutative category pointwise. It is a 2-categorical product in both $\permcats$ and $\permcat$.
\end{prop}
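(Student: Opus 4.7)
The plan is to verify this by constructing the permutative structure on $A \times B$ componentwise and then checking that the universal property holds as a strict isomorphism of hom-categories. First I would define $(A \times B, \oplus, (e_A, e_B))$ with monoidal sum $(a,b) \oplus (a',b') = (a \oplus a', b \oplus b')$ and symmetry $\beta_{(a,b),(a',b')} = (\beta^A_{a,a'}, \beta^B_{b,b'})$. Strict associativity and unity descend componentwise from the two factors, and the two permutative axioms for $\beta$ in \cref{defn:pc} reduce to the same axioms applied in $A$ and in $B$ separately. The projections $\pi_A \cn A \times B \to A$ and $\pi_B \cn A \times B \to B$ are visibly strict symmetric monoidal functors.

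Next, to establish the universal property, I would construct an explicit inverse to the functor
\[
  \permcat(X, A \times B) \to \permcat(X, A) \times \permcat(X, B)
\]
given by postcomposition with $\pi_A$ and $\pi_B$. Given a pair of symmetric monoidal functors $(F \cn X \to A, G \cn X \to B)$, define $\langle F, G\rangle \cn X \to A \times B$ by $\langle F, G\rangle(x) = (Fx, Gx)$ on objects and morphisms, with monoidal constraint $\langle F, G\rangle_2 = (F_2, G_2)$ and unit constraint $\langle F, G\rangle_0 = (F_0, G_0)$. The four axioms of \cref{defn:smfunctor} for $\langle F, G\rangle$ follow by splitting each equality into its two components and applying the corresponding axioms for $F$ and $G$. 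For a pair of monoidal transformations $(\phi \cn F \Rightarrow F', \psi \cn G \Rightarrow G')$, the pointwise pair $\langle \phi, \psi\rangle_x = (\phi_x, \psi_x)$ is a monoidal transformation for the same reason. Restricting to $\permcats$, both constraints $(F_2, G_2)$ and $(F_0, G_0)$ are identities precisely when each of $F$ and $G$ is strict, so the construction restricts to give the inverse in the strict case as well.

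Finally, I would verify that $\langle -, - \rangle$ is a strict two-sided inverse: composing with the projections literally extracts components, and reassembling components returns the original functor, transformation, and structure constraints on the nose. 2-naturality in $X$ is immediate because for any symmetric monoidal functor $h \cn X' \to X$ and any monoidal transformation $\theta \cn h \Rightarrow h'$, one has $\langle F,G\rangle \circ h = \langle F \circ h, G \circ h\rangle$ and $\langle F,G\rangle * \theta = \langle F * \theta, G * \theta\rangle$, so the bijections are compatible with precomposition on both 1-cells and 2-cells. There is no genuine obstacle here; the entire argument is a matter of careful bookkeeping, and the only mildly subtle point is noting that one really does obtain an \emph{isomorphism} of hom-categories (hence a 2-categorical product as in \cref{defn:bicat-2cat-prod}) rather than merely an equivalence, which is what allows the same construction to work simultaneously in $\permcats$ and $\permcat$.
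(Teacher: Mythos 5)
Your proof is correct. The paper states \cref{prop:prods-exist} as a standard fact with a citation rather than a written proof, and your direct componentwise verification — pointwise permutative structure, strict monoidal projections, the pairing $\langle F,G\rangle$ with constraints $(F_2,G_2)$ and $(F_0,G_0)$ as a strict two-sided inverse, and 2-naturality via $\langle F,G\rangle \circ h = \langle Fh, Gh\rangle$ — is exactly the expected argument, including the key observation that one gets an isomorphism (not merely an equivalence) of hom-categories.
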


Bicategorical direct sums are defined as follows.
\begin{defn}\label{defn:bicat-direct-sum}
Let $K$ be a 2-category.
\begin{description}
\item[Initial and terminal objects] An object $i$ is \emph{initial} if $K(i, a)$ is the terminal category for all $a \in K$. An object $t$ is \emph{terminal} if $K(a, t)$ is the terminal category for all $a \in K$.
\item[Zero objects] A \emph{zero object} is an object $0$ which is both initial and terminal.
\item[The comparison map] Assume that $K$ admits bicategorical products $a \times b$,  bicategorical coproducts $a \bincoprod b$, and a zero object $0$. Then $a \simeq a \times 0$ and $b \simeq 0 \times b$, giving an equivalence $a \bincoprod b \fto{\hty} \bigl( a \times 0 \bigr) \bincoprod  \bigl( 0 \times b \bigr)$. The \emph{comparison map} $a \bincoprod b \to a \times b$ is the composite
\[
  a \bincoprod b \fto{\hty} \bigl( a \times 0 \bigr) \bincoprod  \bigl( 0 \times b \bigr) \to a \times b,
\]
where the second map is induced by
\[
  a \times 0 \fto{1_a \times !} a \times b
  \andspace
  0 \times b \fto{! \times 1_b} a \times b.
\]
\item[Bicategorical direct sums] We say that $K$ admits \emph{bicategorical direct sums} if the comparison map is an equivalence.
\end{description}
\end{defn}

We note that both $\permcat$ and $\permcats$ have a zero object, namely, the terminal category equipped with its unique permutative structure.

\section{Bilinearity and the internal homs}
\label{sec:bilin}

In this section we define bilinear maps
\[
  H \cn A,B \to C
\]
and transformations between them for permutative categories $A$, $B$, and $C$.
\Cref{prop:curry} gives an isomorphism of permutative categories between the one given by the collection of such bilinear maps and bilinear transformations and the one given by symmetric monoidal functors
\[
  F \cn A \to \permcat(B,C)
\]
and monoidal transformations. 

\begin{defn}\label{defn:bilinear}
  Let $(A, \oplus, e_A)$, $(B,\oplus,e_B)$ and $(C,\oplus,e_C)$ be permutative categories. A \emph{bilinear map} $H\cn A,B \to C$ consists of the following.
  \begin{itemize}
  \item It has an \emph{underlying functor} $H\cn A \times B \to C$.
  \item It has \emph{bilinearity constraint} isomorphisms
    \begin{align*}
      H_a(b,b')\cn & H(a,b) \oplus H(a,b') \fto{\iso} H(a,b\oplus b'),\\
      H_b(a,a')\cn & H(a,b) \oplus H(a',b) \fto{\iso} H(a\oplus a',b).
    \end{align*}
    for $a, a' \in A$ and $b,b' \in B$. Both isomorphisms are natural with respect to all three of their variables.
  \item It has \emph{unit constraint} isomorphisms
    \begin{align*}
      H_{a,0}\cn & e_C \fto{\iso} H(a,e_B) \andspace \\
      H_{0,b}\cn & e_C \fto{\iso} H(e_A,b)
    \end{align*}
    that are natural with respect to $a \in A$ and $b \in B$.
  \end{itemize}
  These data are subject to the following axioms.
  \begin{enumerate}
  \item\label{BIL1} For fixed $a \in A$ and $b \in B$, the functors
    $H(a,-)$ and $H(-,b)$ are symmetric monoidal, with monoidal, respectively unit, constraints  
    \begin{align*}
      H(a,-)_2  = H_a, \quad
      & H(a,-)_0 = H_{a,0}, \andspace\\
      H(-,b)_2 = H_b, \quad
      & H(-,b)_0 = H_{0,b}.
    \end{align*}

  \item\label{BILii} The following \emph{interchange diagram} commutes for each $a,a' \in A$ and $b,b' \in B$.
    \begin{equation}\label{BIL2}
      \begin{tikzpicture}[x=30mm,y=20mm,vcenter]
        \draw[0cell] 
        (0,0) node (p0) {H(a,b) \oplus H(a,b') \oplus H(a',b) \oplus H(a',b')}
        (-1,-1.5) node (p2) {H(a,b\oplus b') \oplus H(a',b\oplus b')}
        (1,-1) node (p1) {H(a,b) \oplus H(a',b) \oplus H(a,b') \oplus H(a',b')}
        (1,-2) node (p3) {H(a\oplus a',b) \oplus H(a\oplus a',b')}
        (0,-3) node (p4) {H(a\oplus a',b\oplus b')}
        ;
        \draw[1cell] 
        (p0) edge node {1 \oplus \beta \oplus 1} (p1) 
        (p0) edge[swap] node {H_a \oplus H_{a'}} (p2)
        (p2) edge[swap] node {H_{b\oplus b'}} (p4)
        (p1) edge node {H_b\oplus H_{b'}} (p3)
        (p3) edge node {H_{a\oplus a'}} (p4)
        ;
      \end{tikzpicture}
    \end{equation}
  \item\label{BILiii} The following diagram commutes.
    \begin{equation}\label{BIL3}
      \begin{tikzpicture}[x=35mm,y=20mm,vcenter]
        \draw[0cell] 
        (0,0) node (0) {e_C}
        (1,0) node (1) {H(e_A,e_B)}
        ;
        \draw[1cell] 
        (0) edge[bend left=0, transform canvas={shift={(0,3pt)}}] node {H_{e_A,0}} (1)
        (0) edge[swap,bend right=0, transform canvas={shift={(0,-3pt)}}] node {H_{0,e_B}} (1)
        ;
      \end{tikzpicture}
    \end{equation}
  \item\label{BILiv} The following diagrams commute for each $a,a' \in A$ and $b,b' \in B$.
    \begin{equation}\label{BIL6a}
      \begin{tikzpicture}[x=55mm,y=15mm,vcenter]
        \draw[0cell] 
        (0,0) node (0p0) {e_C\oplus e_C}
        (0,-1) node (HpH) {H(a,e_B) \oplus H(a',e_B)}
        (1,0) node (0) {e_C}
        (1,-1) node (H) {H(a\oplus a',e_B)}
        ;
        \draw[1cell] 
        (0p0) edge node {=} (0)
        (HpH) edge['] node {H_{e_B}(a,a')} (H)
        (0p0) edge['] node {H_{a,0} \oplus H_{a',0}} (HpH)
        (0) edge node {H_{a\oplus a',0}} (H)
        ;
      \end{tikzpicture}
    \end{equation}
    \begin{equation}\label{BIL6b}
      \begin{tikzpicture}[x=55mm,y=15mm,vcenter]
        \draw[0cell] 
        (0,0) node (0p0) {e_C\oplus e_C}
        (0,-1) node (HpH) {H(e_A,b) \oplus H(e_A,b')}
        (1,0) node (0) {e_C}
        (1,-1) node (H) {H(e_A,b\oplus b')}
        ;
        \draw[1cell] 
        (0p0) edge node {=} (0)
        (HpH) edge['] node {H_{e_A}(b,b')} (H)
        (0p0) edge['] node {H_{0,b} \oplus H_{0,b'}} (HpH)
        (0) edge node {H_{0,b\oplus b'}} (H)
        ;
      \end{tikzpicture}
    \end{equation}
  \end{enumerate}
  This finishes the definition of a bilinear map.
\end{defn}

\begin{defn}[Bilinear transformation]\label{bilin-transf}
  A \emph{bilinear transformation}  
  \[
  \phy\cn H \Rightarrow J \cn A,B \to C
  \]
  is a natural transformation 
    \[
  \phy\cn H \Rightarrow J \cn A \times B \to C
  \]
  such that
  \begin{align*}
      \phy_{(a,-)} & \cn H(a,-) \Rightarrow J(a,-) \cn B \to C \text{ and }\\
      \phy_{(-,b)} & \cn H(-,b) \Rightarrow J(-,b) \cn A \to C
  \end{align*}
  are monoidal transformations for each $a \in A$ and $b \in B$.
\end{defn}

\begin{prop}\label{prop:bilin-perm}
 Given permutative categories $A$, $B$, and $C$, the collection of bilinear maps $A,B \to C$ together with bilinear transformations between them forms a permutative category denoted $\bilin(A,B;C)$.
\end{prop}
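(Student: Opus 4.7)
The strategy mirrors the proof of \cref{prop:permcat_SM}: endow $\bilin(A,B;C)$ with a strict pointwise permutative structure inherited from that of $C$.

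For two bilinear maps $H, J \cn A, B \to C$, define the sum $H \oplus J$ to have underlying functor $(H \oplus J)(a,b) = H(a,b) \oplus J(a,b)$ and, in analogy with \cref{defn:oplus_fun_trans}, bilinearity constraints
\[
  (H \oplus J)_a(b,b') = \bigl(H_a(b,b') \oplus J_a(b,b')\bigr) \circ \bigl(1 \oplus \beta_{J(a,b),\, H(a,b')} \oplus 1\bigr),
\]
and symmetrically for $(H \oplus J)_b(a,a')$. Take unit constraints $(H \oplus J)_{a,0} = H_{a,0} \oplus J_{a,0}$ and $(H \oplus J)_{0,b} = H_{0,b} \oplus J_{0,b}$, using $e_C \oplus e_C = e_C$. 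For bilinear transformations $\phi, \psi$ set $(\phi \oplus \psi)_{(a,b)} = \phi_{(a,b)} \oplus \psi_{(a,b)}$. The monoidal unit is the constant functor at $e_C$ with identity constraints, and the symmetry $\beta^{\bilin}_{H,J} \cn H \oplus J \To J \oplus H$ is given pointwise by $\beta^C_{H(a,b),\, J(a,b)}$.

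I would then verify the bilinear map axioms for $H \oplus J$. Axiom (BIL1), that $(H \oplus J)(a,-)$ and $(H \oplus J)(-,b)$ are symmetric monoidal with the prescribed constraints, is immediate from \cref{defn:oplus_fun_trans} applied to the symmetric monoidal functors $H(a,-), J(a,-) \in \permcat(B,C)$ and analogously for the other variable. Axioms (BILiii) and (BILiv) reduce componentwise to the corresponding axioms for $H$ and $J$ together with identities among $e_C$-summands. The main obstacle is axiom (BILii), the interchange diagram, which after expanding the outer $\oplus$ becomes an eight-term diagram in $C$. The plan is to use Mac Lane's coherence theorem for the symmetric monoidal structure of $C$ to slide the internal $\beta$-shuffles in both composites so that the diagram decomposes into two commuting copies of (BILii) — one for $H$ and one for $J$ — joined by canonical symmetric reassociations that commute by coherence. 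Naturality of $\beta$ is essential for this shuffling.

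Next, I would check that the data above assemble into a permutative category. The pointwise definition makes $\oplus$ a functor on $\bilin(A,B;C) \times \bilin(A,B;C)$: for bilinear transformations $\phi, \psi$, naturality of $\phi \oplus \psi$ is pointwise, and the two monoidal-transformation conditions of \cref{bilin-transf} for $(\phi \oplus \psi)_{(a,-)}$ and $(\phi \oplus \psi)_{(-,b)}$ follow by \cref{defn:oplus_fun_trans} from those for $\phi$ and $\psi$. Similarly, $\beta^{\bilin}_{H,J}$ is verified to be a bilinear transformation by reducing each monoidal-naturality condition to the permutative axioms for $C$. Finally, strict associativity and unitality of $\oplus$ on underlying functors is immediate from strictness in $C$; strictness on bilinearity constraints follows from naturality of $\beta$ and the symmetric group action on three summands (exactly as in the proof of \cref{prop:permcat_SM}). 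The symmetry axioms $\beta^{\bilin} \circ \beta^{\bilin} = 1$ and the hexagon hold pointwise in $C$, completing the proof.
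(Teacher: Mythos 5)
Your construction coincides with the paper's own proof: the pointwise sum on objects, the bilinearity constraints given by $(H_a\oplus J_a)\circ(1\oplus\beta\oplus 1)$, the additive unit constraints, the constant functor at $e_C$ as unit, and the pointwise symmetry are exactly the data used there. Your more detailed verification of the interchange axiom via symmetric monoidal coherence fills in a step the paper leaves to the reader, but the approach is the same.
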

\begin{proof}
  Identity morphisms and composition are inherited from those of $\Cat(A \times B, C)$, noting that the identity transformation is always bilinear and the composite of two bilinear transformations is again bilinear.
  The permutative structure is determined pointwise by that of $C$, as follows.
  Given bilinear maps $H,J \in \bilin(A,B;C)$ the pointwise sum is
  \[
    (H \oplus J)(a,b) = H(a,b) \oplus J(a,b).
  \]
  The bilinearity constraints of $H \oplus J$ are given by those of $H$ and $J$ combined with the symmetry of $C$, as in the following composite:
  \begin{equation}\label{eq:HplusJsymm}
    \begin{tikzpicture}[x=30mm,y=18mm,vcenter]
      \draw[0cell] 
      (0,0) node (a) {
        H(a,b) \oplus J(a,b) \oplus H(a,b') \oplus J(a,b')
      }
      (1,-1) node (b) {
        H(a,b) \oplus H(a,b') \oplus J(a,b) \oplus J(a,b')
      }
      (2,0) node (c) {
        H(a,b \oplus b') \oplus J(a,b \oplus b') 
      }
      ;
      \draw[1cell] 
      (a) edge['] node[pos=.4] {1 \oplus \beta \oplus 1} (b)
      (b) edge['] node[pos=.6] {H_a \oplus J_a} (c)
      ;
    \end{tikzpicture}
  \end{equation} 
  and similarly for the other variable.
  The unit constraints for $H \oplus J$ are given by adding those of $H$ and $J$.
  The bilinear map axioms for $H \oplus J$ follow from those of $H$ and $J$ separately, together with the permutative structure of $C$.

  The pointwise sum of bilinear transformations $\phy$ and $\mu$ is given by components
  \[
    (\phy \oplus \mu)_{(a,b)} = \phy_{(a,b)} \oplus \mu_{(a,b)} \forspace a \in A,\ b \in B,
  \]
  and the axioms of \cref{bilin-transf} hold componentwise by those of $\phy$ and $\mu$.
  Functoriality of the pointwise sum follows because units and composites of bilinear transformations are determined componentwise.
  
  Strict associativity of the pointwise sum follows from that of $C$. The monoidal unit in $\bilin(A,B;C)$ is the constant functor at $e_C$.
  The symmetry isomorphism is given pointwise by that of $C$.
\end{proof}

\begin{lem}\label{lem:bilin-compose-sm}
  Consider permutative categories
  \[
    A_1,\quad A_2,\quad \ol{A}_1,\quad \ol{A}_2,\quad C, \andspace \ol{C}.
  \]
  Precomposition and postcomposition define functors
  \[\cpre\cn\bilin\bigl(A_1,A_2;C\bigr) \times \tlprod{i=1}{2} \permcat\bigl(\ol{A}_i,A_i\bigr) \to \bilin\bigl(\ol{A}_1,\ol{A}_2;C\bigr)\]
  and
  \[\cpost\cn\permcat\bigl(C,\ol{C}\bigr) \times \bilin\bigl(A_1,A_2;C\bigr) \to \bilin\bigl(A_1,A_2;\ol{C}\bigr)\]
  such that the following axioms hold.
  Below, we abbreviate $\zP = \permcat$ and $\zB = \bilin$.
  \begin{description}
  \item[Compatibility] The following diagram commutes.
    \[
      \begin{tikzpicture}[x=60mm,y=18mm]
        \draw[0cell=.8] 
        (0,0) node (a) {
          \zP \bigl( {C},\ol{C} \bigr)
          \times
          \zB \bigl( A_1,A_2;C \bigr)
          \times
          \tlprod{i}{} \zP \bigl( \ol{A}_i,{A}_i \bigr)
        }
        (a)+(1,0) node (b) {
          \zP \bigl( {C},\ol{C} \bigr)
          \times
          \zB \bigl( \ol{A}_1,\ol{A}_2;C \bigr)
        }
        (b)+(0,-1) node (c) {
          \zB \bigl( \ol{A}_1,\ol{A}_2;\ol{C} \bigr)
        }
        (a)+(0,-1) node (a') {
          \zB \bigl( A_1,A_2;\ol{C} \bigr)
          \times
          \tlprod{i}{} \zP \bigl( \ol{A}_i,{A}_i \bigr)
        }
        ;
        \draw[1cell=.9] 
        (a) edge node {1 \times \cpre} (b)
        (b) edge node {\cpost} (c)
        (a) edge['] node {\cpost \times 1} (a')
        (a') edge['] node {\cpre} (c)
        ;
      \end{tikzpicture}
    \]
  \item[Associativity] The following two diagrams commute for permutative categories $\wt{A}_1$, $\wt{A}_2$, and $\wt{C}$.
    Here, $\circ$ denotes composition in $\zP = \permcat$.
    \[
      \begin{tikzpicture}[x=57mm,y=18mm]
        \draw[0cell=.8] 
        (0,0) node (a) {
          \zP \bigl( \ol{C},\wt{C} \bigr)
          \times
          \zP \bigl( {C},\ol{C} \bigr)
          \times
          \zB \bigl( A_1,A_2;C \bigr)
        }
        (a)+(1,0) node (b) {
          \zP \bigl( \ol{C},\wt{C} \bigr)
          \times
          \zB \bigl( A_1,A_2;\ol{C} \bigr)
        }
        (a)+(0,-1) node (c) {
          \zP \bigl( {C},\wt{C} \bigr)
          \times
          \zB \bigl( A_1,A_2;C \bigr)
        }
        (c)+(1,0) node (d) {
          \zB \bigl( A_1,A_2;\wt{C} \bigr)
        }
        ;
        \draw[1cell=.9] 
        (a) edge node {1 \times \cpost} (b)
        (b) edge node {\cpost} (d)
        (a) edge['] node {\circ \times 1} (c)
        (c) edge['] node {\cpost} (d)
        ;
      \end{tikzpicture}
    \]
    \[
      \begin{tikzpicture}[x=57mm,y=18mm]
        \draw[0cell=.8] 
        (0,0) node (a) {
          \zB \bigl( A_1,A_2;C \bigr)
          \times
          \tlprod{i}{} \zP \bigl( \ol{A}_i,A_i \bigr)
          \times
          \tlprod{i}{} \zP \bigl( \wt{A}_i,\ol{A}_i \bigr)
        }
        (a)+(1,0) node (b) {
          \zB \bigl( \ol{A}_1,\ol{A}_2;C \bigr)
          \times
          \tlprod{i}{} \zP \bigl( \wt{A}_i,\ol{A}_i \bigr)
        }
        (a)+(0,-1) node (c) {
          \zB \bigl( A_1,A_2;C \bigr)
          \times
          \tlprod{i}{} \zP \bigl( \wt{A}_i,A_i \bigr)
        }
        (c)+(1,0) node (d) {
          \zB \bigl( \wt{A}_1,\wt{A}_2;C \bigr)
        }
        ;
        \draw[1cell=.9] 
        (a) edge node {\cpre \times 1} (b)
        (b) edge node {\cpre} (d)
        (a) edge['] node {1 \times \tlprod{i}{} \circ} (c)
        (c) edge['] node {\cpre} (d)
        ;
      \end{tikzpicture}
    \]
  \item[Unity] The following two quadrilaterals commute, where $*$ denotes the terminal category and $\id$ denotes the functors that pick out identities.
    \[
      \begin{tikzpicture}[x=20mm,y=11mm]
        \draw[0cell=.8] 
        (0,0) node (a) {
          \zB \bigl( A_1,A_2;C \bigr)
        }
        (a)+(1,1) node (as) {
          \zB \bigl( A_1,A_2;C \bigr) \times *
        }
        (as)+(2,0) node (b) {
          \zB \bigl( A_1,A_2;C \bigr)
          \times 
          \tlprod{i}{} \zP \bigl( A_i,A_i \bigr)
        }
        (b)+(1,-1) node (c) {
          \zB \bigl( A_1,A_2;C \bigr)
        }
        (a)+(1,-1) node (sa) {
          * \times \zB \bigl( A_1,A_2;C \bigr)
        }
        (sa)+(2,0) node (b') {
          \zP \bigl( C,C \bigr)
          \times \zB \bigl( A_1,A_2;C \bigr)
        }
        ;
        \draw[1cell=.9] 
        (a) edge node {\iso} (as)
        (as) edge node {1 \times \id} (b)
        (b) edge node[pos=.7] {\cpre} (c)
        (a) edge['] node {\iso} (sa)
        (sa) edge['] node {\id \times 1} (b')
        (b') edge['] node[pos=.7] {\cpost} (c)
        (a) edge node {1} (c)
        ;
      \end{tikzpicture}
    \]
  \end{description}
\end{lem}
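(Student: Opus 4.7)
The plan is to construct $\cpre$ and $\cpost$ on objects and morphisms separately, check the four bilinear map axioms \eqref{BIL2}--\eqref{BIL6b} for the resulting functors, then verify the compatibility, associativity, and unity diagrams by direct inspection of the definitions.

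For $\cpost$, given a symmetric monoidal $G\cn C \to \ol{C}$ and a bilinear $H\cn A_1,A_2 \to C$, I would define $\cpost(G,H)$ to have underlying functor $G \circ H \cn A_1 \times A_2 \to \ol{C}$. Its bilinearity constraint in the second variable is the composite
\[
G\bigl(H(a,b)\bigr) \oplus G\bigl(H(a,b')\bigr) \fto{(G_2)_{H(a,b),H(a,b')}} G\bigl(H(a,b) \oplus H(a,b')\bigr) \fto{G(H_a(b,b'))} G\bigl(H(a, b \oplus b')\bigr),
\]
and symmetrically for the first variable. The unit constraints are $G(H_{a,0}) \circ G_0$ and $G(H_{0,b}) \circ G_0$. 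Axiom \eqref{BIL2} follows by inserting an intermediate row that applies $G_2$ before $G$ of the four-fold $H$-interchange, using naturality of $G_2$ for the middle symmetry step and then the assumed \eqref{BIL2} for $H$; axiom \eqref{BIL3} reduces to the corresponding axiom for $H$ after composing with $G_0$; and the unit-interchange axioms \eqref{BIL6a}--\eqref{BIL6b} follow from the two monoidal unity axioms of $G$ together with their counterparts for $H$. For $\cpre$, given symmetric monoidal $F_i \cn \ol{A}_i \to A_i$ and a bilinear $H$, the underlying functor is $H \circ (F_1 \times F_2)$, and the bilinearity constraint in the second variable is
\[
H(F_1a, F_2b) \oplus H(F_1a, F_2b') \fto{H_{F_1a}(F_2b, F_2b')} H(F_1a, F_2b \oplus F_2b') \fto{H(1, (F_2)_2)} H(F_1a, F_2(b \oplus b')),
\]
with analogous constraint in the first variable and unit constraints built from $H_{?,?}$ together with $(F_i)_0$; the four axioms reduce to the axioms for $H$ together with the monoidal functor axioms for $F_1$ and $F_2$, with naturality of the $H$-constraints used to move $(F_i)_2$ past $H_{?}$.

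On 2-cells, I would take $\cpost(\al, \phy)$ to be the whiskering $\al H \cdot G\phy$ and $\cpre(\phy, \ga_1, \ga_2)$ to be $\phy(\ga_1 \times \ga_2)$. To check these are bilinear transformations in the sense of \cref{bilin-transf}, one verifies that the two pointwise monoidality and unit axioms of \cref{defn:montransf} hold in each variable; these follow directly from the corresponding axioms for $\al$, $\phy$, and $\ga_i$ after unfolding the new bilinearity constraints defined above. Functoriality of $\cpre$ and $\cpost$ in the Cat-sense is then immediate because composition and identities in $\bilin$ and $\permcat$ are all determined componentwise from $\Cat$.

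For the compatibility square, both composites applied to $(G,H,F_1,F_2)$ yield the bilinear map whose underlying functor is $G \circ H \circ (F_1 \times F_2)$, and the bilinearity constraint decomposes in the same three-step way
\[
G H_{F_1a}(F_2b,F_2b'),\quad (G_2)_{\cdots},\quad G H(1,(F_2)_2)
\]
regardless of the order in which $\cpre$ and $\cpost$ are applied; strict equality follows because each of the three pieces is independently determined by exactly one of $G$, $H$, or $F_i$. The two associativity squares are similar: associativity of functor composition handles the underlying data, and associativity of constraint composition in the ambient category handles the bilinearity constraints. The unity square follows because identity symmetric monoidal functors have identity monoidal and unit constraints, so the extra composed cells in the new constraints collapse to identities.

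The main obstacle is the interchange axiom \eqref{BIL2} for $\cpost(G,H)$: one must expand both composites of the pentagon, insert four copies of $G_2$ in the correct positions along the two legs, and then reduce to \eqref{BIL2} for $H$ via naturality of $G_2$ and the coherence convention of \cref{convention:F2}. Once this calculation is done carefully for $\cpost$, the analogous verification for $\cpre$ runs in parallel and the remaining axioms and diagrams in the statement are essentially bookkeeping.
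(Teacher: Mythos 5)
Your construction matches the paper's proof of \cref{lem:bilin-compose-sm} essentially verbatim: the same formulas for the underlying functors and for the bilinearity and unit constraints of $\cpre$ and $\cpost$, the same whiskering on 2-cells, and the same reduction of the bilinear-map axioms and of the compatibility, associativity, and unity diagrams to the corresponding axioms for $H$ and the symmetric monoidal functors. The only small imprecision is that handling the $1\oplus\beta\oplus 1$ step in the interchange axiom for $\cpost(G,H)$ uses the symmetry axiom for $G$ (the fourth axiom of \cref{defn:smfunctor}), not merely naturality of $G_2$, but this does not affect the argument.
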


\begin{proof}
  To ease notation in this proof, we let
  \[
    A = A_1 \andspace B = A_2.
  \]
  At the level of underlying functors and underlying transformations, these operations are given by usual composition.
  More precisely, $\cpre$ sends
  \[H\colon A,B \to C, \quad P \colon \ol{A} \to A, \quad \text{and} \quad Q \colon \ol{B} \to B\]
  to the bilinear map with underlying functor given by $H(P-,Q-)$.
  The bilinearity constraints for $a,a'\in \ol{A}$ and $b,b'\in \ol{B}$ are given by the composites
  \[H(Pa,Qb)\oplus H(Pa,Qb') \fto{H_{Pa}} H(Pa,Qb\oplus Qb') \fto{H(1,Q_2)} H(Pa,Q(b\oplus b')),\]
  \[H(Pa,Qb)\oplus H(Pa',Qb) \fto{H_{Qb}} H(Pa\oplus Pa',Qb) \fto{H(P_2,1)} H(P(a\oplus a') ,Qb).\]
  Similarly, the unit constraints are given by
  \[e_C \fto{H_{Pa,0}} H(Pa,e_B) \fto{H(1,Q_0)} H(Pa,Qe_{\ol{B}}),\]
  \[e_C \fto{H_{0,Qb}} H(e_A,Qb) \fto{H(P_0,1)} H(Pe_{\ol{A}},Qb).\]
  The fact that these constraints satisfy the axioms of \cref{defn:bilinear} follow from the analogous axioms for $H$, the axioms for the symmetric monoidal functors, and naturality.
  Note that, when fixing one variable, this composition recovers the standard composition of symmetric monoidal functors.
  This observation can be used to prove that the composite of a bilinear transformation with monoidal transformations is bilinear, thus showing that $\cpre$ is well-defined on transformations and is functorial. 

  The second functor, $\cpost$, sends 
  \[R \colon C \to \ol{C} \quad \text{and} \quad H \colon A,B \to C\]
  to the bilinear map with underlying functor $RH$.
  For $a,a'\in A$ and $b,b'\in B$, the bilinearity constraints are given by
  \[RH(a,b)\oplus RH(a,b') \fto{R_2} R(H(a,b)\oplus H(a,b') \fto{R(H_a)} RH(a,b\oplus b'),\]
  \[RH(a,b)\oplus RH(a',b) \fto{R_2} R(H(a,b)\oplus H(a',b) \fto{R(H_b)} RH(a\oplus a',b),\]
  and the unit constraints are given by
  \[e_{\ol{C}} \fto{R_0} Re_C \fto{R(H_{a,0})} RH(a,e_B),\]
  \[e_{\ol{C}} \fto{R_0} Re_C \fto{R(H_{0,b})} RH(e_A,b).\]
  Again, the axioms for $H$ and $R$, including naturality of the constraints, are key to showing that this satisfies the axioms of \cref{defn:bilinear}.
  The same observation as above shows that $\cpost$ is well-defined on transformations and is functorial.

  The compatibility, associativity, and unity axioms follow from similar considerations about composition of symmetric monoidal functors, or can be verified directly.
  This completes the proof.
\end{proof}

\begin{rmk}\label{rmk:bilin-multicat}
  \Cref{lem:bilin-compose-sm} can be extended to define a multicomposition for $\bilin$, satisfying the axioms of a symmetric multicategory.
  We note that this is subtly different from the multicategory structure for permutative categories developed in \cite{EM2006Rings} and  \cite[Sections~6.5,6.6]{JYringIII}.
  There, the multilinear functors of permutative categories are required to be strictly unital.
  That condition is necessary for the applications to $K$-theory spectra in those and related works, but is too restrictive for our purposes below.
\end{rmk}

\begin{prop}\label{prop:curry}
 There are isomorphisms of permutative categories
 \begin{equation}\label{eq:bilin-curry}
   \permcat\bigl(A,\permcat(B,C)\bigr)\cong \bilin\bigl(A,B;C\bigr)
 \end{equation}
 that are
 \begin{itemize}
 \item 2-natural in $A$, $B$, and $C$, and
 \item strict symmetric monoidal with respect to the pointwise monoidal sums.  \end{itemize} 
\end{prop}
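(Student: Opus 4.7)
The plan is to construct mutually inverse curry and uncurry functors on underlying categories, then verify that they preserve the pointwise permutative structure strictly and are 2-natural in $A$, $B$, and $C$. Given a symmetric monoidal functor $F\cn A \to \permcat(B,C)$, I define a bilinear map $\widehat{F}\cn A,B \to C$ by $\widehat{F}(a,b) = F(a)(b)$, with bilinearity and unit constraints
\[
\widehat{F}_a(b,b') = \bigl(F(a)\bigr)_2, \quad \widehat{F}_b(a,a') = \bigl((F_2)_{a,a'}\bigr)_b, \quad \widehat{F}_{a,0} = \bigl(F(a)\bigr)_0, \quad \widehat{F}_{0,b} = (F_0)_b,
\]
using that $(F_2)_{a,a'}$ and $F_0$ are monoidal transformations in $\permcat(B,C)$ and thus admit components indexed by objects of $B$. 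Conversely, given a bilinear map $H$, I define $\widetilde{H}\cn A \to \permcat(B,C)$ by $\widetilde{H}(a) = H(a,-)$, which is symmetric monoidal by the first axiom of \cref{defn:bilinear}; its monoidal and unit constraints at $(a,a')$ and on the unit have components $H_b(a,a')$ and $H_{0,b}$, respectively.

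The key observation is that the remaining bilinear-map axioms correspond precisely to the monoidal-transformation axioms for $F_2$ and $F_0$, and vice versa. Specifically, the interchange diagram \eqref{BIL2} is the monoidal-naturality axiom for $(F_2)_{a,a'}$ at $(b,b')$, where the twist $(1 \oplus \beta \oplus 1)$ arises from the formula for $\bigl(F(a) \oplus F(a')\bigr)_2$ in \cref{defn:oplus_fun_trans}; the diagrams \eqref{BIL6a} and \eqref{BIL6b} are the unit-transformation axioms for $(F_2)_{a,a'}$ and $F_0$ respectively; and \eqref{BIL3} is the unit-transformation axiom for $F_0$ at $e_B$. In the other direction, the associativity, unit, and symmetry axioms for $\widetilde{H}$ as a symmetric monoidal functor, checked pointwise in $b$, reduce to the corresponding axioms for $H(-,b)$ furnished by the first axiom of \cref{defn:bilinear}. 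Mutual inversion on objects is then immediate by unwinding the definitions. For morphisms, a monoidal transformation $\alpha\cn F \To F'$ has components $\alpha_a$ that are themselves monoidal transformations, and setting $\widehat{\alpha}_{(a,b)} = (\alpha_a)_b$ defines a bilinear transformation whose second-variable monoidal condition is the monoidal naturality of $\alpha$ evaluated at $b$; the inverse is analogous, with functoriality clear because composition is componentwise on both sides.

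For the strict symmetric monoidal structure, $(F \oplus F')(a) = F(a) \oplus F'(a)$ pointwise in $\permcat(B,C)$, so $\widehat{F \oplus F'}$ and $\widehat{F} \oplus \widehat{F'}$ agree on underlying functors. The definition of $(F \oplus F')_2$ in \cref{defn:oplus_fun_trans} (with its $(1 \oplus \beta \oplus 1)$ twist) matches \eqref{eq:HplusJsymm} exactly, so the bilinearity constraints agree; the unit and symmetry match trivially. Finally, 2-naturality in $A$, $B$, and $C$ follows because composition and whiskering on both sides are given by composing underlying functors and pasting constraints, as described in \cref{lem:bilin-compose-sm} and the analogous composition in $\permcat$-valued hom categories, and these commute with the curry/uncurry operations by inspection. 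The principal obstacle is the sheer bookkeeping required to match the many constraints and axioms on either side, rather than any genuine difficulty; no ideas beyond careful unwinding of the definitions are required.
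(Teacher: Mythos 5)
Your proposal is correct and follows essentially the same route as the paper's proof: curry/uncurry via the cartesian closure of $\Cat$, identify the four constraints exactly as the paper does, match each bilinear axiom to a symmetric monoidal functor or monoidal transformation axiom, handle morphisms componentwise, and deduce strict monoidality and 2-naturality from \cref{defn:oplus_fun_trans} and \cref{lem:bilin-compose-sm}. One small mislabel: \eqref{BIL6b} is the \emph{monoidal naturality} axiom for the monoidal transformation $F_0$ (it is indexed by pairs $b,b'$), not a unit axiom for $F_0$ --- the unit axiom for $F_0$ is \eqref{BIL3}, as you correctly state.
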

\begin{proof}
  The isomorphism of categories
  \begin{equation}\label{eq:cat-curry}
    \Cat\bigl(A,\Cat(B,C)\bigr) \iso \Cat\bigl(A \times B, C\bigr)
  \end{equation} 
  determines a correspondence of functors $F \leftrightarrow H$ where
  \[
    F(a)(b) = H(a,b).
  \]
  We first verify that this correspondence extends to an isomorphism
  \[ \permcat\bigl(A,\permcat(B,C)\bigr) \iso \bilin(A,B;C).\]
   The additional data for a symmetric monoidal functor $F \cn A \to \permcat(B, C)$ corresponds to the additional data of a bilinear map $H \cn A, B \to C$ as follows.
  \begin{itemize}
  \item The component of the coherence isomorphism $H_a$ at $b,b'$ corresponds to the isomorphism
    \[
      F(a)_2 \cn F(a)(b) \oplus F(a)(b') \cong F(a)(b \oplus b'),
    \]
    which is the monoidal constraint for the symmetric monoidal functor $F(a)$.
  \item The component of the coherence isomorphism $H_b$ at $a, a'$ corresponds to the component of $(F_2)_{a,a'}$ at $b$, where $F_2$ is the monoidal constraint of $F\cn A \to \permcat(B,C)$.
  \item The isomorphism $H_{a,0}$ corresponds to $F(a)_0$, which is the unit constraint $F(a)_0 \cn e_C \cong F(a)(e_B)$ for the monoidal functor $F(a)$. 
  \item The isomorphism $H_{0,b}$ corresponds to the component of $F_0$ at $b$, where $F_0$ is the unit constraint of $F$.
  \end{itemize}

  One then verifies that the axioms for symmetric monoidal structure on each $F(a)$, and on $F$ itself, correspond to the bilinearity axioms of \cref{defn:bilinear}.
  For example, the monoidal constraint has components
  \begin{equation}\label{eq:bilinF2}
    F_2\cn F(a) \oplus F(a') \cong F(a \oplus a') \forspace a,a' \in A
  \end{equation}
  that are morphisms in $\permcat(B,C)$, and thus are monoidal transformations.
  The unit axiom of \cref{eq:bilinF2} corresponds to commutativity of \cref{BIL6a}.
  The monoidal naturality axiom for \cref{eq:bilinF2}, at a pair of objects $b,b' \in B$, is commutativity of the square below.
  \[
    \begin{tikzpicture}[x=75mm,y=20mm,vcenter]
      \draw[0cell] 
      (0,0) node (a) {\Bigl( F(a) \oplus F(a') \Bigr) (b) \oplus \Bigl( F(a) \oplus F(a') \Bigr) (b')}
      (1,0) node (b) {\Bigl(F(a) \oplus F(a')\Bigr)(b \oplus b')}
      (0,-1) node (c) {F(a \oplus a')(b) \oplus F(a \oplus a')(b')}
      (1,-1) node (d) {F(a \oplus a')(b \oplus b')}
      ;
      \draw[1cell]
      (a) edge node {\bigl(F(a) \oplus F(a')\bigr)_2} (b)
      (b) edge node {F_2} (d)
      (a) edge[swap] node {F_2 \oplus F_2} (c)
      (c) edge[swap] node {F(a \oplus a')_2} (d)
      ;
    \end{tikzpicture}
  \]
  Under the correspondence $F \leftrightarrow H$, commutativity of the above diagram corresponds to that of \cref{BIL2}.
  The top right composite above is the right hand composite of \cref{BIL2}, because the monoidal constraint for $F(a) \oplus F(a')$ involves the symmetry isomorphism in $C$.

  Similarly, the unit constraint for $F$
  \begin{equation}\label{eq:bilinF0}
    F_0: e_{\permcat(B,C)} \cong F(e_A)
  \end{equation} 
  is also a monoidal transformation, where the left hand side denotes the constant functor at $e_C$.
  The monoidal naturality axiom for \cref{eq:bilinF0} corresponds to commutativity of \cref{BIL6b} for each $b,b' \in B$.
  The unit axiom for \cref{eq:bilinF0} is the commutative triangle below.
  \[
    \begin{tikzpicture}[x=50mm,y=20mm,vcenter]
      \draw[0cell]
      (0,0) node (a) {e_C}
      (1,.5) node (b) {e_{\permcat(B,C)}(e_B)}
      (1,-.5) node (c) {F(e_A)(e_B)}
      ;
      \draw[1cell]
      (a) edge node[pos=.8] {(e_{\permcat(B,C)})_0} (b)
      (b) edge node {F_0} (c)
      (a) edge['] node[pos=.6] {F(e_A)_0} (c)
      ;
    \end{tikzpicture}
  \]
  The 1-cell $(e_{\permcat(B,C)})_0$ is the identity at $e_C$ and therefore, under the correspondence $F \leftrightarrow H$, commutativity of the above diagram corresponds to the equality in \cref{BIL3}.

  The remaining axioms for $(F, F_2, F_0)$ correspond to various symmetric monoidal functor axioms \cref{BIL1} for $H$.  This verifies a bijection on objects for \cref{eq:bilin-curry}.

  The isomorphism of categories \cref{eq:cat-curry} also determines a correspondence of natural transformations $\al \leftrightarrow \phy$ with
  \[
    \al_{a,b} = \phy_{(a,b)},
  \]
  where we write $\al_{a,b}$ for the component of $\al_a$ at $b$. 
  For each $a$, the monoidal naturality axioms of $\al_a$ correspond to those of $\phy_{(a,-)}$.
  The monoidal naturality axioms for $\al$ itself correspond, componentwise at each $b$, to those axioms for $\phy_{(-,b)}$.
  This verifies a bijection on morphisms for \cref{eq:bilin-curry}. 

  Functoriality of the bijection
  \begin{equation}\label{eq:F-H-al-phi}
    F \leftrightarrow H \andspace \al \leftrightarrow \phy
  \end{equation}
  is immediate because the respective identity transformations correspond and composition is determined componentwise on both sides.  Therefore, \cref{eq:bilin-curry} is an isomorphism of categories.  The description of bilinearity and unit constraints for $H \oplus J$ in the proof of \cref{prop:bilin-perm} shows that the permutative structure on each side is strictly preserved by the bijection \cref{eq:F-H-al-phi}.

  To verify 2-naturality of \cref{eq:bilin-curry}, recall \cref{lem:bilin-compose-sm} and consider symmetric monoidal functors
  \[
    P \cn \ol{A} \to {A}, \quad
    Q \cn \ol{B} \to {B}, \andspace
    R \cn {C} \to \ol{C}.
  \]
  For each symmetric monoidal functor $F \in \permcat\bigl(A, \permcat(B,C)\bigr)$ and the corresponding bilinear map $H \in \bilin(A,B;C)$, let
  \begin{align*}
    \ol{F} & = R_*Q^*P^* F = R \bigl( F(P-)(Q-)  \bigr)
             \inspace \permcat\Bigl(\ol{A}, \permcat\bigl(\ol{B},\ol{C}\bigr)\Bigr)
             \andspace \\
    \ol{H} & = R_*(P \times Q)^* = R \bigl( H(\,P- , Q-\,)  \bigr)
             \inspace \bilin\Bigl(\ol{A},\ol{B}; \ol{C}\Bigr).
  \end{align*}
  The underlying functors correspond to each other, and one can verify that the monoidal, respectively bilinearity, and unit constraints also agree, so $\ol{F} \leftrightarrow \ol{H}$.
  Likewise, for monoidal transformations
  \[
    \phi \cn P \To P', \quad \psi \cn Q \To Q', \andspace \ga \cn R \To R',
  \]
  the correspondence $\al \leftrightarrow \phy$ for morphisms of \cref{eq:bilin-curry} is preserved by horizontal composition with $\phi \times \psi$ and $\ga$.
\end{proof}

\section{The tensor product of permutative categories}
\label{sec:otimes}

In this section we define a tensor product 2-functor
\[
  \otimes \cn \permcat \times \permcat \to \permcat.
\]
As it will be seen in \cref{sec:bilin-otimes}, the tensor product has the universal property that \emph{strict} symmetric monoidal functors with source $A\otimes B$ are in one-to-one correspondence with bilinear maps out of $A,B$.
This motivates the definition of $\otimes$.

The definitions of $\otimes$ on objects, 1-, and 2-cells are 
\cref{defn:AtensorB,,defn:FtensorG,,defn:phi-tensor-psi}, respectively.  The 2-functoriality of $\otimes$ is proved in \cref{prop:tensor-functor}. \begin{notn}
  Beginning in this section, and continuing throughout the document, we use the generic notation $(+,0)$ for the monoidal sum and monoidal unit in a permutative category.
  We will use $\oplus$ for the direct sum of permutative categories, defined in \cref{sec:oplus}.
\end{notn}

\begin{defn}\label{notn:freeT}
  Given a category $C$, let
  \[
    \bigT C = \coprod_{n\geq 0} C^n
  \]
  denote the free strict monoidal category on $C$.
\end{defn}

\begin{defn}\label{defn:free-adjoin}
  Let $A$ be a strict monoidal category.
  We define two constructions using colimits in the category of strict monoidal categories and strict monoidal functors.
  Let $\{0,1\}$ denote the discrete category with two objects.  
  Let $\II$ denote the category with two objects and a single isomorphism $f$ between them.
  \begin{description}
    \item[Adjoin an Isomorphism]
      Let $x,y$ be a pair of objects in  $A$.
      Let $A+\II$ be the pushout below.
      \[
        \begin{tikzpicture}[x=30mm,y=20mm]
          \draw[0cell] 
          (0,0) node (a) { \bigT \{0,1\} }
          (0,-1) node (b) { \bigT  \II }
          (1,0) node (c) {A}
          (1,-1) node (d) {A+\II}
          ;
          \draw[1cell] 
          (a) edge node {} (b)
          (a) edge node {} (c)
          (b) edge node {} (d)
          (c) edge node {} (d)
          ;
          \draw[2cell] 
          (d)+(135:.25) node[scale=1.5] {\ulcorner}
          ;
        \end{tikzpicture}
      \]
      Here, the top arrow sends $0,1$ to $x,y$, respectively, and the left vertical arrow sends $0,1$ to the source and target of $f$, respectively.
      We say that $A+\II$ is obtained by \emph{adjoining the isomorphism $f\cn x \to y$}.
    \item[Impose a Relation]
      Let $p$ and $q$ be parallel morphisms in $A$.
      Let $\JJ$ denote the category with two objects and a single non-identity morphism.
      Let $A/(p=q)$ be the coequalizer below.
      \[
        \begin{tikzpicture}[x=20mm,y=20mm]
          \draw[0cell] 
          (0,0) node (a) { \bigT \JJ }
          (1,0) node (c) {A}
          (2.125,0) node (d) {A/(p=q)}
          ;
          \draw[1cell] 
          (a) edge[transform canvas={shift={(0,.7ex)}}] node {} (c)
          (a) edge[transform canvas={shift={(0,-.7ex)}}] node {} (c)
          (c) edge node {} (d)
          ;
        \end{tikzpicture}
      \]
      Here, the two arrows at left are the free symmetric monoidal functors that send the non-identity morphism of $\JJ$ to $p$ and $q$, respectively.
      We say that $A/(p=q)$ is obtained by \emph{imposing the relation $p = q$}.
    \end{description}
\end{defn}

\begin{defn}\label{defn:AtensorB}
Let $A = (A,+,0)$ and $B = (B,+,0)$ be permutative categories.  The
permutative category $A \otimes B$ is defined by the following process.  

\begin{description}
\item[Step 1] Consider the category $\bigT \bigl(A \times B\bigr)$.
  We write objects or morphisms of $A\times B$ as $a.b$ or $f.g$ for pairs of objects or morphisms, with $f \cn a \to a'$ in $A$ and $g \cn b \to b'$ in $B$.
  We write an element of $(A\times B)^n$ as a formal sum of $n$ terms, with the unique $0$-tuple being denoted by $0$.
  Objects and morphisms of the form $a.b$ or $f.g$  are called \emph{simple} objects and morphisms.

\item[Step 2] Adjoin the following isomorphisms for all $a,a',b,b'$:
\[
a.b + a.b' \fto{\ \de^L_{a}(b, b')\ } a.(b + b')
\quad \mathrm{and} \quad
a.b + a'.b \fto{\ \de^R_{b}(a, a')\ } (a+a').b
\]
\[
a.b + a'.b' \fto{\ \beta_{a.b, \, a'.b'}\ } a'.b' + a.b
\]
\[
0 \fto{\ \ze^L_{a}\ } a.0
\quad \mathrm{and} \quad
0 \fto{\ \ze^R_{b}\ } 0.b.
\]
These are called the \emph{adjoined morphisms} in $A \otimes B$.
We will sometimes suppress the superscripts $L$ and $R$.

\item[Step 3] Impose the following relations.

\begin{enumerate}
\item\label{TENnat} The isomorphisms $\delta, \beta, \zeta$, in all of their variants, are natural in every variable.
\item\label{TENsmc} The isomorphisms $\be$, together with $+$ and $0$, give the structure of a permutative category.
\item\label{TEN1} For fixed $a \in A$, the \emph{left multiplication} functor $a.(-)$ is symmetric monoidal with coherence isomorphisms given by $\de^L_a$ and $\ze^L_{a}$. Similarly, for fixed $b\in B$ the \emph{right multiplication} functor $(-).b$ is symmetric monoidal with coherence isomorphisms given by $\de^R_b$ and $\ze^R_{b}$.

\item\label{TEN2}
  The following composites are equal for each $a,a'\in A$ and $b,b' \in B$.
\[
  \begin{tikzpicture}[x=53mm,y=15mm]\
    \draw[0cell] 
    (0,1) node (01) {a.b + a'.b + a.b' + a'.b'}
    (1,1) node (11) {a.b + a.b' + a'.b + a'.b'}
    (0,0) node (00) {(a + a').b + (a + a').b'}
    (1,0) node (10) {a.(b+b') + a'.(b+b')}
    (.5,-.8) node (e) {(a + a').(b + b')}
    ;
    \draw[1cell] 
    (01) edge node {1 + \be + 1} (11)
    (01) edge[swap] node {\de^R_b + \de^R_{b'}} (00)
    (11) edge node {\de^L_a + \de^L_{a'}} (10)
    (00) edge[swap] node {\de^L_{a+a'}} (e)
    (10) edge node {\de^R_{b+b'}} (e)
    ;
  \end{tikzpicture}
\]

\item\label{TEN3}
  The following morphisms are equal.
\[
\begin{tikzpicture}[x=36mm,y=15mm]
  \draw[0cell] 
  (0,0) node (00) {0}
  (1,0) node (01) {0.0}
  ;
  \draw[1cell] 
  (00) edge[bend left=0, transform canvas={shift={(0,3pt)}}] node{\ze^L_{0}} (01)
  (00) edge[swap, bend right=0, transform canvas={shift={(0,-3pt)}}] node{\ze^R_{0}} (01)  
  ;
\end{tikzpicture}
\]

\item\label{TEN6}
  The following two composites are equal for each $a,a'\in A$ and $b,b' \in B$.
\[
\begin{tikzpicture}[x=36mm,y=15mm]
  \draw[0cell] 
  (0,0) node (00) {a.0 + a'.0}
  (1,0) node (10) {(a+a').0}
  (0,1) node (01) {0 + 0}
  (1,1) node (11) {0}
  ;
  \draw[1cell] 
  (00) edge[swap] node{\de^R_0} (10)
  (01) edge node{=} (11)
  (01) edge[swap] node{\zeta^L + \zeta^L} (00)
  (11) edge node{\zeta^L} (10)  
  ;
\end{tikzpicture}
\hspace{2pc}
\begin{tikzpicture}[x=36mm,y=15mm]
  \draw[0cell] 
  (0,0) node (00) {0.b + 0.b'}
  (1,0) node (10) {0.(b+b')}
  (0,1) node (01) {0 + 0}
  (1,1) node (11) {0}
  ;
  \draw[1cell] 
  (00) edge[swap] node{\de^L_0} (10)
  (01) edge node{=} (11)
  (01) edge[swap] node{\zeta^R + \zeta^R} (00)
  (11) edge node{\zeta^R} (10)
  ;
\end{tikzpicture}
\]

\end{enumerate}
\end{description}

\end{defn}

\begin{lem}\label{lem:tensor-bilinear}
There is a bilinear map
\[
  \uni\cn A,B \to A \otimes B
\]
whose underlying functor is the composite 
 \[A\times B \hookrightarrow \bigT(A\times B) \to A\otimes B,\]
and whose bilinearity constraints are given by $\delta_a^L$ and $\delta_b^R$, and whose unit constraints are given by $\zeta_a^L$ and $\zeta_b^R$. 
\end{lem}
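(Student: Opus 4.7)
The plan is to verify, in order, the data and axioms of \cref{defn:bilinear}, observing that the construction of $A \otimes B$ in \cref{defn:AtensorB} was engineered precisely so that each required equation is imposed as a relation in Step~3. The proof is therefore a matter of reading off the correspondence between Steps~2--3 of \cref{defn:AtensorB} and the list of axioms in \cref{defn:bilinear}; no substantive computation is required.

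First I would check the underlying data. The functor $u \colon A \times B \to A \otimes B$ is well-defined as the composite $A \times B \hookrightarrow \bigT(A \times B) \to A \otimes B$, since the inclusion is functorial and the passage to the quotient permutative category preserves functoriality. On objects and morphisms of $A \times B$ the composite picks out precisely the simple objects $a.b$ and simple morphisms $f.g$. The morphisms $\delta^L_a(b,b')$, $\delta^R_b(a,a')$, $\zeta^L_a$, and $\zeta^R_b$ are among the adjoined isomorphisms introduced in Step~2, hence are isomorphisms in $A \otimes B$, and their naturality in every variable is imposed as relation~\cref{TENnat}.

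Next I would verify the four axioms of \cref{defn:bilinear} in turn. Axiom~\cref{BIL1}, namely that $u(a,-)$ and $u(-,b)$ are symmetric monoidal functors with the stated constraints, is exactly relation~\cref{TEN1}. The interchange diagram~\cref{BIL2} of axiom~\cref{BILii} is exactly relation~\cref{TEN2}. The coherence equality~\cref{BIL3} of axiom~\cref{BILiii} is exactly relation~\cref{TEN3}. The two unit--interchange diagrams~\cref{BIL6a,BIL6b} comprising axiom~\cref{BILiv} are exactly the two equalities packaged in relation~\cref{TEN6}.

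There is no substantial obstacle. The one implicit point worth recording is that relation~\cref{TENsmc} supplies the ambient permutative structure on $A \otimes B$ in terms of which the remaining relations are stated, so there is no circularity in invoking $+$, $0$, and $\beta$ when interpreting the bilinearity diagrams. The real content of the lemma was front-loaded into the design of $A \otimes B$; the result here is the first consistency check that the construction carries the bilinear map $u$ which will corepresent bilinear maps in the subsequent development.
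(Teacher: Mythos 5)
Your proposal is correct and matches the paper's proof, which simply observes that the bilinear map axioms mirror the relations imposed in the construction of $A \otimes B$; you have just spelled out the correspondence (axiom~\cref{BIL1} with~\cref{TEN1}, \cref{BILii} with~\cref{TEN2}, and so on) more explicitly than the paper does.
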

\begin{proof}
Indeed, note that the axioms for a bilinear map in \cref{defn:bilinear} mirror the relations imposed in the constructions of $A\otimes B$ above.
\end{proof}

\begin{rmk}\label{rmk:T-vs-P}
  In \cref{defn:AtensorB}, one can equivalently use the free \emph{permutative} category construction, $\bigP$ (see \cref{defn:freeP}), instead of $\bigT$, and adjoin only the morphisms $\de^L$, $\de^R$, $\ze^L$, and $\ze^R$.
\end{rmk}

\begin{defn}\label{defn:FtensorG}
  Given symmetric monoidal functors
  \[
    F\cn A \to A' \andspace
    G\cn B \to B',
  \]
  we define a strict symmetric monoidal functor
  \[
    F \otimes G \cn A \otimes B \to A' \otimes B'
  \]
  as follows.
  \begin{description}
  \item[Step 1] Consider the strict monoidal functor
    \[
      \bigT \bigl(F \times G\bigr) \cn \bigT \bigl(A \times B\bigr) \to \bigT \bigl(A' \times B'\bigr).
    \]
    For the monoidal unit 0 or for a formal sum of pairs, this functor is given explicitly by
    \begin{align*}
      \bigT \bigl( F \times G \bigr) (0) & = 0, \\
      \bigT \bigl( F \times G \bigr) (\tsum a_i . b_i)
                                    & = \tsum Fa_i . Gb_i, \andspace\\
      \bigT \bigl( F \times G \bigr) (\tsum f_i . g_i)
                                    & = \tsum Ff_i . Gg_i.
    \end{align*}
    By the definition there is the canonical strict monoidal functor
    \[
      \bigT (A' \times B') \to A' \otimes B'.
    \]
    We will define $F \otimes G$ by descending through the pushouts and coequalizers defining $A \otimes B$.
  \item[Step 2] For each morphism $\ka$ adjoined in Step 2 of \cref{defn:AtensorB}, we define $\bigl( F \otimes G \bigr)(\ka)$ as the indicated composite in the corresponding diagram below.
    \[
      \begin{tikzpicture}[x=50mm,y=15mm]
        \draw[0cell] 
        (0,0) node (a) {Fa.Gb + Fa.Gb'}
        (a)+(.5,-1) node (b) {Fa.(Gb + Gb')}
        (b)+(.5,1) node (c) {Fa.G(b+b')}
        ;
        \draw[1cell] 
        (a) edge['] node {\de^L_{Fa}} (b)
        (b) edge['] node {1.G_2} (c)
        (a) edge[dashed] node {\bigl(F \otimes G\bigr)\bigl(\de^L_a\bigr)} (c)
        ;
      \end{tikzpicture}
    \]
    \[
      \begin{tikzpicture}[x=50mm,y=15mm]
        \draw[0cell] 
        (0,0) node (a) {Fa.Gb + Fa'.Gb}
        (a)+(.5,-1) node (b) {(Fa + Fa').Gb}
        (b)+(.5,1) node (c) {F(a + a').Gb}
        ;
        \draw[1cell] 
        (a) edge['] node {\de^R_{Gb}} (b)
        (b) edge['] node {F_2.1} (c)
        (a) edge[dashed] node {\bigl(F \otimes G\bigr)\bigl(\de^R_b\bigr)} (c)
        ;
      \end{tikzpicture}
    \]
    \[
      \begin{tikzpicture}[x=50mm,y=18mm]
        \draw[0cell] 
        (0,0) node (a) {Fa.Gb + Fa'.Gb'}
        (a)+(0,-.5) node (b) {Fa.Gb + Fa'.Gb'}
        (1,0) node (c) {Fa'.Gb' + Fa.Gb}
        (c)+(0,-.5) node (d) {Fa'.Gb' + Fa.Gb}
        ;
        \draw[1cell] 
        (a) edge[dashed] node {\bigl(F \otimes G\bigr)\bigl(\be\bigr)} (c)
        (b) edge['] node {\be} (d)
        (a) edge[equal] node {} (b)
        (c) edge[equal] node {} (d)
        ;
      \end{tikzpicture}
    \]
    \[
      \begin{tikzpicture}[x=50mm,y=15mm]
        \draw[0cell] 
        (0,0) node (a) {0}
        (a)+(.5,-1) node (b) {Fa.0}
        (b)+(.5,1) node (c) {Fa.G0}
        ;
        \draw[1cell] 
        (a) edge['] node {\ze^L_{Fa}} (b)
        (b) edge['] node {1.G_0} (c)
        (a) edge[dashed] node {\bigl(F \otimes G\bigr)\bigl(\ze^L_{a}\bigr)} (c)
        ;
      \end{tikzpicture}
    \]
    \[
      \begin{tikzpicture}[x=50mm,y=15mm]
        \draw[0cell] 
        (0,0) node (a) {0}
        (a)+(.5,-1) node (b) {0.Gb}
        (b)+(.5,1) node (c) {F0.Gb}
        ;
        \draw[1cell] 
        (a) edge['] node {\ze^R_{Gb}} (b)
        (b) edge['] node {F_0.1} (c)
        (a) edge[dashed] node {\bigl(F \otimes G\bigr)\bigl(\ze^R_{b}\bigr)} (c)
        ;
      \end{tikzpicture}
    \]
  \item[Step 3] One verifies that $F \otimes G$ is well-defined on $A \otimes B$ by checking that the relations imposed in Step 3 of \cref{defn:AtensorB} are preserved by the assignments above.
    For each relation, this involves the corresponding relation in $A' \otimes B'$, the monoidal and unit axioms of $F$ and $G$, functoriality of the cartesian product, and naturality of the data $\de$, $\be$, and $\ze$.
    For example, to verify that relation \cref{TEN3} is preserved, one uses naturality of $\ze^L$ and $\ze^R$ in $A' \otimes B'$ together with functoriality of the cartesian product to show that the following diagram commutes.
    \[
      \begin{tikzpicture}[x=70mm,y=50mm]
        \draw[0cell] 
        (0,0) node (0) {0}
        (0)+(.5,0) node (00) {0.0}
        (00)+(.5,0) node (f0g0) {F0.G0}
        (00)+(.25,.25) node (f00) {F0.0}
        (00)+(.25,-.25) node (0g0) {0.G0}
        ;
        \draw[1cell] 
        (0) edge[bend left=0, transform canvas={shift={(0,3pt)}}] node[pos=.6] {\ze^L_{0}} (00)
        (0) edge[',bend right=0, transform canvas={shift={(0,-3pt)}}] node[pos=.6] {\ze^R_{0}} (00)
        (00) edge node[pos=.4] {F_0.1} (f00)
        (00) edge['] node[pos=.4] {1.G_0} (0g0)
        (f00) edge node[pos=.6] {1.G_0} (f0g0)
        (0g0) edge['] node[pos=.6] {F_0.1} (f0g0)
        ;
        \draw[1cell,rounded corners=3mm]
        (0.55) -- node {\ze^L_{F0}} ($(f00)+(-.5,0)$) -- (f00)
        ;
        \draw[1cell,rounded corners=3mm]
        (0.-55) -- node['] {\ze^R_{G0}} ($(0g0)+(-.5,0)$) -- (0g0)
        ;
      \end{tikzpicture}
    \]





  \end{description}
  Constructed as the universal morphism out of the colimits in \cref{defn:AtensorB}, $F \otimes G$ is a strict monoidal functor, even if $F$ and $G$ are not.  It is symmetric monoidal because
  \[
    \bigl( F \otimes G \bigr) (\beta) = \beta.
  \]
\end{defn}

\begin{defn}\label{defn:phi-tensor-psi}
  Given monoidal transformations
  \[
    \phi \cn F \To F' \andspace \psi \cn G \To G',
  \]
  we define a monoidal transformation
  \[
    \phi \otimes \psi \cn F \otimes G \To F' \otimes G'
  \]
  with components
  \begin{align*}
    \bigl( \phi \otimes \psi \bigr)_{\ssum a_i.b_i}
    & = \tsum \phi_{a_i} . \psi_{b_i} \andspace \\
    \bigl( \phi \otimes \psi \bigr)_0
    & = 1_0.
  \end{align*}
  Naturality with respect to morphisms $f.g$ in $A \otimes B$ follows from naturality of $\phi$ and $\psi$.  Naturality with respect to the additional morphisms $\de$, $\ze$, $\beta$ follows from the naturality of those morphisms and the monoidal transformation axioms for $\phi$ and $\psi$.
\end{defn}

\begin{prop}\label{prop:tensor-functor}
  The tensor product of \cref{defn:AtensorB,defn:FtensorG,defn:phi-tensor-psi} defines a 2-functor
  \[
    \otimes \cn \permcat \times \permcat \to \permcat.
  \]
\end{prop}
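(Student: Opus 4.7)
The strategy is to exploit the universal property by which $A \otimes B$ is constructed. Since $A \otimes B$ is built as a sequence of pushouts and coequalizers in the category of strict monoidal categories and strict monoidal functors, any strict monoidal functor out of $A \otimes B$ is determined by its values on the simple objects and morphisms $a.b$, $f.g$ together with the adjoined morphisms $\de^L$, $\de^R$, $\be$, $\ze^L$, $\ze^R$. A natural transformation between two such strict monoidal functors is analogously determined by its components on simple objects. Consequently every required verification reduces to a check on this small collection of generators.

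First I would verify preservation of identity 1-cells, that is, $1_A \otimes 1_B = 1_{A \otimes B}$. On simple objects and morphisms this is immediate from $\bigT(1_A \times 1_B) = 1_{\bigT(A \times B)}$. On each adjoined morphism, the monoidal and unit constraints of $1_A$ and $1_B$ are identities, so inspection of the defining diagrams in \cref{defn:FtensorG} shows that each adjoined morphism is sent to itself.

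Next I would verify that $(F' \otimes G') \circ (F \otimes G) = (F'F) \otimes (G'G)$ for composable symmetric monoidal $F, F'$ and $G, G'$. On simple objects and morphisms both sides agree componentwise. On an adjoined morphism such as $\de^L_a(b,b')$, unwinding the definitions shows that applying $F' \otimes G'$ to $(F \otimes G)\bigl(\de^L_a(b,b')\bigr)$ yields $\bigl(1_{F'Fa}.G'(G_2)\bigr) \circ \bigl(1_{F'Fa}.G'_2\bigr) \circ \de^L_{F'Fa}(G'Gb,G'Gb')$, which contracts via the composition of simple morphisms to $\bigl(1_{F'Fa}.(G'(G_2) \circ G'_2)\bigr) \circ \de^L_{F'Fa}(G'Gb,G'Gb')$. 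The standard composition formula $(G'G)_2 = G'(G_2) \circ G'_2$ identifies this with $\bigl((F'F) \otimes (G'G)\bigr)\bigl(\de^L_a(b,b')\bigr)$. The analogous calculations for $\de^R$, $\ze^L$, and $\ze^R$ invoke the corresponding formulas for $(F'F)_2$, $(F'F)_0$, and $(G'G)_0$, while $\be$ is sent to $\be$ on both sides.

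For 2-cells, the identity $1_F \otimes 1_G$ has each component $\tsum 1_{Fa_i}.1_{Gb_i}$ equal to the identity, and for vertical composition the component of $(\phi'\phi) \otimes (\psi'\psi)$ at a simple object $a.b$ is $(\phi'_a \circ \phi_a).(\psi'_b \circ \psi_b)$, which equals $(\phi'_a.\psi'_b) \circ (\phi_a.\psi_b)$ by the composition law for simple morphisms and therefore agrees with the component of $(\phi' \otimes \psi') \circ (\phi \otimes \psi)$. I expect the main obstacle to be the calculation on $\de$ and $\ze$ for composition of 1-cells, as that is where the monoidal and unit composition laws for symmetric monoidal functors must be tracked carefully; the remaining 2-functoriality axioms on the product 2-category $\permcat \times \permcat$ then follow automatically because horizontal and vertical composition of monoidal transformations in $\permcat$ is determined componentwise, and the check that $\otimes$ respects whiskering reduces to the same generator-level verifications already carried out.
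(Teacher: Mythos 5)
Your proposal is correct and follows essentially the same route as the paper: the key computation on $\de^L_a(b,b')$ using $(G'G)_2 = G'(G_2)\circ G'_2$ is exactly the calculation the paper performs, and the componentwise verification for 2-cells matches as well. The explicit framing via the colimit universal property is implicit in the paper but changes nothing of substance.
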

\begin{proof}
  Given two pairs of composable functors $F,F'$ and $G,G'$, one computes from the definitions
  \begin{align*}
    \Bigl(\bigl( F'\otimes G'\bigr)
    \circ
    \bigl( F \otimes G \bigr)\Bigr)\bigl(\de_a(b,b')\bigr)
    & = \Bigl( F' \otimes G' \Bigr) \bigl( 1.G_2 \circ \de_{Fa}(Gb,Gb') \bigr) \\
    & = 1.G'(G_2) \circ 1.G_2' \circ \de_{F'Fa}(G'Gb, G'Gb') \\
    & = 1.(G'G)_2 \circ \de_{F'Fa}(G'Gb, G'Gb') \\
    & = \Bigl( F'F \otimes G'G  \Bigr)\bigl( \de_{a}(b,b')  \bigr).
  \end{align*}
  This shows that $\bigl( F'\otimes G'\bigr) \circ \bigl( F \otimes G \bigr)$ and $\bigl( F'F \bigr) \otimes \bigl( G'G  \bigr)$ give the same assignment on $\de_a$.
  The corresponding calculations for the other adjoined morphisms are similar.
  Thus, $\otimes$ preserves composites of symmetric monoidal functors. One can similarly show that $\otimes$ preserves identity symmetric monoidal functors.
  One verifies componentwise that $\otimes$ preserves identities, vertical, and horizontal composites of monoidal transformations.
\end{proof}

\section{Bilinearity and the tensor product}
\label{sec:bilin-otimes}

This section shows that \emph{strict} symmetric monoidal functors out of a tensor product $A \otimes B$ correspond to bilinear maps with source $A, B$ in the sense of \cref{defn:bilinear}.
The main result is \cref{prop:otimes-bilin-iso}.

\begin{convention}\label{conv:bilin-generic}
  Throughout this section, we use the following notation for generic objects and morphisms.
  \begin{center}
    \renewcommand{\arraystretch}{1.25}
    \begin{tabular}{ccc}
      category & objects & morphisms\\
      \hline
      $A$ & $a,a',a_i$ & $f, f_i$
      \\
      $B$ & $b,b',b_i$ & $g, g_i$
      \\
      $C$ & $c,c',c_i$ & $h, h_i$
    \end{tabular}
    \\ \ 
  \end{center}
\end{convention}

\begin{defn}\label{defn:ABstrict}
  For permutative categories $A$ and $B$, we let
  \[
    \strict{A,B} = \permcats(A,B)
  \]
  denote the category of strict symmetric monoidal functors $A \to B$ and monoidal transformations.
  Note that this is generally \emph{not} a monoidal category with pointwise sum.
  Recalling \cref{defn:oplus_fun_trans}, for strict monoidal $F$ and $G$, the pointwise sum $F + G$ is generally not a strict monoidal functor, unless the target $B$ has trivial symmetry.
\end{defn}

\begin{prop}\label{prop:otimes-bilin-iso}
  Composition with the bilinear map $\uni$ of \cref{lem:tensor-bilinear} defines an isomorphism of categories
  \begin{equation}\label{eq:otimes-bilin}
    \strict{A \otimes B, C} \cong \bilin(A,B; C)
  \end{equation} 
  that is 2-natural with respect to strict symmetric monoidal functors $C \to \ol{C}$ and not-necessarily-strict symmetric monoidal functors $\ol{A} \to A$ and $\ol{B} \to B$.
\end{prop}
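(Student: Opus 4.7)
The plan is to exploit the universal colimit description of $A \otimes B$ in \cref{defn:AtensorB} to identify strict symmetric monoidal functors out of $A \otimes B$ with bilinear maps out of $A,B$. On objects, one direction is already supplied by \cref{lem:tensor-bilinear}: for a strict symmetric monoidal $F\cn A \otimes B \to C$, the composite $F \circ \uni$ is bilinear because the bilinearity and unit constraints of $\uni$ (namely $\delta^L,\delta^R,\zeta^L,\zeta^R$) are carried on the nose by $F$, and the bilinear map axioms of \cref{defn:bilinear} hold in $C$ because they hold in $A \otimes B$ after applying $F$ strictly.

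For the inverse, given a bilinear map $H\cn A,B \to C$, I will build a strict symmetric monoidal functor $\hat H\cn A \otimes B \to C$ by descending through the three steps of \cref{defn:AtensorB}. On $\bigT(A \times B)$, define $\hat H$ as the unique strict monoidal extension of the underlying functor of $H$, sending a formal sum $\sum a_i.b_i$ to $\sum H(a_i,b_i)$. Over the adjoined isomorphisms (Step 2), send
\[
  \delta^L_a(b,b') \mapsto H_a(b,b'), \quad
  \delta^R_b(a,a') \mapsto H_b(a,a'), \quad
  \zeta^L_a \mapsto H_{a,0}, \quad
  \zeta^R_b \mapsto H_{0,b},
\]
and send $\beta$ in $A \otimes B$ to the symmetry in $C$. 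The main verification is that the imposed relations of Step 3 are sent to true equalities in $C$: relation \cref{TENnat} follows from naturality of the bilinearity and unit constraints of $H$ together with bifunctoriality of its underlying functor; relation \cref{TENsmc} holds because $C$ is permutative; relation \cref{TEN1} is axiom \cref{BIL1}; relation \cref{TEN2} is the interchange axiom \cref{BILii}; relation \cref{TEN3} is axiom \cref{BILiii}; and relation \cref{TEN6} is axioms \cref{BIL6a}, \cref{BIL6b}. By the universal property of the pushouts and coequalizers, $\hat H$ descends to a well-defined strict symmetric monoidal functor on $A \otimes B$.

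The two assignments $F \mapsto F \circ \uni$ and $H \mapsto \hat H$ are mutually inverse: starting from $H$, the composite $\hat H \circ \uni$ has underlying functor $H$ and its bilinearity constraints are again $H_a,H_b,H_{a,0},H_{0,b}$ by the definition of $\hat H$ on the adjoined morphisms; starting from a strict $F$, the equality $F = \widehat{F \circ \uni}$ holds because both are strict symmetric monoidal and agree on simple objects and on the generating morphisms $\delta^{L/R}, \zeta^{L/R}, \beta$. On morphisms, a monoidal transformation $\alpha\cn F \Rightarrow F'$ between strict functors restricts to a bilinear transformation $\alpha \uni$ in the sense of \cref{bilin-transf}; conversely, a bilinear transformation $\varphi\cn H \Rightarrow J$ extends uniquely to a monoidal transformation $\hat\varphi\cn \hat H \Rightarrow \hat J$ with components on sums given by the sum of components on simple tensors. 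The monoidal naturality axiom for $\hat\varphi$ is trivial on the monoidal unit $0$ and automatic on sums; naturality with respect to $\delta, \zeta, \beta$ reduces, via the definition of $\hat H$ and $\hat J$, to the bilinear transformation axioms on $\varphi$.

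The hardest step to organize cleanly will be the verification that $\hat H$ is well-defined on $A \otimes B$, because it requires matching each relation of Step 3 against an axiom of \cref{defn:bilinear}. This is essentially the content that motivated the precise form of \cref{defn:AtensorB}, but care is required with relation \cref{TEN2}, whose hexagon uses the symmetry $\beta$ in $A \otimes B$ and must be matched against the interchange hexagon \cref{BIL2} where $\beta$ is the symmetry in $C$; the match works precisely because $\hat H$ sends $\beta$ to $\beta$. Finally, for the 2-naturality claim, given $P\cn \ol{A} \to A$, $Q \cn \ol{B} \to B$ (not necessarily strict) and strict $R \cn C \to \ol C$, one checks directly that the square
\[
  \strict{A \otimes B, C} \cong \bilin(A,B;C)
\]
is compatible with the pre- and postcomposition functors of \cref{lem:bilin-compose-sm} on the right and with $(P \otimes Q)^*$ and $R_*$ on the left; this reduces to the fact that, by construction, $\hat H \circ (P \otimes Q)$ and $R \circ \hat H$ realize exactly the bilinearity and unit constraints described for $\cpre$ and $\cpost$ in the proof of \cref{lem:bilin-compose-sm}.
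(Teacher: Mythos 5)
Your proposal is correct and follows essentially the same route as the paper: construct the inverse by extending the underlying functor of $H$ strictly over $\bigT(A\times B)$, assign the adjoined morphisms to the bilinearity/unit constraints and the symmetry of $C$, and match each imposed relation of \cref{defn:AtensorB} to the corresponding axiom of \cref{defn:bilinear}, with 2-naturality reduced to the composition formulas of \cref{lem:bilin-compose-sm}. The only cosmetic difference is that the paper treats \cref{TENnat} and \cref{TEN1} jointly as the statement that the one-variable restrictions are symmetric monoidal functors (and calls the diagram in \cref{TEN2} a pentagon rather than a hexagon), but the substance is identical.
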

\begin{proof}
  By \cref{lem:bilin-compose-sm}, composition with $\uni\cn A,B \to A\otimes B$ defines a functor
  \[
    \strict{A \otimes B, C} \to \bilin \bigl( A,B; C \bigr)
  \]
  that sends a strict symmetric monoidal functor $F \cn A\otimes B \to C$ to the bilinear map
  \[A,B \fto{\uni} A\otimes B \fto{F} C.\]
  
  Given a bilinear map $H \cn A,B \to C$, we can construct a strict symmetric monoidal functor $F\cn A\otimes B \to C$ as follows. Applying the universal property of $\bigT$ to the underlying functor $H \cn A\times B \to C$ gives rise to a strict monoidal functor
  \[F\cn \bigT(A\times B) \to C.\]
  We further extend this to a strict symmetric monoidal functor with domain $A\otimes B$ by the following assignments on adjoined morphisms:
  \begin{align*}
    F(\de^L_a) & = H_a,
    \quad F(\de^R_b) = H_b,
    \quad F(\be_{a.b,a'.b'}) = \be_{H(a,b),H(a',b')},
    \\
    F(\ze^L_a) & = H_{a,0}, 
    \quad F(\ze^R_b) = H_{0,b}.
  \end{align*}

  To show that $F$ is well-defined, we verify that $F$ preserving the relations imposed in \cref{defn:AtensorB} for $A \otimes B$ corresponds to the axioms that $H$ must satisfy from \cref{defn:bilinear}. 
  Because $F$ is strictly symmetric monoidal, the requirement that $F$ preserve conditions~\ref{defn:AtensorB}.\cref{TENnat} and~\ref{defn:AtensorB}.\cref{TEN1} is equivalent to the requirement that each of
  \[
    \bigl( F(a.-)\,,\, F(\de^L_a)\,,\, F(\ze^L_a) \bigr)
    \andspace
    \bigl( F(-.b)\,,\, F(\de^R_b)\,,\, F(\ze^L_b) \bigr)
  \]
  is a symmetric monoidal functor.
  These correspond to axiom~\ref{defn:bilinear}.\cref{BIL1} for $H$, which requires that
  \[
    \bigl( H(a,-)\,,\, H_a\,,\, H_{a,0} \bigr) \andspace
    \bigl( H(-,b)\,,\, H_b\,,\, H_{0,b} \bigr)
  \]
  are symmetric monoidal functors.
  Preservation of~\ref{defn:AtensorB}.\cref{TENsmc} automatically holds because $F$ is required to be symmetric monoidal.  
  Preservation of the pentagon~\ref{defn:AtensorB}.\cref{TEN2} corresponds to the interchange axiom~\ref{defn:bilinear}.\cref{BILii} for $H$.
  Similarly, the requirement that $F$ preserve relations in~\ref{defn:AtensorB}.\cref{TEN3}, respectively~\ref{defn:AtensorB}.\cref{TEN6}, corresponds to axiom~\ref{defn:bilinear}.\cref{BILiii}, respectively~\ref{defn:bilinear}.\cref{BILiv} for $H$.
  This completes the proof that \cref{eq:otimes-bilin} is a bijection on objects.
  
  Given a bilinear transformation
  \begin{equation}\label{eq:phiab-alab}
    \phy \cn H \to H' \inspace \bilin(A,B;C),
  \end{equation} 
  the two-dimensional universal property of $\bigT$ gives rise to a monoidal transformation
    \[
    \al \cn F \to F' \inspace \strict{\bigT(A \times B), C}   \withspace  \phy_{(a,b)} = \al_{a.b}.
  \]
  One can further verify that $\al$ is natural with respect to the adjoined morphisms of $A \otimes B$ as follows.
  Naturality of $\al$ with respect to the adjoined morphisms $\de$ and $\ze$ corresponds to the requirement that $\phy$ is monoidal natural in each variable separately.
  Naturality of $\al$ with respect to the symmetry in $A \otimes B$ automatically holds by naturality of the symmetry in $C$, because $F$ and $F'$ are strict symmetric monoidal.

  By definition, the constructions of $F$ and $\al$ from $H$ and $\phy$ above are inverse to composition with $\uni$.
  Therefore, \cref{eq:otimes-bilin} is an isomorphism of categories.

  The associativity condition for $\cpost$ in \cref{lem:bilin-compose-sm} implies 2-naturality of \cref{eq:otimes-bilin} with respect to strict monoidal functors in $C$.
  For symmetric monoidal functors and monoidal transformations
  \[
    P \cn \ol{A} \to {A} \andspace
    Q  \cn \ol{B} \to {B},
  \]
  one can check directly from the constructions in \cref{lem:bilin-compose-sm,lem:tensor-bilinear,defn:FtensorG} that the diagram
  \[
    \begin{tikzpicture}[x=20mm,y=15mm]
      \draw[0cell] 
      (0,0) node (a) {\ol{A},\ol{B}}
      (1,0) node (b) {\ol{A} \otimes \ol{B}}
      (0,-1) node (a') {{A},{B}}
      (1,-1) node (b') {{A} \otimes {B}}
      ;
      \draw[1cell] 
      (a) edge node {\uni} (b)
      (a') edge node {\uni} (b')
      (a) edge['] node {P,Q} (a')
      (b) edge node {P \otimes Q} (b')
      ;
    \end{tikzpicture}
  \]
  commutes, in the sense that the bilinear maps given by the depicted composites are equal. This verifies that \cref{eq:otimes-bilin} is natural with respect to symmetric monoidal functors in $A$ and $B$.
  Similarly, 2-naturality in $A$ and $B$ is verified using the composition of monoidal transformations with bilinear maps in \cref{lem:bilin-compose-sm} and the formulas for $\phi \otimes \psi$ from \cref{defn:phi-tensor-psi}.
  This completes the proof.
\end{proof}

\begin{rmk}\label{rmk:R-not-strict}
  In \cref{eq:otimes-bilin} we only consider 2-naturality in $C$ with respect to \emph{strict} symmetric monoidal functors.
  This is because the construction $\strict{- \otimes -, -}$ is only functorial with respect to strict symmetric monoidal functors in the last variable, since composing with a non-strict symmetric monoidal functor rarely gives a strict one.
\end{rmk}

Combining \cref{prop:curry,prop:otimes-bilin-iso} gives two isomorphisms
\begin{equation}\label{eq:otimes-curry}
  \strict{A \otimes B,C} \cong \bilin(A,B;C)
  \cong \permcat\bigl(A , \permcat(B , C)\bigr)
\end{equation}
that are 2-natural with respect to symmetric monoidal functors in $A$ and $B$, and strict symmetric monoidal functors in $C$.
This implies the following.
\begin{cor}\label{cor:tensor-2adj}
  For each permutative category $B$, there is a 2-adjunction
  \[
    - \otimes B \cn \permcat \lradj \permcats \bacn \permcat(B,-).
  \]
\end{cor}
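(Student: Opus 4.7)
The plan is to assemble the corollary directly from the isomorphism \cref{eq:otimes-curry} and the 2-functoriality already established, with the main care being to track which morphisms lie in $\permcat$ versus $\permcats$.

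First I would check that both functors in the purported 2-adjunction are 2-functors of the claimed form. By \cref{prop:tensor-functor}, $-\otimes - $ is a 2-functor on $\permcat \times \permcat$, and restricting to $-\otimes B$ gives a 2-functor $\permcat \to \permcat$. But \cref{defn:FtensorG} is explicit that $F \otimes G$ is always a strict symmetric monoidal functor, constructed via the universal property of the colimits defining the tensor product. Hence $-\otimes B$ factors through $\permcats$, giving a 2-functor $-\otimes B \cn \permcat \to \permcats$. The 2-functor $\permcat(B,-)$ is defined by postcomposition and pointwise permutative structure (\cref{prop:permcat_SM}); on 1- and 2-cells this uses only the standard 2-categorical enrichment of $\permcat$ by itself, and postcomposition with a \emph{strict} symmetric monoidal functor $F \cn C \to \ol{C}$ evidently preserves the pointwise permutative structure on $\permcat(B,-)$ and sends symmetric monoidal functors to symmetric monoidal functors, so $\permcat(B,-)$ restricts to a 2-functor $\permcats \to \permcat$.

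Next I would invoke \cref{eq:otimes-curry}, which provides isomorphisms of categories
\[
  \permcats(A \otimes B, C) \;=\; \strict{A \otimes B, C} \;\iso\; \bilin(A,B;C) \;\iso\; \permcat\bigl(A, \permcat(B,C)\bigr).
\]
The first isomorphism comes from \cref{prop:otimes-bilin-iso} and is 2-natural in $A$ along arbitrary symmetric monoidal functors and in $C$ along strict symmetric monoidal functors. The second isomorphism comes from \cref{prop:curry} and is 2-natural in $A$, $B$, $C$ along arbitrary symmetric monoidal functors. Composing the two, the composite isomorphism is 2-natural in $A \in \permcat$ and in $C \in \permcats$, which is exactly the naturality datum required for a $\Cat$-enriched adjunction between the 2-functors $-\otimes B$ and $\permcat(B,-)$.

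Finally, since the hom-isomorphism is a strict isomorphism of categories and is 2-natural in the appropriate variables, it determines a 2-adjunction in the $\Cat$-enriched sense, with unit and counit produced in the standard way by taking the images of identity 1-cells under the isomorphism. There is no real obstacle beyond the bookkeeping of strict-versus-general symmetric monoidal functors; the substance of the argument is entirely contained in \cref{prop:curry} and \cref{prop:otimes-bilin-iso}, and the corollary is simply their juxtaposition.
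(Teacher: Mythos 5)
Your proposal is correct and follows the same route as the paper: the corollary is deduced directly from the composite 2-natural isomorphism \cref{eq:otimes-curry} obtained by combining \cref{prop:curry} and \cref{prop:otimes-bilin-iso}, with the naturality in $A$ along all symmetric monoidal functors and in $C$ along strict ones being exactly the enriched-adjunction datum. The extra checks you supply (that $-\otimes B$ lands in $\permcats$ because $F\otimes G$ is always strict, and that $\permcat(B,-)$ restricts to a 2-functor on $\permcats$) are routine verifications the paper leaves implicit.
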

Note that this is subtly different from a closed structure, because
we must restrict to $\strict{A\otimes B,C} = \permcats(A \otimes B, C)$ on the left side of \cref{eq:otimes-curry}.
See \cref{thm:biadj} below for a tensor-hom biadjunction.

Recall from \cref{prop:bilin-perm} that bilinear maps have a pointwise monoidal structure.
Thus, \Cref{prop:otimes-bilin-iso} induces a symmetric monoidal structure on $\strict{A \otimes B, C}$ as follows.
We will use this in \cref{prop:tensor-cofib-adj-equiv} below.
\begin{defn}\label{defn:sum-for-strict-AB-C}
  Suppose $A$, $B$, and $C$ are permutative categories.
  We define the \emph{bilinear permutative structure} on $\strict{A \otimes B,C}$ to be the permutative structure induced by the isomorphism \cref{eq:otimes-bilin}.
  Explicitly, the sum $F \boxplus G$ of two strict symmetric monoidal functors
  \[
    F,G \cn A \otimes B \to C
  \]
  is defined strictly on sums of objects by
  \[
    \bigl( F \boxplus G \bigr)\bigl( \tsum a_i . b_i \bigr)
    = \tsum \bigl( F(a_i.b_i) + G(a_i.b_i) \bigr)
    \andspace
    \bigl( F \boxplus G \bigr)\bigl( 0 \bigr) = 0.
  \]
  On sums of morphisms, $\tsum \ka_i$, the sum $F \boxplus G$ is also defined to be strict monoidal, with the following assignments on simple and adjoined morphisms of $A \otimes B$:
  \begin{align*}
    \bigl( F \boxplus G \bigr) \bigl( f.g \bigr)
    & = F(f.g) + G(f.g), \\
    \bigl( F \boxplus G \bigr) \bigl( \de^L_a(b,b') \bigr)
    & = \bigl( F(\de^L_a) + G(\de^L_a) \bigr)
      \circ \bigl( 1_{F(a.b)} + \be_{G(a.b),F(a.b')} + 1_{G(a.b')} \bigr), \\
    \bigl( F \boxplus G \bigr) \bigl( \de^R_b(a,a') \bigr)
    & =  \bigl( F(\de^R_b) + G(\de^R_b) \bigr)
      \circ \bigl( 1_{F(a.b)} + \be_{G(a.b),F(a'.b)} + 1_{G(a'.b)} \bigr), \\
    \bigl( F \boxplus G \bigr) \bigl( \be \bigr)
    & = \be, \\
    \bigl( F \boxplus G \bigr) \bigl( \ze^L_a \bigr)
    & = F(\ze^L_a) + G(\ze^L_a), \andspace \\
    \bigl( F \boxplus G \bigr) \bigl( \ze^R_b \bigr)
    & = F(\ze^R_b) + G(\ze^R_b).
  \end{align*}
  The appearance of morphisms $1 + \be + 1$ above corresponds to their appearance in the sum of bilinear maps $H + J$ in \cref{eq:HplusJsymm}.

  The sum of two monoidal transformations in $\strict{A\otimes B, C}$,
  \[
    \phi \cn F \to F' 
    \andspace
    \psi \cn G \to G'
  \]
  is given componentwise by
  \[
    \bigl( \phi \boxplus \psi \bigr)_{\ssum a_i.b_i} = \tsum (\phi_{a_i.b_i} + \psi_{a_i.b_i})
    \andspace
    \bigl( \phi \boxplus \psi \bigr)_0 = 1_0.
  \]
  This completes the description of the bilinear monoidal sum.
\end{defn}

\begin{prop}\label{prop:box-iso-o}
Let $F,G \cn A \otimes B \to C$ be strict symmetric monoidal functors. Then $F \boxplus G \cong F \oplus G$.
\end{prop}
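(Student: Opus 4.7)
The plan is to construct an explicit monoidal isomorphism $\eta \cn F \boxplus G \To F \oplus G$ whose components are shuffle isomorphisms in $C$.

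First I would use strictness of $F$ and $G$ to simplify the monoidal structure on $F \oplus G$: by \cref{defn:oplus_fun_trans}, the monoidal constraint reduces to $(F \oplus G)_2 = 1 + \beta + 1$ and $(F \oplus G)_0 = 1_0$. Since $F \boxplus G$ is strict, the two functors agree on the monoidal unit and on simple tensors:
\[
(F \boxplus G)(0) = 0 = (F \oplus G)(0) \andspace (F \boxplus G)(a.b) = F(a.b) + G(a.b) = (F \oplus G)(a.b),
\]
so I set $\eta_0 = 1_0$ and $\eta_{a.b} = 1$.

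Observing that the pushouts and coequalizers of \cref{defn:AtensorB} do not introduce new objects, every object of $A \otimes B$ is uniquely a formal sum $\tsum_i a_i.b_i$ of simple tensors. I extend $\eta$ by iterating the monoidal transformation axiom,
\[
\eta_{x+y} = (F \oplus G)_2 \circ (\eta_x + \eta_y),
\]
which produces a shuffle permutation in $C$ that rearranges the interleaved sum $\tsum_i (F(a_i.b_i) + G(a_i.b_i))$ into $\tsum_i F(a_i.b_i) + \tsum_i G(a_i.b_i)$. As a composite of instances of $\beta$ in $C$, it is a natural isomorphism, and the monoidal and unit axioms for $\eta$ are built into this inductive definition.

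The remaining task is to verify naturality of $\eta$ with respect to the generating morphisms of $A \otimes B$. For simple morphisms $f.g$, both $F \boxplus G$ and $F \oplus G$ give $F(f.g) + G(f.g)$, and naturality is immediate. For the adjoined morphisms $\delta^L, \delta^R, \zeta^L, \zeta^R$, the shuffle $1 + \beta + 1$ occurring in \cref{defn:sum-for-strict-AB-C} for $F \boxplus G$ on $\delta$ is precisely $(F \oplus G)_2$, so the naturality squares commute on the nose. The most delicate check is naturality at the adjoined symmetries $\beta_{a.b,a'.b'}$: one compares $(\beta_{Fa.b,Fa'.b'} + \beta_{Ga.b,Ga'.b'}) \circ (1 + \beta + 1)$ with $(1 + \beta + 1) \circ \beta_{Fa.b+Ga.b,\, Fa'.b'+Ga'.b'}$, which realize the same permutation of four letters and are therefore equal by coherence for symmetric monoidal categories. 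I expect this final coherence check to be the main obstacle, though it is a routine calculation.
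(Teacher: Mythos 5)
Your proposal is correct and follows the same route as the paper: both construct the monoidal natural isomorphism $F \boxplus G \To F \oplus G$ whose component at $\tsum a_i.b_i$ is the shuffle permutation rearranging $\tsum_i \bigl(F(a_i.b_i)+G(a_i.b_i)\bigr)$ into $\tsum_i F(a_i.b_i) + \tsum_i G(a_i.b_i)$, identified with $(F\oplus G)(\tsum a_i.b_i)$ via strictness of $F$ and $G$. Your inductive definition via $(F\oplus G)_2 = 1+\beta+1$ unwinds to exactly that permutation isomorphism, and the naturality checks you sketch (on simple morphisms, on $\de$ and $\ze$, and the symmetric-coherence argument at the adjoined $\beta$'s) are precisely the verification the paper leaves to the reader.
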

\begin{proof}
By definition of the bilinear permutative structure, we have
\[
    \bigl( F \boxplus G \bigr)\bigl( \tsum a_i . b_i \bigr)
    = \tsum \bigl( F(a_i.b_i) + G(a_i.b_i) \bigr)
    \andspace
    \bigl( F \boxplus G \bigr)\bigl( 0 \bigr) = 0,
  \]
while by the definition of the pointwise permutative structure (\cref{defn:oplus_fun_trans}), we have
\[
    \bigl( F \oplus G \bigr)\bigl( \tsum a_i . b_i \bigr)
    =  F\bigl( \tsum a_i.b_i \bigr) + G\bigl(\tsum a_i.b_i \bigr)
    \andspace
    \bigl( F \oplus G \bigr)\bigl( 0 \bigr) = 0.
  \]
  We leave it to the reader to check that the composite
  \[
  \tsum \bigl( F(a_i.b_i) + G(a_i.b_i) \bigr) \cong \bigl( \tsum F(a_i.b_i) \bigr) + \bigl( \tsum G(a_i.b_i) \bigr) = F\bigl( \tsum a_i.b_i \bigr) + G\bigl(\tsum a_i.b_i \bigr),
  \]
  given by a permutation isomorphism and strictness of $F$ and $G$, defines the component at the object $\tsum a_i . b_i$ of a monoidal natural isomorphism from $F \boxplus G$ to $F \oplus G$.
\end{proof}

\section{The tensor-hom biadjunction}
\label{sec:otimes-biadj}

In this section we establish a tensor-hom biadjunction for permutative categories, \cref{thm:biadj}.
The result depends on \cref{prop:curry,prop:otimes-bilin-iso}, together with a strictification result, \cref{prop:tensor-cofib-adj-equiv}, showing that the
local inclusion
\[
  j\cn \permcats(A\otimes B, C) \hookrightarrow \permcat(A \otimes B, C) 
\]
is an equivalence of categories, for each triple of permutative categories $A$, $B$, $C$.
The proof of this strictification result depends on the representation of objects in $A \otimes B$ as formal sums, and does not hold generally if $A \otimes B$ is replaced by an arbitrary permutative category $X$.

\begin{convention}\label{conv:tensor-hom-generic}
  Throughout this section, we continue to use the following notation from \cref{conv:bilin-generic} for generic objects and morphisms.
  \begin{center}
    \renewcommand{\arraystretch}{1.25}
    \begin{tabular}{ccc}
      category & objects & morphisms\\
      \hline
      $A$ & $a,a',a_i$ & $f, f_i$
      \\
      $B$ & $b,b',b_i$ & $g, g_i$
      \\
      $C$ & $c,c',c_i$ & $h, h_i$
    \end{tabular}
    \\ \ 
  \end{center}
\end{convention}

We begin with the definition of a strictification for symmetric monoidal functors.

\begin{defn}\label{defn:Fs}
  Let
  \[
    F \cn A \otimes B \to C
  \]
  be a symmetric monoidal functor.
  Define a strict symmetric monoidal functor
  \[
    F^s \cn A \otimes B \to C
  \]
  via the following assignments.
  \begin{description}
  \item[Objects] Define
    \[
      F^s(0) = 0 \andspace
      F^s\bigl(\tsum a_i.b_i\bigr) = \tsum F(a_i.b_i).
    \]
  \item[Morphisms] On simple and adjoined morphisms, define
    \begin{align*}
      F^s \bigl( f.g \bigr)
      & = F \bigl( f.g \bigr),
      & F^s \bigl( \be \bigr)
      & = \be, \\
      F^s \bigl( \de^L_a \bigr)
      & = F\bigl( \de^L_a \bigr) \circ F_2, 
      & F^s \bigl( \ze^L_a \bigr)
      & = F\bigl( \ze^L_a \bigr) \circ F_0, \\
      F^s \bigl( \de^R_b \bigr)
      & = F\bigl( \de^R_b \bigr) \circ F_2,
      & F^s \bigl( \ze^R_b \bigr)
      & = F\bigl( \ze^R_b \bigr) \circ F_0.
    \end{align*}
    For a formal sum of morphisms $\ka_i$ in $A \otimes B$, define
    \[
      F^s\tsum \ka_i = \tsum F^s\ka_i
    \]
    so that $F^s$ is strictly monoidal.
  \end{description} 
  This completes the definition of $F^s$ as an assignment on objects and morphisms.
\end{defn}

\begin{lem}\label{lem:Fs}
  In the context of \cref{defn:Fs}, $F^s$ is a well-defined strict symmetric monoidal functor.
\end{lem}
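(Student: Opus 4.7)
The plan is to avoid checking the many relations of \cref{defn:AtensorB} directly, and instead to realize $F^s$ as the strict symmetric monoidal functor corresponding, under the isomorphism of \cref{prop:otimes-bilin-iso}, to a suitable bilinear map built from $F$.  Define $H \cn A, B \to C$ to have underlying functor $F \circ \uni$ on $A \times B$, with bilinearity and unit constraints
\begin{align*}
  H_a(b,b') & = F\bigl(\de^L_a(b,b')\bigr) \circ (F_2)_{a.b,\, a.b'}, &
  H_b(a,a') & = F\bigl(\de^R_b(a,a')\bigr) \circ (F_2)_{a.b,\, a'.b}, \\
  H_{a,0} & = F(\ze^L_a) \circ F_0, &
  H_{0,b} & = F(\ze^R_b) \circ F_0.
\end{align*}
Naturality of each constraint in every variable follows from naturality of $F_2$, $F_0$, and of the adjoined morphisms in $A \otimes B$.

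First I would verify that $H$ satisfies the axioms of \cref{defn:bilinear}.  Axiom~\cref{BIL1} for $H(a,-)$, asserting that the composites above give $H(a,-)$ the structure of a symmetric monoidal functor, follows by pasting the corresponding axioms for the symmetric monoidal functor $a.(-) \cn B \to A \otimes B$ (which hold in $A\otimes B$ by relation~\cref{TEN1}) with the associativity, unit, and symmetry axioms for $F$, together with a naturality square for $F_2$.  The symmetric argument handles $H(-,b)$.  The interchange axiom~\cref{BILii} reduces to the pentagon~\cref{TEN2} combined with the monoidal constraint axiom for $F_2$ and naturality.  Axioms~\cref{BILiii} and~\cref{BILiv} follow from~\cref{TEN3} and~\cref{TEN6} respectively, combined with the unit axioms for $F$.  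Applying \cref{prop:otimes-bilin-iso}, this bilinear $H$ then determines a unique strict symmetric monoidal functor $\wt{F} \cn A \otimes B \to C$ with $\wt{F} \circ \uni = H$.

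Finally I would check that $\wt{F}$ coincides with $F^s$ on generating data: by strictness, both send $\tsum a_i.b_i$ to $\tsum F(a_i.b_i)$ and a formal sum of morphisms to the corresponding sum of images; on simple morphisms $f.g$ both give $F(f.g)$; on the symmetry $\be$ both give $\be$ since each is strict symmetric monoidal; and on the adjoined morphisms $\de^L_a, \de^R_b, \ze^L_a, \ze^R_b$, the formula produced for $\wt{F}$ in the proof of \cref{prop:otimes-bilin-iso} is exactly the composite stipulated in \cref{defn:Fs}.  Thus $F^s = \wt{F}$, and hence $F^s$ is a well-defined strict symmetric monoidal functor.

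The main obstacle is the verification of axiom~\cref{BIL1}: one must show that $F(\de^L_a) \circ F_2$ and $F(\ze^L_a) \circ F_0$ satisfy the associativity, unit, and symmetry axioms for a symmetric monoidal functor.  This combines the symmetric monoidal structure of left multiplication $a.(-)$ inside $A \otimes B$ with the coherence axioms for $F$ via naturality of $F_2$, and is genuinely the heart of the argument.  The remaining bilinearity axioms all follow a common pattern of pasting a relation from \cref{defn:AtensorB} (which $F$ respects as a functor on $A \otimes B$) with a coherence axiom of $F$, and are largely routine once this pattern is identified.
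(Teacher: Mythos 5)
Your proposal is correct, but it takes a genuinely different route from the paper's. The paper proves \cref{lem:Fs} by directly checking that the assignment of \cref{defn:Fs} descends through the relations imposed in \cref{defn:AtensorB}, exhibiting for the pentagon~\cref{TEN2} an explicit diagram assembled from naturality of $F_2$, coherence for $F$, and $F$ applied to the relation. You instead package the data as a bilinear map $H$ with underlying functor $F \circ \uni$ and constraints $F(\de^L_a)\circ F_2$, etc., verify the axioms of \cref{defn:bilinear}, and invoke \cref{prop:otimes-bilin-iso} to obtain a strict symmetric monoidal functor $\wt{F}$ that you then identify with $F^s$ on generators; since the generators determine a strict monoidal functor uniquely, this does establish well-definedness, and there is no circularity because \cref{prop:otimes-bilin-iso} is proved before \cref{sec:otimes-biadj} and does not depend on $(-)^s$. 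Two observations. First, the underlying diagram chases are ultimately the same: under the correspondence in the proof of \cref{prop:otimes-bilin-iso}, axiom~\cref{BILii} for $H$ \emph{is} preservation of the pentagon~\cref{TEN2}, so your verification of the interchange axiom decomposes into exactly the cells the paper draws; likewise, axiom~\cref{BIL1} is not really the ``heart of the argument'' so much as the standard fact that $F \circ \bigl(a.(-)\bigr)$ is a composite of symmetric monoidal functors. Second, your route admits a shortcut you did not take: your $H$ is precisely $\cpost(F,\uni)$ for the universal bilinear map $\uni$ of \cref{lem:tensor-bilinear}, so all of its bilinear axioms are already guaranteed by \cref{lem:bilin-compose-sm}, and the only remaining content would be the comparison of $\wt{F}$ with $F^s$ on generators. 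What your approach buys is a conceptual explanation of the formulas in \cref{defn:Fs} (they are forced by the universal property of the tensor product); what the paper's direct approach buys is a self-contained, explicit verification whose diagrams are reused in the subsequent naturality checks of \cref{prop:tensor-cofib-adj-equiv}.
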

\begin{proof}
  To show that $F^s$ is well-defined on the imposed relations of $A \otimes B$, one uses symmetric monoidal functor axioms and coherence for $F$, together with the assumption that $F$ preserves these relations.
  For example, preservation of the pentagon~\ref{defn:AtensorB}.\cref{TEN2} is commutativity of the outer diagram below.
  \[
    \begin{tikzpicture}[x=20mm,y=18mm]
      \draw[0cell=.8] 
      (0,0) node (a) {\genatop{F(a.b) + F(a'.b)}{ + F(a.b') + F(a'.b')}}
      (4,0) node (a') {\genatop{F(a.b) + F(a.b')}{ + F(a'.b) + F(a'.b')}}
      (a)+(0,-1) node (b) {\genatop{F(a.b + a'.b)}{ + F(a.b' + a'.b')}}
      (b)+(0,-1) node (c) {\genatop{F((a + a').b)}{ + F((a + a').b')}}
      (c)+(.5,-1) node (d) {F((a + a').b + (a + a').b')}
      (d)+(1.5,-1) node (e) {F((a + a').(b + b'))}
      (a')+(0,-1) node (b') {\genatop{F(a.b + a.b')}{ + F(a'.b + a'.b')}}
      (b')+(0,-1) node (c') {\genatop{F(a.(b + b'))}{ + F(a'.(b + b'))}}
      (c')+(-.5,-1) node (d') {F(a.(b + b') + a'.(b + b'))}
      (b)+(1.2,-1) node (x) {F(X)}
      (b')+(-1.2,-1) node (y) {F(Y)}
      ;
      \draw[1cell]
      (a) edge node {1 + \be + 1} (a')
      (a) edge['] node {F_2 + F_2} (b)
      (b) edge['] node {F(\de^R_b) + F(\de^R_{b'})} (c)
      (c) edge['] node {F_2} (d)
      (d) edge['] node {F(\de^L_{a+a'})} (e)
      (a') edge node {F_2 + F_2} (b')
      (b') edge node {F(\de^L_a) + F(\de^L_{a'})} (c')
      (c') edge node {F_2} (d')
      (d') edge node {F(\de^R_{b+b'})} (e)
      ;
      \draw[1cell]
      (b) edge node {F_2} (x)
      (b') edge['] node {F_2} (y)
      (x) edge node {F(1+\be+1)} (y)
      (x) edge node[pos=.4] {F(\de^R_b + \de^R_{b'})} (d)
      (y) edge['] node[pos=.4] {F(\de^L_a + \de^L_{a'})} (d')
      ;
    \end{tikzpicture}
  \]
  In the above diagram,
  \[
    X = a.b + a'.b + a.b' + a'.b'
    \andspace
    Y = a.b + a.b' + a'.b + a'.b'.
  \]
  The two quadrilaterals commute by naturality of $F_2$, the upper hexagon commutes by coherence for $F$, and the lower pentagon is $F$ applied to~\ref{defn:AtensorB}.\cref{TEN2}.
  Verification that $F^s$ preserves the other imposed relations is similar.

  Functoriality of $F^s$ follows from functoriality of $F$ together with naturality of the constraints $F_2$ and $F_0$ and naturality of the adjoined morphisms in $A \otimes B$.
  Finally, $F^s$ is strict symmetric monoidal by its definition on objects and morphisms.
\end{proof}

\begin{defn}\label{defn:als}
  Let
  \[
    \al \cn F \to F' \cn A \otimes B \to C
  \]
  be a monoidal transformation between symmetric monoidal functors.
  Define a monoidal transformation
  \[
    \al^s \cn F^s \to (F')^s
  \]
  via components
  \begin{align}\label{eq:als-components}
    \al^s_0 = 1_{0} \andspace \al^s_{\ssum a_i.b_i} = \tsum \al_{a_i.b_i}.
  \end{align}
\end{defn} 
\begin{lem}\label{lem:als}
  In the context of \cref{defn:als}, $\al^s$ is a monoidal transformation.
\end{lem}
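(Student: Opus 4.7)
The plan is to verify the three conditions for $\al^s$ to be a monoidal transformation: (i) naturality with respect to morphisms in $A \otimes B$, (ii) the monoidal naturality axiom, and (iii) the unit axiom from \cref{defn:montransf}. Conditions (ii) and (iii) are essentially immediate given the strictness of $F^s$ and $(F')^s$ from \cref{lem:Fs}: since $(F^s)_2$, $((F')^s)_2$, $(F^s)_0$, $((F')^s)_0$ are all identities, (ii) reduces to $\al^s_{x + y} = \al^s_x + \al^s_y$, which follows directly from the formula in \cref{eq:als-components}, and (iii) reduces to $\al^s_0 = 1_0$, which holds by definition.

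The main work, therefore, is to verify naturality of $\al^s$ with respect to the morphisms of $A \otimes B$. Since every morphism of $A \otimes B$ is built as a formal sum and composite from the simple morphisms $f.g$ and the adjoined morphisms $\de^L$, $\de^R$, $\be$, $\ze^L$, $\ze^R$, I would argue by cases on these generating morphisms. First I would observe that naturality squares are closed under composition and, by the strict monoidal nature of $F^s$, $(F')^s$, and $\al^s$, under formal sums; so it suffices to check generators.

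For a simple morphism $f.g$, one has $F^s(f.g) = F(f.g)$ and $(F')^s(f.g) = F'(f.g)$, while $\al^s$ restricts to $\al$ on simple objects, so naturality of $\al^s$ on $f.g$ is immediate from naturality of $\al$. For $\be$, both $F^s(\be)$ and $(F')^s(\be)$ equal the symmetry of $C$, so the required square is naturality of $\be$ in $C$ applied to $\al_{a.b} + \al_{a'.b'}$. The interesting cases are the adjoined morphisms with nontrivial $F^s$-value. For example, the naturality square for $\de^L_a(b,b')$ requires
\[
\al_{a.(b+b')} \circ F(\de^L_a) \circ F_2 = F'(\de^L_a) \circ (F')_2 \circ \bigl(\al_{a.b} + \al_{a.b'}\bigr).
\]
This follows by first applying monoidal naturality of $\al$ to move $(F')_2$ past $\al_{a.b} + \al_{a.b'}$, producing $\al_{a.b + a.b'} \circ F_2$, and then applying naturality of $\al$ at the morphism $\de^L_a$ of $A \otimes B$. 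The arguments for $\de^R_b$, $\ze^L_a$, and $\ze^R_b$ are analogous, using the monoidal naturality axiom in the $\de$ cases and the unit axiom of \cref{defn:montransf} in the $\ze$ cases.

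The one point requiring care — and the main conceptual obstacle — is to confirm that this case-by-case verification on generators genuinely yields naturality on \emph{all} morphisms of $A \otimes B$, including those built by composition and formal sum. Closure under composition is formal. Closure under formal sum uses that $F^s$ and $(F')^s$ act strictly on sums of morphisms, together with the componentwise formula $\al^s_{\sum a_i.b_i} = \sum \al_{a_i.b_i}$, which lets the naturality squares for the summands be added. Once this is in place, the proof is complete.
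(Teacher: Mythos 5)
Your proposal is correct and follows essentially the same route as the paper: reduce to generators using strictness of $F^s$ and $(F')^s$, handle simple morphisms by naturality of $\al$, and handle each adjoined morphism by pasting the monoidal naturality (or unit) square for $\al$ with the naturality square for $\al$ at that adjoined morphism — your displayed equation for $\de^L_a$ is exactly the outer diagram the paper draws. The only difference is that you spell out the closure under composition and formal sums a bit more explicitly, which the paper leaves implicit.
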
 
\begin{proof}
  Since both $F^s$ and $(F')^s$ are strict monoidal,
  it suffices by definition \cref{eq:als-components} to verify naturality of $\al^s$ with respect to the simple and adjoined morphisms of $A \otimes B$.
  The monoidal and unit axioms of \cref{defn:montransf} are likewise immediate from \cref{eq:als-components}.
  
  Naturality of $\al^s$ with respect to a simple morphism $f.g$ reduces to naturality of $\al$.
  Naturality of $\al^s$ with respect to adjoined morphisms $\ka$ follows from the formulas for $F^s(\ka)$ and $(F')^s(\ka)$ above, together with monoidal naturality of $\al$.
  For example, naturality of $\al^s$ with respect to $\ka = \de^L_a$ is commutativity of the outer diagram below, using the monoidal naturality of $\al$ at left and naturality with respect to $\de^L_a$ at right.
  \[
    \begin{tikzpicture}[x=40mm,y=15mm]
      \draw[0cell] 
      (0,0) node (a) {F(a.b) + F(a.b')}
      (1,0) node (b) {F(a.b + a.b')}
      (2,0) node (c) {F(a.(b+b'))}
      (0,-1) node (a') {F'(a.b) + F'(a.b')}
      (1,-1) node (b') {F'(a.b + a.b')}
      (2,-1) node (c') {F'(a.(b+b'))}
      ;
      \draw[1cell] 
      (a) edge node {F_2} (b)
      (b) edge node {F(\de^L_a)} (c)
      (a') edge['] node {F'_2} (b')
      (b') edge['] node {F'(\de^L_a)} (c')
      (a) edge['] node {\al_{a.b} + \al_{a.b'}} (a')
      (b) edge node {\al_{a.b + a.b'}} (b')
      (c) edge node {\al_{a.(b+b')}} (c')
      ;
    \end{tikzpicture}
  \]
  Naturality with respect to the other adjoined morphisms $\ka$ is similar.
\end{proof}

\begin{defn}\label{defn:strictification}
  Define a strict symmetric monoidal functor
  \[
    (-)^s\cn \permcat(A \otimes B, C) \to \permcats(A \otimes B, C)
  \]
  via \cref{defn:Fs,defn:als}.
  Functoriality of $(-)^s$ follows from \cref{eq:als-components} because identities and composites of monoidal transformations are determined componentwise.
\end{defn}

\begin{prop}\label{prop:tensor-cofib-adj-equiv}
  Suppose $A$, $B$, and $C$ are permutative categories.
  There is an adjoint equivalence
  \begin{equation}\label{eq:tensor-cofib}
    j\cn \permcats(A \otimes B, C) \lradjequiv \permcat(A \otimes B, C) \bacn (-)^s
  \end{equation}
  where $j$ is the fully faithful inclusion of $\permcats$ into $\permcat$ and $(-)^s$ is given by \cref{defn:strictification}.
  Moreover, the following statements hold.
  \begin{itemize}
  \item Each of $j$ and $(-)^s$ is 2-natural with respect to strict symmetric monoidal functors $C \to \ol{C}$ and not-necessarily-strict symmetric monoidal functors $\ol{A} \to A$ and $\ol{B} \to B$.
  \item With respect to the bilinear permutative structure at left and the pointwise permutative structure at right, 
the inclusion $j$ is symmetric monoidal while $(-)^s$ is strict symmetric monoidal.
\end{itemize}
\end{prop}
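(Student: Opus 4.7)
The plan is to show that $(-)^s \circ j$ equals the identity 2-functor on the nose and that, conversely, there is a monoidal natural isomorphism $\eta_F \cn F^s \To F$ for each symmetric monoidal functor $F \cn A \otimes B \to C$, which will serve as the counit of the adjoint equivalence. Define the components of $\eta_F$ by
\[
  (\eta_F)_0 = F_0 \andspace (\eta_F)_{\ssum a_i . b_i} = F_2,
\]
where the second expression denotes the well-defined (by \cref{convention:F2}) iterated monoidal constraint $\tsum F(a_i.b_i) \to F(\tsum a_i.b_i)$. Both components are invertible because $F_2$ and $F_0$ are.

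Verifying that $\eta_F$ is a monoidal natural isomorphism is the main computation. Naturality with respect to a simple morphism $f.g$ reduces to naturality of $F_2$. Naturality with respect to the adjoined morphisms $\de^L_a$, $\de^R_b$, $\ze^L_a$, $\ze^R_b$ is immediate from the definition of $F^s$ in \cref{defn:Fs} together with the fact that $\eta_F$ is the identity on simple objects; for instance, the naturality square at $\de^L_a$ collapses to the defining equation $F^s(\de^L_a) = F(\de^L_a) \circ F_2$. Naturality with respect to the adjoined symmetry $\be$ holds because both $F^s$ and $F$ send it to the symmetry in $C$, the former by definition of $F^s$ and the latter by the symmetry axiom for $F$. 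The monoidal naturality and unit axioms of \cref{defn:montransf} for $\eta_F$ then follow from \cref{convention:F2}.

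For the adjoint equivalence structure, observe that if $F$ is strict then $F_2$ and $F_0$ are identities, hence $F^s = F$ on the nose and $\eta_F = 1$. Therefore $(-)^s \circ j$ is literally the identity, the unit is the identity, and one triangle identity is immediate. The other requires $(\eta_F)^s = 1_{F^s}$, which holds by \cref{eq:als-components}: the components of $(\eta_F)^s$ on formal sums are sums of components of $\eta_F$ on simple objects, each of which is the identity, and the component at $0$ is $1_0$ by definition.

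The 2-naturality claims follow by direct inspection of the formulas in \cref{defn:Fs,defn:als}. For the monoidal structure claim, the monoidal constraint of $j$ is exactly the isomorphism $F \boxplus G \iso F \oplus G$ constructed in \cref{prop:box-iso-o}, and the remaining monoidal functor axioms reduce to coherence in $C$ together with the form of the bilinear sum on adjoined morphisms in \cref{defn:sum-for-strict-AB-C}. Strict preservation of sums by $(-)^s$, meaning $(F \oplus G)^s = F^s \boxplus G^s$, is verified by direct comparison: on objects both sides equal $\tsum (F(a_i.b_i) + G(a_i.b_i))$; on simple morphisms they agree by definition; and on the adjoined morphisms $\de^L_a$, $\de^R_b$ the factors $1 + \be + 1$ built into \cref{defn:sum-for-strict-AB-C} match exactly the symmetries appearing in $(F \oplus G)_2$ via \cref{defn:oplus_fun_trans}, once the constraints are strictified through $\circ\, F_2 \oplus G_2$. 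I expect this last identification, and more generally the bookkeeping for naturality of $\eta_F$ when adjoined morphisms appear inside larger formal sums, to be the main obstacle — although it is really a matter of unwinding definitions rather than a conceptual difficulty.
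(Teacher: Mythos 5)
Your proposal is correct and follows essentially the same route as the paper: the counit $F^s \To F$ built from $F_2$ and $F_0$, the observation that $(-)^s \circ j$ is the identity so the triangle identities reduce to $(\eta_F)^s = 1$, and the direct verification that $(F\oplus G)^s = F^s \boxplus G^s$. The only cosmetic difference is that you exhibit the monoidal constraint of $j$ explicitly via the isomorphism $F \boxplus G \iso F \oplus G$, whereas the paper deduces that $j$ is symmetric monoidal from doctrinal adjunction after proving $(-)^s$ is strict symmetric monoidal.
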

\begin{proof}
  Let
  \[
    F \cn A \otimes B \to C
  \]
  be a symmetric monoidal functor.
  Recall from \cref{defn:Fs} that we have
  \[
    F^s(0) = 0 \andspace
    F^s \bigl( \tsum a_i.b_i \bigr) = \tsum F(a_i.b_i).
  \]
  Therefore, there is a natural isomorphism
  \begin{equation}\label{eq:Fs-to-F}
   \mu_F \cn F^s \To F
  \end{equation}
  given by (composites of) the monoidal and unit constraints $F_2$ and $F_0$.
  Similarly to \cref{convention:F2}, the coherence theorem for monoidal functors \cite{JS1993Braided} ensures that any two such composites with the same source and target are equal, thus implying that $\mu_F$ is monoidal.
  Naturality of \cref{eq:Fs-to-F} with respect to monoidal transformations $\al$ follows from the componentwise definition of $\al^s$ together with the monoidal naturality and unit equalities of \cref{defn:montransf} for $\al$.
  Thus, \cref{eq:Fs-to-F} provides a natural isomorphism
  \[
    \mu\cn j \circ (-)^s \To 1.  
  \]
  Using the formulas of \cref{defn:Fs,defn:als}, one verifies that the composite $(-)^s \circ j$ is the identity and both whiskerings $\mu * j$ and $(-)^s * \mu$ are identities.
  This completes the proof that $(j, (-)^s)$ is an adjoint equivalence.

  Next we verify the 2-naturality assertion.
  For $j$, this follows from 2-functoriality of the inclusion
  \[
    j\cn \permcats \hookrightarrow \permcat.
  \]
  For $(-)^s$, consider symmetric monoidal functors
  \[
    P \cn \ol{A} \to A, \quad Q \cn \ol{B} \to B, \andspace R \cn C \to \ol{C},
  \] 
  with $R$ assumed strict.
  Let
  \[
    \ol{F} = R_*(P \otimes Q)^* F \inspace \permcat(\ol{A} \otimes \ol{B}, \ol{C}), \forspace F \in \permcat(A \otimes B, C).
  \]
  Since both $\bigl( \ol{F} \bigr)^s$ and $\ol{(F^s)}$ are strict symmetric monoidal, it suffices to verify that they agree on generating objects and morphisms of $\ol{A} \otimes \ol{B}$.
  For simple objects $a.b$ and simple morphisms $f.g$ there is nothing to check.
  For the adjoined morphisms $\de^L_a$, the formulas above together with the formulas for monoidal constraints of a composite give the following because $R$ is assumed strict:
  \begin{align*}
    \bigl( \ol{F} \bigr)^s (\de^L_a)
    & = \ol{F}(\de^L_a) \circ \ol{F}_2 \\
    & = R\bigl( F \bigl( 1.Q_2 \circ \de^L_{Pa} \bigr)\bigr) \circ R\bigl(\bigl(F \circ (P \otimes Q) \bigr)_2\bigr) \\
    & = R\bigl( F^s \bigl( (P \otimes Q)(\de^L_a) \bigr)\bigr) \\
    & = \ol{(F^s)} (\de^L_a).
  \end{align*}
  A similar computation shows
  \[
    \bigl( \ol{F} \bigr)^s (\ka)
    = \ol{(F^s)} (\ka)
  \]
  for each of the other adjoined morphisms $\ka$.
  This shows that the naturality diagram
  \[
    \begin{tikzpicture}[x=45mm,y=15mm]
      \draw[0cell] 
      (0,0) node (a) {\permcat(A \otimes B, C)}
      (1,0) node (b) {\permcats(A \otimes B, C)}
      (0,-1) node (a') {\permcat(\ol{A} \otimes \ol{B}, \ol{C})}
      (1,-1) node (b') {\permcats(\ol{A} \otimes \ol{B}, \ol{C})}
      ;
      \draw[1cell] 
      (a) edge node {(-)^s} (b)
      (a') edge['] node {(-)^s} (b')
      (a) edge['] node {R_* (P \otimes Q)^*} (a')
      (b) edge node {R_* (P \otimes Q)^*} (b')
      ;
    \end{tikzpicture}
  \]
  commutes on objects.
  Consider monoidal transformations
  \[
    \al\cn F \To F' \inspace \permcat(A \otimes B, C),
  \]
  together with
  \[
    \phi \cn P \To P', \quad
    \psi \cn Q \To Q', \andspace
    \ga \cn R \To R'.
  \]
  It is straightforward to verify the equality
  \[
    \bigl( \ga * \al * (\phi \otimes \psi) \bigr)^s
    = \ga * \al^s * (\phi \otimes \psi)
  \]
  of monoidal transformations using \cref{eq:als-components}. 
  Taking each of $\phi, \psi, \ga$ to be the identity, the resulting equation shows that the naturality square above also commutes on morphisms.
  Taking $\al$ to be the identity shows that this naturality square is actually a 2-naturality square.
  This completes the proof that $(-)^s$ is 2-natural.
  
  We only prove that $(-)^s$ is strict symmetric monoidal, and the claim that $j$ is symmetric monoidal follows by doctrinal adjunction \cite[Section~2.1]{Kelly1974Doctrinal}.
  Let $F, G \cn A \otimes B \to C$ be symmetric monoidal functors. 
  We will show that 
  \[
  \bigl( F \oplus G \bigr)^s = F^s \boxplus G^s,
  \]
  where $\oplus$ is the pointwise permutative structure of \cref{defn:oplus_fun_trans} and $\boxplus$ is the bilinear permutative structure of \cref{defn:sum-for-strict-AB-C}.
  Both of these functors are strict symmetric monoidal by construction, so we only need to verify that they are equal as functors.
  As this is accomplished by checking both definitions on all of the objects and morphisms, we give some representative calculations and leave the rest for the reader.
  For objects, we have the following equalities by the definitions of $(-)^s$, $\oplus$, and $\boxplus$.
  \begin{align*}
    \bigl( F \oplus G \bigr)^s \bigl( \tsum a_i . b_i \bigr)
    & = \tsum \bigl( F \oplus G \bigr)(a_i. b_i)
      = \tsum \bigl( F(a_i.b_i) + G(a_i.b_i) \bigr)\\
    \bigl( F^s \boxplus G^s \bigr) \bigl( \tsum a_i . b_i \bigr)
    & = \tsum \bigl( F(a_i.b_i) + G(a_i.b_i) \bigr)
  \end{align*}
  Therefore, these functors agree on objects.
  The same calculation suffices for sums of morphisms $\tsum f_i.g_i$.
  On an adjoined morphism of the form $\de^L_a$, we have the following equalities.
  \begin{align*}
    \bigl( F \oplus G \bigr)^s \bigl( \de^L_a \bigr)
    & = \bigl( F \oplus G \bigr) \bigl( \de^L_a \bigr) \circ \bigl( F \oplus G \bigr)_2\\
    & = \bigl( F \oplus G \bigr) \bigl( \de^L_a \bigr) \circ \bigl( F_2 + G_2 \bigr) \circ \bigl( 1 + \be + 1 \bigr)\\
    \bigl( F^s \boxplus G^s \bigr) \bigl( \de^L_a \bigr)
    & = \Bigl( F^s\bigl(\de^L_a\bigr) + G^s\bigl(\de^L_a\bigr) \Bigr) \circ \bigl( 1 + \be + 1 \bigr)\\
    & = \Bigl( \bigl(F\bigl(\de^L_a\bigr)\circ F_2\bigr) + \bigl(G\bigl(\de^L_a\bigr)\circ G_2\bigr) \Bigr) \circ \bigl( 1 + \be + 1 \bigr)
  \end{align*}
  By the functoriality of $+$ in $C$, these functors agree on the adjoined morphisms $\de^L_a$.
  Similar calculations suffice for the other adjoined morphisms.
\end{proof}

\begin{rmk}\label{rmk:strong-needed}
  Note that the natural isomorphim $F^s \To F$ \cref{eq:Fs-to-F} in the proof of \cref{prop:tensor-cofib-adj-equiv} has components given by the monoidal and unit constraints of $F$.
  Thus, invertibility of these components depends on the assumption that $F$ is a symmetric monoidal functor in the strong sense of \cref{defn:smfunctor}.
\end{rmk}

The following result establishes a biadjunction between the 2-functors
\[
  - \otimes B \cn \permcat \lradj \permcat \bacn \permcat\bigl( B, - \bigr)
\]
by \cref{prop:biadj-loceq}.
\begin{thm}[Tensor-hom biadjunction]\label{thm:biadj}
  For permutative categories $A$, $B$, and $C$, there is an adjoint equivalence
  \begin{equation}\label{eq:tensor-hom-equiv}
    W \cn \permcat\bigl(A\otimes B, C\bigr) \lradjequiv \permcat\bigl(A, \permcat(B,C)\bigr) \bacn \Wdot
  \end{equation}
  with the following properties.
  \begin{itemize}
  \item The functor $W \cn \permcat\bigl(A\otimes B, C\bigr) \to \permcat\bigl(A, \permcat(B,C)\bigr)$ is 2-natural with respect to symmetric monoidal functors in $A$, $B$, and $C$ and is strict symmetric monoidal with respect to the pointwise structures.
  \item The functor $\Wdot \cn \permcat\bigl(A, \permcat(B,C)\bigr) \to \permcat\bigl(A\otimes B, C\bigr)$ is 2-natural with respect to symmetric monoidal functors in $A$ and $B$, is 2-natural with respect to strict symmetric monoidal functors and pseudonatural with respect to symmetric monoidal functors in $C$, and is symmetric monoidal with respect to the pointwise structures.
  \end{itemize}
\end{thm}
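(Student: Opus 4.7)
The plan is to realize $W$ and $\Wdot$ as composites of three comparisons established earlier: the adjoint equivalence $\bigl(j,(-)^s\bigr)$ of \cref{prop:tensor-cofib-adj-equiv}, the isomorphism $\strict{A\otimes B, C}\iso \bilin(A,B;C)$ of \cref{prop:otimes-bilin-iso}, and the isomorphism $\permcat\bigl(A,\permcat(B,C)\bigr)\iso \bilin(A,B;C)$ of \cref{prop:curry}. Explicitly, $W$ is the composite
\[
\permcat(A\otimes B, C) \fto{(-)^s} \permcats(A\otimes B, C) \fto{\iso} \bilin(A,B;C) \fto{\iso} \permcat\bigl(A, \permcat(B,C)\bigr),
\]
and $\Wdot$ is the composite in the opposite order, ending with the inclusion $j$. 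Since the two middle factors are isomorphisms and $\bigl((-)^s, j\bigr)$ is an adjoint equivalence by \cref{prop:tensor-cofib-adj-equiv}, the composite is an adjoint equivalence whose unit and counit are obtained from those of $\bigl((-)^s, j\bigr)$ by transport across the isomorphisms.

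Naturality in $A$ and $B$ transfers directly, since each of the three factors is 2-natural in those variables with respect to arbitrary symmetric monoidal functors. The delicate point is the asymmetric behavior in $C$. For a general symmetric monoidal $R\cn C \to \ol C$ I would check by direct computation that $W(R_* F) = R_* W(F)$ as objects of $\permcat\bigl(A, \permcat(B, \ol C)\bigr)$: both correspond to the bilinear map with underlying functor $(a,b)\mapsto R\bigl(F(a.b)\bigr)$, and the formulas of \cref{defn:Fs} together with the identity $(R\circ F)_2 = R(F_2)\circ R_2$ for composite monoidal constraints show that the bilinearity and unit constraints computed from $(R\circ F)^s$ agree with those obtained by postcomposing $F^s\circ \uni$ with $R$. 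A parallel check handles morphisms. For $\Wdot$, the analogous square fails to commute strictly when $R$ is non-strict, since $R\circ \Wdot(G)$ is generally not strict while $\Wdot(R_* G)$ is; the natural isomorphism between them assembled from the monoidal and unit constraints of $R$ provides the pseudonaturality 2-cell, which reduces to the identity precisely when $R$ is strict and thereby recovers 2-naturality in that case.

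The monoidal claims follow by tracking the monoidal behavior of each factor: the two isomorphisms from \cref{prop:curry,prop:otimes-bilin-iso} are strict symmetric monoidal by construction, $(-)^s$ is strict symmetric monoidal from the pointwise structure to the bilinear one, and $j$ is symmetric monoidal but not strict, all by \cref{prop:tensor-cofib-adj-equiv} and \cref{defn:sum-for-strict-AB-C}. I expect the main obstacle to lie in the explicit construction of the pseudonaturality constraint for $\Wdot$ in $C$, which requires transporting the monoidal constraints of $R$ through both isomorphisms and verifying the pseudonaturality axioms. Since all of the underlying data is forced by the explicit formulas of \cref{defn:Fs} and the isomorphisms of \cref{prop:curry,prop:otimes-bilin-iso}, this amounts to careful bookkeeping rather than a conceptual difficulty.
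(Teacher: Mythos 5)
Your proposal matches the paper's proof: the paper defines $W$ and $\Wdot$ as exactly these composites through $\strict{A\otimes B,C}$ and $\bilin(A,B;C)$, verifies 2-naturality of $W$ in $C$ by the same direct comparison of bilinearity constraints using $(R\circ F)_2 = R(F_2)\circ R_2$, and builds the pseudonaturality constraint $\Wdot_R$ from the components $R_2$ (and $R_0$), which is the identity when $R$ is strict. The monoidal claims are likewise deduced from \cref{prop:curry}, \cref{defn:sum-for-strict-AB-C}, and \cref{prop:tensor-cofib-adj-equiv}, so this is essentially the paper's argument.
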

\begin{proof}
  Combining \cref{prop:curry,prop:otimes-bilin-iso,prop:tensor-cofib-adj-equiv} gives the following, which defines the two functors of \cref{eq:tensor-hom-equiv}
  \[
    \begin{tikzpicture}[x=40mm,y=15mm]
      \draw[0cell] 
      (0,0) node (a) {
        \permcat\bigl(A \otimes B, C\bigr) 
      }
      (1,0) node (b) {
        \strict{A \otimes B, C}
      }
      (b) +(.6,-1) node (c) {
        \bilin(A,B; C)
      }
      (b) +(1.2,0) node (d) {
        \permcat\bigl(A , \permcat(B, C)\bigr).
      }
      ;
      \draw[1cell] 
      (a) edge[transform canvas={shift={(0,3pt)}}] node {(-)^s} (b)
      (b) edge[transform canvas={shift={(0,-3pt)}}] node {j} (a)
      (b) edge node {\iso} node['] {\ref{prop:otimes-bilin-iso}} (c)
      (c) edge node {\iso} node['] {\ref{prop:curry}} (d)
      ;
    \end{tikzpicture}
  \]
  We write $W$ for the composite from left to right (using $(-)^s$), and $\Wdot$ for the composite from right to left (using $j$).
  Those results establish the adjoint equivalence and 2-naturality in $A$ and $B$ with respect to all symmetric monoidal functors, but only establish 2-naturality with respect to strict monoidal functors $C \to \ol{C}$.
  We will verify 2-naturality of \cref{eq:tensor-hom-equiv} with respect to not-necessarily-strict symmetric monoidal functors in $C$ below.
  $W$ is strict symmetric monoidal and $\Wdot$ is symmetric monoidal by \cref{prop:curry}, \cref{defn:sum-for-strict-AB-C}, and \cref{prop:tensor-cofib-adj-equiv}.
  
  Now we verify 2-naturality in $C$. Since the isomorphism of \cref{prop:curry} is 2-natural in $C$, is suffices to check 2-naturality of the composite of the first two maps.
  For not-necessarily-strict symmetric monoidal functors
  \[
    F \cn A \otimes B \to C
    \andspace
    R \cn C \to \ol{C},
  \]
  let
  \[
    H \in \bilin(A,B;C) \andspace
    \ol{H} \in \bilin(A,B;\ol{C})
  \]
  be the bilinear maps corresponding to
  \[
    F^s \in \strict{A \otimes B, C}
    \andspace
    (R_*F)^s \in \strict{A \otimes B, \ol{C}},
  \]
  respectively.
  Then one verifies that $R_*H$ and $\ol{H}$ are equal as bilinear maps.  On objects, both send $(a,b) \in A \times B$ to $RF(a.b)$. A similar calculation works for morphisms, thus showing the underlying functors are equal.  On bilinearity constraints, for example, for $a \in A$ and $b,b' \in B$ we have
  \begin{align*}
    \bigl( R_*H \bigr)_a(b,b')
    & = R \bigl( F(\de^L_a) \circ F_2 \bigr) \,\circ\, R_2, \andspace\\
    \bigl( \ol{H} \bigr)_a(b,b')
    & = \bigl( RF \bigr)(\de^L_a) \,\circ\, \bigl( RF \bigr)_2\\
    & = \bigl( RF \bigr)(\de^L_a) \,\circ\, R(F_2) \circ R_2.
  \end{align*} 
  A similar verification shows that the other bilinearity constraints agree.
  This shows that the following two composites involving $(-)^s$ are equal on objects.
  \[
    \begin{tikzpicture}[x=40mm,y=20mm]
      \draw[0cell] 
      (0,0) node (a) {\permcat(A\otimes B, C)}
      (1,0) node (b) {\strict{A \otimes B, C}}
      (2,0) node (c) {\bilin(A,B;C)}
      (0,-1) node (a') {\permcat(A\otimes B, \ol{C})}
      (1,-1) node (b') {\strict{A \otimes B, \ol{C}}}
      (2,-1) node (c') {\bilin(A,B;\ol{C})}
      ;
      \draw[1cell] 
      (a) edge[transform canvas={shift={(0,0pt)}}] node {(-)^s} (b)
      (b) edge node {\iso} (c)
      (a') edge[transform canvas={shift={(0,0pt)}}] node {(-)^s} (b')
      (b') edge node {\iso} (c')
      (a) edge['] node {R_*} (a')
      (c) edge node {R_*} (c')
      ;
    \end{tikzpicture}
  \]
  
  For the remainder of the verification of 2-naturality, we proceed as in the proof of \cref{prop:tensor-cofib-adj-equiv}. 
  Consider the following monoidal transformations between symmetric monoidal functors:
  \[
    \al \cn F \To F' \cn A \otimes B \to C
    \andspace
    \ga \cn R \To R' \cn C \to \ol{C}.
  \]
  Writing $\widehat{F} \cn A, B \to C$ for the bilinear map associated to $F$ and $\widehat{\al} \cn \widehat{F} \To \widehat{F'}$ for the bilinear transformation associated to $\al$, then using \cref{eq:als-components} one verifies
  \[
    \widehat{(\ga * \al)^s} = \ga * \widehat{(\al^s)}.
  \]
  Taking $\ga$ to be the identity shows that the naturality square involving $(-)^s$ commutes.
  Taking $\al$ to be the identity shows that this naturality square is actually a 2-naturality square.
  Thus the composite
  \[
    \begin{tikzpicture}[x=40mm,y=20mm]
      \draw[0cell] 
      (0,0) node (a) {\permcat(A\otimes B, C)}
      (1,0) node (b) {\strict{A \otimes B, C}}
      (2,0) node (c) {\bilin(A,B;C)}
      ;
      \draw[1cell] 
      (a) edge[transform canvas={shift={(0,0pt)}}] node {(-)^s} (b)
      (b) edge node {\iso} (c)
      ;
    \end{tikzpicture}
  \]
  is 2-natural with respect to symmetric monoidal functors and monoidal transformations, so composing with the 2-natural isomorphism from \cref{prop:curry} proves that $W$ is 2-natural in all variables.
  
  We now turn to the pseudonaturality of $\Wdot$ in not-necessarily-strict symmetric monoidal functors in $C$.
  Let $R \cn C \to \ol{C}$ be a symmetric monoidal functor.
  We will produce the natural isomorphism labeled $\Wdot_R$ in the square below, and then check that these assemble to give a pseudonatural transformation.
   \[
    \begin{tikzpicture}[x=50mm,y=20mm]
      \draw[0cell] 
      (0,0) node (a) {\permcat \bigl(A, \permcat(B, C) \bigr)}
      (1,0) node (b) {\permcat \bigl(A, \permcat(B, \ol{C}) \bigr)}
      (0,-1) node (a') {\permcat\bigl( A\otimes B, C \bigr)}
      (1,-1) node (b') {\permcat\bigl( A\otimes B, \ol{C} \bigr)}
      (.5,-.5) node (m) {\Downarrow \Wdot_R}
      ;
      \draw[1cell] 
      (a) edge node {R_*} (b)
      (b) edge[] node {\Wdot} (b')
      (a) edge[swap] node {\Wdot} (a')
      (a') edge['] node {R_*} (b')
      ;
    \end{tikzpicture}
  \]
In order to define a natural isomorphism $\Wdot_R \cn \Wdot \circ R_* \To R_* \circ \Wdot$, we must give a component at each object $F$ in $\permcat \bigl(A, \permcat(B, C) \bigr)$.
Such a component is a morphism of $\permcat\bigl( A\otimes B, \ol{C} \bigr)$.
These morphisms are monoidal transformations and therefore themselves have components at objects $\tsum a_i . b_i$.
Let $F \cn (A, +) \to \bigl( \permcat(B,C), \oplus \bigr)$ be a symmetric monoidal functor, and let $\tsum a_i . b_i$ be an object of $A \otimes B$.
Omitting both $F$ and $\tsum a_i . b_i$ from the notation below to avoid clutter, the component of $\Wdot_R$ is given by
\[
R_2 \cn \tsum R \bigl( F(a_i)(b_i) \bigr) \cong R \bigl( \tsum F(a_i)(b_i) \bigr),
\]
where we follow our convention that this is actually $R_0$ for a sum of length zero, the identity for a sum of length one, the morphisms $R_2$ for a sum of length two, and a composite of $R_2$'s for longer sums (all of which are equal by coherence).

We verify the axioms as follows.
\begin{itemize}
\item These components define a natural isomorphism 
\[
\Wdot \bigl( R_* F \bigr) \to R_* \Wdot(F)
\]
using the naturality of $R_2$ for morphisms of the form $\tsum f_i.g_i$, the definition of $\Wdot$ for adjoined morphisms of the form $\de$ or $\ze$, and the fact that $R$ is a symmetric monoidal functor for adjoined morphisms of the form $\be$.
\item This is a monoidal natural isomorphism using the naturality of $R_2$. Thus the components of $\Wdot_R$ are morphisms in $\permcat\bigl( A\otimes B, \ol{C} \bigr)$.
\item Given a monoidal transformation $\ga \cn R \To R'$, the equality
\[
\Wdot_{R'} \circ \bigl( \Wdot * \ga_* \bigr) = \bigl( \ga_* * \Wdot \bigr) \circ \Wdot_R
\]
holds using monoidal transformation axioms for $\ga$. Thus the 2-cell $\Wdot_R$ is natural in $R$.
\item When $R$ is the identity symmetric monoidal functor, $\Wdot_1$ is the identity by definition.
\item For a composable pair $\fto{R} \fto{S}$ of symmetric monoidal functors, the equality
   \[
    \begin{tikzpicture}[x=17.5mm,y=10mm]
      \draw[0cell] 
      (0,0) node (a) {}
      (1,0) node (b) {}
      (0,-1) node (a') {}
      (1,-1) node (b') {}
      (.5,-.5) node (m) {\Downarrow \Wdot_R}
       (2,0) node (c) {}
       (2,-1) node (c') {}
      (1.5,-.5) node (m) {\Downarrow \Wdot_S}
      (2.75,-.5) node (=) {=}
      (3.5,0) node (ta) {}
      (4.5,0) node (tb) {}
      (3.5,-1) node (ta') {}
      (4.5,-1) node (tb') {}
      (4,-.5) node (tm) {\Downarrow \Wdot_{SR}}
      ;
      \draw[1cell] 
      (a) edge node {R_*} (b)
      (b) edge[] node {} (b')
      (a) edge[swap] node {\Wdot} (a')
      (a') edge['] node {R_*} (b')
      (b) edge node {S_*} (c)
       (b') edge['] node {S_*} (c')
       (c) edge[] node {\Wdot} (c')
       (ta) edge node {(SR)_*} (tb)
      (tb) edge[] node {\Wdot} (tb')
      (ta) edge[swap] node {\Wdot} (ta')
      (ta') edge['] node {(SR)_*} (tb')
      ;
    \end{tikzpicture}
  \]
  holds by the definition of $(SR)_2$ as $S(R_2) \circ S_2$. Thus the cells $\Wdot_R$ satisfy the axioms of a pseudonatural transformation.
\end{itemize}
  \end{proof}

\section{Coherence of the adjoined morphisms}
\label{sec:coherence}

In this section we prove Coherence Theorems~\ref{thm:tensor-coherence} and~\ref{thm:iterated-tensor-coherence} for the adjoined morphisms in (iterated) tensor products.
Our approach uses a free construction and a corresponding biadjunction from \cite{BKP1989Two}. This coherence result will be used to quickly check axioms for the symmetric monoidal 2-category structure on $\permcat$, but it is also the first step in showing that the free permutative category 2-functor is itself symmetric monoidal.

\begin{defn}\label{defn:freeP}
  Let
  \[
    \bigP\cn \Cat \to \permcats
  \]
  denote the free permutative category 2-functor, and 
  \[
    U\cn \permcats \to \Cat
  \]
  denote the underlying category 2-functor.
  For a category $A$, the permutative category $\bigP A$ has objects finite sums $\tsum a_i$, for $a_i$ objects of $A$, and morphisms generated by sums $\tsum f_i$ and symmetries $\be$.
  This functor has an alternative description via the categorical Barratt-Eccles operad \cite[Lemmas~4.3-4.5]{May1974Einfty}.
  There is a 2-adjunction between $\bigP$ and $U$ giving a 2-natural isomorphism 
  \begin{equation}\label{eq:bigP-2adj}
    \permcats(\bigP X, B) \iso \Cat(X, UB),
  \end{equation} 
  where $B$ is a permutative category and $X$ is a category.
\end{defn}

\begin{defn}\label{defn:Phi}
  Suppose $X$ and $Y$ are categories.
  Define a strict symmetric monoidal functor
  \[
    \Phi\cn \bigP X \otimes \bigP Y \to
    \bigP \bigl( X \times Y \bigr)
  \]
  as follows.
  \begin{itemize}
  \item For objects and morphisms $f_i\cn x_i \to x_i'$ in $X$ and $g_j\cn y_j \to y_j'$ in $Y$,
    \begin{align*}
      \Phi\Bigl(\bigl( \tsum_i x_i \bigr).\bigl( \tsum_j y_j \bigr)\Bigr)
      & = \tsum_i \tsum_j (x_i,y_j) \andspace\\
      \Phi\Bigl(\bigl( \tsum_i f_i \bigr).\bigl( \tsum_j g_j \bigr)\Bigr)
      & = \tsum_i \tsum_j (f_i,g_j).
    \end{align*} 
    We note in the two special cases involving an empty sum, $\bigl( \tsum_i x_i \bigr).0$ and $0. \bigl( \tsum_i y_i \bigr)$, $\Phi$ takes the value $0$.
  \item For the adjoined morphism $\be\cn x_1 +x_2 \to x_2 +x_1$ in $\bigP X$ and an object $\tsum_j y_j$ in $\bigP Y$,
  \[ 
  \Phi(\be.1)=\be \cn 
  \tsum_j (x_1,y_j) + \tsum_j (x_2,y_j) \to \tsum_j (x_2,y_j) + \tsum_j (x_1,y_j).
  \]
  \item For the adjoined morphism $\be\cn y_1 +y_2 \to y_2 +y_1$ in $\bigP Y$ and an object $\tsum_i x_j$ in $\bigP X$,
  \[ 
  \Phi(1.\be)=\tsum_i \be \cn 
  \tsum_i \bigl((x_i,y_1) + (x_i,y_2)\bigr) \to \tsum_i \bigl((x_i,y_2) + (x_i,y_1)\bigr).
  \]
  \item For the adjoined morphisms
    \[
      \de^L_{\ssum x_i}\cn
      \bigl( \tsum x_i \bigr).\bigl( \tsum_j y_j\bigr) + \bigl( \tsum x_i \bigr).\bigl(\tsum_k y'_k \bigr)
      \to
      \bigl( \tsum x_i \bigr).\bigl( \tsum_j y_j + \tsum_k y'_k \bigr)
    \]
    in $\bigP X \otimes \bigP Y$,
    \begin{equation}\label{eq:Phi-deL}
      \Phi\bigl(\de^L_{\ssum x_i}\bigr) = \beta\cn
      \tsum_i \tsum_j (x_i,y_j) + \tsum_i \tsum_k (x_i,y'_k)
      \to
      \tsum_i \Bigl(\tsum_j (x_i,y_j) + \tsum_k (x_i,y'_k)\Bigr)
    \end{equation} 
    in $\bigP\bigl(X \times Y\bigr)$.
  \item For the symmetry $\beta$ in $\bigP X \otimes \bigP Y$,
    \[
      \Phi\bigl(\beta\bigr) = \beta \inspace \bigP\bigl(X \times Y\bigr).
    \]
  \item The remaining adjoined morphisms $\de^R$, $\ze^L$, and $\ze^R$ are sent to identities.
  \end{itemize} 
\end{defn}

\begin{thm}\label{thm:free-tensor-adj-equiv}
  Let $X$ and $Y$ be categories.
  There is an adjoint equivalence
  \[
    \Phi \cn \bigP X \otimes \bigP Y \lradjequiv
    \bigP \bigl( X \times Y \bigr) \bacn \Phidot
  \]
  such that the following statements hold.
  \begin{itemize}
  \item Each of $\Phi$ and $\Phidot$ is 2-natural in $X$ and $Y$.
  \item Each of $\Phi$ and $\Phidot$ is strict symmetric monoidal.
  \end{itemize}
\end{thm}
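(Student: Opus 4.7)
The plan is to construct $\Phidot$ using the 2-adjunction $\bigP \dashv U$ of~\eqref{eq:bigP-2adj}, check that $\Phi \Phidot = 1_{\bigP(X \times Y)}$ strictly, and build a monoidal natural isomorphism $\eta \cn 1 \To \Phidot \Phi$ from the adjoined morphisms $\de^L, \de^R, \ze^L, \ze^R$ in $\bigP X \otimes \bigP Y$. Taking the counit $\epsilon \cn \Phi\Phidot \To 1$ to be the identity, the tuple $(\Phi, \Phidot, \eta, \epsilon)$ will give the adjoint equivalence, with one triangle identity handled for free and the other reducing to a single check on $\eta$.

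To define $\Phidot$, apply~\eqref{eq:bigP-2adj} to the functor $\iota_{X,Y} \cn X \times Y \to U(\bigP X \otimes \bigP Y)$ sending $(x,y) \mapsto x.y$ and $(f,g) \mapsto f.g$. This produces a unique strict symmetric monoidal extension $\Phidot$ with $\tsum (x_i,y_i) \mapsto \tsum x_i.y_i$, $\tsum (f_i,g_i) \mapsto \tsum f_i.g_i$, and $\be \mapsto \be$; strict symmetric monoidality and 2-naturality in $X$ and $Y$ are inherited from the 2-adjunction. For $\Phi$, well-definedness follows from \cref{prop:otimes-bilin-iso} once one checks that the data of \cref{defn:Phi} correspond to a bilinear map $\bigP X, \bigP Y \to \bigP(X \times Y)$; the axioms of \cref{defn:bilinear} reduce to coherence for the symmetry in $\bigP(X \times Y)$. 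The identity $\Phi \Phidot = 1$ then holds by uniqueness in~\eqref{eq:bigP-2adj}, since both $\Phi \Phidot$ and $1_{\bigP(X \times Y)}$ are strict symmetric monoidal functors restricting to the inclusion $X \times Y \hookrightarrow \bigP(X \times Y)$ on generators.

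For the unit, at a simple tensor $a.b = (\tsum_{i=1}^n x_i).(\tsum_{j=1}^m y_j)$ I would define $\eta_{a.b}$ as the inverse of the canonical composite
\[
\tsum_i \tsum_j x_i.y_j \;\to\; \tsum_i x_i.\bigl(\tsum_j y_j\bigr) \;\to\; \bigl(\tsum_i x_i\bigr).\bigl(\tsum_j y_j\bigr),
\]
in which the first arrow is a composite of $\de^L_{x_i}$ (or $\ze^L_{x_i}$ if $m=0$) and the second a composite of $\de^R$'s (or $\ze^R$ if $n=0$). These composites are unambiguous because $\uni(a,-)$ and $\uni(-,b)$ are symmetric monoidal functors by relation~\ref{TEN1} of \cref{defn:AtensorB}, so Joyal--Street coherence for symmetric monoidal functors applies. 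Extend additively via $\eta_{\tsum a_i.b_i} = \tsum \eta_{a_i.b_i}$ and $\eta_0 = 1_0$, so that $\eta$ automatically satisfies the monoidal transformation axioms for strict source and target. Naturality of $\eta$ with respect to a simple morphism $f.g$ is naturality of $\de$; naturality with respect to each adjoined morphism must be checked case by case and reduces to the imposed relations of \cref{defn:AtensorB}.

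The triangle identities simplify substantially given $\epsilon = 1$. The identity $\eta * \Phidot = 1_\Phidot$ holds pointwise because $\Phidot$ lands in formal sums $\tsum x_i.y_i$ of length-$1$-by-length-$1$ tensors, on which $\eta$ is the identity by construction. The identity $\Phi * \eta = 1_\Phi$ reduces to $\Phi(\eta_{a.b}) = 1$: by \cref{defn:Phi}, $\Phi$ sends each $\de^R$ to the identity and sends $\de^L_{x_i}$, for a length-$1$ sum $x_i$, to a shuffle whose source and target coincide as formal sums in $\bigP(X \times Y)$, so the shuffle is itself the identity by Joyal--Street coherence. Strict symmetric monoidality of both functors is in hand from the constructions, and 2-naturality in $X$ and $Y$ follows from 2-naturality of \cref{prop:otimes-bilin-iso} and of~\eqref{eq:bigP-2adj}. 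I expect the main obstacle to be the case analysis required to verify naturality of $\eta$ against the pentagon~\ref{TEN2} and the unit squares~\ref{TEN6}, where one must track how the shuffle permutations underlying $\Phi(\de^L)$ interact with the imposed relations of the tensor product; these identities are ultimately all consequences of Joyal--Street coherence for symmetric monoidal categories applied in $\bigP(X \times Y)$.
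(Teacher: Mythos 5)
Your proposal is correct and follows essentially the same route as the paper: $\Phidot$ is the strict symmetric monoidal functor induced by $(x,y)\mapsto x.y$ via the universal property of $\bigP$, the composite $\Phi\Phidot$ is the identity on the nose, the unit is assembled from $\de^L$, $\de^R$, $\ze^L$, $\ze^R$ exactly as in the paper (which uses the inverse direction, a counit $\Phidot\Phi \To 1$), and both whiskerings are identities for the reasons you give. One small repair: your claim that $\Phi(\de^L_{x_i})$ is the identity for a length-one sum should be justified by noting that the underlying permutation of the shuffle in \cref{eq:Phi-deL} is trivial when the outer sum has a single term, not merely that its source and target coincide, since in the free permutative category $\bigP(X\times Y)$ a canonical symmetry with equal source and target can still be a nontrivial permutation.
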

\begin{proof}
  Since $\Phi$ sends each adjoined morphism of $\bigP X \otimes \bigP Y$ to either a symmetry or identity morphism of $\bigP(X \times Y)$, verification that $\Phi$ preserves each of the imposed relations in \cref{defn:AtensorB} follows from coherence of the symmetry.
  Moreover, $\Phi$ is strict symmetric monoidal by definition.

  Define a strict symmetric monoidal adjoint inverse
  \[
    \Phidot\cn \bigP\bigl(X \times Y\bigr) \to \bigP X \otimes \bigP Y
  \]
  as the unique strict symmetric monoidal functor induced by the functor 
  \[
    X \times Y \hookrightarrow \bigP X \times \bigP Y \hookrightarrow
    \bigP X \otimes \bigP Y.
  \]
  The above is given explicitly on objects and morphisms $f \cn x \to x'$ in $X$ and $g\cn y \to y'$ in $Y$ by the assignments
  \[
    (x,y) \mapsto x.y, \andspace
    (f,g) \mapsto f.g.
  \]
  It is immediate from the definitions that $\Phi\Phidot$ is the identity on $\bigP\bigl(X \times Y\bigr)$.
  For the reverse composite, it suffices to define a monoidal transformation
  \[
    \al \cn \Phidot\Phi \To 1_{\bigP X \otimes \bigP Y}
  \]
  by its component at simple objects $s = \bigl( \tsum_i x_i \bigr).\bigl( \tsum_j y_j \bigr)$.
  Define $\al_s$ by the composite
  \[
    \tsum_i \tsum_j x_i.y_j
    \fto{\tsum_i \de^L_{x_i}}
    \tsum_i x_i.\bigl( \tsum_j y_j\bigr)
    \fto{\de^R_{\ssum y_j}}
    \bigl( \tsum_i x_i \bigr).\bigl( \tsum_j y_j \bigr),
  \]
  using \cref{convention:F2}.
  In the special case that $s = \bigl( \tsum_i x_i \bigr).0$, $\al_s$ is $\ze^L_{\ssum x_i}$, and in the special case that $s = 0. \bigl( \tsum_i x_i \bigr).\bigl( \tsum_j y_j \bigr)$, $\al_s$ is $\ze^R_{\ssum y_i}$.
  It is easy to check that $\al$ is a well-defined monoidal natural isomorphism using the naturality of $\de^L$, $\de^R$, $\ze_L$, and $\ze^R$ together with the coherence theorem for symmetric monoidal functors \cite{JS1993Braided}.
  The whiskering $\Phi * \al$ is the identity because each $\Phi(\de^L_{x_i}) = 1$.
  The whiskering $\al * \Phidot$ is the identity because in that case $s = x.y$ for $x \in X$ and $y \in Y$.
  Therefore, $(\Phi,\Phidot)$ is an adjoint equivalence.
  
  The 2-naturality of $\Phi$ with respect to functors and natural transformations
  \[
    \theta\cn J \to J' \cn X \to X'
    \andspace
    \sigma\cn K \to K' \cn Y \to Y'
  \]
  follows from a direct check, using the formulas in \cref{defn:Phi}.
  The 2-naturality of $\Phidot$ is likewise straightforward by checking generating objects and morphisms of $\bigP \bigl( X \times Y \bigr)$.
  This completes the proof.
\end{proof}

\begin{defn}\label{defn:formal-diagram}
  Let $A$ and $B$ be permutative categories.
  \begin{description}
  \item[Diagram] A \emph{diagram} $(\mathbb{D}, D)$ in a category $X$ consists of a small category $\mathbb{D}$ and a functor $D \cn \mathbb{D} \to X$. 
  We consider a morphism $f \cn x \to y$ in $X$ as a diagram by taking $\mathbb{D} = \{ \bullet \fto{!} \bullet \}$ and the functor $D$ to be defined by $D(!) = f$.
  \item[Formal diagrams] 
  A diagram $(\mathbb{D}, D)$ in $A \otimes B$
  is called \emph{formal} if there is a lift $\wt{D}$ below making the diagram commute,
  \[
    \begin{tikzpicture}[x=30mm,y=20mm]
      \draw[0cell] 
      (0,0) node (d) {\mathbb{D}}
      (1,1) node (f) {\bigP \bigl( \ob A \bigr) \otimes \bigP \bigl( \ob B \bigr)}
      (1,0) node (ab) {A \otimes B}
      ;
      \draw[1cell] 
      (d) edge[dashed] node {\wt{D}} (f)
      (f) edge node {\chi} (ab)
      (d) edge node {D} (ab)
      ;
    \end{tikzpicture}
  \]
  where $\ob A$ and $\ob B$ are regarded as discrete categories and $\chi$ is the unique strict symmetric monoidal functor induced by the inclusions $\ob A \hookrightarrow A$ and $\ob B \hookrightarrow B$.
 \item[Underlying permutations] Let $\ka$ be a formal morphism $\ka$ in $A \otimes B$.
 An \emph{underlying permutation} of $\ka$ is a morphism of the form
  $\Phi(\wt{\ka})$ in $\bigP\bigl(\ob A \times \ob B\bigr)$ for some lift $\wt{\ka}$ in $\bigP\bigl(\ob A\bigr) \otimes \bigP\bigl(\ob B\bigr)$.
  \end{description}
\end{defn}

\begin{rmk}
Note that the only morphisms in $\bigP\bigl(\ob A \times \ob B\bigr)$ are symmetries, so an underlying permutation should be viewed as a choice of source object and an element in the appropriate symmetric group.
\end{rmk}

\begin{rmk}[Example of underlying permutations]\label{rmk:foil-perm}
  For permutative categories $A$ and $B$ with $a,a' \in A$ and $b,b' \in B$, the formulas in \cref{defn:Phi} give
  \[
    \Phi\bigl( (a+a').(b+b') \bigr) = (a,b) + (a,b') + (a',b) + (a',b').
  \]
  Furthermore, using \cref{eq:Phi-deL}, the underlying permutation of the composite
  \[
    a.b + a.b' + a'.b + a'.b' \fto{\de^R \,\circ\, (\de^L + \de^L)} (a+a').(b+b')
  \]
  is the identity.
  On the other hand, \cref{eq:Phi-deL} gives the underlying permutation of the composite
  \[
    a.b + a'.b + a.b' + a'.b' \fto{\de^L \,\circ\, (\de^R + \de^R)} (a+a').(b+b')
  \]
  as that of $1 + \be + 1$.
  Thus, the underlying permutations of the two composites in the pentagon~\ref{defn:AtensorB}.(\ref{TEN2}) are equal.
\end{rmk}

\begin{rmk}[Non-uniqueness of underlying permutations]\label{rmk:non!-perm}
We note that a morphism $\ka$ in $A \otimes B$ may not have a uniquely defined underlying permutation. If an object $a \in A$ has the property that $\be_{a,a} \cn a + a \iso a + a$ is the identity, then for any object $b \in B$ the identity map on $(a+a).b$ has underlying permutation given by both the identity (by taking the identity lift) and the transposition $(1 \ 2)$ (by taking the lift $\be.1$).
\end{rmk}

\begin{thm}[Coherence in Tensor Products]\label{thm:tensor-coherence}
Let $\ka$ and $\ka'$ be two parallel morphisms in $A \otimes B$.
  Suppose the 2-arrow diagram consisting of $\ka$ and $\ka'$ is formal, and suppose $\ka$ and $\ka'$ have underlying permutations which coincide.
  Then $\ka = \ka'$.
\end{thm}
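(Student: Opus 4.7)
The plan is to lift the claimed equality $\ka = \ka'$ from $A \otimes B$ to the more rigidly-structured category $\bigP(\ob A) \otimes \bigP(\ob B)$, where it will follow from faithfulness of the equivalence $\Phi$ of \cref{thm:free-tensor-adj-equiv}.

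First, formality of the 2-arrow diagram supplies parallel lifts $\wt{\ka}, \wt{\ka}' \cn \wt{s} \to \wt{t}$ in $\bigP(\ob A) \otimes \bigP(\ob B)$ with $\chi \wt{\ka} = \ka$ and $\chi \wt{\ka}' = \ka'$, chosen so that the underlying permutations satisfy $\Phi(\wt{\ka}) = \Phi(\wt{\ka}')$ in $\bigP(\ob A \times \ob B)$. The hypothesis on coinciding underlying permutations is precisely the existence of such a compatible choice of lift.

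Second, we observe that any equivalence of categories is faithful, and apply this to $\Phi$. Explicitly, with adjoint inverse $\Phidot$ and monoidal natural isomorphism $\al \cn \Phidot \Phi \To 1$ from \cref{thm:free-tensor-adj-equiv}, applying $\Phidot$ to $\Phi(\wt{\ka}) = \Phi(\wt{\ka}')$ and invoking naturality of $\al$ at both morphisms yields
\[
\wt{\ka} \circ \al_{\wt{s}} \,=\, \al_{\wt{t}} \circ \Phidot \Phi(\wt{\ka}) \,=\, \al_{\wt{t}} \circ \Phidot \Phi(\wt{\ka}') \,=\, \wt{\ka}' \circ \al_{\wt{s}}.
\]
Since $\al_{\wt{s}}$ is invertible, $\wt{\ka} = \wt{\ka}'$. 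Applying $\chi$ to this equality gives $\ka = \chi \wt{\ka} = \chi \wt{\ka}' = \ka'$.

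I do not anticipate a real obstacle here: the substantive work has been packaged into \cref{thm:free-tensor-adj-equiv}, which reduces coherence in $A \otimes B$ to coherence of symmetries and identities in $\bigP(\ob A \times \ob B)$. The only mildly delicate point is bookkeeping around the formal lift hypothesis to ensure that the parallel lifts $\wt{\ka}, \wt{\ka}'$ can be chosen simultaneously with matching underlying permutations, but this is what the two hypotheses of the theorem assert in tandem.
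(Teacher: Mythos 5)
Your proposal is correct and follows essentially the same route as the paper: use formality to obtain parallel lifts $\wt{\ka},\wt{\ka}'$ with matching images under $\Phi$, conclude $\wt{\ka}=\wt{\ka}'$ from the fact that the equivalence $\Phi$ of \cref{thm:free-tensor-adj-equiv} is faithful, and push the equality back down along $\chi$. The only difference is that you spell out the faithfulness of $\Phi$ via the adjoint inverse and the natural isomorphism, whereas the paper simply invokes that $\Phi$ is an equivalence.
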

\begin{proof}
  Since the diagram $D$ consisting of $\ka$ and $\ka'$ is formal, we have lifts
  \[
    \bigl\{\!
    \begin{tikzpicture}[x=3em,baseline={(0,-3pt)}]
      \draw[0cell] (0,0) node (A) {\bullet} (1,0) node (B) {\bullet};
      \draw[1cell] (A) edge[transform canvas={shift={(0,-2pt)}}] (B);
      \draw[1cell] (A) edge[transform canvas={shift={(0,+2pt)}}] (B);
    \end{tikzpicture}
    \!\bigr\}
    \fto{\wt{D} = \{\wt{\ka},\wt{\ka}'\}} \bigP \bigl(\ob A\bigr) \otimes \bigP \bigl(\ob B\bigr)
  \]
that can be chosen such that $\Phi\wt{\ka} = \Phi\wt{\ka}'$ by hypothesis.
  The functor
  \[
    \Phi\cn \bigP \bigl(\ob A\bigr) \otimes \bigP \bigl(\ob B\bigr) \to
    \bigP \bigl( \ob A \times \ob B \bigr)
  \]
  from \cref{thm:free-tensor-adj-equiv} is a strict symmetric monoidal equivalence.
  Therefore for parallel $\wt{\ka}, \wt{\ka}'$, $\Phi\wt{\ka} = \Phi\wt{\ka}'$ if and only if $\wt{\ka} = \wt{\ka}'$ which then immediately implies that $\ka = \ka'$ by
  \[
  \ka = \chi \bigl( \wt{\ka} \bigr) = \chi \bigl( \wt{\ka}' \bigr) = \ka'.\qedhere
  \]
\end{proof}

We can extend \cref{thm:tensor-coherence} to iterated tensor products of permutative categories as follows.

\begin{defn}\label{defn:iterated-formal-diagram}
\ 
\begin{description}
\item[Iterated tensors] An \emph{iterated tensor of length 1} is a permutative category $A$. 
An \emph{iterated tensor of length $k$}, denoted $\ol{Q}$, is a triple 
\[
\ol{Q} = (Q, \ol{L_i}, \ol{R_j})
\]
where $\ol{L_i}$ is an iterated tensor of length $i$, $\ol{R_j}$ is an iterated tensor of length $j$ with $i + j = k$, and $Q=L_i\otimes R_j$. 
We call $Q$ the \emph{underlying object} of the iterated tensor.
\item[Iterated $\bigP \ob$ and $\chi$] For an iterated tensor of length 1 $A$, define
\begin{itemize}
    \item $\bigl(\bigP \ob\bigr)(A) = \bigP \bigl( \ob A \bigr)$,
    \item $T(A) = \ob A$, and
    \item $\chi \cn \bigl(\bigP \ob\bigr)(A) \to A$ as in \cref{defn:formal-diagram}.
\end{itemize}  
For an iterated tensor of length $k > 1$, $\ol{Q} = (Q, \ol{L_i}, \ol{R_j})$, define
\[
\bigl(\bigP \ob\bigr)\bigl( \ol{Q} \bigr) = \Bigl( \bigl(\bigP \ob\bigr)(L_i) \otimes  \bigl(\bigP \ob\bigr)(R_j), \bigl(\bigP \ob\bigr)\bigl( \ol{L_i} \bigr), \bigl(\bigP \ob\bigr)\bigl(\ol{R_j}\bigr)  \Bigr)
\] 
and
\[
T\bigl(\ol{Q}\bigr) = T(\ol{L_i}) \times T(\ol{R_j}).
\]
Define $\chi \cn \bigl(\bigP \ob\bigr)(Q)   \to Q$ to be 
\[ \bigl(\bigP \ob\bigr)(L_i) \otimes  \bigl(\bigP \ob\bigr)(R_j) \fto{\chi \otimes \chi}  L_i \otimes R_j.\]

\item[Formal diagrams in iterated tensors] Let $Q$ be the underlying object of an iterated tensor of length $k$. 
A diagram $(\mathbb{D}, D)$ in $Q$ is called \emph{formal} if there exists a lift $\wt{D} \cn \mathbb{D} \to \bigl(\bigP \ob\bigr)(Q) $ such that $D = \chi \circ \wt{D}$.
\item[Iterated $\Phi$] For an iterated tensor of length $1$, define $\Phi\cn \bigl(\bigP \ob\bigr)(A) \to \bigP T(A)$ to be the identity. 
For an iterated tensor of length $k >1$, $\ol{Q} = (Q,\ol{L_i}, \ol{R_j})$, define $\Phi\cn \bigl( \bigP\ob\bigr)\bigl(\ol{Q}\bigr) \to \bigP T\bigl(\ol{Q}\bigr)$ to be the composite
\[
\bigl(\bigP \ob\bigr)(L_i) \otimes \bigl(\bigP \ob\bigr)(R_j) \fto{\Phi \otimes \Phi} \bigP(T(\ol{L_i})) \otimes \bigP(T(\ol{R_j})) \fto{\Phi} \bigP\bigl( T(\ol{L_i}) \times T(\ol{R_j}) \bigr),
\]
where the second arrow above, labeled $\Phi$, is the strict symmetric monoidal equivalence from \cref{thm:free-tensor-adj-equiv}.
\item[Underlying permutations] Let $\ka$ be a formal morphism in the underlying object $Q$ of an iterated tensor of length $k$, $\ol{Q} = (Q,\ol{L_i}, \ol{R_j})$. An \emph{underlying permutation} of $\ka$ is a morphism of the form $\Phi(\wt{\ka})$ in $\bigP T\bigl(\ol{Q}\bigr)$ for some lift $\wt{\ka}$ in $\bigl(\bigP \ob\bigr)(Q) $.
\end{description}
\end{defn}

The same proof as in \cref{thm:tensor-coherence}, noting that the iterated versions of $\Phi$ are still strict symmetric monoidal equivalences, yields the following.

\begin{thm}[Coherence in Iterated Tensor Products]\label{thm:iterated-tensor-coherence}
Let $\ka$ and $\ka'$ be two parallel morphisms in the underlying object of an iterated tensor of length $k$, $\ol{Q} = (Q,\ol{L_i}, \ol{R_j})$. 
  Suppose the 2-arrow diagram consisting of $\ka$ and $\ka'$ is formal, and suppose $\ka$ and $\ka'$ have underlying permutations that coincide.
  Then $\ka = \ka'$.
\end{thm}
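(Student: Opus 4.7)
The plan is to prove this by induction on the length $k$ of the iterated tensor $\ol{Q} = (Q, \ol{L_i}, \ol{R_j})$, adapting the template of \cref{thm:tensor-coherence}. For the base case $k = 1$, the iterated $\Phi$ is the identity and $\chi$ is the inclusion of objects, so any parallel lifts $\wt{\ka}, \wt{\ka}'$ with equal underlying permutations (i.e.\ $\Phi \wt{\ka} = \Phi \wt{\ka}'$) are already equal, and applying $\chi$ yields $\ka = \ka'$. The whole proof will reduce to this same argument once we establish the key technical input for the inductive step.

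The main technical claim is that the iterated $\Phi \cn \bigl(\bigP\ob\bigr)(Q) \to \bigP T\bigl(\ol{Q}\bigr)$ is itself a strict symmetric monoidal equivalence. By \cref{defn:iterated-formal-diagram} this functor factors as
\[
\bigl(\bigP\ob\bigr)(L_i) \otimes \bigl(\bigP\ob\bigr)(R_j)
\xrightarrow{\ \Phi \otimes \Phi\ }
\bigP T(\ol{L_i}) \otimes \bigP T(\ol{R_j})
\xrightarrow{\ \Phi\ }
\bigP\bigl(T(\ol{L_i}) \times T(\ol{R_j})\bigr).
\]
The second arrow is a strict symmetric monoidal equivalence by \cref{thm:free-tensor-adj-equiv}. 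For the first arrow, each factor $\Phi$ is a strict symmetric monoidal equivalence by the inductive hypothesis, and the 2-functoriality of $\otimes$ established in \cref{prop:tensor-functor} guarantees that $\otimes$ sends adjoint equivalences in each variable to adjoint equivalences. Thus $\Phi \otimes \Phi$ is a strict symmetric monoidal equivalence, and composites of strict symmetric monoidal equivalences are again such, so the iterated $\Phi$ is one.

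With this in hand, the proof of \cref{thm:tensor-coherence} carries over verbatim. The formal hypothesis gives a lift of the 2-arrow diagram $\{\ka, \ka'\}$ through $\chi$ to parallel morphisms $\wt{\ka}, \wt{\ka}'$ in $\bigl(\bigP\ob\bigr)(Q)$, and the hypothesis on underlying permutations lets us pick these lifts so that $\Phi \wt{\ka} = \Phi \wt{\ka}'$. Since $\Phi$ is a strict symmetric monoidal equivalence and $\wt{\ka}, \wt{\ka}'$ are parallel, fullness and faithfulness force $\wt{\ka} = \wt{\ka}'$, from which $\ka = \chi(\wt{\ka}) = \chi(\wt{\ka}') = \ka'$.

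The only non-routine step is verifying that $\otimes$ preserves adjoint equivalences of strict symmetric monoidal functors, but this is immediate from the 2-functoriality in \cref{prop:tensor-functor} together with the standard fact that any 2-functor preserves adjoint equivalences between 0-cells. The remaining work is bookkeeping to match lifts with the recursive definitions in \cref{defn:iterated-formal-diagram}, and this is entirely parallel to the single-tensor case.
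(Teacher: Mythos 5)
Your proposal is correct and follows essentially the same route as the paper, which simply observes that the same proof as Theorem~\ref{thm:tensor-coherence} applies once one notes that the iterated versions of $\Phi$ are still strict symmetric monoidal equivalences. Your inductive argument for that fact --- the recursive factorization through $\Phi \otimes \Phi$ followed by the binary $\Phi$ of Theorem~\ref{thm:free-tensor-adj-equiv}, using 2-functoriality of $\otimes$ to preserve adjoint equivalences --- is exactly the justification the paper leaves implicit.
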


\section{The monoidal unit}\label{sec:unit}

In this section we define a monoidal unit $S$, along with left and right unit equivalences, for each permutative category $A$,
\[
  \dL \cn S \otimes A \to A
  \andspace
  \dR \cn A \otimes S \to A.
\]
These are \cref{defn:L,defn:R}, respectively.

\begin{convention}\label{conv:unit-generic}
  Throughout this section, we use the following notation for generic objects and morphisms, where $S$ is the unit permutative category defined below, and $A$ is a general permutative category.
  \begin{center}
    \renewcommand{\arraystretch}{1.25}
    \begin{tabular}{ccc}
      category & objects & morphisms\\
      \hline
      $S$ & $m,m',m_i$ & $\si, \si_i$
      \\
      $A$ & $a,a',a_i$ & $f, f_i$
    \end{tabular}
    \\ \ 
  \end{center}
\end{convention}

\begin{notn}\label{defn:S}
  Let $S$ denote the permutative category with objects given by the natural numbers $n \ge 0$ and hom sets
  \[
    S(m,n) =
    \begin{cases}
      \varnothing & \ifspace m \neq n\\
      \Sigma_m & \ifspace m = n,
    \end{cases}
  \]
  where $\Sigma_m$ denotes the symmetric group on $m$ letters.  The monoidal product is given by addition on objects and block sum on morphisms.
\end{notn}

\begin{rmk}\label{rmk:what-is-S}
The permutative category $S$ is a skeleton of the symmetric monoidal category of finite sets and bijections, with disjoint union as the monoidal structure. $S$ is also $\bigP 1$, the free permutative category generated by a single object. Using the adjunction for which $\bigP$ is the left adjoint, we see that $S$ corepresents the underlying category functor $U \cn \permcats \to \cat$:
\[
\permcats(S, A) \cong \cat(1, UA) \cong UA,
\]
where both isomorphisms are 2-natural in $A$.
\end{rmk}

\begin{notn}\label{notn:ma}
  Let $A$ be a permutative category and let $f \cn a \to b$ be a morphism of $A$.
  \begin{description}
    \item[Iterated sum] For a natural number $m > 0$, we write
      \[
        ma = \underbrace{a + \cdots + a}_{m \text{\ times}}
        \andspace
        mf = \underbrace{f + \cdots + f}_{m \text{\ times}}
      \]
      for the $m$-fold sums in $A$.
    \item[Empty sum] For $m = 0$ we define $0a$ to be the monoidal unit of $A$ and $0f$ is the identity, so
      \[
        0a = 0 \andspace 0f = 1_0.
      \]
    \item[Permutation action] For a permutation $\si \in \Si_m$, we let $\si$ denote the symmetry isomorphism $ma \to ma$ that permutes summands according to $\si$.
      Such a morphism is unique by the coherence theorem for symmetric monoidal categories \cite{ML98Categories}.
      We write $f^\si$ for either of the two composites below, which are equal by naturality of $\be$.
      \[
        \begin{tikzpicture}[x=20mm,y=13mm]
          \draw[0cell] 
          (0,0) node (ma) {ma}
          (1,0) node (ma2) {ma}
          (0,-1) node (mb) {mb}
          (1,-1) node (mb2) {mb}
          ;
          \draw[1cell] 
          (ma) edge node {\si} (ma2)
          (mb) edge node {\si} (mb2)
          (ma) edge['] node {mf} (mb)
          (ma2) edge node {mf} (mb2)
          ;
        \end{tikzpicture}
      \]
    \item[Perfect shuffle]\label{it:perfshuf} For objects $x_i, x'_i \in A$, there is a symmetry isomorphism
      \[
        p \cn \sum_{i=1}^m x_i + \sum_{i=1}^m x'_i \to \sum_{i=1}^m (x_i + x'_i)
      \]
      that we will call the \emph{perfect shuffle}.
      We will use the notation
      \[
        p_{m;a,a'} \cn ma + ma' \to m(a + a')
      \]
      in the case that each $x_i = a$ and each $x'_i = a'$.
    \end{description}
    The following properties of $p$ will be used below.
    \begin{enumerate}
    \item\label{it:perfshuf-ids} Each of $p_{0;a,a'}$, $p_{1;a,a'}$, $p_{m;0,a'}$, and $p_{m;a,0}$ is an identity morphism.
    \item Naturality of the symmetry isomorphism in $A$ implies that $p$ is natural in all of its variables.
    \item The diagram below commutes.
    \begin{equation}\label{eq:p-symm}
        \begin{tikzpicture}[x=60mm,y=15mm,vcenter]
          \draw[0cell] 
          (0,0) node (a) {
            \tsum x_i + \tsum y_j + \tsum x'_i + \tsum y'_j
          }
          (1,0) node (b) {
            \tsum x_i + \tsum x'_i + \tsum y_j + \tsum y'_j
          }
          (.5,-1) node (c) {
            \tsum (x_i + x'_i) + \tsum (y_j + y'_j)
          }
          ;
          \draw[1cell] 
          (a) edge node {1 + \be + 1} (b)
          (a) edge['] node[pos=.3] {p} (c)
          (b) edge node[pos=.3] {p + p} (c)
          ;
          \draw[2cell] 
          
          ;
        \end{tikzpicture}
      \end{equation}
    \end{enumerate}
    \ 
\end{notn}

\begin{defn}\label{defn:L}
  For a permutative category $A$, define a strict symmetric monoidal functor
  \[
    \dL\cn S \otimes A \to A
  \]
  as follows.
  \begin{description}
  \item[Step 1] Consider the strict monoidal functor
    \[
      \dL\cn T \bigl( S \times A \bigr) \to A
    \]
    induced by $(m,a) \mapsto ma$ and $(\si,f) \mapsto f^\si$.

  \item[Step 2]
    Extend $\dL$ to the adjoined morphisms of $S \otimes A$ with the assignments
    \begin{equation}\label{eq:Lkappa}
      \begin{aligned}
        \dL(\de^L_{m}(a,a')) & = p_{m;a,a'}, \\
        \dL(\de^R_{a}(m,m')) & = 1_{(m+m')a}, \\
        \dL(\be_{m.a,m'.a'}) & = \be_{ma,m'a'}, \\
        \dL(\ze^L_{m}) & = 1_0, \andspace \\
        \dL(\ze^R_{a}) & = 1_0.
      \end{aligned}
    \end{equation} 
  \item[Step 3]
    To verify that $\dL$ is well-defined with respect to the relations imposed in Step 3 of \cref{defn:AtensorB}, one uses naturality of $p$, the naturality and symmetric moniodal axioms for $\be$, and \cref{eq:p-symm}.
    %
    %
    %
    %
    %
  \end{description}
\end{defn}

\begin{prop}\label{prop:Lpsnat}
  The strict symmetric monoidal functors $\dL$ are the components of a pseudonatural transformation
  \[
    \dL\cn S \otimes - \To 1_{\permcat}.
  \]
\end{prop}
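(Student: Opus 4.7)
The plan is to construct, for each symmetric monoidal functor $F \cn A \to B$, an invertible monoidal natural transformation
\[
\dL_F \cn \dL_B \circ (1_S \otimes F) \To F \circ \dL_A
\]
and then verify the pseudonaturality axioms: unit ($\dL_{1_A} = 1$), composition ($\dL_{GF}$ equals the pasting of $\dL_G$ and $\dL_F$), and compatibility with monoidal transformations $F \To G$. The components of $\dL_F$ are dictated by what both composites do on objects: since $(1_S \otimes F)$ and $\dL_B$ are strict symmetric monoidal, the left-hand composite sends $\tsum m_i . a_i$ to $\tsum m_i(Fa_i)$, while $F \circ \dL_A$ sends the same object to $F(\tsum m_i a_i)$. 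I will define $(\dL_F)_{m.a} \cn m(Fa) \to F(ma)$ to be the identity for $m = 1$, the constraint $F_0$ for $m = 0$, and the iterated monoidal constraint $F_2$ for $m \geq 2$ (well-defined up to \cref{convention:F2}). On a general object $\tsum m_i . a_i$ the component is the evident composite built from these data together with the monoidal constraint of $F$, which is again canonical by coherence.

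Next I will verify naturality of $\dL_F$ with respect to the morphisms of $S \otimes A$. For simple morphisms $\sigma.f$, naturality follows from the definition $\dL(\sigma.f) = f^\sigma$ together with the fact that $F$ is symmetric monoidal (so $F$ preserves both $nf$ and $\sigma$). For the adjoined morphism $\de^L_m(a,a')$, using \cref{eq:Lkappa} the naturality square reduces to the identity
\[
F_2 \,\circ\, p_{m;Fa,Fa'} \,=\, F(p_{m;a,a'}) \,\circ\, \bigl(F_2 + F_2\bigr),
\]
which holds by coherence for symmetric monoidal functors because both sides are parallel composites built from $F_2$ and symmetries in $B$. The cases $\de^R$, $\be$, $\ze^L$, and $\ze^R$ are analogous and use \cref{eq:Lkappa}, the unit axioms for $F$, and naturality of $\be$. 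With naturality in hand, the monoidal transformation axioms of \cref{defn:montransf} for $\dL_F$ likewise amount to coherence equalities among composites of $F_2$ and $F_0$, so they are automatic. Invertibility of $\dL_F$ is immediate, since every component is a composite of the invertible constraints $F_2$ and $F_0$.

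Finally I verify the pseudonaturality axioms. The unit axiom $\dL_{1_A} = 1$ is immediate from the definition because $(1_A)_2$ and $(1_A)_0$ are identities. For composition, given $F \cn A \to B$ and $G \cn B \to C$, the equation $\dL_{GF} = (G * \dL_F) \circ (\dL_G * (1_S \otimes F))$ reduces componentwise at $m.a$ to the standard formula $(GF)_2 = G(F_2) \circ G_2$ for the monoidal constraint of a composite (together with the analogous equation for $(GF)_0$). For naturality with respect to a monoidal transformation $\alpha \cn F \To G$, the two pastings agree componentwise by the monoidal naturality axiom $\alpha_{x+y} \circ F_2 = G_2 \circ (\alpha_x + \alpha_y)$ (iterated) and the unit axiom $\alpha_0 \circ F_0 = G_0$. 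The main technical bookkeeping, and the point I expect to require the most care, is the naturality check for $\de^L$ above: it is here that the shuffle $p_{m;-,-}$ interacts nontrivially with $F_2$. All the remaining verifications are either direct consequences of \cref{convention:F2} or routine diagram chases, and when expressed in terms of underlying permutations they fall directly under the coherence result of \cref{thm:iterated-tensor-coherence}.
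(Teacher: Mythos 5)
Your proposal is correct and follows essentially the same route as the paper: the components of $\dL_F$ are given by (composites of) $F_2$ and $F_0$, naturality with respect to simple and adjoined morphisms reduces to the symmetric monoidal functor axioms and coherence for $F$ (since every $\dL(\ka)$ is an identity or a composite of symmetries), and the pseudonaturality axioms follow from the formula $(GF)_2 = G(F_2)\circ G_2$ and the monoidal naturality axioms for transformations. The only quibble is your closing appeal to Theorem~\ref{thm:iterated-tensor-coherence}, which governs formal diagrams inside tensor products and is not the right tool for these naturality squares, which live in the target category $B$; the coherence theorem for symmetric monoidal functors, which you already invoke, is what actually closes those diagrams.
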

\begin{proof}
  The pseudonaturality constraint for $\dL$ at a symmetric monoidal functor $F \cn A \to A'$ is a monoidal natural isomorphism
  \[
    \dL_{F}\cn \dL \circ (1 \otimes F) \To F \circ \dL
  \]
  defined as follows.
  Recall from \cref{convention:F2} that we use the notation $F_2$ to denote any composite of the monoidal constraints for $F$.

  The component of $\dL_F$ at an object $\tsum m_i.a_i \in S \otimes A$ is defined to be
  \[
    F_2\cn \tsum m_i F(a_i) \to F \bigl( \tsum m_i a_i  \bigr).
  \]
  The component of $\dL_F$ at the monoidal unit $0 \in S \otimes A$ is defined to be
  \[
    F_0\cn 0 \to F(0).
  \]

  For a morphism $\si.f$ in $S \otimes A$, the morphism $\dL(\si.f) = f^\si$ is a composite of $f$ with symmetry isomorphisms.
  For each adjoined morphism $\ka$ in $S \otimes A$, the morphism $\dL(\ka)$ is either an identity or a composite of symmetry isomorphisms.
  Thus, naturality of $\dL_F$ with respect to morphisms in $S \otimes A$ follows from the symmetric monoidal functor axioms for $(F,F_2,F_0)$.
  Likewise, the symmetric monoidal functor axioms for $(F,F_2,F_0)$ imply the monoidal naturality axioms for $\dL_F$.

  To verify the pseudonaturality axioms for $\dL$, one uses
  \begin{itemize}
  \item the definition of the monoidal constraint for a composite of symmetric monoidal functors, $GF$, and
  \item the monoidal naturality axioms for a monoidal transformation $\al$ from  $F$ to $F'$.
  \end{itemize}
  Thus, $\dL$ is a pseudonatural transformation.
\end{proof}

\begin{rmk}\label{rmk:psnatL}
  The pseudonaturality constraints $\dL_{F}$ in \cref{prop:Lpsnat} have components determined by the monoidal and unit constraints of $F$.
  Therefore, if $F$ is assumed to be only lax monoidal, then $\dL_F$ will be a not-necessarily-invertible transformation.
\end{rmk}

\begin{prop}\label{prop:Lequiv}
  The pseudonatural transformation
  \[
    \dL\cn S \otimes - \to 1_{\permcat}
  \]
  is a pseudonatural equivalence.
\end{prop}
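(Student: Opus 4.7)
The plan is to construct an explicit pseudo-inverse $\dLdot \cn A \to S \otimes A$ and produce the two invertible 2-cells required for an equivalence, using \cref{thm:iterated-tensor-coherence} to handle the verification of relations.

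First I would define $\dLdot \cn A \to S \otimes A$ to be the symmetric monoidal functor with underlying functor $a \mapsto 1.a$ and $f \mapsto 1_1.f$, monoidal constraint
\[
(\dLdot)_2 = \de^L_1 \cn 1.a + 1.a' \to 1.(a+a'),
\]
and unit constraint $(\dLdot)_0 = \ze^L_1 \cn 0 \to 1.0$. The symmetric monoidal functor axioms for $\dLdot$ follow directly from the relations imposed on $S \otimes A$ in \cref{defn:AtensorB}, specifically from the fact that left multiplication $1.(-)$ is a symmetric monoidal functor by~\ref{defn:AtensorB}.\cref{TEN1}. Next I would observe that $\dL \circ \dLdot = 1_A$ on the nose: on objects $\dL(1.a) = 1 \cdot a = a$; on morphisms, $\dL(1_1.f) = f^{1_1} = f$; and the constraints $\dL(\de^L_1) = p_{1;a,a'}$ and $\dL(\ze^L_1) = 1_0$ are both identities by \cref{eq:Lkappa} and property~(\ref{it:perfshuf-ids}) of perfect shuffles.

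Second, I would produce a monoidal natural isomorphism $\eta \cn 1_{S \otimes A} \To \dLdot \circ \dL$. Using that $m = 1 + \cdots + 1$ in $S$, define the component at a simple object $m.a$ by the composite
\[
m.a \xleftarrow{\ \de^R\ } \tsum_{i=1}^m 1.a \xrightarrow{\ \de^L_1\ } 1.(ma),
\]
with $\de^R$ inverted, and at $0$ by $\ze^L_1$. Extend to general objects $\tsum m_i.a_i$ by combining these components with instances of $\de^L_1$ to land in $1.(\tsum m_i a_i)$. Naturality with respect to simple morphisms $\si.f$ reduces to naturality of $\de^L, \de^R$ together with the fact that symmetries in $S$ act by the corresponding symmetries of $A$ after passing through $\dL$. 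Naturality with respect to the adjoined morphisms $\de^L, \de^R, \be, \ze^L, \ze^R$ of $S \otimes A$, and the monoidal naturality and unit axioms for $\eta$, all reduce to equalities of parallel formal morphisms in $S \otimes A$; both sides will have the same underlying permutation (by inspection, essentially an identity or perfect shuffle that matches after cancellation), so \cref{thm:iterated-tensor-coherence} gives the required equalities.

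The main obstacle will be organizing this last step. While each individual diagram one must check is straightforward, there are several of them, and each involves comparing composites built from $\de^L_1$, $\de^R$, $\ze^L$, $\ze^R$, and $\be$ against corresponding composites arising from $\dLdot \circ \dL$. Rather than checking them by hand, the strategy is to package each such diagram as a pair of parallel formal morphisms in the iterated tensor $S \otimes A$, note that the chosen lifts to $\bigP(\ob S) \otimes \bigP(\ob A)$ have underlying permutations that agree by bookkeeping of summands, and invoke \cref{thm:iterated-tensor-coherence}. Combined with $\dL \circ \dLdot = 1_A$ and the invertibility of $\eta$, this exhibits $\dLdot$ as a pseudo-inverse to $\dL$, so each component of $\dL$ is an equivalence and therefore $\dL$ is a pseudonatural equivalence.
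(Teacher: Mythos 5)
Your proposal is correct and follows essentially the same route as the paper: define $\dLdot = 1.(-)$ with constraints $\de^L_1$ and $\ze^L_1$, observe that $\dL\circ\dLdot$ is the identity on the nose via \cref{eq:Lkappa} and \cref{notn:ma}~(\ref{it:perfshuf-ids}), and build the monoidal natural isomorphism $\eta\cn 1_{S\otimes A}\To \dLdot\circ\dL$ with exactly the components $\de^L_1\circ\de^{-R}$ (and $\ze^L_1\circ\ze^{-R}$ in the degenerate cases), checking naturality on simple morphisms directly and on adjoined morphisms via the coherence theorem. The only cosmetic difference is that you invoke the iterated Coherence Theorem~\ref{thm:iterated-tensor-coherence} where the single-tensor version, Theorem~\ref{thm:tensor-coherence}, already suffices.
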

\begin{proof}
  Let $A$ be a permutative category. 
  A pseudonatural transformation is an equivalence if and only if each component is an equivalence \cite{Kel89Elementary} so we construct a weak inverse for each $\dL_A$. 
  The natural number $m=1$ gives a symmetric monoidal functor
  \begin{equation}\label{eq:Ldot}
    \dLdot = 1.(-)\cn A \to S \otimes A
  \end{equation} 
  as in \cref{defn:AtensorB}~\cref{TEN1}.
  The composite $\dL \circ \dLdot$ is the identity symmetric monoidal functor on $A$ because, recalling \cref{eq:Lkappa} and  \cref{notn:ma}~\cref{it:perfshuf-ids}, we have
  \begin{equation}\label{eq:LLdot-id}
    \dL\bigl(\de^L_1(a,a')\bigr) = p_{1;a,a'} = 1_{a+a'}
    \andspace
    \dL \bigl( \ze^L_1 \bigr) = 1_0.
  \end{equation}
  We will show that the other composite,
  \[
    S \otimes A \fto{\dL} A \fto{1.(-)} S \otimes A,
  \]
  is isomorphic, as a symmetric monoidal functor, to the identity on $S \otimes A$.
  It suffices to just check the isomorphism at the level of functors between categories, but our proof uses the monoidal structure to simplify the calculations.
  It will follow automatically that these isomorphisms will be the components of a modification
  \[
    \eta\cn 1_{S \otimes -} \To \dLdot \dL,
  \]
  and our calculations will show that $\eta * \dLdot = 1_{\dLdot}$ and $\dL * \eta = 1_{\dL}$.

  Recall that $(-).a$ is a monoidal functor with $\de^R_a$ as its monoidal constraint.
  Therefore, by coherence for monoidal functors \cite{JS1993Braided}, there is a unique composite of sums of morphisms $\de^R_a(i,j)$ 
  \[
    m(1.a) = \underbrace{1.a + \cdots + 1.a}_{m \text{\ terms}} \to m.a.
  \]
  As in \cref{convention:F2}, we will write $\de^R_a$ for any such composite.
  Similarly, we write $\de^L_1$ for any composite of sums of $\de^L_1(ia,ja)$ giving a morphism $m(1.a) \to 1.(ma)$.

  Now we define component morphisms of
  \begin{equation}\label{eq:etaLA}
    \eta = \eta_A \cn 1_{S \otimes A} \To 1.(-) \circ \dL
  \end{equation}
  as follows. 
  For the unit $0$, let
  \[
    \eta_0 = \ze^L_{1} \cn 0 \to 1.0.
  \]
  Next, let $\ze^{-R}$ and $\de^{-R}_a$ denote the inverses of $\ze^R$ and $\de^R_a$.
  Define
  \[
    \begin{tikzpicture}[x=30mm,y=13mm,vcenter]
      \draw[0cell] 
      (0,0) node (a) {0.a}
      (.5,-1) node (b) {0}
      (1,0) node (c) {1.0}
      ;
      \draw[1cell] 
      (a) edge['] node {\ze^{-R}_{a}} (b)
      (b) edge['] node {\ze^L_{1}} (c)
      (a) edge[dashed] node {\eta_{0.a}} (c)
      ;
    \end{tikzpicture}
    \andspace
    \begin{tikzpicture}[x=30mm,y=13mm,vcenter]
      \draw[0cell] 
      (0,0) node (a) {m.a}
      (.5,-1) node (b) {m(1.a)}
      (1,0) node (c) {1.(ma)}
      ;
      \draw[1cell] 
      (a) edge['] node {\de^{-R}_a} (b)
      (b) edge['] node {\de^L_1} (c)
      (a) edge[dashed] node {\eta_{m.a}} (c)
      ;
    \end{tikzpicture}
  \]
  for $m > 0$.
  The component at a sum of objects is then given by the following composite.
  \[
    \begin{tikzpicture}[x=40mm,y=13mm]
      \draw[0cell] 
      (0,0) node (a) {\tsum m_i.a_i}
      (.5,-1) node (b) {\tsum 1.(m_i a_i)}
      (1,0) node (c) {1.(\tsum m_i a_i)}
      ;
      \draw[1cell] 
      (a) edge['] node {\tsum \eta_{m_i.a_i}} (b)
      (b) edge['] node {\de^L_1} (c)
      (a) edge[dashed] node {\eta_{\ssum m_i.a_i}} (c)
      ;
    \end{tikzpicture}
  \] 

  Each component of $\eta = \eta_A$ is an isomorphism by construction.
  Naturality of $\eta_A$ is verified in cases, first for morphisms of the form $\si.f$ and then for each of the adjoined morphisms in $S \otimes A$.
  We give one example, and note that all of the naturality squares commute by the Coherence Theorem~\ref{thm:tensor-coherence} for adjoined morphisms in a tensor product.

  The naturality square for $\eta_A$ with respect to morphisms $\de^L_m$ is the square below, where the relevant components of $\eta$ are the vertical composites at left and right.
  \[
    \begin{tikzpicture}[x=80mm,y=15mm]
      \draw[0cell] 
      (0,0) node (a) {m.a + m.a'}
      (0,-1) node (b) {m(1.a) + m(1.a')}
      (0,-2) node (c) {1.(ma) + 1.(ma')}
      (0,-3) node (d) {1.(ma + ma')}
      (a)+(1,0) node (a') {m.(a+a')}
      (b)+(1,-.5) node (b') {m\bigl(1.(a+a')\bigr)}
      (d)+(1,0) node (d') {1.\bigl( m(a+a') \bigr)}
      ;
      \draw[1cell] 
      (a) edge['] node {\de^\inv_a + \de^\inv_{a'}} (b)
      (b) edge['] node {\de_1 + \de_1} (c)
      (c) edge['] node {\de_1} (d)
      (a') edge node {\de_{a+a'}^\inv} (b')
      (b') edge node {\de_1} (d')
      (a) edge node {\de^L_m} (a')
      (d) edge node {\dL(\de^L_m) = 1.p} (d')
      ;
    \end{tikzpicture}
  \]
  Each of the other cases follows similarly.
  One can use the Coherence Theorem~\ref{thm:tensor-coherence} or, more specifically, use naturality from condition \cref{TENnat} for the relevant data, the symmetric monoidal functor axioms for $1.(-)$ and $(-).a$ from \cref{TEN1}, and other relations imposed in the definition of $S \otimes A$.

  The monoidal transformation axioms for each $\eta_A$ follow by construction of $\eta_A$ and the monoidal functor axioms for $1.(-)$.
  This completes the proof that each $\eta_A$ in \cref{eq:etaLA} is a monoidal natural isomorphism $1_{S \otimes A} \to 1.(-) \circ \dL$, and thus verifies that $\dL$ is a pseudonatural equivalence.
\end{proof}

\begin{lem}\label{lem:Ldot-2nat}
  The components
  \[
    \dLdot_A = 1.(-)\cn A \to S \otimes A  
  \]
  in \cref{eq:Ldot} are 2-natural in $A$.
\end{lem}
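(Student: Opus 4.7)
The plan is to verify 2-naturality by a direct comparison of the two composites in the naturality square, at the level of underlying functors, monoidal constraints, unit constraints, and (for 2-cells) of whiskered monoidal transformations. Given a symmetric monoidal functor $F\cn A \to A'$ we must show the equality of symmetric monoidal functors
\[
  (1_S \otimes F) \circ \dLdot_A \;=\; \dLdot_{A'} \circ F \cn A \to S \otimes A',
\]
and given a monoidal transformation $\phi \cn F \To F'$ the corresponding equality of whiskerings. Recall that $\dLdot_A = 1.(-)$ is symmetric monoidal with constraints $\de^L_1$ and $\ze^L_1$, and that $1_S \otimes F$ is strict symmetric monoidal by \cref{defn:FtensorG}.

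First I would check the underlying functors. On a simple object, $(1_S \otimes F)(1.a) = 1.Fa = \dLdot_{A'}(Fa)$, and similarly on a simple morphism $f$, $(1_S \otimes F)(1.f) = 1.Ff = \dLdot_{A'}(Ff)$, so the two composites agree as functors. Next I would compare monoidal constraints. For $\dLdot_{A'} \circ F$, the composite constraint is
\[
  \dLdot_{A'}(F_2(a,a')) \circ (\dLdot_{A'})_2(Fa,Fa') = (1.F_2(a,a')) \circ \de^L_1(Fa,Fa').
\]
For $(1_S \otimes F) \circ \dLdot_A$, strictness of $1_S \otimes F$ reduces the composite constraint to $(1_S \otimes F)\bigl(\de^L_1(a,a')\bigr)$, which by the defining formula in \cref{defn:FtensorG} also equals $(1.F_2(a,a')) \circ \de^L_1(Fa,Fa')$. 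The corresponding comparison of unit constraints similarly reduces to the defining formula $(1_S \otimes F)(\ze^L_1) = (1.F_0) \circ \ze^L_1$, matching $\dLdot_{A'}(F_0) \circ (\dLdot_{A'})_0$.

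For 2-cells, a monoidal transformation $\phi \cn F \To F'$ gives two whiskerings whose components at an object $a \in A$ are $\dLdot_{A'}(\phi_a) = 1.\phi_a$ on one side and $(1_{1_S} \otimes \phi)_{1.a}$ on the other; by \cref{defn:phi-tensor-psi} the latter is $1_1.\phi_a = 1.\phi_a$, so the two whiskerings are componentwise equal, hence equal as monoidal transformations.

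There is no serious obstacle in this argument; the statement is essentially an unwinding of the definition of $F \otimes G$ on the adjoined morphisms $\de^L_1$ and $\ze^L_1$ together with the strictness of $1_S \otimes F$. The only place requiring mild care is lining up the order of composition in the monoidal constraint of a composite of symmetric monoidal functors with the order in which $1_S \otimes F$ factors the image of $\de^L_1$, but the two agree on the nose because $1_S \otimes F$ is strict and the first coordinate of the tensor is untouched.
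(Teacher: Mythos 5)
Your proposal is correct and follows essentially the same route as the paper: compare the two composites as symmetric monoidal functors by checking underlying functors, then monoidal and unit constraints (using strictness of $1_S \otimes F$ and the defining formula for $(F \otimes G)(\de^L)$), and finally verify the whiskering equality componentwise for 2-cells. The only cosmetic difference is that the paper evaluates $(1_S \otimes F)\bigl((\dLdot_A)_2\bigr)$ and $(\dLdot_B \circ F)_2$ separately and observes both equal the composite $1.F_2 \circ \de^L_1$, which is exactly the computation you perform.
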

\begin{proof}
  Suppose given a symmetric monoidal functor $F \cn A \to B$ and consider the two composites around the following square.
  \begin{equation}\label{eq:Ldot-2nat-square}
    \begin{tikzpicture}[x=30mm,y=15mm,vcenter]
      \draw[0cell] 
      (0,0) node (a) {A}
      (a)++(1,0) node (b) {B}
      (a)++(0,-1) node (sa) {S \otimes A}
      (b)++(0,-1) node (sb) {S \otimes B}
      ;
      \draw[1cell] 
      (a) edge node {F} (b)
      (sa) edge node {1_{S} \otimes F} (sb)
      (a) edge['] node {\dLdot_A} (sa)
      (b) edge node {\dLdot_B} (sb)
      ;
    \end{tikzpicture}
  \end{equation} 
  On objects $a \in A$, the composite $\bigl(1_{S} \otimes F \bigr) \circ \dLdot_A$ is given by
  \[
    \Bigl(\bigl(1_{S} \otimes F \bigr) \circ \dLdot_A\Bigr) (a)
    = \bigl(1_{S} \otimes F \bigr) \bigl(1.a \bigr) = 1.(Fa) = \bigl( \dLdot_B \circ F \bigr)(a).
  \]
  A similar calculation on morphisms shows that the underlying functors of the two composites around \cref{eq:Ldot-2nat-square} are equal.

  It remains to check that the two composites around \cref{eq:Ldot-2nat-square} have the same unit and monoidal constraints.
  Both arguments proceed in the same fashion, so we only give the proof for the monoidal constraint.
  Since $1_S \otimes F$ is strict monoidal, the monoidal constraint of the composite $\bigl( 1_S \otimes F \bigr) \circ \dLdot_A$ is
  \[
    \bigl( \bigl(1_{S} \otimes F \bigr) \circ \dLdot \bigr)_2
    = \bigl(1_{S} \otimes F \bigr) \bigl( \bigl( \dLdot_A \bigr)_2 \bigr).
  \]
  The component of $\bigl( \dLdot_A \bigr)_2$ at a pair of objects $a$ and $a'$ is given by $\de^L \cn 1.a + 1.a' \cong 1.(a + a')$.
  Applying $1 \otimes F$ to $\de^L$ produces the composite
  \begin{equation}\label{eq:leftbot-mon}
    1.Fa + 1.Fa' \fto{\de^L} 1.\bigl( Fa + Fa' \bigr) \fto{1.F_2} 1.F\bigl( a + a' \bigr).
  \end{equation} 
  On the other hand, the monoidal constraint of the composite $\dLdot_B \circ F$ is
  \[
    \bigl( \dLdot_B \circ F \bigr)_2 = \bigl( \dLdot_B \bigr)_2 \circ \dLdot_B \bigl( F_2 \bigr).
  \]
  The component of $\bigl( \dLdot_B \circ F \bigr)_2$ at a pair of objects $a$ and $a'$ is therefore equal to \cref{eq:leftbot-mon}.
  Thus, the monoidal constraints for the two composites around \cref{eq:Ldot-2nat-square} are equal.
  A similar computation shows that their unit constraints are also equal, and hence $\dLdot$ is natural with respect to symmetric monoidal functors $F \cn A \to B$.

  For 2-naturality of $\dLdot$, suppose given a monoidal transformation
  \[
    \phi \cn F \to F' 
  \]
  between symmetric monoidal functors $F,F'\cn A \to B$.
  Recalling \cref{defn:phi-tensor-psi} and checking components shows that
  \[
    \dLdot_B * \phi = \bigl( 1_S \otimes \phi \bigr) * \dLdot_A,
  \]
  as desired.
\end{proof}

\begin{rmk}
  An alternative proof of \cref{lem:Ldot-2nat} can be given as follows.
  The proof of \cref{prop:Lequiv} shows that each $\dLdot_A$ is an inverse equivalence to $\dL_A$.
  It follows from \cite{Kel89Elementary} that the components $\dLdot_A$ assemble to a pseudonatural transformation, with pseudonaturality constraints constructed using whiskerings of $\dL_F$ and $\eta$ by $\dLdot$.
  One can check directly that these whiskerings are identities, giving another proof that $\dLdot$ is 2-natural.
\end{rmk}

\begin{lem}\label{lem:L-factorization}
For any symmetric monoidal functor $F \cn A \to B$, the equality
\[
F = \dL \circ \bigl(1_{S} \otimes F \bigr) \circ \dLdot
\]
holds.
\end{lem}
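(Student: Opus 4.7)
The plan is to verify the equality directly, by comparing the two sides as symmetric monoidal functors $A \to B$. Since both composites of symmetric monoidal functors share the same type, we must check agreement on objects, on morphisms, and on the two coherence constraints. The key observation is that both $\dL$ and $1_S \otimes F$ are strict symmetric monoidal (the former by \cref{defn:L}, the latter by \cref{defn:FtensorG}), while $\dLdot = 1.(-)$ carries the coherence isomorphisms $\de^L_1$ and $\ze^L_1$ by \cref{TEN1}. Therefore the composite constraints collapse to images of $\dLdot$'s constraints under $\dL \circ (1_S \otimes F)$, and these images should be seen to equal $F_2$ and $F_0$ respectively.

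First I would check the underlying functors. On an object $a \in A$, the composite gives $a \mapsto 1.a \mapsto 1.Fa \mapsto 1 \cdot Fa = Fa$, using the definition of $\dL$ on simple objects in \cref{defn:L}. On a morphism $f$, it gives $f \mapsto 1.f \mapsto 1.Ff \mapsto (Ff)^{1} = Ff$, where $1 \in \Si_1$ is the identity permutation. Thus the underlying functors agree.

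Next, for the unit constraint: using the formula $(LMN)_0 = LMN_0 \circ LM_0 \circ L_0$, strictness of $\dL$ and $1_S \otimes F$ reduces the composite's unit constraint to $\dL\bigl((1_S \otimes F)(\ze^L_1)\bigr)$. By \cref{defn:FtensorG}, $(1_S \otimes F)(\ze^L_1) = (1.F_0) \circ \ze^L_1$, and applying $\dL$ and using \cref{eq:Lkappa} gives $F_0 \circ 1_0 = F_0$. For the monoidal constraint, an analogous computation using $(\dLdot)_2 = \de^L_1$ at $(a,a')$, \cref{defn:FtensorG} for $(1_S \otimes F)(\de^L_1(a,a'))$, and then $\dL(\de^L_1(Fa,Fa')) = p_{1;Fa,Fa'} = 1$ from \cref{eq:LLdot-id} together with $\dL(1.F_2) = F_2$ yields the constraint $F_2$ of $F$.

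The main obstacle is simply the bookkeeping for the composite of three symmetric monoidal functors, two of which are strict; once this is organized, every step reduces to the explicit formulas for $\dL$, $\dLdot$, and $1_S \otimes F$ on the adjoined morphisms $\ze^L_1$ and $\de^L_1$, together with the identity values $p_{1;-,-} = 1$ and $(-)^1 = 1$ noted in \cref{notn:ma}\cref{it:perfshuf-ids} and \cref{eq:LLdot-id}. (Alternatively, one could invoke the pseudonaturality square $\dL_F$ of \cref{prop:Lpsnat} together with $\dL \dLdot = 1$, but then verifying that the whiskering $\dL_F * \dLdot$ is an identity requires essentially the same computation.)
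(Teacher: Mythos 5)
Your proof is correct, but it is organized differently from the paper's. The paper's proof is two lines: it first invokes the 2-naturality of $\dLdot$ (\cref{lem:Ldot-2nat}) to rewrite $\bigl(1_S \otimes F\bigr) \circ \dLdot_A = \dLdot_B \circ F$, and then uses the fact, recorded in \cref{eq:LLdot-id}, that $\dL \circ \dLdot$ is the identity on $B$. You instead compute the triple composite from scratch on objects, morphisms, and both constraints. The computational content is essentially the same — your verification that the composite monoidal constraint is $1.F_2 \circ \de^L_1$ followed by $\dL$, collapsing to $F_2$, is exactly the computation the paper performs inside the proof of \cref{lem:Ldot-2nat} (see \cref{eq:leftbot-mon}), just inlined rather than factored through the naturality square. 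What the paper's route buys is brevity and reuse: \cref{lem:Ldot-2nat} is needed elsewhere (e.g., in \cref{lem:L-1Ldot} and \cref{cor:Sotimes-left-biadj}), so citing it here costs nothing. What your route buys is self-containment, at the price of redoing the bookkeeping for a composite of three monoidal functors. All the individual steps you cite ($p_{1;-,-} = 1$, $f^{1_1} = f$, the formulas for $(1_S \otimes F)$ on $\de^L$ and $\ze^L$ from \cref{defn:FtensorG}, and \cref{eq:Lkappa}) check out, so there is no gap.
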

\begin{proof}
  By 2-naturality of $\dLdot$ we have $\bigl( 1_S \otimes F \bigr) \circ \dLdot = \dLdot \circ F$.
  Then, the result follows because, recalling \cref{eq:LLdot-id}, the composite $\dL \circ \dLdot \cn B \to B$ is the identity on $B$.
\end{proof}

\begin{lem}\label{lem:L-1Ldot}
For any permutative category $A$, the composite
\[
S \otimes A \fto{1 \otimes \dLdot} S \otimes (S \otimes A) \fto{\dL} S \otimes A
\]
is isomorphic, as a symmetric monoidal functor, to the identity. These isomorphisms constitute the components of an invertible modification.
\end{lem}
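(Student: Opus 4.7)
The plan is to construct, for each permutative category $A$, a monoidal natural isomorphism
\[
\zeta_A \cn 1_{S \otimes A} \To \dL_{S \otimes A} \circ \bigl(1_S \otimes \dLdot_A\bigr),
\]
and then verify that these components assemble into an invertible modification. On a simple object $m.a \in S \otimes A$, the composite evaluates to $\dL_{S \otimes A}(m.(1.a)) = m(1.a) = 1.a + \cdots + 1.a$ (sum of $m$ terms in $S \otimes A$). In $S \otimes A$ there is a canonical isomorphism $\de^R_a \cn m(1.a) \to m.a$, obtained by iterated composition of the adjoined morphisms $\de^R_a(i,j)$ together with $\ze^R_a$ for the $m = 0$ case, in the sense of \cref{convention:F2}; this comes directly from the symmetric monoidality of $(-).a\cn S \to S \otimes A$ in \ref{defn:AtensorB}.\ref{TEN1}. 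I define the component of $\zeta_A$ at $m.a$ to be $(\de^R_a)^{-1}$, at the unit $0$ to be the identity, and extend to a general object $\sum m_i.a_i$ as $\sum(\de^R_{a_i})^{-1}$. Because both source and target are strict symmetric monoidal, this extension is forced and the monoidal transformation axioms for $\zeta_A$ hold automatically.

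Next I verify naturality of $\zeta_A$ case by case. For a simple morphism $\sigma.f$ in $S \otimes A$, naturality reduces to naturality of $\de^R_a$ in its $A$-variable together with the symmetric monoidality of $(-).a$, which handles the $\sigma$-action via the symmetry $f \mapsto f^\sigma$ of \cref{notn:ma}. For each of the adjoined morphisms $\de^L$, $\de^R$, $\be$, $\ze^L$, $\ze^R$ in $S \otimes A$, the naturality square is a formal diagram in $S \otimes A$, and both composite paths unpack via the formulas in \cref{defn:L} and \cref{defn:FtensorG} into sums and composites of adjoined morphisms with equal underlying permutations. The Coherence Theorem~\ref{thm:tensor-coherence} then delivers commutativity.

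For the modification condition, let $F \cn A \to B$ be a symmetric monoidal functor. Since $\dLdot$ is 2-natural by \cref{lem:Ldot-2nat}, the pseudonaturality constraint of the composite $\dL \circ (1_S \otimes \dLdot)$ at $F$ is exactly $\dL_{1_S \otimes F}$ from \cref{prop:Lpsnat} whiskered with $1_S \otimes \dLdot_A$, with components determined by $(1_S \otimes F)_2$. The modification axiom, equating two composite 2-cells from $(1_S \otimes F)$ to $(1_S \otimes F) \circ \dL \circ (1_S \otimes \dLdot)$, reduces on each simple object $m.a$ to a square of $\de^R$ and $(1_S \otimes F)_2$ morphisms in $S \otimes B$. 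This is again a formal diagram whose two paths have matching underlying permutations, so \cref{thm:tensor-coherence} gives commutativity. The main obstacle is the bookkeeping around the pseudonaturality constraint of $\dL$ in the modification axiom, but every verification reduces via \cref{thm:tensor-coherence} to a comparison of underlying permutations, all of which are either identities or easily matched by naturality of $\be$.
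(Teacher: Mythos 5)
Your proposal is correct and follows essentially the same route as the paper: the component at $m.a$ is the (inverse of the) canonical $\de^R \cn m(1.a) \to m.a$ coming from the symmetric monoidality of $(-).a$, naturality is checked case-by-case via the Coherence Theorem~\ref{thm:tensor-coherence}, and the modification condition reduces to a componentwise whiskering identity using the 2-naturality of $\dLdot$. The one point you should make explicit is that $(1_S \otimes F)_2$ is an identity because $1_S \otimes F$ is strict monoidal by construction (\cref{defn:FtensorG}); this is precisely what makes your modification-axiom square a formal diagram and, as the paper observes, makes the composite $\dL \circ (1 \otimes \dLdot)$ strictly 2-natural.
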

\begin{proof}
The composite $\dL \circ \bigl( 1 \otimes \dLdot \bigr)$ is strict monoidal and sends a simple tensor $n.a$ in $S \otimes A$ to the sum
\[
\sum_{i = 1}^n 1.a.
\]
Define a monoidal natural isomorphism $d \cn \dL \circ \bigl( 1 \otimes \dLdot \bigr) \cong 1_{S \otimes A}$ to have its component $d_{n.a}$ at a simple tensor $n.a$ be
\begin{equation}\label{eq:dna}
  \begin{aligned}
    d_{n.a} & = \de^R \cn \sum_{i = 1}^n 1.a \cong n.a, \quad n >0, \\
    d_{0.a} &  = \ze^R_a,
  \end{aligned}
\end{equation}
and to have its component at a sum of simple tensors be the sum of these.
This morphism is clearly natural in $a$, and it is natural in $n$ using \cref{TEN1} for the right multiplication functor $-.a$.
It is a simple exercise to compute, case-by-case, that it is natural in each of the adjoined morphisms $\de$, $\be$, $\ze$.
The components are isomorphisms by definition, and monoidal by construction, completing the proof of the existence of a monoidal natural isomorphism.

To show that these isomorphisms assemble into a modification, we begin by noting that the composite $\dL \circ \bigl( 1 \otimes \dLdot \bigr)$ is in fact 2-natural with respect to symmetric monoidal functors $F \cn A \to B$.
This holds as a consequence of the following two observations.
First, $\dLdot$ is 2-natural, by \cref{lem:Ldot-2nat}.
Second, the pseudonaturality constraint $\dL_{1 \otimes F}$---determined in the proof of \cref{prop:Lpsnat} by the monoidal constraint $\bigl( 1 \otimes F \bigr)_2$---is the identity because $1 \otimes F$ is strict monoidal.

Hence, we only need to verify the equality
\[
\bigl( 1 \otimes F \bigr) * d = d * \bigl( 1 \otimes F \bigr)
\]
for each symmetric monoidal functor $F$.
This equality holds by checking components and using \cref{eq:dna} along with \cref{defn:FtensorG}.
\end{proof}

\begin{cor}\label{cor:Sotimes-left-biadj}
The 2-functor $S \otimes - \cn \permcat \to \permcats$ is left biadjoint to the inclusion $\permcats \to \permcat$.
\end{cor}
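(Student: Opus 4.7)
The plan is to apply \cref{prop:biadj-loceq}: it suffices to construct, for each $A \in \permcat$ and $B \in \permcats$, an equivalence of categories
\[
  \al^*_{A,B} \cn \permcats(S \otimes A, B) \fto{\hty} \permcat(A, B),
\]
pseudonatural in $A$ and $B$. I take $\al^*_{A,B}$ to be precomposition with $\dLdot_A = 1.(-)$ from \cref{eq:Ldot}, sending $H \cn S \otimes A \to B$ to $H \circ \dLdot_A$ and a monoidal transformation $\psi$ to $\psi * \dLdot_A$. Since $\dLdot$ is 2-natural in $A$ by \cref{lem:Ldot-2nat}, and postcomposition with morphisms in $\permcats(B, B')$ is plainly 2-natural in $B$, this family of functors is 2-natural (in particular pseudonatural) in both variables.

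For a candidate inverse, define $\al_{A,B}(F) = \dL_B \circ (1_S \otimes F)$, with $\al_{A,B}(\phi) = \dL_B * (1_S \otimes \phi)$ on 2-cells. The functor $1_S \otimes F$ is strict symmetric monoidal by \cref{defn:FtensorG} regardless of whether $F$ is, and $\dL_B$ is strict by \cref{defn:L}, so $\al_{A,B}$ takes values in $\permcats(S \otimes A, B)$. By \cref{lem:L-factorization}, $\al^*_{A,B} \circ \al_{A,B}$ is the identity on objects; a componentwise check extends this to 2-cells, giving $\al^* \circ \al = 1$ strictly.

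For the reverse composite, unwinding the definitions yields
\[
  \al \al^*(H) = \dL_B \circ \bigl(1_S \otimes H\bigr) \circ \bigl(1_S \otimes \dLdot_A\bigr).
\]
Since $H$ is strict, the pseudonaturality constraint $\dL_H$ of \cref{prop:Lpsnat} has components equal to $H_2$, which are all identities; hence $\dL_B \circ (1_S \otimes H) = H \circ \dL_{S \otimes A}$ as strict symmetric monoidal functors. Whiskering the invertible modification $d$ of \cref{lem:L-1Ldot} by $H$ then produces a monoidal natural isomorphism $H * d \cn \al\al^*(H) \cong H$ between strict symmetric monoidal functors, and naturality of $d$ ensures that these components assemble into a natural isomorphism $\al\al^* \To 1$. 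This completes the equivalence, and hence the biadjunction via \cref{prop:biadj-loceq}.

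The only real subtlety to watch is that the pseudonaturality of $\dL$ collapses to strict 2-naturality when restricted to strict symmetric monoidal functors, which is what lets $\al\al^*$ simplify to $H \circ \dL \circ (1_S \otimes \dLdot)$ before \cref{lem:L-1Ldot} is applied. Everything else is routine bookkeeping and verification that the constructed 2-cells live in the strict subcategory $\permcats$.
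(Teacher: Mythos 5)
Your proof is correct and follows essentially the same route as the paper: both apply \cref{prop:biadj-loceq} with precomposition by $\dLdot$ as the equivalence, use $F \mapsto \dL \circ (1_S \otimes F)$ as the reverse functor, and invoke \cref{lem:L-factorization} for one composite and \cref{lem:L-1Ldot} (together with the collapse of $\dL$'s pseudonaturality constraint at strict functors) for the other.
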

\begin{proof}
Following \cref{prop:biadj-loceq}, we produce an equivalence of categories 
\[
\permcats \bigl( S \otimes A, B) \fto{\simeq} \permcat \bigl(A, B \bigr),
\]
that is 2-natural in $A$ and $B$.
The functor $\permcats \bigl( S \otimes A, B) \to \permcat \bigl(A, B \bigr)$ is given by precomposition with $\dLdot$.
This functor is visibly 2-natural in $B$, and 2-natural in $A$ by the 2-naturality of $\dLdot$ (\cref{lem:Ldot-2nat}).
The functor $\permcat \bigl(A, B) \to \permcats \bigl(S \otimes A, B \bigr)$ is given on objects by $F \mapsto \dL \circ \bigl( 1_S \otimes F \bigr)$ and similarly on morphisms.
Note that both $\dL$ and any symmetric monoidal functor of the form $F \otimes G$ are strict symmetric monoidal.
This functor is 2-natural in $A$ using the 2-functoriality of $S \otimes -$, and pseudonatural in $B$ using the pseudonaturality of $\dL$.
By \cref{lem:L-factorization}, the composite
\[
\permcat \bigl(A, B) \to \permcats \bigl(S \otimes A, B \bigr) \to \permcat \bigl(A, B)
\]
is the identity.
By \cref{lem:L-1Ldot}, the composite
\[
\permcats \bigl( S \otimes A, B) \to \permcat \bigl(A, B \bigr) \to \permcats \bigl( S \otimes A, B)
\]
is isomorphic to the identity via
\[
\dL \circ \bigl(1 \otimes F \bigr) \circ \bigl(1 \otimes \dLdot \bigr) = F \circ \dL \circ \bigl(1 \otimes \dLdot \bigr) \fto{1_F * d} F.
\]
These verifications satisfy the hypotheses of  \cref{prop:biadj-loceq} and therefore produce the desired biadjunction.
\end{proof}

\begin{rmk}\label{rmk:Sotimes-fact}
By \cite{BKP1989Two}, the inclusion $\permcats \to \permcat$ actually has a left 2-adjoint.
Inspecting the proofs above, it is easy to see that it is given by quotienting $S \otimes A$ by forcing the components of $d$ to be identities.
This quotient is the expected form of the pseudomorphism classifier (see \cite{JS1993Braided} for example).
Thus it is possible to have a tighter form of adjunction at the expense of having to perform additional quotients.
From a homotopical perspective, \cref{lem:L-factorization} already gives a well-behaved factorization of any symmetric monoidal functor into an equivalence followed by a strict symmetric monoidal functor, showing that $S \otimes -$ is a perfectly good candidate for a kind of cofibrant replacement.
From a 2-monadic perspective, the proof of \cref{prop:tensor-cofib-adj-equiv} shows that any permutative category of the form $A \otimes B$ is semi-flexible \cite{BKP1989Two}, so in particular $S \otimes A$ is.
\end{rmk}

\begin{defn}\label{defn:R}
  For a permutative category $A$, define a strict symmetric monoidal functor
  \[
    \dR\cn A \otimes S \to A
  \]
  as follows.
  The assignments $(a,m) \mapsto ma$ and $(f,\si) \mapsto f^\si$ induce a strict monoidal functor.
  \[
    \dR\cn T \bigl( A \times S  \bigr) \to A.
  \]
  Then we extend $\dR$ to the adjoined morphisms of $A \otimes S$ with the assignments
  \begin{align*}
    \dR(\de^L_{a}(m,m')) & = 1_{(m+m')a}, \\
    \dR(\de^R_{m}(a,a')) & = p_{m;a,a'}, \\
    \dR(\be_{a.m,a'.m'}) & = \be_{ma,m'a'}, \\
    \dR(\ze^L_{a}) & = 1_0, \andspace \\
    \dR(\ze^R_{m}) & = 1_0.
  \end{align*} 
  Verification that $\dR$ is well-defined with respect to the imposed relations in $A \otimes S$ is similar to that of $\dL$.

  Given a symmetric monoidal functor $F\cn A \to A'$, let
  \[
    \dR_F\cn \dR \circ (F \otimes 1) \To F \circ \dR
  \]
  be defined by components $F_2$ and $F_0$ as in \cref{prop:Lpsnat}.
  Verification that these define a pseudonaturality constraint for $\dR$ is the same as that for $\dL$.
  \Cref{prop:Requiv} below shows that $\dR$ is a pseudonatural equivalence.
\end{defn}

\section{The symmetry}\label{sec:B}

In this section we define a strict symmetric monoidal functor
\[
  \dB \cn A \otimes C \to C \otimes A
\]
for permutative categories $A$ and $C$.  We show that $\dB$ is an idempotent isomorphism in \cref{prop:Bequiv} and use this to show that $\dR$ is a pseudonatural equivalence in \cref{lem:R=LB,prop:Requiv}.

\begin{defn}\label{defn:B}
  For permutative categories $A$ and $C$, define a strict symmetric monoidal functor
  \[
    \dB\cn A \otimes C \to C \otimes A
  \]
  as follows.
  The symmetry of the cartesian product induces a strict monoidal functor
  \[
    \dB\cn T(A \times C) \to C \otimes A.
  \]
  Then we extend $\dB$ to the adjoined morphisms of $A \otimes C$ with the following assignments for $a,a' \in A$ and $c,c' \in C$:
  \begin{align*}
    \dB(\de^L_{a}(c,c')) & = \de^R_{a}(c,c'), \\
    \dB(\de^R_c(a,a')) & = \de^L_{c}(a,a'), \\
    \dB(\be_{a.c,a'.c'}) & = \be_{c.a,c'.a'}, \\
    \dB(\ze^L_{a}) & = \ze^R_a, \andspace \\
    \dB(\ze^R_c) & = \ze^L_c. \\
  \end{align*}
  Verification that $\dB$ is well-defined with respect to the imposed relations in $A \otimes C$ is similar to that of $\dR$ and $\dL$.  Comparing the definition of $\dB$ with that of $F \otimes G$ in \cref{defn:FtensorG} and $\phi \otimes \psi$ in \cref{defn:phi-tensor-psi} shows that $\dB$ is 2-natural in $A$ and $C$.
\end{defn}

The assignments listed above give the following result.
\begin{prop}\label{prop:Bequiv}
  The composite
  \[
    A \otimes C \fto{\dB} C \otimes A \fto{\dB} A \otimes C
  \]
  is the identity symmetric monoidal functor.
  Therefore, $\dB$ is a 2-natural isomorphism of permutative categories.
\end{prop}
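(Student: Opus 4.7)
The plan is to verify the equality $\dB \circ \dB = 1_{A \otimes C}$ by checking it on the generating objects and morphisms of $A \otimes C$, and then to deduce everything else. The key observation is that both the composite $\dB \circ \dB$ and the identity are strict symmetric monoidal endofunctors of $A \otimes C$. Since $A \otimes C$ is constructed by a universal property from $\bigT(A \times C)$ by adjoining the morphisms $\de^L, \de^R, \be, \ze^L, \ze^R$ and then imposing relations, a strict symmetric monoidal functor out of $A \otimes C$ is determined by its values on the objects and morphisms of $A \times C$ together with its values on these adjoined generators.

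First I would observe that on simple objects and morphisms, the symmetry of the cartesian product is involutive: $\dB(\dB(a.c)) = \dB(c.a) = a.c$, and similarly for $f.g$. Next I would run through the five adjoined morphism classes and check, directly from the formulas in \cref{defn:B}, that each pair of assignments cancels:
\[
  \de^L_a \mapsto \de^R_a \mapsto \de^L_a, \quad
  \de^R_c \mapsto \de^L_c \mapsto \de^R_c, \quad
  \be \mapsto \be \mapsto \be, \quad
  \ze^L_a \mapsto \ze^R_a \mapsto \ze^L_a, \quad
  \ze^R_c \mapsto \ze^L_c \mapsto \ze^R_c.
\]
Because both $\dB \circ \dB$ and $1_{A \otimes C}$ are strict symmetric monoidal functors out of $A \otimes C$ and agree on all of these generators, they coincide. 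This argument is essentially routine, and there is no hard step: the construction of $A \otimes C$ has been arranged precisely so that $\dB$'s swapping of $\de^L \leftrightarrow \de^R$ and $\ze^L \leftrightarrow \ze^R$ is self-inverse.

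From $\dB \circ \dB = 1$ on both sides (applied to $\dB \cn A \otimes C \to C \otimes A$ and to $\dB \cn C \otimes A \to A \otimes C$), one concludes that $\dB$ has a strict symmetric monoidal inverse, namely itself, so $\dB$ is an isomorphism of permutative categories. Finally, 2-naturality in $A$ and $C$ is already recorded at the end of \cref{defn:B} as an immediate consequence of comparing the formulas defining $\dB$ on $(F \otimes G)$ and $(\phi \otimes \psi)$ with \cref{defn:FtensorG,defn:phi-tensor-psi}; this promotes the family of componentwise isomorphisms to a 2-natural isomorphism, completing the proof.
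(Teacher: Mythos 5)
Your proposal is correct and matches the paper's intent exactly: the paper omits a formal proof, introducing the proposition with ``The assignments listed above give the following result,'' which is precisely your generator-by-generator cancellation argument ($\de^L \leftrightarrow \de^R$, $\ze^L \leftrightarrow \ze^R$, $\be \mapsto \be$, and involutivity of the cartesian symmetry on simple cells), followed by the 2-naturality already recorded in \cref{defn:B}.
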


The formulas in \cref{defn:L,defn:R,defn:B} imply the following.
\begin{lem}\label{lem:R=LB}
  There is an equality of pseudonatural transformations
  \[
    \dR = \dL\dB.
  \]
\end{lem}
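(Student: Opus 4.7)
The plan is to prove the equality of pseudonatural transformations $\dR = \dL \dB \cn - \otimes S \To 1_{\permcat}$ by separately verifying that their components agree and that their pseudonaturality 2-cells agree.

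First, I would fix a permutative category $A$ and show that the strict symmetric monoidal functors $\dR_A$ and $(\dL \dB)_A$ from $A \otimes S$ to $A$ are equal. Both are strict monoidal, so it suffices to check equality on simple objects, simple morphisms, and each adjoined morphism. On simple data, $\dB$ swaps the factors, so $(\dL \dB)(a.m) = \dL(m.a) = ma = \dR(a.m)$, and similarly $(\dL \dB)(f.\si) = \dL(\si.f) = f^\si = \dR(f.\si)$. For each of the adjoined morphisms $\de^L_a$, $\de^R_m$, $\be$, $\ze^L_a$, $\ze^R_m$ of $A \otimes S$, I would compare the assignments given in \cref{defn:R} with the composites obtained by first applying $\dB$ (using its values in \cref{defn:B}) and then $\dL$ (using its values in \cref{eq:Lkappa}). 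In every case the two outputs coincide: for example, $\dB(\de^R_m(a,a')) = \de^L_m(a,a')$ and $\dL(\de^L_m(a,a')) = p_{m;a,a'} = \dR(\de^R_m(a,a'))$, and the swapping of $\ze^L$ and $\ze^R$ likewise lines up because both pseudofunctors send each $\ze$ to $1_0$.

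Second, I would verify that the pseudonaturality constraints agree on each symmetric monoidal functor $F \cn A \to A'$. Because $\dB$ is a 2-natural isomorphism (see \cref{defn:B} and \cref{prop:Bequiv}), its pseudonaturality constraint $\dB_F$ is the identity, so the pseudonaturality constraint of the composite $\dL \dB$ at $F$ is obtained by whiskering $\dL_F$ with $\dB$. Inspecting the proofs of \cref{prop:Lpsnat} and \cref{defn:R}, both $\dL_F * \dB$ and $\dR_F$ have components given by the composites $F_2$ on objects $\tsum a_i.m_i$ and by $F_0$ at the unit, with no extra permutation required since $\dB$ preserves sums strictly. Hence the 2-cell components agree.

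The only step with any content is the first, and it is a finite case check against the two definitions; the pseudonaturality comparison reduces immediately to the fact that $\dB$ is strict and 2-natural, so there is no genuine obstacle. Once both verifications are in hand, the equality $\dR = \dL \dB$ follows by definition of equality of pseudonatural transformations.
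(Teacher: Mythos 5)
Your proposal is correct and matches the paper's approach: the paper proves this lemma simply by asserting that "the formulas in \cref{defn:L,defn:R,defn:B} imply the following," i.e., by direct comparison of the generating assignments, which is exactly the case check you carry out. Your additional verification that the pseudonaturality constraints agree (using that $\dB$ is 2-natural, so its constraint is the identity) is also correct and fills in a detail the paper leaves implicit.
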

Since $\dL$ is a pseudonatural equivalence with adjoint $\dLdot = 1.(-)$, and $\dB$ is a 2-natural isomorphism, \cref{lem:R=LB} implies the following. \begin{cor}\label{prop:Requiv}
  The strict symmetric monoidal functors $\dR$ are the components of a pseudonatural equivalence
  \[
    \dR \cn - \otimes S \to 1_{\permcat}
  \]
  with adjoint $\dRdot = (-).1$.
\end{cor}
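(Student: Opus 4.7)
The plan is to deduce this corollary directly from the three preceding results, namely the pseudonatural equivalence $\dL$ from \cref{prop:Lequiv}, the 2-natural isomorphism $\dB$ from \cref{prop:Bequiv}, and the factorization $\dR = \dL\dB$ from \cref{lem:R=LB}. Since a composite of a 2-natural isomorphism with a pseudonatural equivalence is again a pseudonatural equivalence, the assertion that $\dR$ is a pseudonatural equivalence follows formally. The only substantive content is identifying the adjoint inverse explicitly as $(-).1$.

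First I would invoke \cref{lem:R=LB} to rewrite $\dR = \dL \circ \dB$. By \cref{prop:Bequiv}, $\dB$ is a 2-natural isomorphism whose inverse is $\dB$ itself (applied in the other direction, $C \otimes A \to A \otimes C$), and by \cref{prop:Lequiv}, $\dL$ is a pseudonatural equivalence with weak inverse $\dLdot = 1.(-)$. Therefore the candidate adjoint inverse to $\dR$ is the composite
\[
\dRdot \;=\; \dB \circ \dLdot \cn A \to S \otimes A \to A \otimes S.
\]
Evaluating on an object $a \in A$ gives $\dB(1.a) = a.1$, and on a morphism $f$ gives $\dB(1.f) = f.1$, matching the description $\dRdot = (-).1$. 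In particular $\dRdot$ is a symmetric monoidal functor, being a composite of such.

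Next I would verify the adjoint equivalence data. The composite $\dR \circ \dRdot = \dL \circ \dB \circ \dB \circ \dLdot$ equals $\dL \circ \dLdot$ by \cref{prop:Bequiv}, and this is the identity on $A$ by \cref{eq:LLdot-id} in the proof of \cref{prop:Lequiv}. For the other composite, $\dRdot \circ \dR = \dB \circ \dLdot \circ \dL \circ \dB$; whiskering the monoidal natural isomorphism $\eta \cn 1_{S \otimes -} \To \dLdot \dL$ constructed in the proof of \cref{prop:Lequiv} by $\dB$ on each side produces a monoidal natural isomorphism $1_{- \otimes S} \To \dRdot \dR$. Since $\dB$ is a 2-natural isomorphism, the unit/counit identities transport from $(\dL,\dLdot)$ to $(\dR,\dRdot)$ verbatim.

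Finally I would assemble the pseudonaturality constraints. The constraints $\dR_F = \dL_{F} * \dB$ (using that $\dB$ is 2-natural, hence contributes identity constraints) inherit pseudonaturality from those of $\dL$ as established in \cref{prop:Lpsnat}. There is no real obstacle here: the argument is formal composition of a 2-natural isomorphism with a pseudonatural equivalence, and the explicit form of $\dRdot$ is forced by the definitions of $\dB$ and $\dLdot$ on objects.
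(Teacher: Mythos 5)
Your proposal is correct and follows exactly the route the paper intends: the corollary is deduced from \cref{lem:R=LB}, the involutive 2-natural isomorphism $\dB$ of \cref{prop:Bequiv}, and the adjoint equivalence $(\dL,\dLdot)$ of \cref{prop:Lequiv}, with $\dRdot = \dB \circ \dLdot = (-).1$. You have merely written out the transport of the unit, counit, triangle identities, and pseudonaturality constraints along $\dB$ that the paper leaves implicit.
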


\section{The associativity}\label{sec:A}

In this section we define a strict symmetric monoidal functor
\[
  \dA\cn \bigl( B \otimes C \bigr) \otimes D \to
  B \otimes \bigl( C \otimes D \bigr)
\]
for permutative categories $B$, $C$, and $D$.
We give the assignments on objects and morphisms in \cref{defn:A} and verify that $\dA$ is well-defined in \cref{prop:Awelldef} below.

\begin{convention}\label{conv:assoc-generic}
  Throughout this section, we use the following notation for generic objects and morphisms.
  \begin{center}
    \renewcommand{\arraystretch}{1.25}
    \begin{tabular}{ccc}
      category & objects & morphisms\\
      \hline
      $B$ & $b,b',b_i$ & $f, f_i$
      \\
      $C$ & $c,c',c_i$ & $g, g_i$
      \\
      $D$ & $d,d',d_i$ & $h, h_i$\\
      $B \otimes C$ & $X, X', Y, Y'$ & $\ka$, $\ka_i$ \\
      $B \otimes (C \otimes D)$ & - & $\la$
    \end{tabular}
    \\ \ 
  \end{center}
\end{convention}

\begin{defn}\label{defn:A}
  We define the assignments for
  \[
    \dA \cn (B \otimes C) \otimes D \to B \otimes (C \otimes D)
  \]
  as follows.
  \begin{description}
  \item[Objects]
    The assignment on objects of $(B \otimes C) \otimes D$ is given by
    \begin{align*}
      \dA (0) & = 0, \\
      \dA (0.d) & = 0, \\
      \dA \bigl(\, \bigl( \tsum b_i.c_i  \bigr).d  \,\bigr)
              & = \tsum b_i.(c_i.d),
    \end{align*}
    and extended to sums in $(B \otimes C) \otimes D$ so that $\dA$ is strictly monoidal.

  \item[Morphisms $\ka.h$]
    Suppose $\ka$ is a morphism in $B \otimes C$ and $h$ is a morphism in $D$.  If $\ka$ is itself a sum $\tsum \ka_i$, then we define
    \[
      \dA \bigl( \bigl( \tsum \ka_i  \bigr).h  \bigr) = \sum \dA \bigl(\ka_i.h \bigr).
    \]
    We now treat each possible type of simple or adjoined morphism $\ka$ in $B \otimes C$.
    \begin{itemize}
    \item If $\ka = f.g$, then we define
      \[
        \dA \bigl( ( f.g ).h  \bigr) =  f.(g.h).
      \]
    \item If $\ka = 1_0$, then we define
      \[
        \dA (1_0 . h) = 1_0.
      \]
    \item If $\ka$ is one of the adjoined morphisms in $B \otimes C$,
      we define $\dA(\ka.h)$ via the indicated composite below.
      In each of the first three cases below, there are alternative composites using $1.(1.h)$ in different positions.  
      These all result in the same definition for $\dA(\ka.h)$ by naturality.
    \end{itemize}
    \[
      \begin{tikzpicture}[x=25mm,y=15mm]
        \draw[0cell] 
        (0,0) node (a) {b.(c.d) + b.(c'.d)}
        (.75,-1) node (b) {b.\bigl( c.d + c'.d \bigr)}
        (2.25,-1) node (c) {b.\bigl( (c+c').d  \bigr)}
        (3,0) node (d) {b.\bigl( (c+c').d'  \bigr)}
        (a)+(0,.6) node (a') {\dA\bigl(\, (b.c + b.c').d \,\bigr)}
        (d)+(0,.6) node (d') {\dA\bigl(\, (b.(c + c')).d' \,\bigr)}
        ;
        \draw[1cell] 
        (a) edge['] node {\de^L_{b}} (b)
        (b) edge['] node {1.\de^R_{d}} (c)
        (c) edge['] node {1.(1.h)} (d)
        (a') edge[dashed] node {\dA\bigl(\,\de^L_{b}.h\,\bigr)} (d')
        (a') edge[equal] node {} (a)
        (d') edge[equal] node {} (d)
        ;
      \end{tikzpicture}
    \]
  
    \[
      \begin{tikzpicture}[x=25mm,y=15mm]
        \draw[0cell] 
        (0,0) node (a) {b.(c.d) + b'.(c.d)}
        (a)+(1,-1) node (b) {(b+b').(c.d)}
        (a)+(2,0) node (d) {(b+b').(c.d')}
        (a)+(0,.6) node (a') {\dA\bigl(\, (b.c + b'.c).d \,\bigr)}
        (d)+(0,.6) node (d') {\dA\bigl(\, ((b+b').c).d' \,\bigr)}
        ;
        \draw[1cell] 
        (a) edge['] node {\de^R_{c.d}} (b)
        (b) edge['] node {1.(1.h)} (d)
        (a') edge[dashed] node {\dA\bigl(\,\de^R_{c}.h\,\bigr)} (d')
        (a') edge[equal] node {} (a)
        (d') edge[equal] node {} (d)
        ;
      \end{tikzpicture}
    \]

    \[
      \begin{tikzpicture}[x=25mm,y=15mm]
        \draw[0cell] 
        (0,0) node (a) {b.(c.d) + b'.(c'.d)}
        (a)+(1,-1) node (b) {b'.(c'.d) + b.(c.d)}
        (a)+(2,0) node (d) {b'.(c'.d') + b.(c.d')}
        (a)+(0,.6) node (a') {\dA\bigl(\, (b.c + b'.c').d \,\bigr)}
        (d)+(0,.6) node (d') {\dA\bigl(\, (b'.c' + b.c).d' \,\bigr)}
        ;
        \draw[1cell] 
        (a) edge['] node {\be} (b)
        (b) edge['] node {1.(1.h) + 1.(1.h)} (d)
        (a') edge[dashed] node {\dA\bigl(\,\be.h\,\bigr)} (d')
        (a') edge[equal] node {} (a)
        (d') edge[equal] node {} (d)
        ;
      \end{tikzpicture}
    \]

    \[
      \begin{tikzpicture}[x=25mm,y=15mm]
        \draw[0cell] 
        (0,0) node (a) {0}
        (a)+(.75,-1) node (b) {b.0}
        (b)+(1.5,0) node (c) {b.(0.d)}
        (a)+(3,0) node (d) {b.(0.d')}
        (a)+(0,.6) node (a') {\dA\bigl(\, 0.d \,\bigr)}
        (d)+(0,.6) node (d') {\dA\bigl(\, (b.0).d' \,\bigr)}
        ;
        \draw[1cell] 
        (a) edge['] node {\ze^L_{b}} (b)
        (b) edge['] node {1.\ze^R_{d}} (c)
        (c) edge['] node {1.(1.h)} (d)
        (a') edge[dashed] node {\dA\bigl(\,\ze^L_{b}.h\,\bigr)} (d')
        (a') edge[equal] node {} (a)
        (d') edge[equal] node {} (d)
        ;
      \end{tikzpicture}
    \]

    \[
      \begin{tikzpicture}[x=25mm,y=15mm]
        \draw[0cell] 
        (0,0) node (a) {0}
        (a)+(1,-1) node (b) {0.(c.d)}
        (a)+(2,0) node (d) {0.(c.d')}
        (a)+(0,.6) node (a') {\dA\bigl(\, 0.d \,\bigr)}
        (d)+(0,.6) node (d') {\dA\bigl(\, (0.c).d' \,\bigr)}
        ;
        \draw[1cell] 
        (a) edge['] node {\ze^R_{c.d}} (b)
        (b) edge['] node {1.(1.h)} (d)
        (a') edge[dashed] node {\dA\bigl(\,\ze^R_{c}.h\,\bigr)} (d')
        (a') edge[equal] node {} (a)
        (d') edge[equal] node {} (d)
        ;
      \end{tikzpicture}
    \] 

  \item[Adjoined morphisms] For morphisms $\la$ adjoined in the passage from $- \times D$ to $- \otimes D$, we define $\dA(\la)$ via the indicated composites below.  Then $\dA$ is extended to sums of morphisms so that it is symmetric monoidal.

    \[
      \begin{tikzpicture}[x=30mm,y=15mm]
        \draw[0cell] 
        (0,0) node (a) {\tsum b_i.(c_i.d) + \tsum b_i.(c_i.d')}
        (a)+(.25,-1) node (p) {\tsum \bigl( b_i.(c_i.d) + b_i.(c_i.d') \bigr)}
        (p)+(1.5,0) node (b) {\tsum b_i.(c_i.d + c_i.d')}
        (a)+(2,0) node (d) {\tsum b_i.(c_i.(d+d'))}
        (a)+(0,.6) node (a') {
          \dA\bigl(\, \bigl(\tsum b_i.c_i\bigr).d + \bigl(\tsum b_i.c_i\bigr).d' \,\bigr)
        }
        (d)+(0,.6) node (d') {
          \dA\bigl(\, \bigl(\tsum b_i.c_i\bigr).\bigl(d+d'\bigr) \,\bigr)
        }
        ;
        \draw[1cell] 
        (a) edge['] node[pos=.4] {p} (p)
        (p) edge['] node {\tsum \de^L_{b_i}} (b)
        (b) edge['] node[pos=.8] {\tsum 1.\de^L_{c_i}} (d)
        (a') edge[dashed] node {\dA\bigl( \de^L_{\ssum b_i.c_i} \bigr)} (d')
        (a') edge[equal] node {} (a)
        (d') edge[equal] node {} (d)
        ;
      \end{tikzpicture}
    \]

    For the object $0 \in B \otimes C$, define
    \[
      \dA \bigl( \de^L_0  \bigr) = 1_0 \cn 0+0 \to 0.
    \]
    
    \[
      \begin{tikzpicture}[x=30mm,y=15mm]
        \draw[0cell] 
        (0,0) node (a) {\tsum b_i.(c_i.d) + \tsum b'_j.(c'_j.d)}
        (a)+(2,0) node (d) {\tsum b_i.(c_i.d) + \tsum b'_j.(c'_j.d)}
        (a)+(0,.67) node (a') {
          \dA\bigl(\, \bigl(\tsum b_i.c_i\bigr).d + \bigl(\tsum b'_j.c'_j\bigr).d \,\bigr)
        }
        (d)+(0,.67) node (d') {
          \dA\bigl(\, \bigl(\tsum b_i.c_i + \tsum b'_j.c'_j\bigr).d \,\bigr)
        }
        ;
        \draw[1cell] 
        (a) edge['] node {1} (d)
        (a') edge[dashed] node {\dA\bigl(\, \de^R_{d} \,\bigr)} (d')
        (a') edge[equal] node {} (a)
        (d') edge[equal] node {} (d)
        ;
      \end{tikzpicture}
    \]

    \[
      \begin{tikzpicture}[x=30mm,y=15mm]
        \draw[0cell] 
        (0,0) node (a) {\tsum b_i.(c_i.d) + \tsum b'_j.(c'_j.d')}
        (a)+(2,0) node (d) {\tsum b'_j.(c'_j.d') + \tsum b_i.(c_i.d)}
        (a)+(0,.67) node (a') {
          \dA\bigl(\, \bigl(\tsum b_i.c_i\bigr).d + \bigl(\tsum b'_j.c'_j\bigr).d' \,\bigr)
        }
        (d)+(0,.67) node (d') {
          \dA\bigl(\, \bigl(\tsum b'_j.c'_j\bigr).d' + \bigl(\tsum b_i.c_i\bigr).d \,\bigr)
        }
        ;
        \draw[1cell] 
        (a) edge['] node {\beta} (d)
        (a') edge[dashed] node {\dA\bigl(\, \be \,\bigr)} (d')
        (a') edge[equal] node {} (a)
        (d') edge[equal] node {} (d)
        ;
      \end{tikzpicture}
    \]

    \[
      \begin{tikzpicture}[x=25mm,y=15mm]
        \draw[0cell] 
        (0,0) node (a) {0}
        (a)+(1,-1) node (b) {\tsum b_i.0}
        (a)+(2,0) node (d) {\tsum b_i.(c_i.0)}
        (a)+(0,.6) node (a') {\dA\bigl(\, 0 \,\bigr)}
        (d)+(0,.6) node (d') {\dA\bigl(\, \bigl(\tsum b_i.c_i\bigr).0 \,\bigr)}
        ;
        \draw[1cell] 
        (a) edge['] node {\tsum \ze^L_{b_i}} (b)
        (b) edge['] node {\tsum 1.\ze^L_{c_i}} (d)
        (a') edge[dashed] node {\dA\bigl(\, \ze^L_{\ssum b_i.c_i} \,\bigr)} (d')
        (a') edge[equal] node {} (a)
        (d') edge[equal] node {} (d)
        ;
      \end{tikzpicture}
    \]

    For the object $0 \in B \otimes C$, define
    \[
      \dA \bigl( \ze^L_0  \bigr) = 1_0.
    \]
    
    \[
      \begin{tikzpicture}[x=25mm,y=15mm]
        \draw[0cell] 
        (0,0) node (a) {0}
        (a)+(2,0) node (d) {0}
        (a)+(0,.6) node (a') {\dA\bigl(\, 0 \,\bigr)}
        (d)+(0,.6) node (d') {\dA\bigl(\, 0.d \,\bigr)}
        ;
        \draw[1cell] 
        (a) edge['] node {1_0} (d)
        (a') edge[dashed] node {\dA\bigl(\, \ze^R_{d} \,\bigr)} (d')
        (a') edge[equal] node {} (a)
        (d') edge[equal] node {} (d)
        ;
      \end{tikzpicture}
    \]
  \end{description}
  We show that $\dA$ is well-defined in \cref{lem:Awelldef-2.1,prop:Awelldef} below.  Then we verify 2-naturality of $\dA$ in \cref{prop:A-2nat}.
\end{defn}

The following result confirms \cref{TENnat} for morphisms adjoined in the passage from $- \times D$ to $- \otimes D$.  We state it separately because it requires significantly more sub-cases than any of the other steps in checking that $\dA$ is well-defined.
\begin{lem}\label{lem:Awelldef-2.1}
  The associativity
  \[
    \dA \cn (B \otimes C) \otimes D \to B \otimes (C \otimes D)
  \]
  preserves naturality of each morphism $\la$ in $(B \otimes C) \otimes D$ that is adjoined in the passage from $- \times D$ to $- \otimes D$.
\end{lem}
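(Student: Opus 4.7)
The plan is to reduce each of the required naturality squares to a comparison of underlying permutations via the Coherence Theorem~\ref{thm:iterated-tensor-coherence} for iterated tensor products. The key observation is that every morphism in sight — namely the image under $\dA$ of a simple morphism $(f.g).h$, of $\ka.h$ for $\ka$ an adjoined morphism of $B \otimes C$, or of an adjoined $\la$ from $- \times D$ to $- \otimes D$ — is by construction a formal morphism in the iterated tensor $B \otimes (C \otimes D)$: each is assembled from simple tensors of the form $f.(g.h)$ together with adjoined morphisms $\de$, $\be$, $\ze$ of $B \otimes (C \otimes D)$. Consequently each naturality square that must be checked is a formal $2$-arrow diagram of length $3$, and by Theorem~\ref{thm:iterated-tensor-coherence} it suffices to verify that the two sides have matching underlying permutations.

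First I would enumerate the adjoined morphisms to consider: $\de^L_X(d,d')$, $\de^R_d(X,Y)$, $\be_{X.d,\,Y.d'}$, $\ze^L_X$, and $\ze^R_d$, for $X, Y \in B \otimes C$ and $d, d' \in D$. Since $\dA$ is strict monoidal on formal sums, naturality with respect to a morphism $\tsum \ka_i . h$ in $B \otimes C \times D$ (acting on sources or targets of $\la$) reduces immediately to naturality with respect to a single summand $\ka.h$. Furthermore, naturality may be checked one variable at a time. Naturality of each $\la$ in the $D$-variable with respect to a morphism $h\cn d \to d'$ is essentially immediate: both routes around the square, after applying $\dA$, are built from a fixed formal morphism precomposed or postcomposed with whiskerings of $1.(1.h)$, and the underlying permutations agree by inspection.

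The substantive work is naturality in the $(B \otimes C)$-variable. Here one lets $\ka\cn X \to X'$ range over the types of generating morphism of $B \otimes C$: a simple $f.g$, the monoidal unit morphism $1_0$, or one of the adjoined morphisms $\de^L_b$, $\de^R_c$, $\be$, $\ze^L_b$, $\ze^R_c$. For each choice of $\ka$ and each choice of $\la$, one writes the naturality square and applies $\dA$ using the formulas of \cref{defn:A}. Both legs of the resulting square land in $B \otimes (C \otimes D)$ as formal composites whose underlying permutations in $\bigP(\ob B \times \ob C \times \ob D)$ can be computed directly from \cref{defn:Phi} and \cref{rmk:foil-perm}: all of the generating morphisms contribute either identities or block-transpositions on lists of simple triples $b.(c.d)$, and a routine bookkeeping calculation shows that both legs realize the same permutation. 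The coherence theorem then concludes equality.

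The main obstacle is the combinatorial proliferation of sub-cases — five choices of $\la$ times roughly seven types of $\ka$, plus edge cases when $X$ or $d$ is the monoidal unit, in which the definition of $\dA(\la)$ involves inserted $\ze$'s or collapses to an identity. In these degenerate cases one also invokes the identity properties of the perfect shuffle from \cref{notn:ma}\cref{it:perfshuf-ids} and the relation that forces $\ze^L_0 = \ze^R_0$ in both tensor products. Once one recognizes that coherence allows us to ignore the specific shape of each formal composite and focus only on the action on simple triples, the verification in each case collapses to a one-line permutation check, and the lemma follows.
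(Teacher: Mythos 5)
Your overall strategy matches the paper's: reduce sums to single summands, and dispose of each naturality square pairing an adjoined $\la$ with an adjoined $\ka.1_d$ by observing that it is a formal diagram of adjoined morphisms in $B \otimes (C \otimes D)$ and invoking the Coherence Theorem~\ref{thm:iterated-tensor-coherence} after matching underlying permutations. That is exactly how the paper dispatches the $5 \times 5$ grid of adjoined-versus-adjoined cases, including the reduction to $h = 1_d$ that your ``one variable at a time'' remark provides.

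The gap is your opening claim that \emph{every} naturality square in sight is a formal $2$-arrow diagram. A diagram is formal (\cref{defn:formal-diagram} and its iterated version) only if it lifts through $\bigl(\bigP \ob\bigr)$ applied to the iterated tensor, and the only morphisms upstairs are permutations and the adjoined $\de$, $\be$, $\ze$. A morphism $f.(g.h)$ with $f$, $g$, or $h$ a non-identity, non-symmetry morphism admits no such lift, so the naturality square of $\la$ against a simple morphism $(f.g).h$ --- and likewise your $D$-variable squares against a general $h$ --- is \emph{not} formal, and \cref{thm:iterated-tensor-coherence} says nothing about it. This restriction is not cosmetic: \cref{rmk:non!-perm} shows why the coherence machinery cannot see arbitrary morphisms of the constituent categories. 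These cases need a different, albeit easy, argument, which is the one the paper gives: each $\dA(\la)$ is by construction a composite of instances of $\de$, $\be$, $\ze$, all natural with respect to every morphism of $B \otimes (C \otimes D)$ by relation~\ref{defn:AtensorB}.\cref{TENnat}, so those squares commute by naturality of the constituents rather than by coherence. Your phrase ``the underlying permutations agree by inspection'' conflates these two mechanisms; once you replace it with the naturality argument for all squares involving non-identity simple morphisms, and reserve the coherence theorem for the genuinely formal adjoined-versus-adjoined squares, the proof is complete and agrees with the paper's.
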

\begin{proof}
  By the definition of $\dA$, naturality with respect to any morphism containing a summation will follow automatically from the preservation of the naturality squares with respect to morphisms of the form $(f.g).h$ and $\ka.h$ for $\ka$ an adjoined morphism.
  Naturality of each $\la$ with respect to morphisms $(f.g).h$ is preserved because each $\dA(\la)$ is a composite of natural morphisms in $B \otimes (C \otimes D)$.
  Next we must check, for each $\la$, that $\dA$ preserves naturality with respect to morphisms 
  \[
    \ka.h \cn X.d \to X'.d' \inspace (B \otimes C) \otimes D,
  \]
  where $\ka$ denotes any of the five adjoined morphisms in $B \otimes C$.
  By definition of $\dA$, it suffices to check the case $h = 1_d$ for some object $d$ of $D$.
  Each of these follows from the Coherence Theorem~\ref{thm:iterated-tensor-coherence} for iterated tensor products.

  For example, $\dA$ sends the naturality diagram for $\la = \de^L$ with respect to $\ka.1 = \ze^L.1$ to the boundary of the following diagram in $B \otimes (C \otimes D)$, where the left and right vertical composites are
  \[
    \dA\bigl( \ze^L.1 + \ze^L.1 \bigr)
    \andspace
    \dA\bigl( \ze^L.1 \bigr),
  \]
  respectively.
  The bottom composite is $\dA\bigl( \de^L_b \bigr)$, and commutativity follows from coherence of the adjoined morphisms.
  \begin{equation}\label{Dnat-example}
    \begin{tikzpicture}[x=70mm,y=15mm,vcenter]
      \draw[0cell] 
      (0,0) node (a) {0+0}
      (1,0) node (b) {0}
      (a)+(0,-1) node (a') {b.0 + b.0}
      (b)+(0,-1) node (b') {b.(0)}
      (a')+(0,-1) node (a'') {b.(0.d) + b.(0.d')}
      (b')+(0,-1) node (b'') {b.(0.(d+d'))}
      (b'')+(-.5,0) node (c'') {b.(0.d + 0.d')}
      ;
      \draw[1cell] 
      (a) edge node {1_0} (b)
      (a'') edge node {\de^L} (c'')
      (c'') edge node {1.\de^L_0} (b'')
      (a) edge['] node {\ze^L + \ze^L} (a')
      (a') edge['] node {1.\ze^R + 1.\ze^R} (a'')
      (b) edge node {\ze^L} (b')
      (b') edge node {1.\ze^R} (b'')
      ;
    \end{tikzpicture}
  \end{equation}

  Confirming each of the other 24 naturality relations is similar.
  In each case one applies $\dA$ to a naturality diagram of adjoined morphisms in $(B \otimes C) \otimes D$ and obtains a formal diagram of adjoined morphisms in $B \otimes (C \otimes D)$.  One can give direct arguments for commutativity in each case, or use the Iterated Coherence Theorem~\ref{thm:iterated-tensor-coherence}.
\end{proof} 

\begin{prop}\label{prop:Awelldef}
  The assignments given in \cref{defn:A} induce a well-defined strict symmetric monoidal functor
  \[
    \dA\cn (B \otimes C) \otimes D \to B \otimes (C \otimes D).
  \]
\end{prop}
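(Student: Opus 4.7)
The proof has two parts: first, verifying that the case-by-case assignment $\ka.h \mapsto \dA(\ka.h)$ respects the relations defining the inner tensor $B \otimes C$, so that $\dA$ is well-defined on morphisms of the form $\ka.h$ with $\ka$ regarded as a genuine morphism of $B \otimes C$; second, verifying that $\dA$ descends through the relations imposed in forming the outer tensor $(B \otimes C) \otimes D$. The unifying strategy is to cast every verification as a comparison of two parallel morphisms in $B \otimes (C \otimes D)$ that are formal in the sense of \cref{defn:iterated-formal-diagram}, and to close each case using the Iterated Coherence Theorem~\ref{thm:iterated-tensor-coherence}.

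For the first part, I would show that the assignments in \cref{defn:A} respect each of the six relations listed in Step 3 of \cref{defn:AtensorB} when interpreted inside $B \otimes C$. By the functoriality of $\dA$ on sums and composites, it suffices to check each relation on its generating morphisms. Using the decomposition $\ka.h = (\ka.1_{d'}) \circ (1_X.h)$ and functoriality a second time, we may further reduce to the case $h = 1_d$. In this case the image under $\dA$ of each side of a relation consists entirely of adjoined morphisms in $B \otimes (C \otimes D)$ together with whiskerings of the form $1.\de$ and $1.\ze$, all of which are formal. Both sides are therefore parallel formal morphisms whose underlying permutations can be read off directly from the induced bijections on simple summands $b_i.(c_j.d_k)$, and these coincide in each case.

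For the second part, the naturality of the outer adjoined morphisms is already established in \cref{lem:Awelldef-2.1}. The remaining relations concern composites of the outer adjoined morphisms with objects held fixed; applying $\dA$ again produces parallel formal diagrams in $B \otimes (C \otimes D)$ whose underlying permutations are determined by the same combinatorial data on both sides, so \cref{thm:iterated-tensor-coherence} closes each case. Once $\dA$ is a well-defined functor, it is strict monoidal by its definition on objects and sums of morphisms, and the equality $\dA(\be) = \be$ recorded in \cref{defn:A} makes it symmetric.

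The main obstacle is not conceptual but bookkeeping: there are on the order of several dozen sub-cases when one expands the six relations across the five adjoined morphisms of each tensor construction, with further sub-cases for left/right variants and for $\ze^L/\ze^R$. The Iterated Coherence Theorem converts each such check into the routine observation that the two sides induce the same permutation of simple triples in $\ob B \times \ob C \times \ob D$, reducing what would otherwise be an intricate diagram chase to mechanical pattern-matching.
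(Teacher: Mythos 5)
Your two-part structure (first the relations inherited from $B \otimes C$, then those imposed in the passage from $- \times D$ to $- \otimes D$) matches the paper's proof exactly, and your use of Theorem~\ref{thm:iterated-tensor-coherence} for the purely structural relations is legitimate: the paper itself leans on that theorem for the hardest block of checks (the $24$-plus outer naturality squares of \cref{lem:Awelldef-2.1}) and your reduction to $h = 1_d$ is the same reduction made there. However, there is a genuine gap in your claim that \emph{every} verification can be cast as a comparison of parallel \emph{formal} morphisms. The relation \cref{TENnat} imposed on $B \otimes C$ asserts naturality of $\de$, $\be$, $\ze$ with respect to \emph{arbitrary} morphisms $f.g$ of $B \times C$, so the squares you must show $\dA$ preserves contain edges of the form $\dA\bigl((f.g).1_d\bigr) = f.(g.1_d)$ for general $f$ and $g$. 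These do not lift along $\chi$ to $\bigP(\ob B) \otimes \bigl(\bigP(\ob C) \otimes \bigP(\ob D)\bigr)$, whose only morphisms are generated by formal symmetries and the adjoined isomorphisms; the diagrams are not formal, they have no underlying permutation, and Theorem~\ref{thm:iterated-tensor-coherence} simply does not apply to them. Your sentence asserting that the image of each side ``consists entirely of adjoined morphisms \ldots all of which are formal'' is therefore false precisely for the naturality relations.

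The repair is short but it is a different argument, and it is the one the paper uses: each $\dA(\ka.1_d)$ is \emph{defined} as a composite of components of the natural isomorphisms $\de$, $\be$, $\ze$ of $B \otimes (C \otimes D)$ (for instance $\dA\bigl(\de^L_b.1\bigr) = \bigl(1.\de^R_d\bigr) \circ \de^L_b$), so naturality of the assignment with respect to all morphisms $f.(g.h)$ is automatic from relation \cref{TENnat} already holding in the target --- no coherence needed, and no coherence available. The same observation is what makes the first half of \cref{lem:Awelldef-2.1} work before coherence takes over for the formal sub-cases. With that one substitution your proof goes through; for the remaining relations (\cref{TENsmc}, \cref{TEN1}, \cref{TEN2}, \cref{TEN3}, \cref{TEN6}) your coherence-based treatment is sound and is, if anything, more uniform than the paper's case-by-case appeals to naturality of $\de^R$, the perfect-shuffle identity \cref{eq:p-symm}, and the corresponding relations in the target.
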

\begin{proof}
  We check that $\dA$ is well-defined in two parts.  The first part checks that $\dA$ preserves the relations imposed in the passage from $B \times C$ to $B \otimes C$.  The second part checks similarly for the passage from $- \times D$ to $- \otimes D$. Each part below can be verified with a relatively straightforward diagram chase, and the primary obstruction is one of organization. To aid the reader, we label each calculation by the axiom in \cref{defn:AtensorB} that it briefly explains.

  \medskip
  \noindent\textbf{Part 1.} (From $B \times C$ to $B \otimes C$)
  \newcommand{\BCstep}[1]{\vspace{.25pc}\noindent \textbf{\cref{#1} for $B \otimes C$:}}

  \BCstep{TENnat} Each morphism $\ka$ in $B \otimes C$ is natural with respect to morphisms $f.g$ in $B \otimes C$.  To check that $\dA(\ka.1)$ preserves this naturality, note that the definition of $\dA(\ka.1)$ above is a composite of natural morphisms in $B \otimes (C \otimes D)$.
    Therefore, preservation of the naturality square for $\ka.1$ in $(B \otimes C) \otimes D$ follows from the naturality of $\dA(\ka.1)$ in $B \otimes (C \otimes D)$.  

  \BCstep{TENsmc} The symmetric monoidal axioms for $\be.1$ are preserved because $\dA(\be.1) = \be$.

  \BCstep{TEN1} The symmetric monoidal functor axioms for
  \[
    \bigl( ((b.-).d) , \de^L.1, \ze^L.1 \bigr)
    \andspace
    \bigl( ((-.c).d) , \de^R.1, \ze^R.1 \bigr)
  \]
  are preserved by using the same axioms for the corresponding data in $B \otimes (C \otimes D)$.

  \BCstep{TEN2} The interchange condition for $\be.1$ is preserved by using the naturality of $\de^R$ and the same condition for $\be$ in $B \otimes (C \otimes D)$. 

  \BCstep{TEN3} Preservation of the condition $\ze^L_0.1 = \ze^R_0.1$ follows from naturality of $\ze^R$ and the corresponding condition in $B \otimes (C \otimes D)$.

  \BCstep{TEN6} Preservation of the left, respectively right, unit condition follows from naturality of $\de^R$, respectively $\ze^R$, and the corresponding condition in $B \otimes (C \otimes D)$.

  \medskip
  \noindent\textbf{Part 2.} (From $- \times D$ to $- \otimes D$)
  \newcommand{\Dstep}[1]{\vspace{.25pc}\noindent \textbf{\cref{#1} for $- \otimes D$:}}
  
  \Dstep{TENnat} This is \cref{lem:Awelldef-2.1}.
  
  \Dstep{TENsmc} The symmetric monoidal axioms for $\be$ are preserved because $\dA(\be) = \be$.

  \Dstep{TEN1} For an object $X=\tsum b_i.c_i$ in $B \otimes C$, the symmetric monoidal functor axioms for $\bigl( (X.-) , \de^L, \ze^L \bigr)$ follow from those for composite symmetric monoidal functors $b_i.(c_i.-)$.
  For $X = 0$, all of the morphisms involved are mapped to $1_0$.  
  The symmetric monoidal functor axioms for $\bigl( (-.d) , \de^R, \ze^R \bigr)$ are preserved because $\de^R$ and $\ze^R$ both get mapped to identities.
  
  \Dstep{TEN2} The interchange condition holds by the property \cref{eq:p-symm} of the perfect shuffle.

  \Dstep{TEN3,TEN6} These conditions are trivially preserved since $\dA(\ze^R) = \dA(\ze^L) = 1_0$.

  This completes the verification that $\dA$ preserves each of the relations imposed in the definition of $(B \otimes C) \otimes D$.  It follows from the definition that $\dA$ is strict symmetric monoidal, and this completes the proof.
\end{proof}

\begin{lem}\label{prop:A-2nat}
  The strict symmetric monoidal functors $\dA$ are the components of a 2-natural transformation
  \[
    \dA\cn (- \otimes - ) \otimes -
    \To
    - \otimes (- \otimes -)
  \]
  between functors $\permcat^3 \to \permcats$.
\end{lem}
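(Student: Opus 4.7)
The plan is to verify the two conditions that constitute 2-naturality: first, that the relevant naturality squares commute on the nose for triples of symmetric monoidal functors, and second, that the corresponding cylinder condition holds strictly for triples of monoidal transformations. Both hinge on the observation that everything in sight is strict symmetric monoidal, so equalities of functors reduce to equalities on a short list of generating objects and morphisms, which can then be handled by the Iterated Coherence Theorem~\ref{thm:iterated-tensor-coherence}.

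For 1-naturality, fix symmetric monoidal functors $F\cn B \to B'$, $G\cn C \to C'$, $H\cn D \to D'$. Both composites
\[
\dA \circ \bigl((F \otimes G) \otimes H\bigr)
\andspace
\bigl(F \otimes (G \otimes H)\bigr) \circ \dA
\]
are strict symmetric monoidal functors $(B \otimes C) \otimes D \to B' \otimes (C' \otimes D')$, so to establish equality it suffices to check agreement on the generating objects $(\tsum b_i.c_i).d$ and $0$, and on the generating morphisms, namely the simple morphisms $(f.g).h$ together with the adjoined morphisms of the two forms $\ka.h$ (for $\ka$ an adjoined morphism of $B \otimes C$) and $\la$ (adjoined in the passage from $(B \otimes C) \times D$ to $(B \otimes C) \otimes D$). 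Agreement on objects and simple morphisms is immediate from the defining formulas. For the adjoined morphisms, the definition of $F \otimes G$ in \cref{defn:FtensorG} produces composites involving $F_2$, $G_2$, $F_0$, $G_0$, and the definition of $\dA$ in \cref{defn:A} produces further composites involving perfect shuffles and adjoined morphisms of $B \otimes (C \otimes D)$. In each of the ten cases (five $\ka.h$'s and five $\la$'s), both composites around the square are formal morphisms between the same objects in $B' \otimes (C' \otimes D')$. One verifies that their underlying permutations (in the sense of \cref{defn:iterated-formal-diagram}) coincide and invokes \cref{thm:iterated-tensor-coherence} to conclude equality.

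For 2-naturality, fix monoidal transformations $\phi\cn F \To F'$, $\psi\cn G \To G'$, $\chi\cn H \To H'$. The equality
\[
\dA * \bigl((\phi \otimes \psi) \otimes \chi\bigr)
= \bigl(\phi \otimes (\psi \otimes \chi)\bigr) * \dA
\]
is an equality of monoidal transformations between two strict symmetric monoidal functors that we have already shown to be equal. Since the component of each tensor product transformation at a sum is determined componentwise by the formulas of \cref{defn:phi-tensor-psi}, and $\dA$ maps $(\tsum b_i.c_i).d \mapsto \tsum b_i.(c_i.d)$, a direct component check at simple objects $(b.c).d$ shows that both sides have component $\phi_b . (\psi_c . \chi_d)$, and similarly both sides have component $1_0$ at $0$. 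Strict monoidality of the source and target functors propagates this equality to all objects.

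The main obstacle is the bookkeeping in the 1-naturality step for adjoined morphisms of the form $\ka.h$ and $\la$: on one side of the square the monoidal constraints $F_2,G_2,H_2$ appear inside $\dA$'s definition of the image of a relabelled adjoined morphism, while on the other side they appear outside $\dA$'s definition applied to the original adjoined morphism, and the perfect shuffles interact nontrivially with the symmetries introduced by $F \otimes G$. Rather than fight through each diagram by hand using only naturality and the symmetric monoidal functor axioms, the cleanest approach is to observe that every such square consists of two formal parallel morphisms between the same objects, reduce the question of equality to a comparison of underlying permutations, and then conclude by \cref{thm:iterated-tensor-coherence}.
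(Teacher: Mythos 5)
The overall reduction---both composites around the naturality square are strict symmetric monoidal, so it suffices to compare them on generating objects and morphisms---matches the paper, as does your componentwise check for the 2-cells. But your treatment of the adjoined morphisms has a genuine gap: the two composites in those naturality squares are \emph{not} formal morphisms in the sense of \cref{defn:iterated-formal-diagram}, so \cref{thm:iterated-tensor-coherence} does not apply. The reason is that $(F \otimes G) \otimes H$ and $F \otimes (G \otimes H)$ send adjoined morphisms to composites involving the monoidal constraints $F_2$, $G_2$, $H_2$ (see \cref{defn:FtensorG}); for instance $\bigl((F\otimes G)\otimes H\bigr)\bigl(\de^L_b.1\bigr)$ contains the simple morphism $(1.G_2).1$. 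Since $F$, $G$, $H$ are not assumed strict, these constraints are arbitrary morphisms of the target categories and do not lift along $\chi$ to the free permutative categories on the discrete object sets, whose only morphisms are permutations. The formality hypothesis of the coherence theorem therefore fails, and ``comparing underlying permutations'' is not even defined for these composites.

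The fix---and what the paper actually does---is a direct computation: writing out both composites on, say, $\de^L_b.1$, they come out as literally the same composite of morphisms in $\ol{B} \otimes (\ol{C} \otimes \ol{D})$, with no rearrangement or coherence input needed. The one potential discrepancy is a nested constraint of the form $(G \otimes H)_2$ in the top-right composite (respectively $(F \otimes G)_2$ in the left-bottom composite), and these vanish because the tensor product of any two symmetric monoidal functors is \emph{strict} symmetric monoidal by construction. So the strictness you should be exploiting is not that of the outer composites but that of the inner tensor products $F \otimes G$ and $G \otimes H$. Your 2-cell verification is essentially correct and agrees with the paper's, though you should check components at general simple objects $\bigl(\tsum b_i.c_i\bigr).d$ rather than only at monomials $(b.c).d$.
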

\begin{proof}
  To verify naturality of $\dA$ with respect to symmetric monoidal functors, let
  \[
    F \cn B \to \ol{B},\quad
    G \cn C \to \ol{C}, \andspace
    H \cn D \to \ol{D}
  \]
  be symmetric monoidal functors.
  Then one computes, using the assignments given in \cref{defn:FtensorG,defn:A}, that the following diagram commutes.
  \begin{equation}\label{eq:A-2nat}
    \begin{tikzpicture}[x=50mm,y=20mm]
      \draw[0cell] 
      (0,0) node (a) {(B \otimes C) \otimes D}
      (1,0) node (b) {B \otimes (C \otimes D)}
      (0,-1) node (a') {(\ol{B} \otimes \ol{C}) \otimes \ol{D}}
      (1,-1) node (b') {\ol{B} \otimes (\ol{C} \otimes \ol{D})}
      ;
      \draw[1cell] 
      (a) edge node {\dA} (b)
      (a') edge node {\dA} (b')
      (a) edge['] node {(F \otimes G) \otimes H} (a')
      (b) edge node {F \otimes (G \otimes H)} (b')
      ;
    \end{tikzpicture}
  \end{equation} 
  For example, on the adjoined morphism $\de^L_b.1$ in $(B \otimes C) \otimes D$, the left bottom composite above is
  \begin{align*}
    \de^L_b.1 & \mapsto (1.G_2).1 \circ \de^L_{Fb}.1\\
    & \mapsto 1.(G_2.1) \circ \bigl( 1.\de^R_{Hd} \circ \de^L_{Fb} \bigr).
  \end{align*}
  On the other hand, the top right composite above is
  \begin{align*}
    \de^L_b.1 & \mapsto 1.\de^R_d \circ \de^L_b\\
            & \mapsto \bigl( 1.(G_2.1) \circ 1.\de^R_{Hd} \bigr)
              \circ \bigl( 1.(G \otimes H)_2 \circ \de^L_{Fb} \bigr).
  \end{align*}
  These assignments are equal because $G \otimes H$ is strict monoidal, and hence $(G \otimes H)_2$ is an identity.  For the rest of the objects and morphisms in $(B \otimes C) \otimes D$, verifications that the two composites around \cref{eq:A-2nat} agree are similar.  One uses strictness of $F \otimes G$ for the left bottom composite, and strictness of $G \otimes H$ for the top right composite.

  For monoidal transformations
  \[
    F \To \ol{F}, \quad G \To \ol{G}, \andspace H \To \ol{H},
  \]
  one verifies that the corresponding whiskerings by $\dA$ in \cref{eq:A-2nat} are equal componentwise.
  This completes the proof that $\dA$ is 2-natural.
\end{proof}

\section{The adjoint inverse, \texorpdfstring{$\dAdot$}{Adot}}\label{sec:Adot}

\begin{defn}\label{defn:Adot}
  We define a strict symmetric monoidal functor
  \[
    \dAdot \cn B \otimes (C \otimes D) \to (B \otimes C) \otimes D
  \]
  as the following composite.
  \begin{equation}\label{eq:Adot}
    \begin{tikzpicture}[x=30mm,y=20mm,vcenter]
      \draw[0cell] 
      (0,0) node (a) {B \otimes (C \otimes D)}
      (a)+(-60:1) node (b) {(C \otimes D) \otimes B}
      (b)+(60:1) node (c) {(D \otimes C) \otimes B}
      (c)+(1,0) node (d) {D \otimes (C \otimes B)}
      (d)+(-60:1) node (e) {D \otimes (B \otimes C)}
      (e)+(60:1) node (f) {(B \otimes C) \otimes D}
      ;
      \draw[1cell] 
      (a) edge['] node[pos=.4] {\dB} (b)
      (b) edge['] node[pos=.6] {\dB \otimes 1} (c)
      (c) edge['] node {\dA} (d)
      (d) edge['] node[pos=.4] {1 \otimes \dB} (e)
      (e) edge['] node[pos=.6] {\dB} (f)
      ;
      \draw[1cell]
      (a) [rounded corners=5pt,dashed] |- ($(c)+(0,.45)$)
      -- node {\dAdot} ($(d)+(0,.45)$) -| (f)
      ;
    \end{tikzpicture}
  \end{equation}
  Since both $\dA$ and $\dB$ are 2-natural, so is the composite $\dAdot$.
\end{defn}

It will be useful to have the following explicit description of the $\dAdot$ as an assignment on objects and morphisms.
As with $\dA$, we describe $\dAdot$ on morphisms $f.\ka$ and $\la$, and extend to sums so that $\dAdot$ is strictly monoidal.
\begin{description}
\item[Objects]
  The assignment on objects of $B \otimes (C \otimes D)$ is given by
  \begin{align*}
    \dAdot (0) & = 0, \\
    \dAdot (b.0) & = 0, \\
    \dAdot \bigl(\, b.\bigl( \tsum c_i.d_i  \bigr)  \,\bigr)
               & = \tsum (b.c_i).d_i,
  \end{align*}
  and extended to sums in $B \otimes (C \otimes D)$ to be strictly monoidal.
\item[Morphisms $f.\ka$]
  Suppose $\ka$ is a morphism in $C \otimes D$ and $f$ is a morphism in $B$.
  If $\ka$ is itself a sum $\tsum \ka_i$, then we define
  \[
    \dA \bigl( f.\bigl( \tsum \ka_i  \bigr)  \bigr) = \sum \dA \bigl(f.\ka_i \bigr).
  \]
  We now treat each possible type of simple or adjoined morphism $\ka$ in $C \otimes D$.
  \begin{itemize}
  \item If $\ka = g.h$, then we define
    \[
      \dAdot \bigl( f.(g.h) \bigr) = (f.g).h.
    \]
  \item If $\ka = 1_0$, then we define
    \[
      \dAdot (f . 1_0) = 1_0.
    \]
  \item If $\ka$ is one of the adjoined morphisms in $C \otimes D$,
    then we define $\dAdot(f.\ka)$ via the indicated composite below.
    As with $\dA$, there are alternative equivalent definitions using $(f.1).1$ in different positions by naturality.
  \end{itemize}

  \[
    \begin{tikzpicture}[x=25mm,y=15mm]
      \draw[0cell] 
      (0,0) node (a) {(b.c).d + (b.c).d'}
      (a)+(1,-1) node (b) {(b.c).(d+d')}
      (a)+(2,0) node (d) {(b'.c).(d+d')}
      (a)+(0,.6) node (a') {\dAdot\bigl(\, b.(c.d + c.d') \,\bigr)}
      (d)+(0,.6) node (d') {\dAdot\bigl(\, b'.(c.(d+d')) \,\bigr)}
      ;
      \draw[1cell] 
      (a) edge['] node {\de^L_{b.c}} (b)
      (b) edge['] node {(f.1).1} (d)
      (a') edge[dashed] node {\dAdot\bigl(\,f.\de^L_{c}\,\bigr)} (d')
      (a') edge[equal] node {} (a)
      (d') edge[equal] node {} (d)
      ;
    \end{tikzpicture}
  \]

  \[
    \begin{tikzpicture}[x=25mm,y=15mm]
      \draw[0cell] 
      (0,0) node (a) {(b.c).d + (b.c').d}
      (.75,-1) node (b) {\bigl( b.c + b.c' \bigr).d}
      (2.25,-1) node (c) {\bigl( b.(c+c')  \bigr).d}
      (3,0) node (d) {\bigl( b'.(c+c')  \bigr).d}
      (a)+(0,.6) node (a') {\dAdot\bigl(\, b.(c.d + c'.d) \,\bigr)}
      (d)+(0,.6) node (d') {\dAdot\bigl(\, b'.((c + c').d) \,\bigr)}
      ;
      \draw[1cell] 
      (a) edge['] node {\de^R_{d}} (b)
      (b) edge['] node {\de^L_{b}.1} (c)
      (c) edge['] node {(f.1).1} (d)
      (a') edge[dashed] node {\dAdot\bigl(\,f.\de^R_{d}\,\bigr)} (d')
      (a') edge[equal] node {} (a)
      (d') edge[equal] node {} (d)
      ;
    \end{tikzpicture}
  \]
  
  \[
    \begin{tikzpicture}[x=25mm,y=15mm]
      \draw[0cell] 
      (0,0) node (a) {(b.c).d + (b.c').d'}
      (a)+(1,-1) node (b) {(b.c').d' + (b.c).d}
      (a)+(2,0) node (d) {(b'.c').d' + (b'.c).d}
      (a)+(0,.6) node (a') {\dAdot\bigl(\, b.(c.d + c'.d') \,\bigr)}
      (d)+(0,.6) node (d') {\dAdot\bigl(\, b'.(c'.d' + c.d) \,\bigr)}
      ;
      \draw[1cell] 
      (a) edge['] node {\be} (b)
      (b) edge['] node {(f.1).1 + (f.1).1} (d)
      (a') edge[dashed] node {\dAdot\bigl(\,f.\be\,\bigr)} (d')
      (a') edge[equal] node {} (a)
      (d') edge[equal] node {} (d)
      ;
    \end{tikzpicture}
  \]

  \[
    \begin{tikzpicture}[x=25mm,y=15mm]
      \draw[0cell] 
      (0,0) node (a) {0}
      (a)+(1,-1) node (b) {(b.c).0}
      (a)+(2,0) node (d) {(b'.c).0}
      (a)+(0,.6) node (a') {\dAdot\bigl(\, b.0 \,\bigr)}
      (d)+(0,.6) node (d') {\dAdot\bigl(\, b'.(c.0) \,\bigr)}
      ;
      \draw[1cell] 
      (a) edge['] node {\ze^L_{b.c}} (b)
      (b) edge['] node {(f.1).1} (d)
      (a') edge[dashed] node {\dAdot\bigl(\,f.\ze^L_{c}\,\bigr)} (d')
      (a') edge[equal] node {} (a)
      (d') edge[equal] node {} (d)
      ;
    \end{tikzpicture}
  \]

  \[
    \begin{tikzpicture}[x=25mm,y=15mm]
      \draw[0cell] 
      (0,0) node (a) {0}
      (a)+(.75,-1) node (b) {0.d}
      (b)+(1.5,0) node (c) {(b.0).d}
      (a)+(3,0) node (d) {(b'.0).d}
      (a)+(0,.6) node (a') {\dAdot\bigl(\, b.0 \,\bigr)}
      (d)+(0,.6) node (d') {\dAdot\bigl(\, b'.(0.d) \,\bigr)}
      ;
      \draw[1cell] 
      (a) edge['] node {\ze^R_{d}} (b)
      (b) edge['] node {\ze^L_{b}.1} (c)
      (c) edge['] node {(f.1).1} (d)
      (a') edge[dashed] node {\dAdot\bigl(\,f.\ze^R_{d}\,\bigr)} (d')
      (a') edge[equal] node {} (a)
      (d') edge[equal] node {} (d)
      ;
    \end{tikzpicture}
  \]
\item[Adjoined morphisms] For morphisms $\la$ adjoined in the passage from $B \times -$ to $B \otimes -$, $\dAdot(\la)$ is the indicated composite below.

  \[
    \begin{tikzpicture}[x=30mm,y=15mm]
      \draw[0cell] 
      (0,0) node (a) {\tsum (b.c_i).d_i + \tsum (b.c'_j).d'_j}
      (a)+(2,0) node (d) {\tsum (b.c_i).d_i + \tsum (b.c'_j).d'_j}
      (a)+(0,.67) node (a') {
        \dAdot\bigl(\, b.\bigl(\tsum c_i.d_i\bigr) + b.\bigl(\tsum c'_j.d'_j\bigr) \,\bigr)
      }
      (d)+(0,.67) node (d') {
        \dAdot\bigl(\, b.\bigl(\tsum c_i.d_i + \tsum c'_j.d'_j\bigr) \,\bigr)
      }
      ;
      \draw[1cell] 
      (a) edge['] node {1} (d)
      (a') edge[dashed] node {\dAdot\bigl(\, \de^L_{b} \,\bigr)} (d')
      (a') edge[equal] node {} (a)
      (d') edge[equal] node {} (d)
      ;
    \end{tikzpicture}
  \]

  \[
    \begin{tikzpicture}[x=30mm,y=15mm]
      \draw[0cell] 
      (0,0) node (a) {\tsum (b.c_i).d_i + \tsum (b'.c_i).d_i}
      (a)+(.25,-1) node (p) {\tsum \bigl( (b.c_i).d_i + (b'.c_i).d_i \bigr)}
      (p)+(1.5,0) node (b) {\tsum (b.c_i + b'.c_i).d_i}
      (a)+(2,0) node (d) {\tsum ((b+b').c_i).d_i}
      (a)+(0,.6) node (a') {
        \dAdot\bigl(\, b.\bigl(\tsum c_i.d_i\bigr) + b'.\bigl(\tsum c_i.d_i\bigr) \,\bigr)
      }
      (d)+(0,.6) node (d') {
        \dAdot\bigl(\, \bigl(b+b'\bigr).\bigl(\tsum c_i.d_i\bigr) \,\bigr)
      }
      ;
      \draw[1cell] 
      (a) edge['] node[pos=.4] {p} (p)
      (p) edge['] node {\tsum \de^R_{d_i}} (b)
      (b) edge['] node[pos=.8] {\tsum \de^R_{c_i}.1} (d)
      (a') edge[dashed] node {\dAdot\bigl( \de^R_{\ssum c_i.d_i} \bigr)} (d')
      (a') edge[equal] node {} (a)
      (d') edge[equal] node {} (d)
      ;
    \end{tikzpicture}
  \]

  For the object $0 \in C \otimes D$,
  \[
    \dAdot \bigl( \de^R_0  \bigr) = 1_0 \cn 0+0 \to 0.
  \]

  \[
    \begin{tikzpicture}[x=30mm,y=15mm]
      \draw[0cell] 
      (0,0) node (a) {\tsum (b.c_i).d_i + \tsum (b'.c'_j).d'_j}
      (a)+(2,0) node (d) {\tsum (b'.c'_j).d'_j + \tsum (b.c_i).d_i}
      (a)+(0,.67) node (a') {
        \dAdot\bigl(\, b.\bigl(\tsum c_i.d_i\bigr) + b'.\bigl(\tsum c'_j.d'_j\bigr) \,\bigr)
      }
      (d)+(0,.67) node (d') {
        \dAdot\bigl(\, b'.\bigl(\tsum c'_j.d'_j\bigr) + b.\bigl(\tsum c_i.d_i\bigr) \,\bigr)
      }
      ;
      \draw[1cell] 
      (a) edge['] node {\beta} (d)
      (a') edge[dashed] node {\dAdot\bigl(\, \be \,\bigr)} (d')
      (a') edge[equal] node {} (a)
      (d') edge[equal] node {} (d)
      ;
    \end{tikzpicture}
  \]
  
  \[
    \begin{tikzpicture}[x=25mm,y=15mm]
      \draw[0cell] 
      (0,0) node (a) {0}
      (a)+(2,0) node (d) {0}
      (a)+(0,.6) node (a') {\dAdot\bigl(\, 0 \,\bigr)}
      (d)+(0,.6) node (d') {\dAdot\bigl(\, b.0 \,\bigr)}
      ;
      \draw[1cell] 
      (a) edge['] node {1_0} (d)
      (a') edge[dashed] node {\dAdot\bigl(\, \ze^L_{b} \,\bigr)} (d')
      (a') edge[equal] node {} (a)
      (d') edge[equal] node {} (d)
      ;
    \end{tikzpicture}
  \]

  \[
    \begin{tikzpicture}[x=25mm,y=15mm]
      \draw[0cell] 
      (0,0) node (a) {0}
      (a)+(1,-1) node (b) {\tsum 0.d_i}
      (a)+(2,0) node (d) {\tsum (0.c_i).d_i}
      (a)+(0,.6) node (a') {\dAdot\bigl(\, 0 \,\bigr)}
      (d)+(0,.6) node (d') {\dAdot\bigl(\, 0.\bigl(\tsum c_i.d_i\bigr) \,\bigr)}
      ;
      \draw[1cell] 
      (a) edge['] node {\tsum \ze^R_{d_i}} (b)
      (b) edge['] node {\tsum \ze^R_{c_i}.1} (d)
      (a') edge[dashed] node {\dAdot\bigl(\, \ze^R_{\ssum c_i.d_i} \,\bigr)} (d')
      (a') edge[equal] node {} (a)
      (d') edge[equal] node {} (d)
      ;
    \end{tikzpicture}
  \]

  For the object $0 \in C \otimes D$, 
  \[
    \dAdot \bigl( \ze^R_0  \bigr) = 1_0.
  \]
\end{description}
This completes the description of $\dAdot$.

\begin{prop}\label{prop:AAdot}
  The strict symmetric monoidal functors $\dA$ and $\dAdot$ are part of an adjoint equivalence in $\permcats$
  \[
    \dA \cn (B \otimes C) \otimes D \lradjequiv B \otimes (C \otimes D) \bacn \dAdot.
  \]
\end{prop}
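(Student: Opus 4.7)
The strategy is to exhibit $(\dA, \dAdot)$ as an equivalence in the 2-category $\permcats$ and then invoke the standard fact that any 2-categorical equivalence can be promoted to an adjoint equivalence. Accordingly, the task reduces to constructing monoidal natural isomorphisms
\[
\eta \cn 1 \To \dAdot \circ \dA
\andspace
\epsilon \cn \dA \circ \dAdot \To 1,
\]
whose components will be built from the adjoined isomorphisms $\de$ and $\ze$ and are therefore invertible.

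First I would compute the composites on simple objects. For an object $X.d$ of $(B \otimes C) \otimes D$ with $X = \tsum b_i.c_i$, unfolding \cref{defn:A,defn:Adot} gives $\dAdot \dA(X.d) = \tsum (b_i.c_i).d$. I would therefore take $\eta_{X.d}$ to be the inverse of a composite of instances of $\de^R_d$ (any such composite, in the sense of \cref{convention:F2}), with $\eta_{0.d} = (\ze^R_d)^\inv$ in the empty-sum case, and extend to general objects strictly monoidally by $\eta_{\ssum X_k.d_k} = \tsum \eta_{X_k.d_k}$. Dually, for a simple object $b.Y$ of $B \otimes (C \otimes D)$ with $Y = \tsum c_i.d_i$, one finds $\dA \dAdot(b.Y) = \tsum b.(c_i.d_i)$; define $\epsilon_{b.Y}$ as a composite of $\de^L_b$'s, with $\epsilon_{b.0} = \ze^L_b$, and extend strictly monoidally.

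Next I would verify naturality and the monoidal transformation axioms. Naturality of $\eta$ with respect to simple morphisms $(f.g).h$ is immediate because $\dAdot \dA$ sends $(f.g).h$ back to itself and the relevant components of $\eta$ are identities. For each adjoined morphism of $(B \otimes C) \otimes D$, the resulting naturality square consists of adjoined morphisms and symmetries only, hence lifts to the formal calculus $\bigl(\bigP \ob\bigr)\bigl((B \otimes C) \otimes D\bigr)$; by the Iterated Coherence Theorem~\ref{thm:iterated-tensor-coherence}, commutativity reduces to checking that both composites carry the same underlying permutation, which is a short combinatorial verification case by case (the identity in the $\de$ and $\ze$ cases, a common block transposition in the $\be$ case). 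The monoidal naturality axioms of \cref{defn:montransf} are immediate because $1$ and $\dAdot \dA$ are strict monoidal and $\eta$ is defined as a sum of components. The same pattern handles $\epsilon$. Finally, one promotes the resulting equivalence to an adjoint equivalence via the standard triangle-improvement procedure in a 2-category, which preserves the monoidal character of the unit and counit.

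The main obstacle is purely organizational: with five families of adjoined morphisms at each of the two tensoring levels, one must enumerate a substantial number of naturality squares and, for each, correctly identify the formal lift into $\bigl(\bigP \ob\bigr)\bigl((B \otimes C) \otimes D\bigr)$ together with its underlying permutation. Once this bookkeeping is in place, iterated coherence eliminates any real diagram chasing. The genuine content lies in the invertibility of the components of $\eta$ and $\epsilon$, which rests on invertibility of the adjoined morphisms $\de$ and $\ze$ in the tensor product---precisely the point at which the strong rather than lax monoidal hypothesis is essential, as flagged in \cref{rmk:strong-needed}.
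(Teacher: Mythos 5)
Your construction of the unit and counit coincides with the paper's: the paper defines $\epza\cn \dA\dAdot \To \Id$ with components $\de^L$ (and $\ze^L$ on $b.0$) and $\epz'\cn \dAdot\dA \To \Id$ with components $\de^R$ (and $\ze^R$ on $0.d$), taking $\etaa = (\epz')^\inv$, exactly as you do. The two places you diverge are in technique rather than substance. First, for naturality you invoke the Iterated Coherence Theorem~\ref{thm:iterated-tensor-coherence}, whereas the paper deduces naturality of $\epza$ and $\epz'$ from the symmetric monoidal functor axioms for $b.(-)$ and $(-).d$; both are valid, and the coherence route is the one the paper itself uses for the analogous verifications of $\la$ and of $\eta_A$ in \cref{prop:Lequiv}. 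Second, and more significantly, you finish by appealing to the general promotion of an equivalence to an adjoint equivalence, while the paper checks the triangle identities directly for these specific cells: each of the four whiskerings $\epza * \dA$, $\dAdot * \epza$, $\epz' * \dAdot$, $\dA * \epz'$ is an identity, the first and third because $\epza$ and $\epz'$ are identities on monomials $b.(c.d)$ and $(b.c).d$, the second and fourth because $\dAdot(\de^L) = 1$ and $\dA(\de^R) = 1$. Your shortcut does prove the proposition as stated, but it forfeits the guarantee that \emph{these particular} $\etaa$ and $\epza$ are the adjunction data; the explicit formulas \cref{eq:etaa} and \cref{eq:epza} are used later to compute the mates $\pi_i$ and $R^j$ in the proof of \cref{thm:permcat-smb}, so the direct verification is worth the extra half page. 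A small imprecision: your claim that the components of $\eta$ at simple morphisms are identities holds only for monomials $(b.c).d$; for a morphism $(\tsum f_i.g_i).h$ the component is a genuine $\de^{-R}$, and naturality there follows from naturality of $\de^R$ via \ref{defn:AtensorB}.\cref{TENnat}.
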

\begin{proof}
  For an object $b.\bigl(\tsum c_i.d_i\bigr)$ in $B \otimes (C \otimes D)$, we have
  \[
    \dA \dAdot \bigl(\, b.\bigl(\tsum c_i.d_i\bigr) \,\bigr)
    = \dA \bigl(\, \tsum\, (b.c_i).d_i \,\bigr)
    = \tsum b.(c_i.d_i).
  \]
  Also,
  \[
   \dA \dAdot \bigl( 0 \bigr)=0 \quad \text{and} \quad \dA \dAdot \bigl( b.0 \bigr) = 0.
  \]
  Define a monoidal natural isomorphism
  \[
    \epza\cn \dA \dAdot \To \Id
  \]
  with components
  \begin{align}
    \epza_{b.(\ssum c_i.d_i)}
    & = \de^L \cn \tsum b.(c_i.d_i) \to
      b.\bigl(\tsum c_i.d_i \bigr) \andspace\label{eq:epza} \\
    \epza_{b.0} & = \ze^L \cn 0 \to b.0\nonumber
  \end{align}
  where, as in \cref{convention:F2}, we let $\de^L$ denote any composite of the monoidal constraints $\de^L_b$ for $b.(-)$.
  The symmetric monoidal functor axioms for $b.(-)$ are used to verify naturality of $\epza$.

  Similarly, define a monoidal natural isomorphism
  \[
    \epz' \cn \dAdot \dA \To \Id
  \]
  with components
  \begin{align}
    \epz'_{(\ssum b_i.c_i).d}
    & = \de^R \cn \tsum (b_i.c_i).d \to
      \bigl( \tsum b_i.c_i  \bigr).d \andspace\label{eq:etaa} \\
    \epz'_{0.d} & = \ze^R \cn 0 \to 0.d\nonumber
  \end{align}
  where, as above, $\de^R$ denotes any composite of the monoidal constraints $\de^R_d$ for $(-).d$.
  The symmetric monoidal functor axioms for $(-).d$ are used to verify naturality of $\epz'$.

  The modification axiom for $\epza$ holds because, for symmetric monoidal functors
  \[
    P \cn B \to \ol{B}, \quad
    Q \cn C \to \ol{C}, \andspace
    R \cn D \to \ol{D},
  \]
  the product $Q \otimes R$ is strict symmetric monoidal and hence
  \[
    \bigl(P \otimes (Q \otimes R)\bigr) * \epz = \epz * \bigl( P \otimes (Q \otimes R)\bigr).
  \]
  A similar check, using strictness of $P \otimes Q$, shows that $\epz'$ is also a modification.

  The unit/counit triangle axioms for $\etaa = \bigl(\epz'\bigr)^\inv$ and $\epza$ follow because, checking the relevant formulas for $\dA$ and $\dAdot$, each of the following whiskerings is an identity:
  \[
    \epza * \dA, \quad
    \dAdot * \epza, \quad
    \epz' * \dAdot, \andspace
    \dA * \epz'.
  \]
  Of the whiskerings above, the first and third are identities because each of $\epza$ and $\epz'$ is the identity on a monomial $b.(c.d)$, respectively $(b.c).d$.  The second and fourth are identities because $\dAdot(\de^L) = 1$ and $\dA(\de^R) = 1$.
\end{proof}

\section{2-Dimensional data and axioms}
\label{sec:2Ddata}

In this section we discuss the 2-dimensional data and corresponding axioms to show that $ \bigl( \permcat,\otimes \bigr) $ is a symmetric monoidal 2-category.
First, we describe one additional nontrivial transformation, $\la$.
Then, we identify certain commutative diagrams that simplify the symmetric monoidal structure of $\bigl( \permcat, \otimes \bigr)$.
The main result is \cref{thm:permcat-smb} below.

Throughout, we let $X$, $Y$, $Z$, and $W$ denote permutative categories.
In this section we often write the tensor product as juxtaposition, so
\[
   XY = X \otimes Y.
\]
Recall the monoidal unit $S$ from \cref{defn:S}.
\begin{defn}\label{defn:lambda}
  Let 
  \[
    \begin{tikzpicture}[x=45mm,y=20mm,scale=.7]
      \draw[0cell=.8] 
      (0,0) node (0) {\bigl(SX\bigr)Y}
      (1,0) node (1) {XY}
      (.5,-1) node (2) {S\bigl(XY\bigr)};
      \draw[1cell=.8] 
      (0) edge node {\dL\,1} (1)
      (0) edge[swap] node {\dA} (2)
      (2) edge[swap] node {\dL} (1);
      \draw[2cell]
      (.5,-.5) node[rotate=270, 2label={above,\,\la}] {\Rightarrow}
      ;
    \end{tikzpicture}
  \]
  denote the monoidal transformation with the following components.  For $y \in Y$ and $\tsum m_i.x_i \in S \otimes X$, 
  \begin{align*}
    \la_0 & = 1_0,\\
    \la_{0.y} & = \ze^{-R}_y \cn 0.y \to 0, \andspace\\
    \la_{(\ssum m_i.x_i).y} & = \de^{-R}_y \cn \bigl( \tsum m_i x_i \bigr).y \to \tsum m_i(x_i.y),
  \end{align*}
  where $\ze^{-R}$ and $\de^{-R}$ denote the inverses of $\ze^R$ and $\de^{R}$, or any composite of such, in the case of $\de^{-R}$ (see \cref{convention:F2}).
  Then $\la$ is extended to sums in $(S X) Y$ so that it is monoidal.

  By 2-functoriality of $\otimes$ and naturality of $\de^{-R}$, it suffices to verify naturality of $\la$ with respect to the following morphisms in $(SX)Y$:
  \begin{itemize}
  \item simple morphisms $(\si.f).h$, 
  \item simple morphisms $\ka.1$, where $\ka$ is an adjoined morphism of $SX$, and
  \item morphisms $\ka'$ adjoined in the passage from $- \times Y$ to $- \otimes Y$.
  \end{itemize}
  In the first case, naturality follows from naturality of the adjoined morphisms $\de^{-R}$ defining $\la$.
  In the other two cases, each naturality square is a formal diagram of adjoined morphisms in $XY$ whose two boundary composites have the same underlying permutation.
  Hence, naturality of $\la$ for the second two cases follows from the Coherence Theorem~\ref{thm:tensor-coherence}.
\end{defn}

\begin{lem}\label{lem:pi-mu-rho-triv}
  The following diagrams commute in $\permcat$.
  \[
    \begin{tikzpicture}[x=20mm,y=20mm,scale=.6,vcenter]
      \draw[0cell=.8] 
      (0,0) node (0) {\bigl(\bigl(XY\bigr)Z\bigr)W}
      (1,1) node (1) {\bigl(X\bigl(YZ\bigr)\bigr)W}
      (3,1) node (2) {X\bigl(\bigl(YZ\bigr)W\bigr)}
      (4,0) node (3) {X\bigl(Y\bigl(ZW\bigr)\bigr)}
      (2,-1) node (4) {\bigl(XY\bigr)\bigl(ZW\bigr)};
      \draw[1cell=.8] 
      (0) edge node {\dA \,1} (1)
      (1) edge node {\dA} (2)
      (2) edge node {1\,\dA} (3)
      (0) edge[swap] node {\dA} (4)
      (4) edge[swap] node {\dA} (3);
    \end{tikzpicture}
    \quad
    \begin{tikzpicture}[x=30mm,y=20mm,scale=.7,vcenter]
      \draw[0cell=.8] 
      (0,0) node (0) {\bigl(XS\bigr)Y}
      (1,0) node (1) {X\bigl(SY\bigr)}
      (-.5,-1) node (2) {XY}
      (1.5,-1) node (3) {XY};
      \draw[1cell=.8] 
      (0) edge node {\dA} (1)
      (2) edge node {\dRdot\,1} (0)
      (2) edge[swap] node {1} (3)
      (1) edge node {1 \, \dL} (3);
    \end{tikzpicture}
  \]
  \[
    \qquad
    \begin{tikzpicture}[x=45mm,y=20mm,scale=.7]
      \draw[0cell=.8] 
      (0,0) node (0) {XY}
      (1,0) node (1) {X\bigl(YS\bigr)}
      (.5,-1) node (2) {\bigl(XY\bigr)S};
      \draw[1cell=.8] 
      (0) edge node {1\, \dRdot} (1)
      (0) edge[swap] node {\dRdot} (2)
      (2) edge[swap] node {\dA} (1);
    \end{tikzpicture}
  \]
\end{lem}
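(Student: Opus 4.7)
The plan is to verify each diagram as an equality of strict symmetric monoidal functors, using the Iterated Coherence Theorem~\ref{thm:iterated-tensor-coherence} to dispatch the adjoined morphisms. By \cref{prop:Awelldef,defn:L}, both $\dA$ and $\dL$ are strict, and by \cref{defn:FtensorG} every tensor product of symmetric monoidal functors is strict; in particular $\dRdot \otimes 1$ and $1 \otimes \dL$ are strict even though $\dRdot$ itself is only symmetric monoidal. Hence each composite around each of the three diagrams is a strict symmetric monoidal functor, and verification reduces to checking agreement on simple objects, on simple morphisms, and on the adjoined morphisms of the source.

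For the pentagon, a direct computation from \cref{defn:A} shows that both composites agree on simple objects $(((x.y).z).w) \mapsto x.(y.(z.w))$ and on simple morphisms $(((f.g).h).k) \mapsto f.(g.(h.k))$. For an adjoined morphism $\kappa$ in any of the four tensor-product constructions inside $((XY)Z)W$, unwinding the formulas in \cref{defn:A} expresses each image as a formal morphism in $X(Y(ZW))$ built from $\delta^L$, $\delta^R$, $\zeta^L$, $\zeta^R$, $\beta$, and sums of these. The Iterated Coherence Theorem~\ref{thm:iterated-tensor-coherence} then reduces equality to a comparison of underlying permutations, and these coincide by the same perfect-shuffle identity~\cref{eq:p-symm} that governs relation~\cref{TEN2} in \cref{defn:AtensorB}; the calculation in \cref{rmk:foil-perm} is the prototype.

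For the middle triangle, the composite $(1 \otimes \dL) \circ \dA \circ (\dRdot \otimes 1)$ evaluates on a simple object $x.y$ as
\[
x.y \longmapsto (x.1).y \longmapsto x.(1.y) \longmapsto x.y,
\]
and identically on simple morphisms $f.g$. On each adjoined morphism of $XY$, the image is a formal morphism in $XY$. Because $\dL$ sends an adjoined morphism $\delta^L_1$ in $SY$ to $p_{1;y,y'} = 1_{y+y'}$ (using \cref{notn:ma}), and because the only symmetries appearing come from $\beta$ (which is preserved), the resulting underlying permutation is the identity. \cref{thm:iterated-tensor-coherence} then gives equality with the identity functor. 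The right triangle is handled in the same way: both $1 \otimes \dRdot$ and $\dA \circ \dRdot$ send $x.y$ to $x.(y.1)$, and their actions on adjoined morphisms produce formal morphisms in $X(YS)$ with identical underlying permutations.

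The main obstacle is bookkeeping in the pentagon: the source $((XY)Z)W$ has adjoined morphisms at three nested levels of tensor product, so each adjoined-morphism case below the top level splits further according to its ambient structure. Once the lifts into $(\bigP\ob)(-)$ required by \cref{defn:iterated-formal-diagram} are identified, the generalization of \cref{rmk:foil-perm} makes every resulting permutation comparison routine.
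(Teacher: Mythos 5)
Your proof is correct, but it takes a genuinely different route from the paper's. The paper verifies each diagram by \emph{direct computation}: for the pentagon it checks objects and simple morphisms, then explicitly traces each of the fifteen types of adjoined morphism around both sides (writing out, e.g., the two composites on $\de^L_{\ssum x_i.y_i}.1$ and observing they are literally the same composite of $p$, $\de^L$, and $\de^R$), and for the triangles it uses the observations that $\dRdot \otimes 1$, $1 \otimes \dRdot$ are strict and that $\dA \circ \dRdot$ is strict because $\dA$ kills $\de^R$ and $\ze^R$. You instead check objects and simple morphisms directly and then dispatch all adjoined morphisms at once via the Iterated Coherence Theorem~\ref{thm:iterated-tensor-coherence}, reducing to a comparison of underlying permutations. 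This is legitimate: the images of adjoined morphisms under all the composites involved are built only from $\de$, $\ze$, $\be$, perfect shuffles, and identities, hence are formal in the sense of \cref{defn:iterated-formal-diagram}, and the permutation comparison does come down to \cref{eq:p-symm} as in \cref{rmk:foil-perm}. Interestingly, this is exactly the strategy the paper itself adopts later for the seven axioms in \cref{thm:permcat-smb}, and which it offers as an alternative in \cref{lem:Awelldef-2.1}; your approach buys uniformity and brevity, while the paper's explicit formulas are reused in subsequent computations (e.g., in \cref{lem:R-syl-triv}). One small gap in your justification: for the right triangle, the composite $\dA \circ \dRdot$ contains the bare, non-strict $\dRdot$, so strictness of that composite does not follow from your stated reasons (strictness of $\dA$, $\dL$, and of tensor products of functors); you need the additional observation, made in the paper, that $\dA$ sends $\de^R$ and $\ze^R$ to identities, which is what makes $\dA \circ \dRdot$ strict and validates your reduction to generators for that diagram.
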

\begin{proof}
  For each diagram, one verifies commutativity with respect to each of the objects and morphisms.
  For the pentagon, one uses the assignments described in \cref{defn:A} for $\dA$ and \cref{defn:FtensorG} for a product of functors.
  On objects, both composites around the pentagon give
  \[
    0 \mapsto 0, \quad
    0.w \mapsto 0, \quad
    (0.z).w \mapsto 0, \andspace
    ((x.y).z).w \mapsto x.(y.(z.w))
  \]
  for $x \in X$, $y \in Y$, $z \in Z$, and $w \in W$.
  Since both composites are strictly monoidal, this completes the verification on objects.

  Checking the pentagon on simple morphisms is the same as for simple objects.
  Then one must check each of fifteen types of adjoined morphism, five for each tensor product in $((XY)Z)W$.  For example, consider
  \[
    \de^L_{\ssum x_i.y_i}.1 \cn
    \Bigl( \bigl(\tsum x_i.y_i\bigr).z + \bigl(\tsum x_i.y_i\bigr).z' \Bigr).w
    \to
    \Bigl( \bigl(\tsum x_i.y_i\bigr).(z + z') \Bigr).w
  \]
  in $\Bigl( \bigl( XY \bigr) Z \Bigr) W$.
  The definitions of $\dA$, $1 \otimes \dA$, and $\dA \otimes 1$ give the following assignments around the top of the pentagon, where $p$ denotes each of the relevant perfect shuffles:
  \begin{align*}
    \dA 1 \cn \de^L_{\ssum x_i.y_i}.1
    & \mapsto
      \bigl( \tsum 1.\de^L_{y_i} \bigr).1 \;\circ\;
      \bigl( \tsum \de^L_{x_i} \bigr).1 \;\circ\;
      p.1\\
    \dA \cn \phantom{\de^L_{\ssum x_i.y_i}.1}
    & \mapsto
      \tsum 1.(\de^L_{y_i} .1) \;\circ\; 
      \tsum (1.\de^R_w) \;\circ\;
      \tsum \de^L_{x_i} \;\circ\;
      p\\
    1 \dA \cn \phantom{\de^L_{\ssum x_i.y_i}.1}
    & \mapsto
      \tsum (1.(1.\de^R_w)) \;\circ\;
      \tsum 1.\de^L_{y_i} \;\circ\; 
      \tsum \de^L_{x_i} \;\circ\;
      p.
  \end{align*}
  Likewise, we have the following assignments around the bottom of the pentagon:
  \begin{align*}
    \dA \cn \de^L_{\ssum x_i.y_i}.1
    & \mapsto
      1.\de^r_w \;\circ\;
      \de^L_{\ssum x_i.y_i}.1\\
    \dA \cn \phantom{\de^L_{\ssum x_i.y_i}.1}
    & \mapsto
      \tsum (1.(1.\de^R_w)) \;\circ\;
      \tsum 1.\de^L_{y_i} \;\circ\; 
      \tsum \de^L_{x_i} \;\circ\;
      p.
  \end{align*}
  Verification that the pentagon commutes on each of the other types of adjoined morphism is similar.

  For the other two diagrams, one verifies commutativity on each of the objects and morphisms of $XY$.  Recall from \cref{prop:Requiv} that $\dRdot = (-).1$: right multiplication by the natural number $1 \in S$.  Although $\dRdot$ is strong symmetric monoidal, the tensor products $\dRdot \otimes 1$ and $1 \otimes \dRdot$ are strict symmetric monoidal.  Likewise, the composite $\dA \circ \dRdot$ is strict symmetric monoidal because $\dA$ sends the morphisms $\de^R$ and $\ze^R$ to identities.
\end{proof}

\begin{lem}\label{lem:R-syl-triv}
  The following diagrams commute in $\permcat$.
  \[
    \begin{tikzpicture}[x=25mm,y=16mm,scale=.7]
      \draw[0cell=.8] 
      (.3,0) node (0) {\bigl(XY\bigr)Z}
      (1,1) node (1) {\bigl(YX\bigr)Z}
      (2,1) node (2) {Y\bigl(XZ\bigr)}
      (2.7,0) node (3) {Y\bigl(ZX\bigr)}
      (1,-1) node (4) {X\bigl(YZ\bigr)}
      (2,-1) node (5) {\bigl(YZ\bigr)X};
      \draw[1cell=.8] 
      (0) edge node {\dB \,1} (1)
      (1) edge node {\dA} (2)
      (2) edge node {1\,\dB} (3)
      (0) edge[swap] node {\dA} (4)
      (4) edge[swap] node {\dB} (5)
      (5) edge[swap] node {\dA} (3);
      \begin{scope}[shift={(3.5,0)}]
        \draw[0cell=.8] 
        (.3,0) node (0) {X\bigl(YZ\bigr)}
        (1,1) node (1) {X\bigl(ZY\bigr)}
        (2,1) node (2) {\bigl(XZ\bigr)Y}
        (2.7,0) node (3) {\bigl(ZX\bigr)Y}
        (1,-1) node (4) {\bigl(XY\bigr)Z}
        (2,-1) node (5) {Z\bigl(XY\bigr)};
        \draw[1cell=.8] 
        (0) edge node {1\,\dB} (1)
        (1) edge node {\dAdot} (2)
        (2) edge node {\dB\,1} (3)
        (0) edge[swap] node {\dAdot} (4)
        (4) edge[swap] node {\dB} (5)
        (5) edge[swap] node {\dAdot} (3);
      \end{scope} 
      \begin{scope}[shift={(3.25,-1.5)}]
        \draw[0cell=.8] 
        (-1,-1) node (0) {XY}
        (1,-1) node (1) {XY}
        (0,0) node (2) {YX};
        \draw[1cell=.8] 
        (0) edge[swap] node {1} (1)
        (0) edge node {\dB} (2)
        (2) edge node {\dB} (1);
      \end{scope}
    \end{tikzpicture}
  \]
\end{lem}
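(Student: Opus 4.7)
The plan is to leverage the Coherence Theorem for Iterated Tensor Products, \cref{thm:iterated-tensor-coherence}, to reduce each of the three equalities to a comparison of underlying permutations. First I would dispose of the triangular diagram: this is exactly the assertion of \cref{prop:Bequiv}, which gives $\dB \circ \dB = 1$ as a strict symmetric monoidal equality. So only the two hexagons remain.

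Next, for each hexagon, I would observe that both composites are strict symmetric monoidal functors, since each of $\dA$, $\dAdot$, $\dB$ is strict symmetric monoidal by \cref{prop:Awelldef,defn:Adot,defn:B}, and the tensor product of strict symmetric monoidal functors is strict by construction (\cref{defn:FtensorG}). Consequently, to verify equality of two such composites, it suffices to check their values on the simple objects, simple morphisms, and the various adjoined morphisms generating the source. On simple objects and simple morphisms, both composites implement the same rearrangement of a triple of variables: around the first hexagon, each composite sends $(x.y).z \mapsto y.(z.x)$ and $(f.g).h \mapsto g.(h.f)$, and around the second hexagon each sends $x.(y.z) \mapsto (z.x).y$, as a direct unwinding of the formulas shows.

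The remaining and main content is the check on adjoined morphisms. Here I would invoke \cref{thm:iterated-tensor-coherence}: the image of each adjoined morphism under either composite is a formal morphism in the target iterated tensor product, in the sense of \cref{defn:iterated-formal-diagram}, because $\dA$, $\dAdot$, and $\dB$ send each adjoined morphism to a composite built from adjoined morphisms and symmetries (see \cref{defn:A,defn:Adot,defn:B}). The two composites yield parallel formal morphisms, and by the Coherence Theorem it suffices to show that their underlying permutations agree. This in turn follows by inspection, since both composites are obtained by post- or pre-composing with the 2-natural isomorphism $\Phi$ of \cref{thm:free-tensor-adj-equiv}, and the resulting permutations are determined purely by the underlying rearrangement of variables, which is the same $3$-cycle or transposition in each case.

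The main obstacle will simply be the organizational burden of the case analysis for adjoined morphisms: there are five adjoined morphisms $\ka$ in $XY$ (each giving rise to $\ka.1_Z$) together with five further morphisms adjoined in the passage from $(XY)\times Z$ to $(XY)\otimes Z$, and similarly for the second hexagon. However, no single case is conceptually difficult, and the Iterated Coherence Theorem eliminates the need to manipulate any composite of $\de$, $\ze$, and $\be$ beyond recognizing its underlying element of a symmetric group. Once that observation is in place, the three diagrams reduce to checking that two obvious rearrangements of three variables coincide, which they do.
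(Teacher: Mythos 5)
Your proposal is correct, but it routes part of the argument differently from the paper. The triangle is handled identically (it is \cref{prop:Bequiv}). For the left hexagon, the paper does not invoke the Coherence Theorem at all: it verifies the diagram by direct computation on objects, simple morphisms, and each type of adjoined morphism, in the style of \cref{lem:pi-mu-rho-triv}, writing out the two composites of $\de$'s explicitly and observing they coincide on the nose. For the right hexagon, the paper avoids a second case analysis entirely: since $\dAdot$ is \emph{defined} in \cref{eq:Adot} as a composite of $\dB$'s and one instance of $\dA$, commutativity of the right hexagon is deduced formally from that of the left using naturality of $\dB$ and $\dB^2 = 1$. Your uniform reduction of both hexagons to \cref{thm:iterated-tensor-coherence} is legitimate --- it is exactly the tool the paper deploys for the axioms in \cref{thm:permcat-smb}, and the images of the adjoined morphisms under these composites are indeed formal --- but it trades the paper's shortcut for the second hexagon against roughly twice the number of cases. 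One caution on your final step: the underlying permutation of a formal morphism lives in the symmetric group on the \emph{fully distributed} sum of simple terms, not on the three tensor factors, and parallel formal morphisms can have distinct underlying permutations (compare \cref{rmk:foil-perm}); so the claim that the permutations ``are determined purely by the underlying rearrangement of variables'' does not discharge the case-by-case check --- you still owe the computation of the permutation for each adjoined-morphism type, which you do acknowledge as the remaining organizational work.
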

\begin{proof}
  The triangle $\dB^2 = 1$ is immediate from the definition.
  One verifies that the left hexagon commutes on each of the objects, simple morphisms, and adjoined morphisms of $\bigl(XY\bigr)Z$, as in the proof of \cref{lem:pi-mu-rho-triv}.  For example, consider
  \[
    \de^L_x . 1 \cn (x.y + x.y').z \to (x.(y+y')).z \inspace \bigl(XY\bigr)Z.
  \]
  The assignments around the top of the left hexagon are
  \begin{align*}
    \dB 1 \cn \de^L_{x}.1
    & \mapsto
      \de^R_x . 1 \\
    \dA \cn \phantom{\de^L_{x}.1}
    & \mapsto
      \de^R_{x.z} \\
    1 \dB \cn \phantom{\de^L_{x}.1}
    & \mapsto
      \bigl(1_{y+y'}.1_{z.x}\bigr) \circ \de^R_{z.x} = \de^R_{z.x},
  \end{align*}
  where the final assignment uses the formula for $\bigl(F \otimes G\bigr)\bigl(\de^R\bigr)$ with $F = \Id$ and $G = \dB$.
  The assignments around the bottom of the left hexagon are
  \begin{align*}
    \dA \cn \de^L_{x}.1
    & \mapsto
      1.\de^R_z \;\circ\;
      \de^L_y \\
    \dB \cn \phantom{\de^L_{x}.1}
    & \mapsto
      \de^R_z.1 \;\circ\;
      \de^R_y \\
    \dA \cn \phantom{\de^L_{x}.1}
    & \mapsto
      \de^R_{z.x} \;\circ\;
      1 = \de^R_{z.x}.
  \end{align*}
  The other verifications are similar.

  For the right hexagon, recall that $\dAdot$ is defined as the composite \cref{eq:Adot}.
  Using this composite, together with naturality of $\dB$ and the equality $\dB^2 = 1$, commutativity of the right hexagon follows from that of the left.
\end{proof}

\begin{thm}\label{thm:permcat-smb}
  The data
  \[
    \Bigl( \permcat, \otimes, S, \dA, \dL, \dR, \dB, \la \Bigr)
  \]
  defines a symmetric monoidal 2-category in which
  \begin{itemize}
  \item $\otimes$ is a 2-functor,
  \item each of $\pi$, $\mu$, $\rho$ is an identity, and
  \item each of $R_{- \mid --}$, $R_{-- \mid -}$, and $\syl$ is an identity.
  \end{itemize}
\end{thm}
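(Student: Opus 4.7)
The plan is to verify the data and axioms for a symmetric monoidal bicategory as assembled in \cref{sec:smb-defn}, exploiting two simplifying principles: (i) the Coherence \cref{thm:iterated-tensor-coherence} for iterated tensor products reduces every axiom to a statement about underlying permutations between parallel formal morphisms in some iterated tensor, and (ii) the relevant modifications $\pi$, $\mu$, $\rho$, $R_{-|--}$, $R_{--|-}$, and $\syl$ can all be taken to be identities, so that the lists of axioms collapse dramatically. The only genuinely nontrivial 2-dimensional datum is $\la$, already constructed in \cref{defn:lambda}.

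I would proceed in the following order. First, assemble the 1-dimensional data: $\otimes$ is a 2-functor by \cref{prop:tensor-functor}; $S$ is the unit; $\dA$ is a 2-natural adjoint equivalence by \cref{prop:A-2nat,prop:AAdot}; $\dL, \dR$ are pseudonatural equivalences by \cref{prop:Lequiv,prop:Requiv}; and $\dB$ is a 2-natural (strict) isomorphism with $\dB^2 = 1$ by \cref{prop:Bequiv}. Second, set $\pi$, $\mu$, $\rho$ to be identities. Here \cref{lem:pi-mu-rho-triv} is the content: the three diagrams proved there are exactly the statements that the pentagon for $\dA$, the middle unit triangle relating $\dA, \dRdot, \dL$, and the right-unit triangle all commute on the nose. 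Third, set the two $R$-modifications and the syllepsis $\syl$ to be identities. \cref{lem:R-syl-triv} supplies the hexagons and the equation $\dB^2 = 1$ needed for this to be well-defined; these are the underlying source/target equations for $R_{-|--}$, $R_{--|-}$, and $\syl$.

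Fourth, check the remaining axioms of a symmetric monoidal bicategory. With the modifications above taken as identities, most of the axioms (the non-abelian 4-cocycle, the two associahedron equations, the Breen polytopes, and the syllepsis axioms as listed in \cref{sec:smb-defn}) reduce to equalities between parallel 2-cells that are composites of whiskerings and pastings of $\dA$, $\dL$, $\dR$, $\dB$, and (in some axioms) $\la$. After whiskering, each side of every such equation is a formal 2-cell in some iterated tensor product whose image under $\Phi$ can be computed symbol by symbol. The Iterated Coherence \cref{thm:iterated-tensor-coherence} then finishes the job, provided the underlying permutations agree — which they do, because on the level of permutations these axioms are the classical symmetric-group coherence identities. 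The only axioms involving $\la$ (essentially the left-unit counterpart of the middle unit triangle and the compatibility of $\la$ with $\dA, \dB$) require one to unfold \cref{defn:lambda} and exhibit the matching $\de^{-R}$'s and $\ze^{-R}$'s, again reducible to coherence after formalization.

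The main obstacle will be bookkeeping rather than depth: a symmetric monoidal bicategory has on the order of a dozen axioms, each of which involves large diagrams of pastings. The key organizational trick will be to observe that, after declaring $\pi, \mu, \rho, R_{-|--}, R_{--|-}, \syl$ to be identities, every 2-cell appearing in any axiom except those involving $\la$ is a composite of strict equalities and pseudonaturality constraints, so the axiom becomes an equality of formal morphisms in a single iterated tensor product. At that point \cref{thm:iterated-tensor-coherence} applies uniformly. The $\la$-axioms are the only ones that need direct verification, and these are checked on the generating simple objects $(m.x).y$ and $0.y$, where the formulas in \cref{defn:lambda} make both sides manifestly a composite of (inverse) $\de^R$'s and $\ze^R$'s with the same underlying permutation, hence equal by the coherence theorem.
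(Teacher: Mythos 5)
Your proposal follows essentially the same route as the paper: assemble the data from the cited results, invoke \cref{lem:pi-mu-rho-triv,lem:R-syl-triv} to take $\pi$, $\mu$, $\rho$, the two $R$'s, and $\syl$ to be identities, and then reduce the seven remaining axioms to equalities of underlying permutations via the Iterated Coherence Theorem~\ref{thm:iterated-tensor-coherence}. The only cosmetic difference is that the paper identifies the residual nontrivial 2-cells precisely as mates with respect to $(\dA,\dAdot)$, whose components are the unit/counit $\etaa$ and $\epza$ (hence instances of $\de$ and $\ze$), rather than your looser description of them as pseudonaturality constraints, but the substance of the argument is the same.
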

\begin{proof}
  The 2-functoriality of the tensor product is \cref{prop:tensor-functor}.
  \cref{lem:pi-mu-rho-triv,lem:R-syl-triv} show that we can choose the indicated data to be trivial.
  For the remainder of this proof we check the seven axioms listed in \cref{sec:smb-defn}.

  For the left normalization axiom \cref{eq:left-norm-axiom}, the components of $\la$ are given by $\de^{-R}$ and $\ze^{-R}$.
  For the rest of the axioms, the only nontrivial cells are given by various mates with respect to the adjoint equivalence $\bigl(\dA,\dAdot\bigr)$.
  For example, \cref{eq:pi3} shows one mate of $\pi$.
  Thus, the relevant mates result in cells whose components are those of $\etaa=\bigl(\epz'\bigr)^\inv$ \cref{eq:etaa}, $\epza$ \cref{eq:epza}, or their inverses.
  Since these components are all given by instances of the adjoined morphisms $\de$ and $\ze$, it suffices by the Iterated Coherence Theorem~\ref{thm:iterated-tensor-coherence} to verify that both sides of each axiom have the same underlying permutation.

  For each of the left normalization axiom \cref{eq:left-norm-axiom}; the three crossing axioms \cref{eq:31-cross-axiom,eq:13-cross-axiom,eq:22-cross-axiom}; and the Yang-Baxter axiom \cref{eq:YB-axiom}, the two 1-cell composites around each side of each axiom are equal on objects.
  Thus, each side of each of these axioms is a monoidal transformation whose components are given by instances of $\de^{\pm 1}$ and $\ze^{\pm 1}$, and whose source and target are equal.
  Therefore, by \cref{thm:iterated-tensor-coherence}, each side of each of these first five axioms is an identity.
  
  For the (2,1)-syllepsis axiom \cref{eq:21-syl-axiom}, the source 1-cell composite, for each side, is equal to $\dA\dAdot$ on objects.  The target of each side is the identity.
  Therefore, the two sides of \cref{eq:21-syl-axiom} are nontrivial monoidal  transformations whose underlying permutations are identities.
  Similarly, the two sides of the (1,2)-syllepsis axiom \cref{eq:12-syl-axiom} have source that is equal to $\dAdot\dA$ on objects and target that is the identity.
  Therefore the syllepsis axioms also hold by \cref{thm:iterated-tensor-coherence}.
  This completes the proof.
\end{proof}

\section{The free 2-functor \texorpdfstring{$\bigP$}{P} is symmetric monoidal}\label{sec:coherence2}

In this section we complete the proof that the free permutative category construction
\[
  \bigP \cn (\Cat, \times) \to (\permcat, \otimes)
\]
is symmetric monoidal as a 2-functor between symmetric monoidal 2-categories.
Its unit constraint is the identity, and its monoidal constraint is the symmetric monoidal equivalence
\[
\Phi\cn \bigP X \otimes \bigP Y \to \bigP \bigl( X \times Y \bigr)
\]
of \cref{thm:free-tensor-adj-equiv}.
The main result is \cref{thm:P-sm2fun} below.
Throughout, we let $X$, $Y$ and $Z$ denote categories.

\begin{lem}\label{lem:P-2fun-om-gaL-gaR1}
  The following diagrams commute in $\permcat$, for each triple $X,Y,Z \in \Cat$.  
  Each unlabeled arrow is induced by the symmetric monoidal data of the cartesian product in $\Cat$.
  \begin{equation}\label{eq:om-gaL-gaR1}
    \begin{tikzpicture}[x=32mm,y=12mm,vcenter]
      \draw[0cell=.8] 
      (0,0) node (a) {\bigl( \bigP X \otimes \bigP Y \bigr) \otimes \bigP Z}
      (1,0) node (b) {\bigP X \otimes \bigl( \bigP Y \otimes \bigP Z \bigr)}
      (1,-1) node (c) {\bigP X \otimes \bigP \bigl( Y \times Z \bigr)}
      (1,-2) node (d) {\bigP \bigl( X \times \bigl( Y \times Z \bigr)\bigr)}
      (0,-1) node (b') {\bigP \bigl( X \times Y \bigr) \otimes \bigP Z}
      (0,-2) node (c') {\bigP \bigl( \bigl( X \times Y \bigr) \times Z\bigr)}
      ;
      \draw[1cell=.8] 
      (a) edge node {\dA} (b)
      (b) edge node {1 \otimes \Phi} (c)
      (c) edge node {\Phi} (d)
      (a) edge['] node {\Phi \otimes 1} (b')
      (b') edge['] node {\Phi} (c')
      (c') edge node {} (d)
      ;
    \end{tikzpicture}
    \qquad
    \begin{tikzpicture}[x=22mm,y=10mm,vcenter]
      \draw[0cell=.8] 
      (0,0) node (la) {S \otimes \bigP X}
      (1,0) node (lb) {\bigP X}
      (0,-1) node (la') {\bigP * \otimes \bigP X}
      (1,-1) node (lb') {\bigP\bigl(* \times X \bigr)}
      (la')+(0,-1) node (rda) {\bigP X \otimes S}
      (rda)+(1,0) node (rdb) {\bigP X}
      (rda)+(0,-1) node (rda') {\bigP X \otimes \bigP *}
      (rda')+(1,0) node (rdb') {\bigP \bigl( X \times * \bigr)}
      ;
      \draw[1cell=.8] 
      (la) edge node {\dL} (lb)
      (la') edge node {\Phi} (lb')
      (la) edge[equal] node {} (la')
      (lb') edge node {} (lb)
      (rdb) edge['] node {\dRdot} (rda)
      (rda') edge node {\Phi} (rdb')
      (rda) edge[equal] node {} (rda')
      (rdb) edge node {} (rdb')
      ;
    \end{tikzpicture}
  \end{equation} 
\end{lem}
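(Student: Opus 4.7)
The plan is to verify each of the three diagrams by exploiting that the composites involved are strict symmetric monoidal functors (or become so once the constraints of $\Phi$ are unfolded), so that equality reduces to checking on generators. On simple objects and simple morphisms of the form $(f.g).h$ or $f.g$, I would compute both composites directly. On the adjoined morphisms $\de^L,\de^R,\be,\ze^L,\ze^R$ of the (iterated) tensor product, both composites produce formal morphisms in a free permutative category $\bigP(X \times (Y \times Z))$ or $\bigP X$, and I would invoke the Iterated Coherence Theorem~\ref{thm:iterated-tensor-coherence} to reduce equality to a comparison of underlying permutations.

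For the hexagon in \cref{eq:om-gaL-gaR1}, each of $\dA$, $\Phi \otimes 1$, $1 \otimes \Phi$, $\Phi$, and the cartesian associator is strict symmetric monoidal, so both composites are strict. Both send a simple object $(\tsum_i x_i.\tsum_j y_j).\tsum_k z_k$ to $\tsum_i \tsum_j \tsum_k (x_i,(y_j,z_k))$, and likewise on simple morphisms. For adjoined morphisms one traces the formulas of \cref{defn:A} for $\dA$ together with the assignments of \cref{defn:Phi} for $\Phi$, noting that $\Phi$ sends $\de^R$, $\ze^L$, $\ze^R$ to identities and $\de^L$ to a symmetry. After this reduction both sides are formal diagrams of adjoined morphisms in $\bigP(X \times (Y \times Z))$, and their underlying permutations match because the perfect shuffles introduced by $\dA$ correspond exactly to the shuffles implicit in the cartesian associator composed with $\Phi$; coherence then forces equality.

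For the left unit triangle, all four arrows are strict symmetric monoidal: $\dL$ and $\Phi$ are strict by construction, and the cartesian unit isomorphism $*\times X \cong X$ induces a strict map. On a simple object $n.\tsum_i x_i$ both composites yield the $n$-fold sum $\tsum_{k=1}^n \tsum_i x_i$, since the perfect shuffles in $\dL$ become trivial after identifying $(*,x_i)$ with $x_i$. The adjoined morphisms reduce similarly (using \cref{eq:Lkappa} and the fact that $\Phi$ sends the relevant adjoined morphisms of $S \otimes \bigP X$ to identities or symmetries), and coherence closes the case. For the right unit triangle, the only subtlety is that $\dRdot = (-).1$ is not strict; however, its monoidal constraint is a composite of morphisms of the form $\de^R_1$, and by \cref{defn:Phi} each such $\de^R$ is sent by $\Phi$ to an identity. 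Hence $\Phi \circ \dRdot$ is strict, and comparing it with $\bigP$ applied to the isomorphism $X \cong X \times *$ on generators completes the argument via \cref{thm:iterated-tensor-coherence}.

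The main obstacle is the bookkeeping for the hexagon, where the nontrivial assignments of $\dA$ on the adjoined morphism $\de^L_{\ssum b_i.c_i}$, involving perfect shuffles and nested $1.\de^L$'s, must be shown to match what is produced by $\Phi \circ (1 \otimes \Phi) \circ (\Phi \otimes 1)$ followed by the cartesian associator. Rather than track these composites explicitly, \cref{thm:iterated-tensor-coherence} reduces this to verifying that the underlying permutations agree, a purely combinatorial check analogous to \cref{rmk:foil-perm} that amounts to the associativity of the bijection between $(X \times Y) \times Z$ and $X \times (Y \times Z)$ at the level of indexing.
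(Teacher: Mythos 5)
Your proposal is correct and follows essentially the same route as the paper: both verify the diagrams on generators, compute the same formula on simple objects, reduce the adjoined morphisms to a comparison of symmetries (you via an explicit appeal to Theorem~\ref{thm:iterated-tensor-coherence}, the paper by direct inspection), and both hinge on the same key observation that $\Phi \circ \dRdot$ is strict because the constraints $\de^R_1$ and $\ze^R_1$ of $\dRdot = (-).1$ are among the morphisms $\Phi$ sends to identities. No gaps.
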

\begin{proof}
  For each diagram, one verifies commutativity on (sums of) simple objects and morphisms.
  For example, the two composites around the associativity diagram are given by
  \[
    \Bigl( \tsum_{\ell} \bigl( \tsum_{i_{\ell}} x_{i_\ell} \bigr). \bigl( \tsum_{j_\ell} y_{j_\ell}\bigr) \Bigr). \bigl( \tsum_k z_k \bigr).
    \mapsto
    \tsum_\ell \tsum_{i_\ell} \tsum_{j_\ell} \tsum_{k} (x_{i_\ell}, (y_{j_\ell}, z_{k})).
  \]
  The adjoined morphisms $\de$, $\ze$, $\de.1$, and $\ze.1$ in the source are sent to identities in the target.
   All symmetry morphisms (both those adjoined in the construction of $\bigP$ and those adjoined in the construction of $\otimes$) are sent to the corresponding symmetry in the target.
  Commutativity of the other two diagrams is similar, but simpler.
  Note that the composite $\Phi \circ \dRdot$ is strict monoidal because the monoidal and unit constraints for $\dRdot = (-).1$ are $\de^R_1$ and $\ze^R_1$, which are among the morphisms that $\Phi$ sends to identities.
\end{proof}

\begin{defn}\label{defn:omdot-gaR-gaB}
  We define invertible modifications $\omdot$, $\ga^\dR$, and $\ga^\dB$ as in the following diagrams.
  Each unlabeled arrow is induced by the symmetric monoidal data of the cartesian product in $\Cat$.
\begin{equation}\label{eq:omdot-gaR-gaB}
  \begin{tikzpicture}[x=32mm,y=12mm,vcenter]
    \draw[0cell=.8] 
    (0,0) node (a) {\bigl( \bigP X \otimes \bigP Y \bigr) \otimes \bigP Z}
    (-1,0) node (b) {\bigP X \otimes \bigl( \bigP Y \otimes \bigP Z \bigr)}
    (-1,-1) node (c) {\bigP X \otimes \bigP \bigl( Y \times Z \bigr)}
    (-1,-2) node (d) {\bigP \bigl( X \times \bigl( Y \times Z \bigr)\bigr)}
    (0,-1) node (b') {\bigP \bigl( X \times Y \bigr) \otimes \bigP Z}
    (0,-2) node (c') {\bigP \bigl( \bigl( X \times Y \bigr) \times Z\bigr)}
    ;
    \draw[1cell=.8] 
    (b) edge node {\dAdot} (a)
    (b) edge['] node {1 \otimes \Phi} (c)
    (c) edge['] node {\Phi} (d)
    (a) edge node {\Phi \otimes 1} (b')
    (b') edge node {\Phi} (c')
    (d) edge node {} (c')
    ;
    \draw[2cell]
    node[between=b' and c at .45, rotate=45, 2label={above,\omdot}] {\Rightarrow}
    ;
  \end{tikzpicture}
  \qquad
  \begin{tikzpicture}[x=22mm,y=13mm,vcenter]
    \draw[0cell=.8] 
    (0,-2) node (ra) {\bigP X \otimes S}
    (1,-2) node (rb) {\bigP X}
    (0,-3) node (ra') {\bigP X \otimes \bigP *}
    (1,-3) node (rb') {\bigP \bigl( X \times * \bigr)}
    (0,-3.7) node (sa) {\bigP X \otimes \bigP Y}
    (1,-3.7) node (sb) {\bigP Y \otimes \bigP X}
    (0,-4.7) node (sa') {\bigP\bigl( X \times Y \bigr)}
    (1,-4.7) node (sb') {\bigP\bigl( Y \times X \bigr)}
    ;
    \draw[1cell=.8] 
    (ra) edge node {\dR} (rb)
    (ra') edge['] node {\Phi} (rb')
    (ra) edge[equal] node {} (ra')
    (rb') edge node {} (rb)
    (sa) edge node {\dB} (sb)
    (sa') edge node {} (sb')
    (sa) edge['] node {\Phi} (sa')
    (sb) edge node {\Phi} (sb')
    ;
    \draw[2cell]
    node[between=ra and rb' at .55, rotate=45, 2label={above,\ga^\dR}] {\Rightarrow}
    node[between=sa and sb' at .55, rotate=45, 2label={above,\ga^\dB}] {\Rightarrow}
    ;
  \end{tikzpicture}
\end{equation}
Each component is given by a symmetry isomorphism that interchanges order of summation.
\begin{itemize}
\item The component of $\omdot$ at an object
\[
  \bigl( \tsum_i x_i \bigr).\Bigl( \tsum_{\ell} \bigl( \tsum_{j_\ell} y_{j_\ell} \bigr).\bigl( \tsum_{k_\ell} z_{k_\ell} \bigr) \Bigr)
  \inspace \bigP X \otimes \bigl( \bigP Y \otimes \bigP Z\bigr)
\]
is the symmetry 
\[
  \tsum_i \tsum_\ell \tsum_{j_\ell} \tsum_{k_\ell} \bigl( x_i \,,\, (y_{j_\ell}, z_{k_\ell}) \bigr)
  \fto{\omdot = \beta}
  \tsum_\ell \tsum_i \tsum_{j_\ell} \tsum_{k_\ell} \bigl( x_i \,,\, (y_{j_\ell}, z_{k_\ell}) \bigr).
\]
\item The component of $\ga^\dR$ at an object
  \[
    \bigl( \tsum_i x_i \bigr).m = \bigl( \tsum_i x_i \bigr).\bigl( \tsum_{j=1}^m 1 \bigr) \inspace \bigP X \otimes S
  \]
  is the symmetry
  \[
    \tsum_i m x_i = \tsum_i \tsum_{j=1}^m x_i
    \fto{\ga^\dR = \beta}
    \tsum_{j=1}^m \tsum_i x_i = m \bigl( \tsum x_i \bigr).
  \]
\item The component of $\ga^\dB$ at an object
  \[
    \bigl( \tsum_i x_i \bigr).\bigl( \tsum_j y_j \bigr) \inspace \bigP X \otimes \bigP Y
  \]
  is the symmetry
  \[
    \tsum_i \tsum_j (y_j,x_i)
    \fto{\ga^\dB = \beta}
    \tsum_j \tsum_i (y_j,x_i).
  \]
\end{itemize}
For permutative categories $X$, $Y$, and $Z$, naturality of the symmetry isomorphisms $\beta$ implies that these components define monoidal transformations
\[
  \omdot_{X,Y,Z}, \quad \ga^\dR_{X}, \andspace \ga^\dB_X
\]
as in the diagrams above.
\end{defn}
\begin{lem}\label{lem:omdot-gaR-gaB}
  In the context of \cref{defn:omdot-gaR-gaB}, each of $\omdot$, $\ga^\dR$, and $\ga^\dB$ is an invertible modification.
\end{lem}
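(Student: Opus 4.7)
My plan is to verify the two parts of the claim separately: invertibility, and the modification axiom.

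\textbf{Invertibility.} Each component of $\omdot$, $\ga^\dR$, and $\ga^\dB$ is defined in \cref{defn:omdot-gaR-gaB} as a symmetry isomorphism $\beta$ in a free permutative category of the form $\bigP(-)$. Symmetry isomorphisms are always invertible, and a modification is invertible if and only if its components are. So this part is immediate.

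\textbf{Modification axiom.} The main content is to check, for each choice of functors $F\cn X \to X'$, $G\cn Y \to Y'$, $H\cn Z \to Z'$ in $\Cat$, that the appropriate whiskering equality holds. For $\omdot$, this amounts to showing that pasting $\omdot_{X,Y,Z}$ with $\bigP(F \times (G \times H))$ equals pasting $\bigP F \otimes (\bigP G \otimes \bigP H)$ with $\omdot_{X',Y',Z'}$, modulo the pseudonaturality constraints for $\dAdot$, $\Phi$, and tensor products. The analogous equalities must be checked for $\ga^\dR$ (one variable, involving $\dR$) and $\ga^\dB$ (two variables, involving $\dB$).

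Two simplifications drive the argument. First, each $\bigP F$ is strict symmetric monoidal, so every tensor product of such functors is strict, and every monoidal/unit constraint appearing in the whiskerings is an identity. Second, $\Phi$ is strict symmetric monoidal and 2-natural by \cref{thm:free-tensor-adj-equiv}, while $\dA$ is 2-natural by \cref{prop:A-2nat}, $\dB$ is 2-natural by \cref{prop:Bequiv}, and $\dL, \dR$ are 2-natural with respect to strict maps by \cref{lem:Ldot-2nat} and its right-sided analogue. Hence the pseudonaturality constraints contributed by $\dAdot$, $\dR$, and $\dB$ vanish along the relevant 1-cells.

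After these reductions, the two sides of each modification axiom become parallel morphisms in the target free permutative category whose components at a simple object are built from symmetries that permute summands. I would then invoke the Iterated Coherence Theorem~\ref{thm:iterated-tensor-coherence} (or, equivalently at this stage, classical coherence for symmetric monoidal categories, since the target of $\Phi$ is a free permutative category): two such parallel composites of symmetries with the same underlying permutation are equal. Unpacking the definition of the component $\beta$ on both sides shows that the underlying permutations do coincide, as both simply reorder an iterated sum in the same way.

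\textbf{Main obstacle and execution.} The principal obstacle is organizational rather than conceptual: the source 1-cells involve iterated combinations of $\otimes$, $\dAdot$, $\Phi$, and (for $\ga^\dR$) $\dR$, so expanding each modification axiom produces a large diagram of 2-cells to track. I would execute the proof by (i) writing out the modification axiom diagram for a single generic functor in each slot, (ii) using the strictness/2-naturality observations above to strike out identity constraints, (iii) reducing to a comparison of two explicit symmetries in $\bigP(X' \times (Y' \times Z'))$ (respectively $\bigP(Y' \times X')$ or $\bigP X'$), and (iv) closing the argument by matching underlying permutations and invoking coherence. Once this template is in place, the three cases differ only in notational bookkeeping.
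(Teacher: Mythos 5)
Your proposal is correct and follows essentially the same route as the paper: both arguments observe that 2-naturality of $\Phi$, $\dAdot$, $\dB$ (and 2-functoriality of $\otimes$) together with strictness of each $\bigP J$ force all the pseudonaturality constraints in the modification squares to be identities, reducing the axiom to a comparison of symmetry components. The only difference is cosmetic: where you close by matching underlying permutations via coherence, the paper simply notes that the strict symmetric monoidal functors involved preserve the symmetry isomorphisms $\beta$ on the nose.
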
 
\begin{proof}
  Each of $\Phi$, $\dB$, and $\dAdot$ is 2-natural by \cref{thm:free-tensor-adj-equiv}, \cref{defn:B}, and \cref{defn:Adot}, respectively.
  The tensor products $1 \otimes \Phi$ and $\Phi \otimes 1$ are also 2-natural, because $\otimes$ is 2-functorial (\cref{prop:tensor-functor}).
  For a functor of categories
  \[
    J\cn X \to X',
  \]
  the free construction $\bigP J$ is strict symmetric monoidal.
  This means that the pseudonaturality constraint $\dR_{\;\;\bigP J}$ is the identity by \cref{defn:R}.
  Therefore, the modification axioms for $\om$, $\ga^\dR$, and $\ga^\dB$ follow
  because the symmetry isomorphisms $\beta$ are preserved by symmetric monoidal functors.
\end{proof}

\begin{lem}\label{lem:mates-omdot-gaR1}
  \
  \begin{enumerate}
  \item The modification $\omdot$ in \cref{eq:omdot-gaR-gaB} is the mate of $\om = 1$ in the associativity diagram \cref{eq:om-gaL-gaR1}.
  \item The mate of $\ga^\dR$ in \cref{eq:omdot-gaR-gaB} is $\ga^\dR_1 = 1$, the identity in the corresponding diagram \cref{eq:om-gaL-gaR1}.
  \end{enumerate} 
\end{lem}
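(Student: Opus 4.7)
The plan is to compute each mate by an explicit pasting against the appropriate adjoint-equivalence data, then invoke the Iterated Coherence Theorem~\ref{thm:iterated-tensor-coherence} to identify the result on the nose. For part~(1) the mate is taken with respect to $(\dA,\dAdot,\etaa,\epza)$ from \cref{prop:AAdot}, and for part~(2) with respect to $(\dRdot,\dR)$ from \cref{prop:Requiv}. In both cases, the components of the relevant units and counits are composites of $\de^R$, $\ze^R$, and their inverses (see \cref{eq:epza,eq:etaa}), so whiskering with them keeps us inside the class of formal diagrams of adjoined morphisms to which \cref{thm:iterated-tensor-coherence} applies.

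For part~(1), I would first paste $\om = 1$, together with the units and counits $\etaa$ and $\epza$, around each of the three occurrences of $\dA$ in the associativity pentagon of \cref{eq:om-gaL-gaR1}, obtaining a 2-cell filling the pentagon of \cref{eq:omdot-gaR-gaB}. Both this mate and $\omdot$ are monoidal transformations between strict monoidal functors with matching source and target, and both have all components given by composites of symmetries and adjoined morphisms. On a generic object
\[
\bigl(\tsum_i x_i\bigr).\Bigl(\tsum_\ell \bigl(\tsum_{j_\ell} y_{j_\ell}\bigr).\bigl(\tsum_{k_\ell} z_{k_\ell}\bigr)\Bigr),
\]
the underlying permutation of each rearranges $\tsum_i \tsum_\ell \tsum_{j_\ell} \tsum_{k_\ell}$ into $\tsum_\ell \tsum_i \tsum_{j_\ell} \tsum_{k_\ell}$, matching the component of $\omdot$ specified in \cref{defn:omdot-gaR-gaB}. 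Hence by \cref{thm:iterated-tensor-coherence} the two coincide.

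For part~(2), I would paste $\ga^\dR$ with the unit and counit of $(\dRdot,\dR)$, producing a 2-cell filling the right-hand bottom square of \cref{eq:om-gaL-gaR1}. The source and target 1-cells of this pasting are both equal to the identity on $\bigP X$, as verified in \cref{lem:P-2fun-om-gaL-gaR1}. The resulting mate again has components built from symmetries and adjoined morphisms, so \cref{thm:iterated-tensor-coherence} applies. Evaluated on $\tsum_i x_i$, the swap in $\ga^\dR$ is inverted by the corresponding swaps appearing in the unit and counit of $(\dRdot,\dR)$, and the underlying permutation is trivial. Thus the mate is the identity, as claimed.

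The main obstacle will be bookkeeping: writing down each pasting diagram carefully enough that every intermediate 1-cell and 2-cell component fits the formal-diagram framework of \cref{defn:iterated-formal-diagram}, and that the 1-cell sources and targets of the mate really agree with those of $\omdot$ and of the identity respectively. Once this setup is in place, the coherence theorem reduces the remaining comparison of permutations to an essentially automatic check.
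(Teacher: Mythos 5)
Your endpoint computations are right, but the descriptions of the mate pastings do not match the actual diagrams, and the paper's proof is more direct than the route you sketch. For part (1): the associativity diagram \cref{eq:om-gaL-gaR1} is a hexagon containing exactly \emph{one} occurrence of $\dA$ (the remaining edges are instances of $\Phi$ and the associativity isomorphism of the cartesian product in $\Cat$), so there is no ``pasting around each of the three occurrences of $\dA$.'' Because the bottom edge is an isomorphism, the mate collapses to a single whiskering of $\epzainv$ from \cref{prop:AAdot} with the composite $\bigP X \otimes (\bigP Y \otimes \bigP Z) \fto{1 \otimes \Phi} \bigP X \otimes \bigP(Y \times Z) \fto{\Phi} \bigP(X \times (Y \times Z)) \to \bigP((X\times Y)\times Z)$. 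At that point no coherence theorem is needed: the components of $\epza$ are instances of $\de^L$ and $\ze^L$, and \cref{eq:Phi-deL} shows that $\Phi$ sends these exactly to the order-of-summation symmetries defining $\omdot$. Your permutation count records the same fact, but note that these components live in a free permutative category $\bigP W$, not in an iterated tensor product, so \cref{thm:iterated-tensor-coherence} is not the applicable tool; what you are really using is that a morphism of $\bigP W$ built from symmetries is determined by its underlying permutation.

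For part (2): since $\dR\dRdot = 1$, the counit of $(\dR,\dRdot)$ is the identity and the mate is simply the whiskering $\ga^\dR * \dRdot$; there is no cancellation of ``swaps in the unit and counit'' to appeal to. The whiskering evaluates $\ga^\dR$ at objects of the form $\bigl(\tsum_i x_i\bigr).1$, where $m = 1$ and the interchange-of-summation symmetry is already the identity permutation. Also, the two boundary $1$-cells of that square are both the canonical map $\bigP X \to \bigP(X \times *)$, not the identity on $\bigP X$. None of these issues is irreparable, but as written the pastings you describe would have to be corrected before the ``essentially automatic check'' you defer to could be carried out.
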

\begin{proof}
  Because the associativity in $(\Cat,\times)$ is an isomorphism, the mate of $\om = 1$ is given by whiskering of the inverse counit $\epzainv$ of \cref{prop:AAdot} with the composite
  \[
    \begin{tikzpicture}[x=20mm,y=15mm]
      \draw[0cell] 
      (0,0) node (a) {
        \bigP{X} \otimes \bigl( \bigP{Y} \otimes \bigP{Z} \bigr)
      }
      (a)+(1,-1) node (b) {
        \bigP{X} \otimes \bigP\bigl( Y \times Z \bigr)
      }
      (b)+(2,0) node (c) {
        \bigP\bigl( X \times \big( Y \times Z \bigr)\bigr)
      }
      (c)+(1,1) node (d) {
        \bigP\bigl( X \times \bigl( Y \times Z \bigr)\bigr).
      }
      ;
      \draw[1cell] 
      (a) edge node {1 \otimes \Phi} (b)
      (b) edge node {\Phi} (c)
      (c) edge['] node {} (d)
      ;
    \end{tikzpicture}
  \]
  Since the components of \cref{eq:epza} are given by $\de^L$ and $\ze^L$, the formula \cref{eq:Phi-deL} shows that this whiskering is equal to $\omdot$.

  Similarly, recall that $\dR \dRdot = 1$ and hence the counit $\epz^\dR$ is the identity.
  Thus, the mate $\ga^\dR_1$ is given by the whiskering $\ga^\dR * \dRdot$, which is the identity.
\end{proof}

\begin{thm}\label{thm:P-sm2fun}
  The free construction defines a symmetric monoidal 2-functor
  \[
    (\bigP, \Phi)\cn (\Cat, \times) \to (\permcat, \otimes)
  \]
  with the following properties:
  \begin{itemize}
  \item $\Phi$ is 2-natural,
  \item $\Psi = 1 \cn S \to \bigP *$,
  \item each of $\om$, $\ga^\dL$, and $\ga^\dR_1$ is an identity, and
  \item the components of $\omdot$, $\ga^\dR$, and $\ga^\dB$ are given by symmetry isomorphisms.
  \end{itemize}
\end{thm}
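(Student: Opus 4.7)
The plan is to assemble the data already constructed and verify the axioms of a symmetric monoidal 2-functor. The monoidal constraint is the 2-natural adjoint equivalence $\Phi$ of \cref{thm:free-tensor-adj-equiv}, and the unit constraint is $\Psi = 1$, using the identification $S = \bigP *$ from \cref{rmk:what-is-S}. The invertible modifications $\omdot$, $\ga^\dR$, $\ga^\dB$ of \cref{lem:omdot-gaR-gaB} furnish the associativity, right-unit, and braiding coherence 2-cells; the three opposite cells $\om$, $\ga^\dL$, $\ga^\dR_1$ are identities by \cref{lem:P-2fun-om-gaL-gaR1}, and \cref{lem:mates-omdot-gaR1} identifies $\omdot$ and $\ga^\dR$ as the mates of those identities. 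These ingredients exhaust the structure data.

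What remains is to verify the axioms of a symmetric monoidal 2-functor: a pentagon for the associativity modification, two triangles involving the units, two hexagons relating associativity and braiding, and a syllepsis compatibility axiom. Because $(\Cat,\times)$ is a strict symmetric monoidal 2-category and $(\permcat,\otimes)$ has $\pi$, $\mu$, $\rho$, $R_{-|--}$, $R_{--|-}$, $\syl$ all equal to identities by \cref{thm:permcat-smb}, most interior cells in these axioms collapse. Each axiom reduces to an equality of pasting composites built from $\omdot$, $\ga^\dR$, $\ga^\dB$, appropriate whiskerings of $\dA^{\pm 1}$, $\dL$, $\dR^{\pm 1}$, $\dB$ by $\Phi$, and the mates $\etaa$, $\epza$ of the adjoint equivalence $(\dA,\dAdot)$.

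The unifying observation is that every component of every 2-cell in every such pasting is either an identity or a symmetry isomorphism in some $\bigP(W)$ for an iterated cartesian product $W$ of the input categories. This is manifest for $\omdot$, $\ga^\dR$, $\ga^\dB$ by construction, and follows for $\etaa$, $\epza$, $\dL_F$, $\dR_F$ because their components are built from $\de^{\pm L/R}$ and $\ze^{\pm L/R}$, which $\Phi$ sends to symmetries or identities by \cref{defn:Phi} and \cref{eq:Phi-deL}. Consequently each axiom asserts the equality of two parallel morphisms in some $\bigP(W)$, and one verifies it by checking that both sides induce the same permutation of summation indices and then invoking coherence for symmetric monoidal categories---equivalently, \cref{thm:iterated-tensor-coherence} applied after lifting through $\Phi$.

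The main obstacle will be combinatorial bookkeeping rather than conceptual difficulty: each of the seven axioms must be expanded, its two sides computed componentwise, and the resulting permutations matched. The richest interaction occurs in the pentagon, where $\omdot$ appears in two different orders of re-bracketing a four-fold iterated sum; the pattern established there transfers to the hexagons and the syllepsis axiom with only index-relabelling modifications, while the triangle axioms follow immediately from the explicit formula for $\ga^\dR$ and the identities $\ga^\dL = 1$ and $\ga^\dR_1 = 1$.
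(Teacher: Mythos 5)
Your proposal follows essentially the same route as the paper: assemble the data from \cref{thm:free-tensor-adj-equiv}, \cref{lem:omdot-gaR-gaB}, \cref{lem:P-2fun-om-gaL-gaR1}, and \cref{lem:mates-omdot-gaR1}; use the triviality of $\pi$, $\mu$, $R_{-|--}$, $R_{--|-}$, $\syl$ in $(\permcat,\otimes)$ and of all 2-dimensional data in $(\Cat,\times)$; and dispose of the remaining axioms by symmetric monoidal coherence, since every surviving component is a symmetry isomorphism in a free permutative category. One small correction to your accounting: in the paper's presentation the monoidal (pentagon) and middle-unity axioms are entirely trivial---every 2-cell appearing in them is an identity, since they involve only $\om$, $\ga^\dR_1$, $\pi$, $\ol{\pi}$, $\mu$, and 2-naturality constraints---so the coherence argument is needed only for the two braid axioms and the sylleptic axiom, where $\ga^\dB$ and (in the right braid axiom alone) $\omdot$ appear; $\omdot$ does not enter the pentagon as you anticipated.
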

\begin{proof}
  The 2-naturality of $\Phi$ is verified in \cref{thm:free-tensor-adj-equiv}.
  The other claims about the data of $\bigP$ are verified in \cref{defn:omdot-gaR-gaB,lem:omdot-gaR-gaB} together with \cref{lem:P-2fun-om-gaL-gaR1,lem:mates-omdot-gaR1}.

  In $(\permcat,\otimes)$, recall from \cref{thm:permcat-smb}:
  \begin{itemize}
  \item $\dA$ and $\dB$ are 2-natural, and
  \item $\pi$, $\mu$, $R_{-|--}$, $R_{--|-}$, and $\syl$ are identities.
  \end{itemize}
  In $(\Cat,\times)$, the symmetric monoidal structure is that of a $\Cat$-enriched symmetric monoidal 1-category.
  Thus, all of the 2-dimensional symmetric monoidal data for $(\Cat,\times)$ are identities.

  Now we check the five axioms listed in \cref{sec:sm2fun-defn}, using the simplifications noted above.
  In \cref{eq:sm2fun-monoidal,eq:sm2fun-midunity}, all of the 2-cells are identities.
  In each of \cref{eq:sm2fun-lbraid,,eq:sm2fun-rbraid,eq:sm2fun-syll}, the only nontrivial 2-cells are given by $\ga^\dB$ and $\omdot$, with $\omdot$ appearing only in \cref{eq:sm2fun-rbraid}.
  Therefore, each of these last three axioms commutes by symmetric monoidal coherence.
\end{proof}

\section{The direct sum of permutative categories}
\label{sec:oplus}

This section defines a direct sum (sometimes called a biproduct) for permutative categories that is a model for both binary coproducts (\cref{cor:oplus=bicat_coprod}) and binary products (\cref{cor:oplus=bicat_prod}).
The main results are summarized in \cref{thm:oplus=bicat_directsum}.
Some treatment of products and coproducts has previously appeared in
\cite[Section~5]{Sch2014Ind} and \cite[Appendix~A]{FS2019Supplying}, but those authors do not relate their constructions to the Gray tensor product as we do here.

\begin{defn}\label{defn:perm-oplus}
  For permutative categories $(A,+^A,0^A)$ and $(B,+^B,0^B)$, define the \emph{direct sum} $(A \oplus B, +, 0)$ as follows.
  \begin{description}
  \item[Objects] The objects of $A \oplus B$ are finite formal sums
    \[
      \tsum [x_i] \forspace x_i \in \ob A \bincoprod \ob B,
    \]
    where we write $[x]$ for an object of $A$ or $B$, regarded as an object of $A \oplus B$.
    These sums are subject to the following relations.
    \begin{itemize}
    \item The empty sum is the monoidal unit $0$ and, in the case $x \in \ob A \bincoprod \ob B$ is either $0^A$ or $0^B$, we have
      \[
        [0^A] = 0 = [0^B].
      \]
    \item The formal sum of objects in $A$ or $B$ agrees with the monoidal sum in $A$ and $B$:
      \[
        [a] + [a'] = [a +^A a']
        \andspace
        [b] + [b'] = [b +^B b']
      \]
      for $a,a' \in \ob A$ and $b,b' \in \ob B$.
    \end{itemize}
  \item[Morphisms] The morphisms of $A \oplus B$ are generated under formal sums and composition by morphisms
    \[
      [f]\cn [a] \to [a'], \quad
      [g]\cn [b] \to [b'], \andspace
      \beta \cn [x] + [x'] \to [x'] + [x]
    \]
    for $a,a' \in \ob A$, $b,b' \in \ob B$, and $x,x' \in \ob A \bincoprod \ob B$.
    These are subject to the following relations.
    \begin{itemize}
    \item Addition of formal sums is a functor $\bigl( A \oplus B \bigr) \times \bigl( A \oplus B \bigr) \to \bigl( A \oplus B \bigr)$.
    \item For composable morphisms $f,f' \in A$ and $g,g'\in B$ we have
      \[
        [f'][f] = [f'f] \andspace [g'][g] = [g'g].
      \]
    \item For objects $a,a' \in A$ and $b,b' \in B$ we have
      \[
        \beta_{[a],[a']} = [\beta^A_{a,a'}]
        \andspace
        \beta_{[b],[b']} = [\beta^B_{b,b'}]
      \]
      where superscripts indicate the symmetry isomorphisms in $A$ and $B$.
    \item The morphisms $\beta$ are natural with respect to morphisms in $A \oplus B$ and satisfy the axioms of a symmetry isomorphism for the formal sum $+$.
    \end{itemize}
  \end{description}
  We will see that these generators and relations determine a well-defined permutative category in \cref{prop:oplus-Gray} below.
\end{defn}

\begin{rmk}\label{rmk:2fun-dirsum-incl}
  We note two key properties of the direct sum.
  \begin{enumerate}
  \item\label{it:dirsum-unique-rep} Using the relations for objects, each object of $A \oplus B$ can be uniquely represented as a formal sum
    \[
      \tsum [x_i] = [x_1] + \ldots + [x_n]
    \]
    where no two successive summands $x_i$ and $x_{i+1}$ are objects of the same category.
    So, either $x_i \in A$ and $x_{i+1} \in B$, or vice versa.
    Similarly, a sum of morphisms can be uniquely represented so that no successive summands come from the same category.
    All of our formulas will be applied to objects and morphisms presented in this way.

  \item\label{it:dirsum-can-incl} The assignments $x \mapsto [x]$ determine \emph{canonical inclusions}
    \begin{equation}\label{eq:can-incl}
      A \to A \oplus B \andspace B \to A \oplus B.
    \end{equation}
    The relations in $A \oplus B$ make the canonical inclusions into strict symmetric monoidal functors.
  \end{enumerate}
\end{rmk}

We use the (strong) Gray tensor product of 2-categories to show that $A \oplus B$ is well-defined.
The definition is due to Gray \cite{Gray80Closed} and we refer the reader to \cite[Section~3.1]{Gurski13Coherence} or \cite[Section~12.2]{JY212Dim} for textbook treatments.
To avoid confusion with the tensor for permutative categories in \cref{defn:AtensorB}, we use $\grtimes$ to denote the Gray tensor product.
For the following comparison, we let
\[
  \Si \cn \permcat \to \iicat
\]
denote the suspension 2-functor that sends each permutative category to a 1-object 2-category.

\begin{prop}\label{prop:oplus-Gray}
  For permutative categories $A$ and $B$, the direct sum is determined by an isomorphism of 1-object 2-categories
  \begin{align}\label{eq:oplus-Gray}
    \Si \bigl( A \oplus B \bigr) \iso \bigl( \Si A \bigr) \grtimes \bigl( \Si B \bigr).
  \end{align}
  In particular, $A \oplus B$ is a strict monoidal category. Via this isomorphism, the symmetry isomorphism $\beta_{[x],[y]}$ in $A \oplus B$ corresponds to the Gray structure 2-cell $\Si_{x,y}$ for $x,y \in \ob A \bincoprod \ob B$.
\end{prop}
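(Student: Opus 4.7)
The plan is to recognize both sides of \cref{eq:oplus-Gray} as admitting the same generators-and-relations presentation and then to track the symmetry 2-cells through the resulting identification.

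I would begin by invoking the standard generators-and-relations description of the Gray tensor product $\mathcal{X} \grtimes \mathcal{Y}$ of 2-categories, for instance as in \cite[Section~3.1]{Gurski13Coherence} or \cite[Section~12.2]{JY212Dim}. Specializing to $\mathcal{X} = \Si A$ and $\mathcal{Y} = \Si B$, each of which has a unique 0-cell, the Gray tensor has a unique 0-cell, so it is the suspension of a strict monoidal category. Its hom-category $G$ is generated by 1-cells $[x]$ for $x \in \ob A \bincoprod \ob B$, 2-cells $[f]$ and $[g]$ for morphisms of $A$ and $B$, and Gray structure 2-isomorphisms $\Si_{x,y}\colon [x] + [y] \To [y] + [x]$ for mixed pairs $x \in \ob A$, $y \in \ob B$, subject to strict functoriality of the inclusions of $A$ and $B$, normalization $\Si_{0, y} = 1 = \Si_{x, 0}$, naturality of $\Si$, and the two Gray axioms asserting that $\Si_{-, y}$ and $\Si_{x, -}$ are compatible with composition in each factor.

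Next, I would match this presentation against \cref{defn:perm-oplus}. The 1-cell generators and the relations $[a] + [a'] = [a +^A a']$, $[b] + [b'] = [b +^B b']$, $[0^A] = 0 = [0^B]$ coincide on the nose. The 2-cell generators $[f]$ and $[g]$ match, and the symmetry generators $\beta_{[x],[y]}$ in $A \oplus B$ correspond as follows: when $x, y$ both lie in $\ob A$ the relation $\beta_{[a], [a']} = [\beta^A_{a, a'}]$ identifies $\beta_{[a],[a']}$ with a morphism already present in $G$ via the canonical inclusion from \cref{rmk:2fun-dirsum-incl}(\ref{it:dirsum-can-incl}), and similarly when both lie in $\ob B$; when $x \in \ob A$ and $y \in \ob B$, $\beta_{[x],[y]}$ is identified with $\Si_{x,y}$; and when $x \in \ob B$ and $y \in \ob A$, $\beta_{[x],[y]}$ is identified with $\Si_{y,x}^{-1}$, which exists since the strong Gray tensor has invertible interchangers. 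Thus every generator on one side is accounted for on the other, and only the cross-factor $\Si_{x,y}$ produce genuinely new data beyond the strict monoidal inclusions of $A$ and $B$.

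It remains to match the relations. Naturality of $\beta$ decomposes, using the unique alternating decomposition from \cref{rmk:2fun-dirsum-incl}(\ref{it:dirsum-unique-rep}), into naturality of the symmetries in $A$, in $B$, and of $\Si$. The hexagon axiom for $\beta$, applied to a triple of single-factor objects, splits by the factor of each entry: the all-$A$ and all-$B$ cases reduce to hexagons in $A$ or $B$ via the canonical inclusions, while each mixed case reduces to one of the two Gray compatibility axioms. The inverse axiom $\beta_{[y],[x]} \circ \beta_{[x],[y]} = 1$ for mixed pairs holds by the identification $\beta_{[y],[x]} = \Si_{x,y}^{-1}$. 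This matching yields the desired isomorphism of 1-object 2-categories, from which $A \oplus B$ inherits the structure of a strict monoidal category, and identifies $\beta_{[x],[y]}$ with $\Si_{x,y}$ by construction. The main obstacle is the purely enumerative bookkeeping: one must sort through the cases of the hexagon and naturality relations according to how many summands in each instance come from each factor. Once the Gray presentation is in hand the individual verifications are routine, but the case analysis is not short.
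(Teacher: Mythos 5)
Your proposal is correct and follows essentially the same route as the paper: both arguments match the generators-and-relations presentation of $\bigl(\Si A\bigr) \grtimes \bigl(\Si B\bigr)$ in the one-object case against the presentation of $A \oplus B$ in \cref{defn:perm-oplus}, identifying $[a]$, $[b]$ with $a \grtimes 1$, $1 \grtimes b$ and the cross-factor symmetries with the interchangers. You simply spell out the case analysis (same-factor versus mixed $\beta$'s, and the splitting of the naturality and hexagon relations) that the paper leaves as a one-sentence verification.
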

\begin{proof}
  For $a \in A$ and $b \in B$, the generating objects $[a]$ and $[b]$ in $A \oplus B$ correspond to the generating 1-cells $a \grtimes 1$ and $1 \grtimes b$ in $\bigl( \Si A \bigr) \grtimes \bigl( \Si B \bigr)$.
  Likewise, the generating morphisms $[f]$ and $[g]$ correspond to the 2-cells $f \grtimes 1$ and $1 \grtimes g$.
  To complete the proof, one verifies that each of the relations for 1- and 2-cells in the Gray tensor product is either trivial in the 1-object case, or corresponds to a relation in \cref{defn:perm-oplus}.
\end{proof} 

\begin{prop}\label{prop:oplus-2fun}
  The direct sum defines a 2-functor
  \[
    \oplus \cn \permcat \times \permcat \to \permcat.
  \]
\end{prop}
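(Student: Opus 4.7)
The plan is to define $\oplus$ explicitly on 1-cells and 2-cells using the generator-and-relation description of \cref{defn:perm-oplus}, and then verify the 2-functor axioms directly; alternatively, one can invoke the isomorphism $\Si(A \oplus B) \cong \Si A \grtimes \Si B$ of \cref{prop:oplus-Gray} and deduce the statement from the functoriality of the Gray tensor product.

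On 1-cells: given symmetric monoidal functors $F \cn A \to A'$ and $G \cn B \to B'$, define $F \oplus G \cn A \oplus B \to A' \oplus B'$ on generators by
\[
[a] \mapsto [Fa], \quad [b] \mapsto [Gb], \quad [f] \mapsto [Ff], \quad [g] \mapsto [Gg], \quad \beta \mapsto \beta,
\]
and extend to formal sums so that the underlying functor is strict monoidal. The unit constraint is $(F \oplus G)_0 = 1_0$, and the monoidal constraint $(F \oplus G)_2$ has components built from instances of $[F_2]$ and $[G_2]$ wherever two summands from a common category become adjacent upon concatenation of alternating-form objects in the sense of \cref{rmk:2fun-dirsum-incl}.\cref{it:dirsum-unique-rep}. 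Well-definedness reduces to checking each defining relation of \cref{defn:perm-oplus}: relations internal to $A$ or $B$ hold by the symmetric monoidal functor axioms for $F$ or $G$, while the symmetry relations follow from naturality of $\beta$ in $A'$ and $B'$.

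On 2-cells: given monoidal transformations $\phi \cn F \To F'$ and $\psi \cn G \To G'$, define $(\phi \oplus \psi)_{\ssum [x_i]} = \sum \xi_i$, where $\xi_i = [\phi_{x_i}]$ if $x_i \in A$ and $\xi_i = [\psi_{x_i}]$ if $x_i \in B$. Naturality of $\phi \oplus \psi$ and the monoidal transformation axioms follow from the corresponding properties of $\phi$ and $\psi$ together with naturality of $\beta$ in $A'$ and $B'$. Preservation of identity 1- and 2-cells is immediate on generators, preservation of 1-cell composition uses the standard composition formula $(F'\circ F)_2 = F'(F_2) \circ F'_2$ matched against the analogous formula for $(F' \oplus G') \circ (F \oplus G)$, and preservation of horizontal and vertical composition of 2-cells is componentwise.

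The main obstacle is the bookkeeping for the non-strict monoidal constraints of $F \oplus G$, in particular verifying that the prescribed composite of $F_2$'s and $G_2$'s defining each component of $(F \oplus G)_2$ is well-defined (independent of the order in which constraints are applied) and compatible with 1-cell composition. This is precisely the coherence handled by \cref{convention:F2}, and admits a cleaner conceptual treatment by passing through \cref{prop:oplus-Gray} and invoking the 2-functoriality of the Gray tensor product on suspensions — symmetric monoidal functors and monoidal transformations between permutative categories correspond to appropriate pseudofunctors and pseudonatural transformations between the 1-object 2-categories $\Si A$ and $\Si A'$.
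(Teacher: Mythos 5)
Your proposal is correct and follows essentially the same route as the paper: define $F \oplus G$ on generators with identity unit constraint and monoidal constraints given by $[F_2]$, $[G_2]$ (identities on mixed pairs), define $\phi \oplus \psi$ componentwise, and check the relations and 2-functor axioms directly. Your closing remark about the Gray tensor product is also consistent with the paper, which notes that $\phi \oplus \psi$ is a special case of the Gray tensor product for icons.
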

\begin{proof}
  Let
  \[
    F \in \permcat\bigl(A,C\bigr) \andspace G \in \permcat\bigl(B,D\bigr)
  \]
  be symmetric monoidal functors.
  Define a symmetric monoidal functor
  \begin{equation}\label{eq:F-oplus-G}
    F \oplus G \cn A \oplus B \to C \oplus D
  \end{equation}
  by $\bigl(F \oplus G\bigr)0 = 0$, 
  \[
    \bigl(F \oplus G\bigr)[x] =
    \begin{cases}
      [Fx] & \ifspace x \in A\\
      [Gx] & \ifspace x \in B,
    \end{cases}
  \]
  and similarly for morphisms $[f]$ and $[g]$.
  Extend this to the symmetry isomorphism by
  \begin{equation}\label{eq:F-oplus-G-beta}
    \bigl(F \oplus G\bigr)\beta = \beta
  \end{equation} 
  and to sums of objects or morphisms using the unique representations of \cref{rmk:2fun-dirsum-incl}\,\cref{it:dirsum-unique-rep}.

  The unit constraint of $F \oplus G$ is the identity $1_0$.
  The monoidal constraints of $\bigl(F \oplus G\bigr)$ are determined by the following for $a,a' \in A$ and $b,b' \in B$.
  \begin{itemize}
 \item The source of $\bigl(\bigl(F \oplus G\bigr)_2\bigr)_{[a],[a']}$ is $[Fa + Fa']$, while the target is $[F(a+a')]$. Define this morphism to be $[F_2]$. 
 \item The source of $\bigl(\bigl(F \oplus G\bigr)_2\bigr)_{[b],[b']}$ is $[Gb + Gb']$, while the target is $[G(b+b')]$. Define this morphism to be $[G_2]$.
 \item The source and target of $\bigl(\bigl(F \oplus G\bigr)_2\bigr)_{[a],[b]}$ are both $[Fa] + [Gb]$, so define this morphism to be the identity.
 \item The source and target of $\bigl(\bigl(F \oplus G\bigr)_2\bigr)_{[b],[a]}$ are both $[Gb] + [Fa]$, so define this morphism to be the identity.
  \end{itemize}
  The symmetric monoidal functor axioms for $F$ and $G$, together with the relations in $A \oplus B$, show that $F \oplus G$ is a symmetric monoidal functor.

  Now let
  \[
    \phi \cn F \To F' \andspace \psi \cn G \To G'
  \]
  be monoidal transformations.
  Define a monoidal transformation
  \begin{equation}\label{eq:phi-oplus-psi}
    \phi \oplus \psi
  \end{equation}
  with components
  \begin{align*}
    \bigl(\phi \oplus \psi\bigr)_0\;\; & = 1, \\
    \bigl(\phi \oplus \psi\bigr)_{[a]} & = [\phi_a], \andspace \\
    \bigl(\phi \oplus \psi\bigr)_{[b]} & = [\psi_b]
  \end{align*}
  for $a \in A$ and $b \in B$.
  Then extend $\bigl(\phi \oplus \psi\bigr)$ to formal sums so that it is monoidal natural.

  One verifies directly from the formulas above that these definitions are 2-functorial.  This completes the proof.
\end{proof}

Checking composition with symmetric monoidal functors, one has the following.
\begin{cor}
  The isomorphism in \cref{prop:oplus-Gray} is 2-natural in both $A$ and $B$ with respect to symmetric monoidal functors, and therefore also strict symmetric monoidal functors.
\end{cor}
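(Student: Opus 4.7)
The plan is to verify that the isomorphism of \cref{prop:oplus-Gray} intertwines the action of $F \oplus G$ (from \cref{prop:oplus-2fun}) on the source with that of $\Si F \grtimes \Si G$ on the target, for each pair of symmetric monoidal functors $F \cn A \to A'$ and $G \cn B \to B'$. Since the isomorphism $\Si(A \oplus B) \iso \Si A \grtimes \Si B$ is defined on generating 1-cells and 2-cells, and since both induced maps are determined by their values on generators together with their pseudofunctor coherence constraints, the verification reduces to a generator-by-generator check.

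First I would confirm agreement on the unique 0-cell, which is trivial. On generating 1-cells of $\Si(A \oplus B)$, the map $\Si(F \oplus G)$ sends $[a] \mapsto [Fa]$ and $[b] \mapsto [Gb]$, while under \cref{prop:oplus-Gray} these correspond to $a \grtimes 1$ and $1 \grtimes b$, which $\Si F \grtimes \Si G$ sends to $Fa \grtimes 1$ and $1 \grtimes Gb$ respectively, matching after applying the isomorphism. For generating 2-cells, the cases $[f]$ and $[g]$ are handled by the componentwise formulas of \cref{prop:oplus-2fun}. For the symmetry 2-cells $\beta_{[x],[y]}$, \cref{prop:oplus-Gray} identifies them with the Gray structure 2-cells $\Si_{x,y}$, and the assignment $(F \oplus G)\beta = \beta$ from \cref{eq:F-oplus-G-beta} matches the fact that tensor products of pseudofunctors preserve Gray structure 2-cells up to the prescribed conventions. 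The pseudofunctor coherence 2-cells of $\Si F \grtimes \Si G$ correspond componentwise to the monoidal constraint $(F \oplus G)_2$: in the ``pure'' cases the components are $[F_2]$ or $[G_2]$, matching $F_2 \grtimes 1$ and $1 \grtimes G_2$, while in the ``mixed'' cases both sides are identities.

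For 2-naturality with respect to monoidal transformations $\phi \cn F \To F'$ and $\psi \cn G \To G'$, the same strategy applies: the components of $\phi \oplus \psi$ from \cref{eq:phi-oplus-psi} correspond under \cref{prop:oplus-Gray} to those of $\Si\phi \grtimes \Si\psi$, namely $[\phi_a] \leftrightarrow \phi_a \grtimes 1$ at $[a]$ and $[\psi_b] \leftrightarrow 1 \grtimes \psi_b$ at $[b]$, with extensions to formal sums determined by monoidal naturality on both sides. The main obstacle will be the bookkeeping between the unique formal-sum presentation of objects in $A \oplus B$ (as noted in \cref{rmk:2fun-dirsum-incl}\cref{it:dirsum-unique-rep}) and the corresponding alternating tensor-word presentation of cells in $\Si A \grtimes \Si B$; once this correspondence is pinned down, the verifications become routine applications of the defining relations. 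The last assertion of the corollary is immediate: strict symmetric monoidal functors form a sub-2-category of $\permcat$ closed under $\oplus$, so the 2-naturality for general symmetric monoidal functors restricts to 2-naturality on $\permcats$.
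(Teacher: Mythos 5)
Your proposal is correct and is exactly the direct check the paper intends: the paper offers no written proof beyond the phrase ``checking composition with symmetric monoidal functors,'' and your generator-by-generator verification that the isomorphism of \cref{prop:oplus-Gray} intertwines $\Si(F \oplus G)$ (built from the formulas in \cref{prop:oplus-2fun}) with $\Si F \grtimes \Si G$, including matching the monoidal constraints $[F_2]$, $[G_2]$ against the pseudofunctor compositors and treating the mixed cases as identities, is precisely that check. The closing observation that 2-naturality for all symmetric monoidal functors restricts to the strict case is likewise the intended reading of the final clause.
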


\begin{rmk}
  The definition of $\phi \oplus \psi$ in the proof of \cref{prop:oplus-2fun} is a special case of the Gray tensor product for icons in \cite[Proposition~3.7]{Gur2013monoidal}.
\end{rmk}

\subsection*{Bicategorical coproducts}
We now show that the construction above models bicategorical coproducts in $\permcat$ and 2-categorical coproducts in $\permcats$ (\cref{prop:oplus-coprod-adj-equiv,cor:oplus=bicat_coprod} below).

\begin{defn}\label{defn:M-Mdot}
  For permutative categories $A$, $B$, and $C$, we write
  \[
    M \cn \permcat\bigl(A \oplus B, C\bigr) \to
    \permcat\bigl(A, C\bigr) \times \permcat\bigl(B, C\bigr)
  \]
  for the functor induced by the canonical inclusions \cref{eq:can-incl}, given on objects by $x \mapsto [x]$.
  Define a reverse functor
  \[
    \Mdot \cn 
    \permcat\bigl(A, C\bigr) \times \permcat\bigl(B, C\bigr)
    \to
    \permcat\bigl(A \oplus B, C\bigr)
  \]
  as follows.

  For symmetric monoidal functors
  \[
    F \in \permcat\bigl(A,C\bigr) \andspace G \in \permcat\bigl(B,C\bigr),
  \]
  Let $\Mdot(F,G)$ denote the composite
  \begin{equation}\label{eq:Mdot-FG}
    A \oplus B \fto{F \oplus G} C \oplus C \fto{V} C,
  \end{equation}
  where the first morphism is given by \cref{eq:F-oplus-G} and $V$ is the strict symmetric monoidal functor that sends objects $[x]$ and morphisms $[f]$ in $C \oplus C$ to the corresponding objects and morphisms in $C$.
  For monoidal transformations
  \[
    \phi\cn F \To F'\cn A \to C \andspace \psi \cn G \To G'\cn B \to C, 
  \]
  use \cref{eq:phi-oplus-psi} and define
  \[
    \Mdot(\phi,\psi) = V * (\phi \oplus \psi).
  \]
  Functoriality of $\Mdot$ follows from 2-functoriality of $\oplus$ (\cref{prop:oplus-2fun}) and functoriality of the whiskering $V * -$.
\end{defn}

\begin{rmk}\label{rmk:Mdot-not-ssm}
  Note that $\Mdot$ is generally not a \emph{strict} symmetric monoidal functor with respect to the pointwise monoidal sums of its source and target.
  It is symmetric monoidal with nontrivial monoidal constraint determined by the adjoint equivalence \cref{eq:oplus-coprod} below.
\end{rmk}

\begin{prop}\label{prop:oplus-coprod-adj-equiv}
  Suppose $A$, $B$, and $C$ are permutative categories.
  The canonical inclusions induce an adjoint equivalence
  \begin{equation}\label{eq:oplus-coprod}
    M \cn \permcat\bigl(A \oplus B, C\bigr) \lradjequiv
    \permcat\bigl(A, C\bigr) \times \permcat\bigl(B, C\bigr) \bacn \Mdot.
  \end{equation}
  Moreover, the following statements hold.
  \begin{itemize}
  \item Both functors are 2-natural with respect to symmetric monoidal functors in $A$, $B$, and $C$.
  \item The functor $M$ is strict symmetric monoidal with respect to pointwise monoidal sums. 
  \item The functor $\Mdot$ is symmetric monoidal, with identity unit constraint and with monoidal constraint determined by the adjoint equivalence \eqref{eq:oplus-coprod}.
  \end{itemize}
  Furthermore, the restriction to subcategories of strict monoidal functors yields an isomorphism of categories
  \begin{equation}\label{eq:oplus-coprod-strict}
    \strict{A \oplus B, C} \fto{\iso} \strict{A,C} \times \strict{B,C}
  \end{equation}
  that is 2-natural with respect to strict symmetric monoidal functors.
\end{prop}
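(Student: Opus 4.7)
The plan is to verify one triangle identity on the nose and construct a monoidal natural isomorphism for the other, patterning the argument on the earlier tensor--hom biadjunction.

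First, I would check that $M \circ \Mdot$ equals the identity. For symmetric monoidal functors $F \cn A \to C$ and $G \cn B \to C$, precomposing $\Mdot(F,G) = V \circ (F \oplus G)$ with the canonical inclusion $A \to A \oplus B$ sends $a \mapsto [a] \mapsto [Fa] \mapsto Fa$, with monoidal constraint $[F_2]$ mapping to $F_2$ under the strict fold $V$ and identity unit constraint. The analogous computation holds for $B$ and extends to morphisms and monoidal transformations, so $M \circ \Mdot = \id$ strictly on the nose.

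Next, I would construct the counit $\epz \cn \Mdot \circ M \To \id$. For $H \cn A \oplus B \to C$, the composite $\Mdot M(H)$ is strict monoidal and sends a unique canonical representation $\tsum [x_i]$ (with alternating $A$/$B$ summands, per \cref{rmk:2fun-dirsum-incl}) to $\tsum H[x_i]$. Define $\epz_{H,0} = H_0$ and $\epz_{H, \ssum[x_i]}$ to be any composite of monoidal constraints $H_2$ from $\tsum H[x_i]$ to $H(\tsum [x_i])$; by \cref{convention:F2} and coherence for symmetric monoidal functors \cite{JS1993Braided}, this is unambiguous and automatically monoidal. Naturality with respect to morphisms $[f]$ and $[g]$ (and their formal sums) follows from monoidal naturality of $H_2$, while naturality with respect to the adjoined symmetries $\be_{[x],[y]}$ follows from the symmetry axiom for $H$. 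Both triangle identities hold because the components $\epz_{H,[a]}$ and $\epz_{H,[b]}$ at simple summands are identities (length-one composites), and because the monoidal constraint of $\Mdot(F,G) = V \circ (F \oplus G)$ at mixed-type pairs $([a],[b])$ is the identity, making the components $\epz_{\Mdot(F,G), \ssum[x_i]}$ identities on canonical representations.

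For the naturality and monoidal structure, $M$ is 2-natural in all three variables because it is precomposition with the canonical inclusions, and the identity $(P \oplus Q) \circ \iota_{\ol{A}} = \iota_{A} \circ P$ holds as symmetric monoidal functors by direct inspection of the definitions in \cref{prop:oplus-2fun}. The functor $\Mdot$ is 2-natural by combining 2-functoriality of $\oplus$ with postcomposition by the 2-natural fold $V$. The functor $M$ is strict symmetric monoidal with respect to pointwise sums because both restriction and pointwise sum are defined componentwise; doctrinal adjunction \cite[Section~2.1]{Kelly1974Doctrinal} then endows $\Mdot$ with an induced symmetric monoidal structure whose unit constraint is the identity, since $\Mdot$ sends the pair of constant-zero functors to the constant-zero functor on $A \oplus B$.

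Finally, when $H$ is strict, $H_2$ and $H_0$ are identities, so $\epz_H$ reduces to the identity and the adjoint equivalence collapses to the strict isomorphism \cref{eq:oplus-coprod-strict}; its 2-naturality with respect to strict symmetric monoidal functors then follows by restriction. The main technical obstacle is verifying that $\epz_H$ in the second paragraph is a well-defined monoidal natural isomorphism: one must reconcile the strict identifications $[a] + [a'] = [a +^A a']$ inside $A \oplus B$ with the non-strict monoidal constraints of $H$, and this is precisely what coherence for symmetric monoidal functors makes manageable.
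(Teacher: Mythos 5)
Your proposal is correct and follows essentially the same route as the paper: verify $M\Mdot = \id$ on the nose, build the counit $\Mdot M \To \id$ from composites of $H_2$ and $H_0$ (identities on simple summands, so both whiskerings are trivial), observe that everything collapses to an isomorphism in the strict case, and obtain the monoidal structure on $\Mdot$ by doctrinal adjunction. The only cosmetic difference is that you invoke coherence for symmetric monoidal functors explicitly to handle longer alternating sums, where the paper lists the length-two components and leaves the extension implicit.
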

\begin{proof}
  Given a symmetric monoidal functor
  \[
    K \cn A \oplus B \to C,
  \]
  let
  \[
    K_A \cn A \to C
    \andspace
    K_B \cn B \to C
  \]
  denote the restrictions via the canonical inclusions.
  Likewise, we use subscripts $A$ and $B$ to denote the corresponding restrictions for a monoidal transformation $\phi \cn K \to K'$.
  Thus we have
  \[
    M(K) = (K_A,K_B) \andspace M(\phi) = (\phi_A,\phi_B).
  \]
Note that $M$ is strict symmetric monoidal since it is given by pre-composition with the canonical inclusions.

  One can verify by definition of $\Mdot$ that the composite $M\Mdot$ is the identity.
  For the other composite, define a monoidal isomorphism
  \[
    \al\cn \Mdot(K_A,K_B) \To K \forspace K \in \permcat\bigl( A \oplus B, C \bigr)
  \]
  as follows.  The components of $\al$ are the isomorphisms
  \begin{align*}
    \al_0 = K_0 & \cn 0 = 0 \to K0,\\
    \al_{[x]} = 1_{K[x]}\\
    \al_{[a]+[b]} = K_2 & \cn K[a] + K[b] \to K\bigl([a] + [b]\bigr),
          \andspace\\
    \al_{[b]+[a]} = K_2 & \cn K[b] + K[a] \to K\bigl([b] + [a]\bigr)
  \end{align*} 
  for $a \in A$, $b \in B$, and $x$ in $A$ or $B$.
  If $x$ and $x'$ are both objects of $A$, or both objects of $B$, then we have
  \[
    \Mdot(K_A,K_B)_2 = K_2\cn K[x] + K[x'] \to K\bigl([x] + [x']\bigr)
  \]
  and hence we take $\al_{[x]+[x']} = 1$ in such cases.
  The symmetric monoidal axioms for $(K,K_2,K_0)$ ensure that $\al$ is well-defined and monoidal natural.

  The whiskering $M * \al$ is the identity because the monoidal constraint of each $K_A$ or $K_B$ is given by restricting that of $K$ along the corresponding canonical inclusion.
  The whiskering $\al * \Mdot$ is the identity because, taking $K = \Mdot(F,G)$ above, the components $\al_{[x]+[x']} = (\Mdot(F,G)_2)_{[x],[x']}$ are identities when $x$ and $x'$ are in different categories, and are equal to $F_2$, respectively $G_2$, when $x$ and $x'$ are both in $A$, respectively $B$.
  This completes the proof that $\Mdot$ is adjoint inverse to $M$.

  If $K$ is strict symmetric monoidal, then all the components of $\al$ are identities and $\Mdot(K_A,K_B) = K$.
  Therefore, $\Mdot$ provides a strict inverse for $M$ in \cref{eq:oplus-coprod-strict}. In particular, this implies that $\Mdot$ is symmetric monoidal \cite[Section~2.1]{Kelly1974Doctrinal}.

  Next we verify 2-naturality of $M$.
  For
  \[
    \psi \cn P' \To P \cn \ol{A} \to A \in \permcat,
  \]
  the following two whiskerings are equal.
  \[
    \begin{tikzpicture}[x=30mm,y=18mm]
      \def\wl{1.3} 
      \def\wr{.1} 
      \def\h{.5} 
      \def\m{.4} 
      \draw[font=\Large] (0,0) node (eq) {=}; 
      \newcommand{\boundary}{
        \draw[0cell] 
        (0,0) node (a) {\ol{A}}
        (1,0) node (b) {\ol{A} \oplus B}
        (0,-1) node (a') {A}
        (1,-1) node (b') {A \oplus B}
        ;
        \draw[1cell] 
        (a) edge node {} (b)
        (a') edge node {} (b')
        (a) edge[',bend right=40] node {P'} (a')
        (b) edge[bend left=50] node[pos=.45] {P \oplus 1} (b')
        ;
      }
      \begin{scope}[shift={(-\wl-\m,\h)}]
        \boundary
        \draw[1cell] 
        (a) edge[bend left=40] node {P} (a')
        ;
        \draw[2cell] 
        node[between=a and a' at .6, rotate=0, 2label={above,\psi}] {\Rightarrow}
        ;
      \end{scope}
      \begin{scope}[shift={(\wr+\m,\h)}]
        \boundary
        \draw[1cell] 
        (b) edge[',bend right=50] node[pos=.45] {P' \oplus 1} (b')
        ;
        \draw[2cell] 
        node[between=b and b' at .6, rotate=0, 2label={above,\psi\oplus 1}] {\Rightarrow}
        ;
      \end{scope}
    \end{tikzpicture}
  \]
  A similar equality holds for the canonical inclusion of $B$.
  Together these imply 2-naturality of $M$ in \cref{eq:oplus-coprod} with respect to symmetric monoidal functors
  \[
    P\cn \ol{A} \to A, \quad Q\cn \ol{B} \to B, \andspace R\cn C \to \ol{C}
  \]
  and monoidal transformations between them.

 Next we verify 2-naturality of $\Mdot$ with respect to $P$, $Q$, and $R$ as above.
  For symmetric monoidal functors
  \[
    F\cn A \to C \andspace G\cn B \to C,
  \]
  one must prove that the composite $R\Mdot(F,G)(P \oplus Q)$
is equal to $\Mdot\bigl(\, RFP, RGQ \,\bigr)$. These symmetric monoidal functors are respectively given by the composites
   \begin{equation}\label{eq:2natM-1}
    \ol{A} \oplus \ol{B} \fto{P \oplus Q} A \oplus B \fto{F \oplus G} C \oplus C \fto{V}
    C \fto{R} \ol{C}
  \end{equation}
  and
  \begin{equation}\label{eq:2natM-2}
    \ol{A} \oplus \ol{B} \fto{RPF \oplus RQG} \ol{C} \oplus \ol{C} \fto{V}
     \ol{C}.
  \end{equation}
  Since $\oplus$ is a 2-functor, the latter is equal to 
   \[ \ol{A} \oplus \ol{B} \fto{P \oplus Q} A \oplus B \fto{F \oplus G} C \oplus C \fto{R \oplus R}
    \ol{C} \oplus \ol{C} \fto{V} \ol{C}.\]
    Thus the equality of the functors follows from 2-naturality of $V$, which can be easily verified. This 2-naturality of $V$ also implies the 2-naturality statement for 2-cells.
  \end{proof}

Recalling bicategorical coproducts from \cref{defn:bicat-2cat-prod}, we have the following.
\begin{cor}\label{cor:oplus=bicat_coprod}
  Let $A$ and $B$ be permutative categories.
  Then $A \oplus B$, with its canonical inclusions from \cref{eq:can-incl}, is a bicategorical coproduct of $A$ and $B$ in $\permcat$ and a 2-categorical coproduct of $A$ and $B$ in $\permcats$.
\end{cor}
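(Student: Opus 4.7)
The proof will be an essentially immediate unpacking of \cref{prop:oplus-coprod-adj-equiv}. The plan is to show, in turn, that the functor $M$ of \cref{defn:M-Mdot} coincides with the comparison functor required by \cref{defn:bicat-2cat-prod}, and then to quote the adjoint equivalence and strict isomorphism already established.

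First I would verify that $M$ is, by construction, precisely the functor induced by composition and whiskering with the canonical inclusions $A \to A \oplus B$ and $B \to A \oplus B$ of \cref{eq:can-incl}. This is immediate from the formula $M(K) = (K_A, K_B)$ in the proof of \cref{prop:oplus-coprod-adj-equiv}, where $K_A$ and $K_B$ denote the restrictions along these canonical inclusions, and similarly on monoidal transformations.

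For the bicategorical coproduct claim in $\permcat$, I would then invoke \cref{prop:oplus-coprod-adj-equiv} to conclude that $M$ is an adjoint equivalence
\[
  \permcat(A \oplus B, C) \lradjequiv \permcat(A, C) \times \permcat(B, C)
\]
that is 2-natural in $C$ with respect to symmetric monoidal functors. Since 2-naturality implies pseudonaturality, this is exactly the defining property of a bicategorical coproduct from \cref{defn:bicat-2cat-prod}. For the 2-categorical coproduct claim in $\permcats$, I would instead invoke the strict version \cref{eq:oplus-coprod-strict} from the same proposition, which provides an isomorphism of categories that is 2-natural with respect to strict symmetric monoidal functors in $C$; this matches the definition of a 2-categorical coproduct.

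There is no substantial obstacle: the only potential concern is checking that the comparison functor built from the canonical inclusions coincides with $M$ as defined in \cref{defn:M-Mdot}, but this is a direct observation. All of the substantive work---the construction of $\Mdot$, verification of the triangle identities for $(M, \Mdot)$, and 2-naturality in each variable---has already been carried out in \cref{prop:oplus-coprod-adj-equiv}.
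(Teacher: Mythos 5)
Your proposal is correct and matches the paper's intent exactly: the corollary is stated in the paper without a separate proof precisely because it is the immediate unpacking of \cref{prop:oplus-coprod-adj-equiv} that you describe, with $M$ being by construction the restriction functor along the canonical inclusions, the 2-natural adjoint equivalence \cref{eq:oplus-coprod} supplying the pseudonatural equivalence required for a bicategorical coproduct, and the strict isomorphism \cref{eq:oplus-coprod-strict} supplying the 2-natural isomorphism required for a 2-categorical coproduct.
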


\subsection*{Bicategorical products}
We now turn to products and show that the direct sum models bicategorical products of permutative categories.
Recall the comparison maps for bicategorical co/products from \cref{defn:bicat-direct-sum}.

\begin{prop}\label{prop:oplus=times}
  For permutative categories $A$ and $B$, the comparison map
  \[
    I \cn A \oplus B \to A \times B
  \]
  is an equivalence in $\permcat$.
  Moreover, $I$ is
  \begin{itemize}
  \item pseudonatural with respect to symmetric monoidal functors in both $A$ and $B$,
  \item 2-natural with respect to strict symmetric monoidal functors in both $A$ and $B$, and
  \item strict symmetric monoidal.
  \end{itemize} 
\end{prop}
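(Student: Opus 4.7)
The plan is to give an explicit description of $I$, construct a quasi-inverse $J \cn A \times B \to A \oplus B$, and then verify the claimed properties. Chasing through \cref{defn:bicat-direct-sum}, the comparison map is determined on generators by
\[
I[a] = (a, 0^B), \quad I[b] = (0^A, b), \quad I(\be) = \be,
\]
and extended to formal sums and composites strictly monoidally. The relations of \cref{defn:perm-oplus} are all satisfied on the nose in $A \times B$ because the symmetric monoidal structure on $A \times B$ is the pointwise one; in particular $I([a] + [b]) = (a, 0^B) + (0^A, b) = (a, b)$, matching the fact that sums of objects from different summands carry no identification in $A \oplus B$. Thus $I$ is a well-defined strict symmetric monoidal functor.

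Next I would construct $J\cn A \times B \to A \oplus B$ by $J(a,b) = [a] + [b]$ on objects, $J(f,g) = [f] + [g]$ on morphisms, and $J(\be_{(a,b),(a',b')}) = \be$ at the symmetry. This is not strict: the monoidal constraint
\[
J_2\cn ([a] + [b]) + ([a'] + [b']) \fto{\iso} [a+a'] + [b+b']
\]
is the composite of $1 + \be_{[b],[a']} + 1$ with the amalgamations $[a]+[a'] = [a+a']$, $[b]+[b']=[b+b']$, and the unit constraint is the identity. Verification that $J$ is a symmetric monoidal functor reduces to axioms for the symmetry in $A \oplus B$, which in turn hold by the Gray-type presentation of \cref{prop:oplus-Gray}. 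The main routine obstacle is keeping bookkeeping of the unique representation of objects from \cref{rmk:2fun-dirsum-incl}\,\cref{it:dirsum-unique-rep} consistent under $J$.

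Then I would verify the two composites: $I \circ J$ equals the identity on $A \times B$ on the nose, since $I(J(a,b)) = (a,0^B) + (0^A,b) = (a,b)$. The other composite $J \circ I$ is only isomorphic to the identity: starting from a reduced formal sum $\sum [x_i]$ in $A \oplus B$, applying $I$ collapses alternating $A$- and $B$-summands into a single pair, and applying $J$ reassembles them in $A$-then-$B$ order. The required monoidal natural isomorphism $\eta\cn 1_{A \oplus B} \To J \circ I$ has components given by a composite of symmetries $\be$ together with the identifications $[a]+[a']=[a+a']$ and $[b]+[b']=[b+b']$. Naturality with respect to simple morphisms and the $\be$-generators follows from coherence of $\be$ in $A \oplus B$ (again using \cref{prop:oplus-Gray}); the monoidal transformation axioms for $\eta$ hold by construction, and the whiskering $I * \eta$ is the identity since every $\be$ in play is sent to an identity of the form $(a+0,0+b) = (a+0,0+b)$ in $A \times B$.

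Finally, the three additional clauses are checked directly from the formulas. For the 2-naturality with respect to strict symmetric monoidal functors $F\cn A \to A'$ and $G\cn B \to B'$, the square $(F \times G) \circ I = I \circ (F \oplus G)$ commutes strictly on generators because $F$ and $G$ preserve $0$ and $\be$ on the nose, and this equality extends to sums and composites by 2-functoriality of $\oplus$ in \cref{prop:oplus-2fun} and of $\times$. For general strong symmetric monoidal $F$ and $G$, the discrepancy at a generator such as $[a]$ is precisely the unit constraint $G_0 \cn 0 \to G0$ in the second coordinate, so the pseudonaturality constraint is obtained by whiskering $F_0$ and $G_0$ with $I$. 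Pseudonaturality axioms follow from the corresponding axioms for $F_0$, $G_0$, and the monoidal constraint data, together with the already-verified strict case. The hardest bookkeeping step is recognizing that the only non-strict 2-cells arise from the unit constraints of $F$ and $G$; once that is isolated, the remaining checks are formal.
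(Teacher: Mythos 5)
Your construction of the equivalence takes a genuinely different route from the paper. The paper does not build an explicit quasi-inverse $J$: it identifies $\Si I$ with the composite $\Si(A \oplus B) \iso (\Si A) \grtimes (\Si B) \fto{i} (\Si A) \times (\Si B) \iso \Si(A \times B)$ and cites the fact that the canonical comparison $i$ from the Gray tensor product to the cartesian product is a biequivalence. Your hands-on approach ($J(a,b) = [a] + [b]$, with $I J = 1$ on the nose and $J I \iso 1$ via a sorting permutation) is more elementary and self-contained, and your computations checking $I(J_2) = 1$ and $I * \eta = 1$ (using $\be_{0,x} = 1$) are correct; the cost is that you must verify by hand that $J$ and $\eta$ are well defined, which the Gray-tensor citation packages for free. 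Note also that this explicit $J$ cannot be made strict symmetric monoidal, consistent with \cref{rmk:oplus=times_weakeq}.

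There is, however, a genuine error in your treatment of pseudonaturality. You assert that ``the only non-strict 2-cells arise from the unit constraints of $F$ and $G$,'' and that the constraint is obtained by whiskering $F_0$ and $G_0$. This is false: the component of the constraint at a general object $\tsum [x_i]$ must compare $\bigl( F(\sum_{x_i \in A} x_i),\, G(\sum_{x_i \in B} x_i) \bigr)$ with $\bigl( \sum_{x_i \in A} F x_i,\, \sum_{x_i \in B} G x_i \bigr)$, and these differ by full composites of the monoidal constraints $F_2$ and $G_2$, not just the unit constraints. For instance at $[a] + [b] + [a'] = $ (after reduction of its image) the component is $(F_2^{-1}, 1)$, because $I$ merges the two $A$-summands that $F \oplus G$ has mapped separately. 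The paper's constraint $\iota$ accordingly has components $(F_2^{-1}, G_2^{-1})$ (in the convention of \cref{convention:F2}), with the unit constraints appearing only as the degenerate empty-sum case. If you defined the constraint to be the identity away from unit constraints, the naturality squares against morphisms such as $1_{[a]} + \be_{[b],[a']}$ would fail to typecheck, since source and target would be distinct objects. The fix is exactly the paper's: take $(F_2^{-1}, G_2^{-1})$ everywhere and verify monoidality and the pseudonaturality axioms via coherence for monoidal functors and the formula $(F' \circ F)_2 = F'_2 \circ F'(F_2)$.
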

\begin{proof}
  It is routine to verify that $I$ is uniquely determined by the requirement that it be strict symmetric monoidal, using that $\oplus$ is a 2-categorical coproduct in $\permcats$ (\cref{prop:oplus-coprod-adj-equiv}), and its restrictions to $A$ and $B$ separately.
  These restrictions are the inclusions
  \begin{align*}
    A \to A \times B & \qquad (a \mapsto (a,0)) \andspace\\
    B \to A \times B & \qquad (b \mapsto (0,b)),
  \end{align*}
  respectively.
  Note that this means $I$ is given on an object $\tsum [x_i]$ by
  \[
    I\bigl( \tsum [x_i] \bigr) = \bigl( \sum_{x_i \in A} x_i, \sum_{x_i \in B} x_i \bigr),
  \]
  and that $I$ maps the symmetry
  \[
    \beta_{[a],[b]}\cn [a] + [b] \iso [b] + [a], \forspace a \in A \andspace b \in B,
  \]
  to the identity on $(a,b)$ in $A \times B$.
  Since we have constructed the comparison map $I$ in $\permcats$ using its 2-categorical products and coproducts, it is necessarily a strict symmetric monoidal functor.

  Recall from \cref{prop:oplus-Gray} that the direct sum is determined by the Gray tensor product $\grtimes$ for 1-object 2-categories.
  For general 2-categories $X$ and $Y$, there is a 2-functor
  \[
    i\cn X \grtimes Y \to X \times Y
  \]
  that is the identity on objects and is a biequivalence \cite[Corollary~3.22]{Gurski13Coherence}.
  The formulation above, together with \cref{eq:oplus-Gray}, show that $\Si I$ is equal to the composite
  \[
    \Si \bigl( A \oplus B \bigr) \iso \bigl( \Si A \bigr) \grtimes \bigl( \Si B \bigr) \fto{i} \bigl( \Si A \bigr) \times \bigl( \Si B \bigr) \iso \Si \bigl( A \times B \bigr).
  \]
  Since this is a composite of two isomorphisms and a biequivalence, the entire composite is a biequivalence, and that implies that $I$ is an equivalence of categories.
  This verification completes the construction of the required strict symmetric monoidal equivalence $I \cn A \oplus B \to A \times B$.

We first prove the pseudonaturality of $I$ with respect to symmetric monoidal functors, and then the 2-naturality with respect to strict symmetric monoidal functors will follow immediately.
Let $F \cn A \to \ol{A}$ and $G \cn B \to \ol{B}$ be symmetric monoidal functors.
We will construct a monoidal natural isomorphism $\iota$ filling the square below.
 \[
    \begin{tikzpicture}[x=40mm,y=20mm]
      \draw[0cell] 
      (0,0) node (a) {A \oplus B}
      (1,0) node (b) {A \times B}
      (0,-1) node (a') {\ol{A} \oplus \ol{B}}
      (1,-1) node (b') {\ol{A} \times \ol{B}}
      (.5,-.5) node (iso) {\iso}
      ;
      \draw[1cell] 
      (a) edge node {I} (b)
      (b) edge node {F \times G} (b')
      (a) edge[swap] node {F \oplus G} (a')
      (a') edge[swap] node {I} (b')
      ;
    \end{tikzpicture}
  \]
  We compute the values of $\bigl( F \times G \bigr) \circ I$ and $I \circ \bigl( F \oplus G \bigr)$ on an object $\tsum [x_i]$ in $A \oplus B$.
  Recalling that 
  \[
  I \bigl( \tsum [x_i] \bigr) = \bigl( \sum_{x_i \in A} x_i, \sum_{x_i \in B} x_i \bigr),
  \]
  the top and right composite yields
  \[
  \bigl( F \times G \bigr) \circ I \bigl( \tsum [x_i] \bigr) = \bigl( F\bigl(\sum_{x_i \in A} x_i \bigr), G\bigl(\sum_{x_i \in B} x_i \bigr) \bigr)
  \]
  while the left and bottom composite yields
  \[
  I \circ \bigl( F \oplus G \bigr)\bigl( \tsum [x_i] \bigr) = \bigl( \sum_{x_i \in A} Fx_i, \sum_{x_i \in B} Gx_i \bigr).
  \]
  Recall from \cref{convention:F2} that we write $F_2$ and $G_2$ for any composite of sums of monoidal constraints.
  We define the isomorphism $\iota \cn \bigl( F \times G \bigr) \circ I \iso I \circ \bigl( F \oplus G \bigr)$ to have as its component at $\tsum [x_i]$ the morphism $(F_2^{-1}, G_2^{-1})$.
  These components satisfy the axioms for a monoidal transformation by the naturality of $F_2$ and $G_2$, the fact that these are symmetric monoidal functors, and the coherence theorem for monoidal functors \cite{JS1993Braided}.
  Furthermore, they assemble to give a pseudonatural transformation by a straightforward application of the formula
  \[
  (F' \circ F)_2 = F'_2 \circ F'(F_2)
  \]
  defining the monoidal constraint for a composite of monoidal functors. 
  Finally we note that if $F$ and $G$ are strict, then the isomorphism $\iota$ is an identity, verifying 2-naturality in that case.
\end{proof}

\begin{cor}\label{cor:oplus=bicat_prod}
Let $A$ and $B$ be permutative categories. 
Then $A \oplus B$, with its projections 
\[
A \oplus B \fto{I} A \times B \to A \andspace A \oplus B \fto{I} A \times B \to B,
\]
is a bicategorical product of $A$ and $B$ in $\permcat$.
\end{cor}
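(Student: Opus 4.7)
The plan is to transport the bicategorical product structure on $A \times B$ across the equivalence $I \cn A \oplus B \to A \times B$ established in \cref{prop:oplus=times}. By \cref{prop:prods-exist}, $A \times B$ is a 2-categorical product in $\permcat$, and hence a bicategorical product by \cref{rmk:2cat-to-bicat}. Thus, for each permutative category $X$, the functor
\[
\permcat(X, A \times B) \to \permcat(X,A) \times \permcat(X,B)
\]
induced by whiskering with the projections of $A \times B$ is an equivalence (indeed an isomorphism), pseudonaturally in $X$.

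First I would observe that since $I$ is an equivalence in the 2-category $\permcat$, post-composition with $I$ yields an equivalence of hom-categories
\[
I_* \cn \permcat(X, A \oplus B) \fto{\simeq} \permcat(X, A \times B)
\]
for every $X$. This follows from the general fact that whiskering with an equivalence in a 2-category is an equivalence on hom-categories; an explicit inverse is given by whiskering with an adjoint inverse of $I$, and the natural isomorphisms are obtained by whiskering with the unit and counit. Pseudonaturality of $I_*$ in $X$ is automatic, since composition in the 2-category $\permcat$ is strictly functorial.

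Next, composing $I_*$ with the product-projection equivalence gives an equivalence
\[
\permcat(X, A \oplus B) \fto{I_*} \permcat(X, A \times B) \fto{\simeq} \permcat(X,A) \times \permcat(X,B).
\]
By definition of the projections of $A \oplus B$ as the composites $A \oplus B \fto{I} A \times B \to A$ and $A \oplus B \fto{I} A \times B \to B$, this composite is exactly the functor induced by whiskering with the projections of $A \oplus B$, which is what the definition of bicategorical product in \cref{defn:bicat-2cat-prod} requires. Pseudonaturality in $X$ of the composite follows by combining the pseudonaturality (in fact 2-naturality) of both factors, completing the verification.

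The only potentially delicate point is confirming that the induced map identifies with whiskering by the specified projections, but this is immediate from how those projections were defined in the statement of \cref{cor:oplus=bicat_prod}. No further ingredients beyond \cref{prop:oplus=times,prop:prods-exist,rmk:2cat-to-bicat} are needed.
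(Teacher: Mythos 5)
Your proposal is correct and follows the same route as the paper: the paper's proof simply notes that bicategorical products are invariant under equivalence, that $A \times B$ is a 2-categorical (hence bicategorical) product by \cref{prop:prods-exist} and \cref{rmk:2cat-to-bicat}, and that $I$ is an equivalence by \cref{prop:oplus=times}. You have merely unpacked the "invariance under equivalence" step explicitly, which is a fine elaboration but not a different argument.
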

\begin{proof}
Bicategorical products are invariant under equivalence, and $I$ is an equivalence in $\permcat$.
Since $A \times B$ is a 2-categorical product in $\permcat$, it is also a bicategorical product (\cref{rmk:2cat-to-bicat}) and so $A \otimes B$ is as well.
\end{proof}

\begin{cor}\label{cor:oplus=times_univ}
Suppose $A$, $B$, and $C$ are permutative categories. The strict symmetric monoidal functor $I$ of \cref{prop:oplus=times} and the two projections induce an equivalence of categories
\begin{equation}\label{eq:oplus=times_univ}
    N\cn \permcat\bigl(A, B \oplus C\bigr) \fto{\hty} \permcat\bigl(A, B\bigr) \times \permcat\bigl(A, C\bigr).
  \end{equation}
  Moreover, the following statements hold.
  \begin{itemize}
  \item $N$ is 2-natural in $A$ with respect to symmetric monoidal functors, and pseudonatural in $B$ and $C$ with respect to symmetric monoidal functors.
  \item $N$ is 2-natural in $B$ and $C$ with respect to strict symmetric monoidal functors.
  \item $N$ is strict symmetric monoidal with respect to the pointwise monoidal sums.
\end{itemize}
\end{cor}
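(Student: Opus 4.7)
The plan is to factor $N$ through the cartesian product $B \times C$ and then use the fact that $\permcat$ has 2-categorical products. Specifically, we define $N$ as the composite
\[
\permcat\bigl(A, B \oplus C\bigr) \fto{I_*} \permcat\bigl(A, B \times C\bigr) \fto{\pi} \permcat\bigl(A, B\bigr) \times \permcat\bigl(A, C\bigr),
\]
where $I$ is the strict symmetric monoidal equivalence from \cref{prop:oplus=times} and $\pi$ is the functor induced by post-composition with the cartesian projections.

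First I would establish properties of $\pi$. By \cref{prop:prods-exist}, $B \times C$ is a 2-categorical product in both $\permcats$ and $\permcat$, so $\pi$ is an isomorphism of categories that is 2-natural in $A$, $B$, and $C$ with respect to symmetric monoidal functors. With respect to the pointwise permutative structure on each side, $\pi$ is strict symmetric monoidal because the monoidal sum on $B \times C$ is defined componentwise and post-composition with a strict symmetric monoidal functor preserves pointwise structure.

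Next I would handle $I_*$. Since $I$ is an equivalence in $\permcat$ by \cref{prop:oplus=times}, post-composition $I_*$ is an equivalence of categories. Since $I$ is strict symmetric monoidal, $I_*$ is strict symmetric monoidal with respect to the pointwise permutative structures. For naturality, $I_*$ is 2-natural in $A$ because post-composition with a fixed functor is always 2-natural in the source variable. For 2-naturality in $B$ and $C$ with respect to strict symmetric monoidal functors, this follows from the 2-naturality of $I$ in the strict case established in \cref{prop:oplus=times}, together with functoriality of the whiskering operation. For pseudonaturality in $B$ and $C$ with respect to general symmetric monoidal functors, the pseudonaturality constraint of $I_*$ at $(F,G)$ is given by post-whiskering with the monoidal natural isomorphism $\iota$ constructed in the proof of \cref{prop:oplus=times}; the pseudonaturality axioms for $I_*$ reduce to those for $I$.

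Finally, I would assemble the pieces. The composite $N = \pi \circ I_*$ is an equivalence of categories as the composite of an isomorphism and an equivalence, and it is strict symmetric monoidal as the composite of two strict symmetric monoidal functors. The 2-naturality in $A$ is immediate. For the pseudonaturality in $B$ and $C$, the constraint for $N$ is obtained by pasting the identity 2-cell for $\pi$ with the constraint for $I_*$; when the symmetric monoidal functors in $B$ and $C$ are strict, $\iota$ becomes an identity, so the constraint for $N$ is the identity and we recover 2-naturality. The main thing to check carefully will be the compatibility of the pseudonaturality constraints with composition, but this is a direct consequence of the analogous property for $\iota$ established in \cref{prop:oplus=times} together with the 2-functoriality of post-composition. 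I anticipate no genuine obstacles in this proof — the work has all been done in \cref{prop:oplus=times} and \cref{prop:prods-exist}, and this corollary is essentially a packaging of those results.
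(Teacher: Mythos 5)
Your proposal is correct and follows essentially the same route as the paper: both factor $N$ through $\permcat(A, B \times C)$ via post-composition with $I$ followed by the 2-categorical product isomorphism, and both derive the naturality and monoidality claims from the corresponding properties of $I$ (including the constraint $\iota$) and from the fact that post-composition with a strict symmetric monoidal functor is strict symmetric monoidal for the pointwise structures.
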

\begin{proof}
The only additional features to verify are the compatibility with the pointwise symmetric monoidal structures and the naturality statements.
The strict symmetric monoidal functor $I$ induces an equivalence of categories
\[
 \permcat\bigl(A, B \oplus C\bigr) \fto{\hty}  \permcat\bigl(A, B \times C\bigr) 
\]
by post-composition.
This is 2-natural in $A$ with respect to all symmetric monoidal functors, but the naturality in $B$ and $C$ matches that of $I$ as determined in \cref{prop:oplus=times}.
The product of permutative categories is a 2-categorical product with respect both strict symmetric monoidal functors and symmetric monoidal functors, giving a 2-natural isomorphism
\[
 \permcat\bigl(A, B \times C\bigr) \fto{\iso} \permcat\bigl(A, B\bigr) \times \permcat\bigl(A, C\bigr). 
 \]
These are induced by the projections from $B \times C$ onto $B$ and $C$ separately, and the projections are strict symmetric monoidal.

Now let $F \cn Y \to \ol{Y}$ be any strict symmetric monoidal functor between permutative categories.
It is routine to verify that post-composition with $F$,
\[
F_* \cn \permcat(X, Y) \to \permcat(X, \ol{Y}),
\]
is itself a strict symmetric monoidal functor with respect to the pointwise structures on the source and target.
Applying this fact to $I$ and the two projections yields the desired conclusions.
\end{proof}

\begin{rmk}\label{rmk:oplus=times_weakeq}
In general, there is no strict symmetric monoidal pseudo-inverse to $I$. 
This means that $B \oplus C$ is generally not equivalent to $B \times C$ in $\permcats$, and therefore that $\strict{A, B \oplus C}$ is generally not equivalent to $\strict{A, B} \times \strict{A, C}$.
We prove this in the case that $A = B = C = S$, the  monoidal unit from \cref{defn:S}.

Let $u$, respectively $v$, denote the image of the natural number $1 \in S$ in the left, respectively right, copy of $S$ using the two canonical inclusions $S \to S \oplus S$.
Assume that $I \cn S \oplus S \to S \times S$ admits a strict symmetric monoidal pseudo-inverse $J \cn S \times S \to S \oplus S$, equipped with monoidal isomorphisms $\epz \cn JI \iso 1_{S \oplus S}$ and $\eta \cn 1_{S \times S} \iso IJ$.
Let $L$ denote the isomorphism
\begin{equation}\label{eq:example_iso}
L\cn \strict{S \oplus S, S \oplus S} \iso \bigl( S \oplus S \bigr) \times \bigl( S \oplus S \bigr),
\end{equation}
given by the 2-categorical coproduct property of $\oplus$ (\cref{cor:oplus=bicat_coprod}) and the fact that homming out of $S$ is the underlying category functor (\cref{rmk:what-is-S}).
Thus, for a strict symmetric monoidal functor $F \cn S \oplus S \to S \oplus S$, the object $L(F)$ is the pair $\bigl( Fu, Fv \bigr)$.
In particular, $L(1_{S \oplus S}) = (u,v)$.
Applying $L$ to the isomorphism $\epz \cn JI \iso 1$ produces an isomorphism
\[
L(\epz) \cn \bigl( JIu, JIv \bigr) \iso \bigl( u, v \bigr),
\]
where each factor above is a sum in $S \oplus S$ with only a single term.

Any object of $S \oplus S$ has the form
\[
a_1u + b_1v + \cdots + a_nu + b_nv, \forspace a_i,b_i \in \mathbbm{N},
\]
by \cref{rmk:2fun-dirsum-incl}.
Such an object is isomorphic to $u$ if and only if it is equal to $u$.
Furthermore, one checks that the morphism set $\bigl(S \oplus S\bigr)(u, u)$ has only one element, which is the identity on $u$.
Analogous observations hold for $v$.
Therefore $JIu = u$ and $JIv = v$.

But in $S \times S$, we have the equality of objects $(1,0)(0,1) = (0,1)(1,0)$, so if $J$ is strict symmetric monoidal we would then have
\[
u + v = J(1,0) + J(0,1) = J(1,1) = J(0,1) + J(1,0) = v + u
\]
in $S \oplus S$.
This is false, so there cannot be a strict symmetric monoidal pseudo-inverse to $I \cn S \oplus S \to S \times S$.
\end{rmk}

The following theorem collects the key results of this section.
\begin{thm}\label{thm:oplus=bicat_directsum}
Let $A$ and $B$ be permutative categories. 
\begin{enumerate}
\item $A \oplus B$, with its canonical inclusions \cref{eq:can-incl}
\[
A \to A \oplus B \andspace B \to A \oplus B 
\]
is a bicategorical coproduct of $A$ and $B$ in $\permcat$ and a 2-categorical coproduct of $A$ and $B$ in $\permcats$.
\item $A \oplus B$, with its projections 
\[
A \oplus B \fto{I} A \times B \to A \andspace A \oplus B \fto{I} A \times B \to B,
\]
is a bicategorical product of $A$ and $B$ in $\permcat$.
\item The comparison map
\[
A \oplus B \to A \times B
\]
is a strict symmetric monoidal equivalence, and therefore $\permcat$ admits bicategorical direct sums.
\end{enumerate}
\end{thm}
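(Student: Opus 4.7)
The plan is to assemble the three parts directly from earlier results in this section, so the proof will largely consist of observations about how to package those results. Part~(1) is already contained in \cref{cor:oplus=bicat_coprod}, which itself rests on \cref{prop:oplus-coprod-adj-equiv}: the adjoint equivalence $(M,\Mdot)$ between $\permcat(A \oplus B, C)$ and $\permcat(A,C) \times \permcat(B,C)$, pseudonatural in $C$, is exactly the bicategorical coproduct condition; and the restriction of that equivalence to strict symmetric monoidal functors is an isomorphism, giving the 2-categorical coproduct in $\permcats$.

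For part~(2), I would invoke \cref{prop:prods-exist} together with \cref{rmk:2cat-to-bicat} to see that $A \times B$, with its standard projections, is a bicategorical product in $\permcat$. \Cref{prop:oplus=times} then exhibits the strict symmetric monoidal equivalence $I \cn A \oplus B \to A \times B$ that is pseudonatural in both variables. Since a bicategorical product is invariant under equivalence in the domain, composing with the standard projections transports the universal property to $A \oplus B$. This is the content of \cref{cor:oplus=bicat_prod}.

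For part~(3), the bulk of the assertion—that the comparison map is a strict symmetric monoidal equivalence—is \cref{prop:oplus=times}, once one identifies the map $I$ constructed there with the abstract comparison map of \cref{defn:bicat-direct-sum}. The terminal category is a zero object in $\permcat$ (both initial and terminal), so the abstract comparison map is well-defined. By the universal property of $\oplus$ established in part~(1), any map $A \oplus B \to A \times B$ in $\permcat$ is determined up to monoidal natural isomorphism by its restrictions along the two canonical inclusions. Both the abstract comparison map and $I$ are seen, by direct inspection of the construction in \cref{defn:bicat-direct-sum} and of the explicit formula for $I$ in the proof of \cref{prop:oplus=times}, to have restrictions $a \mapsto (a,0)$ and $b \mapsto (0,b)$. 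Hence the two maps agree up to monoidal natural isomorphism, and \cref{prop:oplus=times} supplies the equivalence.

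The only mildly subtle point, and what I would flag as the main (small) obstacle, is the identification of the strict $I$ with the abstract comparison map: the latter is built using the pseudonatural equivalences $A \simeq A \times 0$ and $B \simeq 0 \times B$ in the 2-categorical product, while $I$ is defined strictly from the coproduct structure. However, because the coproduct universal property of part~(1) characterizes such maps by their two restrictions, and both maps realize the same restrictions (strictly in the case of $I$), the comparison is forced to be an isomorphism, so no further computation is required.
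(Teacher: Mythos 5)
Your proposal is correct and takes essentially the same route as the paper, which simply cites \cref{cor:oplus=bicat_coprod}, \cref{cor:oplus=bicat_prod}, and \cref{prop:oplus=times} for the three parts respectively. Your extra care in identifying the strict functor $I$ with the abstract comparison map of \cref{defn:bicat-direct-sum} is a reasonable elaboration, but the paper already treats $I$ as the comparison map in \cref{prop:oplus=times}, so no additional argument is needed.
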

\begin{proof}
The first statement appears as \cref{cor:oplus=bicat_coprod}, the second as \cref{cor:oplus=bicat_prod}, and the third as \cref{prop:oplus=times}.
\end{proof}

The results of \cref{thm:oplus=bicat_directsum} and those of \cref{sec:bilin-otimes} show that the tensor product distributes over direct sums.
\begin{cor}\label{cor:oplus-otimes-dist}
  Suppose $A$, $B$, and $C$ are permutative categories.
  There are adjoint equivalences
  \begin{align*}
    \pa^R_C\cn \bigl( A \otimes C \bigr) \oplus \bigl( B \otimes C \bigr)
    & \lradjequiv \bigl(A \oplus B\bigr) \otimes C \bacn\patil^R_C \andspace \\
    \pa^L_C\cn \bigl( C \otimes A \bigr) \oplus \bigl( C \otimes B \bigr)
    & \lradjequiv C \otimes \bigl(A \oplus B\bigr) \bacn\patil^L_C
  \end{align*}
  with the following properties.
  \begin{itemize}
  \item Each of $\pa^R$, $\patil^R$, $\pa^L$, and $\patil^L$ is 2-natural with respect to symmetric monoidal functors in $A$, $B$, and $C$.
  \item Each of $\pa^R$, $\patil^R$, $\pa^L$, and $\patil^L$ is strict symmetric monoidal.
  \end{itemize}
\end{cor}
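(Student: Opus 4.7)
The two equivalences are symmetric via the braiding $\dB$ of \cref{sec:B}, so it suffices to construct $(\pa^R_C, \patil^R_C)$. The canonical inclusions $A \to A \oplus B$ and $B \to A \oplus B$ are strict symmetric monoidal (\cref{rmk:2fun-dirsum-incl}) and remain strict after tensoring with $1_C$ by 2-functoriality of $\otimes$ (\cref{prop:tensor-functor}). The 2-categorical coproduct property of $\oplus$ in $\permcats$ (\cref{thm:oplus=bicat_directsum}) then supplies a unique strict symmetric monoidal functor $\pa^R_C \cn (A \otimes C) \oplus (B \otimes C) \to (A \oplus B) \otimes C$ making the evident triangles commute.

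To show $\pa^R_C$ is an equivalence in $\permcat$, I combine the tensor-hom biadjunction (\cref{thm:biadj}) with the bicategorical coproduct equivalence (\cref{prop:oplus-coprod-adj-equiv}). For each permutative category $D$, these yield a composite pseudonatural equivalence of categories
\[
\permcat\bigl((A \oplus B) \otimes C, D\bigr) \simeq \permcat\bigl(A, \permcat(C, D)\bigr) \times \permcat\bigl(B, \permcat(C, D)\bigr) \simeq \permcat\bigl((A \otimes C) \oplus (B \otimes C), D\bigr)
\]
which by inspection of the definitions is precomposition with $\pa^R_C$. A bicategorical Yoneda argument, evaluating at $D$ equal to the source and target of $\pa^R_C$ and chasing the respective identities, produces a pseudo-inverse together with the unit and counit of an adjoint equivalence in $\permcat$.

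To upgrade this pseudo-inverse to a strict symmetric monoidal functor $\patil^R_C$, use \cref{prop:otimes-bilin-iso} and \cref{prop:curry} to identify strict symmetric monoidal functors $(A \oplus B) \otimes C \to D$ with symmetric monoidal functors $A \oplus B \to \permcat(C, D)$. The bicategorical coproduct property describes these in turn as compatible pairs of symmetric monoidal functors out of $A$ and $B$ separately. Taking $D = (A \otimes C) \oplus (B \otimes C)$ and choosing the pair whose components are the tensor-hom adjoints of the canonical inclusions $A \otimes C \to (A \otimes C) \oplus (B \otimes C)$ and $B \otimes C \to (A \otimes C) \oplus (B \otimes C)$ defines $\patil^R_C$ as a strict symmetric monoidal functor. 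Its adjoint equivalence with $\pa^R_C$ follows by identifying its unit and counit with those produced by the Yoneda argument; on generators these are given by the canonical coherences $\de$ and $\ze$ of the tensor product.

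The 2-naturality assertions for all four functors follow from 2-functoriality of $\otimes$ and $\oplus$ (\cref{prop:tensor-functor,prop:oplus-2fun}) together with the 2-naturality of the tensor-hom biadjunction, of the coproduct equivalence (\cref{prop:oplus-coprod-adj-equiv}), and of the bilinearity isomorphisms (\cref{prop:curry,prop:otimes-bilin-iso}). The principal obstacle is verifying that the pseudo-inverse supplied by the Yoneda argument agrees with the strict functor $\patil^R_C$ produced by the bilinear construction; this reduces to checking an explicit formula on the simple and adjoined morphisms of $(A \oplus B) \otimes C$. An alternative, more computational route would define $\patil^R_C$ directly on generators via $\bigl(\tsum [x_i]\bigr).c \mapsto \tsum [x_i.c]$ and verify preservation of each relation imposed in \cref{defn:AtensorB} case-by-case, in the style of \cref{defn:A} and \cref{prop:Awelldef}.
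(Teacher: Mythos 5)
Your overall strategy—currying plus the coproduct universal property plus a bicategorical Yoneda argument—is the same as the paper's, and your direct construction of $\pa^R_C$ via the 2-categorical coproduct property of $\oplus$ in $\permcats$ agrees with what the paper's chain produces. But the organization differs in a way that creates the extra work you flag at the end without discharging. You run the Yoneda chain through the non-strict hom-categories $\permcat(-,-)$ using \cref{thm:biadj}; since that biadjunction involves the strictification $(-)^s$ and the inclusion $j$, which are only adjoint-inverse, the resulting pseudo-inverse of $\pa^R_C$ is a priori only strong symmetric monoidal (and your claim that the composite ``is precomposition with $\pa^R_C$'' is only true up to isomorphism). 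You then build a strict $\patil^R_C$ by a second, partially strict chain and must reconcile the two constructions—this is the ``principal obstacle'' you name but defer. The paper sidesteps this entirely by running a single chain through the strict hom-categories:
\[
\strict{(A \otimes C) \oplus (B \otimes C), D} \iso \strict{A\otimes C,D}\times\strict{B\otimes C,D} \iso \permcat\bigl(A,\permcat(C,D)\bigr)\times\permcat\bigl(B,\permcat(C,D)\bigr) \hty \permcat\bigl(A\oplus B,\permcat(C,D)\bigr) \iso \strict{(A\oplus B)\otimes C,D},
\]
where the first and last steps are the \emph{isomorphisms} \cref{eq:oplus-coprod-strict} and \cref{eq:otimes-curry} and only the middle step is an equivalence (\cref{prop:oplus-coprod-adj-equiv}). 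Chasing the identity in the two directions then yields $\pa^R_C$ and $\patil^R_C$ simultaneously as strict symmetric monoidal functors, together with the unit and counit of the adjoint equivalence, with no separate strictness upgrade or comparison needed. Your reduction of the $L$ case to the $R$ case via the isomorphism $\dB$ is a legitimate alternative to the paper's ``similar computation,'' and your proposed explicit formula $\bigl(\tsum [x_i]\bigr).c \mapsto \tsum [x_i.c]$ for $\patil^R_C$ is the correct value of the Yoneda-produced functor on generators; if you want a complete proof along your lines, the cleanest fix is to replace your non-strict chain by the strict one above rather than to carry out the deferred comparison.
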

\begin{proof}
  We use the bicategorical Yoneda Lemma \cite{Str96Cat,JY212Dim} as follows.
  Abbreviating
  \[
    \zP \bigl( -,- \bigr) = \permcat \bigl( -,- \bigr) \andspace
    \strict{ -, -} = \permcats \bigl( -,- \bigr),
  \]
  we have the following for each permutative category $D$:
  \begin{align*}
    \strict{ \bigl(A \otimes C\bigr) \oplus \bigl(B \otimes C\bigr) , D }
    & \iso \strict{ A \otimes C , D }
      \times \strict{ B \otimes C , D }
    & & \ref{eq:oplus-coprod-strict}\\
    & \iso \zP \bigl( A , \zP(C,D) \bigr)
      \times \zP \bigl( B , \zP(C,D) \bigr)
    & & \ref{eq:otimes-curry}\\
    & \hty \zP \bigl( A \oplus B, \zP(C,D) \bigr)
    & & \ref{prop:oplus-coprod-adj-equiv}\\
    & \iso \strict{\bigl( A \oplus B \bigr) \otimes C, D}
    & & \ref{eq:otimes-curry}
  \end{align*}
  Letting $D=\bigl(A \oplus B\bigr) \otimes C$, the identity map in the final step above corresponds to a strict symmetric monoidal functor $\pa^R_C$ as desired. 
  A similar computation yields $\pa^L_C$.
  The reverse functors $\patil^R$ and $\patil^L$ are given in the final step of the Yoneda argument, by taking $D = \bigl( A \otimes C \bigr) \oplus \bigl( B \otimes C \bigr)$ and the identity in the first step.

  Given symmetric monoidal functors
  \[
    P\cn \ol{A} \to A, \quad
    Q\cn \ol{B} \to B, \andspace
    R\cn C \to \ol{C},
  \]
  note that the tensor product in \cref{defn:FtensorG} results in strict symmetric monoidal functors.
  Also, recall that the sum of two strict monoidal functors in \cref{prop:oplus-2fun} will be strict monoidal.
  Therefore, each of the induced
  \[
    \bigl( P \oplus Q \bigr) \otimes R, \quad
    \bigl( P \otimes R \bigr) \oplus \bigl( Q \otimes R \bigr), \quad
    R \otimes \bigl( P \oplus Q \bigr), \andspace
    \bigl( R \otimes P \bigr) \oplus \bigl( R \otimes Q \bigr)
  \]
  is strict symmetric monoidal.
  Hence, 2-naturality of $\pa^R$, $\patil^R$, $\pa^L$, and $\patil^L$ follows from the 2-naturality of each equivalence and isomorphism above.
\end{proof}

\begin{rmk}\label{rmk:bicolim-otimes}
  The standard Yoneda argument as in \cref{cor:oplus-otimes-dist} shows, more generally, that $\otimes$ distributes, via a strict symmetric monoidal equivalence, over small bicolimits in $\permcat$.
\end{rmk}

Using that $\oplus$ is both a product and a coproduct, we can conclude the following corollary. 
\begin{cor}\label{cor:oplus_smb}
  \ 
\begin{enumerate}
\item The 2-category $\permcats$ has the structure of a symmetric monoidal 2-category using $\oplus$.
\item The 2-category $\permcat$ has the structure of a symmetric monoidal 2-category using $\oplus$, in two distinct ways. 
\end{enumerate}
\end{cor}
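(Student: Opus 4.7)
The plan is to apply the general principle that a 2-category with all finite bicategorical (co)products and a corresponding zero object inherits a canonical symmetric monoidal 2-category structure, often called the (co)cartesian monoidal structure. The terminal permutative category, equipped with its unique permutative structure, serves as a zero object in both $\permcats$ and $\permcat$: every permutative category admits a unique strict symmetric monoidal functor to it, and every strong or strict symmetric monoidal functor out of it is determined up to canonical monoidal natural isomorphism by where it sends the monoidal unit.

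For part (1), I would use that $\oplus$ provides \emph{2-categorical} coproducts in $\permcats$ by \cref{cor:oplus=bicat_coprod}. A 2-category with all finite 2-categorical coproducts and a 2-categorical zero object admits a canonical $\Cat$-enriched symmetric monoidal structure: the associator, unitors, and symmetry are the unique 2-natural isomorphisms induced by the universal property, and the higher coherence data can be taken to be identities. Since a $\Cat$-enriched symmetric monoidal structure is in particular a symmetric monoidal 2-category in the sense of \cref{defn:closed-smb}, this settles (1).

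For part (2), I would construct two genuinely distinct symmetric monoidal bicategory structures on $\permcat$. The first uses $\oplus$ as a bicategorical coproduct with the canonical inclusions \cref{eq:can-incl}, while the second uses $\oplus$ as a bicategorical product with the projections described in \cref{cor:oplus=bicat_prod}. In each case, the 2-functoriality of $\oplus$ (\cref{prop:oplus-2fun}) supplies the tensor 2-functor, while the associator, unitors, symmetry, and higher modifications $\pi$, $\mu$, $\rho$, $R_{-|--}$, $R_{--|-}$, $\syl$, and $\la$ are determined up to canonical invertible modification by the respective bicategorical universal property. Although the comparison map $I\cn A\oplus B \to A \times B$ from \cref{prop:oplus=times} intertwines the two structures up to equivalence, they remain distinct as 2-categorical data because the associator and unitor 1-cells in one come from the coproduct universal property and in the other from the product one.

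The main obstacle is verifying the symmetric monoidal bicategory axioms rather than producing the data. The cleanest approach is a bicategorical Yoneda argument: for each axiom, applying $\permcat(X,-)$ or $\permcat(-,X)$ to both sides yields an equality of pseudonatural transformations between (co)representable pseudofunctors, which the universal property reduces to a standard symmetric monoidal coherence statement for the (co)cartesian monoidal structure on $\Cat$. Equivalently, one can cite general results on cartesian and cocartesian symmetric monoidal bicategories; parts (1) and (2) then become instances of the general fact that finite biproduct structure on a 2-category is symmetric monoidal, with (1) being the stricter $\Cat$-enriched variant available in $\permcats$ because the coproducts there are 2-categorical, not merely bicategorical.
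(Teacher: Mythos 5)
Your proposal is correct and follows the same overall strategy as the paper: part (1) via the fact that a 2-category with 2-categorical coproducts and a zero object carries a canonical $\Cat$-enriched symmetric monoidal structure (applying \cref{cor:oplus=bicat_coprod}), and part (2) via the two structures induced by the coproduct and product roles of $\oplus$, with the product case handled by the general result on cartesian symmetric monoidal bicategories cited as \cite[Theorem~2.15]{CKWW07Cartesian}. The one place where you diverge is the coproduct-induced structure on $\permcat$: you propose to re-derive the coherence data from the bicategorical coproduct universal property in $\permcat$ and then verify the axioms by a representability/Yoneda argument, whereas the paper simply observes that the strict, 2-natural associativity and unit isomorphisms already constructed for $\permcats$ remain 2-natural when the 1-cells are enlarged from strict to strong symmetric monoidal functors, so the same data and the same axiom verifications carry over verbatim. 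Your route is workable (it is the dual of the cartesian case), but it produces only pseudonatural data determined up to invertible modification and pushes the burden onto the Yoneda-style axiom check; the paper's inheritance argument buys strictly 2-natural constraints for free and avoids re-verifying anything. Your remark on distinctness matches \cref{rmk:same_smb}: the two structures differ as data (coproduct-induced versus product-induced associators) while being related by a symmetric monoidal biequivalence along the identity.
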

\begin{proof}
For $\permcats$, first note that every category with coproducts is symmetric monoidal. Adding a $\cat$-enrichment to that statement yields a proof that every 2-category with 2-categorical coproducts is a $\cat$-enriched version of a symmetric monoidal category. (This is the symmetric version of what was called a strongly monoidal 2-category in \cite{Gur2013monoidal}.) One then applies \cref{cor:oplus=bicat_coprod}.

For $\permcat$, the two distinct symmetric monoidal structures arise from the coproduct and product properties for $\oplus$.
\begin{description}
\item[As a coproduct] We note that it is possible to use the data from the proof of the symmetric monoidal structure on $\permcats$ above. The axioms still hold, and the associativity and unit isomorphisms inherited from $\permcats$ are still 2-natural in all variables even when extended from strict symmetric monoidal functors to symmetric monoidal functors.
\item[As a product] By \cref{cor:oplus=bicat_prod}, $\oplus$ is a bicategorical product.
  This gives the structure of a symmetric monoidal bicategory by \cite[Theorem~2.15]{CKWW07Cartesian}.
\end{description}
\end{proof}

\begin{rmk}\label{rmk:same_smb}
The identity functor is a symmetric monoidal biequivalence between the two symmetric monoidal structures on $\permcat$ given above. The reader can prove this by writing out the explicit product-induced structure, which has associator given as the composite
\[
(A \oplus B) \oplus C \fto{(I \times 1) \circ I} 
(A \times B) \times C \cong A \times (B \times C) \fto{(1 \oplus I^{\bullet}) \circ I^{\bullet}} 
A \oplus (B \oplus C),
\]
where $I^\bullet$ is a chosen pseudoinverse for $I$ in $\permcat$. This composite is isomorphic, but not equal, to the coproduct-induced associator for $\oplus$, and such an isomorphism is the data needed to construct the modification $\omega$ in \cref{defn:sm2fun}.
\end{rmk}

\appendix
\section{Symmetric monoidal 2-categories}
\label{sec:smb-defn}

Below, we summarize the definition of symmetric monoidal bicategory, as given in \cite[Section~12.1]{JY212Dim}.
See \cite[Section~2]{SP2011Classification} and  \cite[Section~2]{CG2014Iterated} for equivalent presentations.
We list the data in \cref{defn:sm2cat} below.
After that, we discuss several simplifications that occur in our application.
This leads to a simplification in the symmetric monoidal bicategory axioms, and we give the simplified axioms at the end of this section.

\begin{defn}\label{defn:sm2cat}
A \textit{symmetric monoidal 2-category} consists of the following.
\begin{description}
\item[Monoidal Data]\ 
  \begin{itemize}
  \item It has a 2-category $\B$.
  \item It has a tensor product pseudofunctor
    $\otimes\cn \B \times \B \to \B$, denoted by concatenation below.
  \item It has a unit object $S$ determined by a pseudofunctor $* \to \B$.
  \item It has an associativity pseudonatural equivalence with components $\dA\colon (XY)Z \fto{\simeq} X(YZ)$.
  \item It has unit pseudonatural equivalences with components
    \[
      \dL\colon SX \fto{\simeq} X \andspace
      \dR\colon SX \fto{\simeq} X.
    \]
  \item It has invertible modifications $\pi$, $\mu$, $\la$, $\rho$ as follows.
    \[
      \begin{tikzpicture}[x=20mm,y=20mm,scale=.6,vcenter]
        \draw[0cell=.8] 
        (0,0) node (0) {\bigl(\bigl(XY\bigr)Z\bigr)W}
        (1,1) node (1) {\bigl(X\bigl(YZ\bigr)\bigr)W}
        (3,1) node (2) {X\bigl(\bigl(YZ\bigr)W\bigr)}
        (4,0) node (3) {X\bigl(Y\bigl(ZW\bigr)\bigr)}
        (2,-1) node (4) {\bigl(XY\bigr)\bigl(ZW\bigr)};
        \draw[1cell=.8] 
        (0) edge node {\dA \,1} (1)
        (1) edge node {\dA} (2)
        (2) edge node {1\,\dA} (3)
        (0) edge[swap] node {\dA} (4)
        (4) edge[swap] node {\dA} (3);
        \draw[2cell]
        (2,0) node[rotate=270, 2label={above,\,\pi}] {\Rightarrow}
        ;
      \end{tikzpicture}
      \quad
      \begin{tikzpicture}[x=30mm,y=20mm,scale=.7,vcenter]
        \draw[0cell=.8] 
        (0,0) node (0) {\bigl(XS\bigr)Y}
        (1,0) node (1) {X\bigl(SY\bigr)}
        (-.5,-1) node (2) {XY}
        (1.5,-1) node (3) {XY};
        \draw[1cell=.8] 
        (0) edge node {\dA} (1)
        (2) edge node {\dRdot\,1} (0)
        (2) edge[swap] node {1} (3)
        (1) edge node {1 \, \dL} (3);
        \draw[2cell]
        (.5,-.5) node[rotate=270, 2label={above,\,\mu}] {\Rightarrow}
        ;
      \end{tikzpicture}
    \]
    \[
      \begin{tikzpicture}[x=45mm,y=20mm,scale=.7]
        \draw[0cell=.8] 
        (0,0) node (0) {\bigl(SX\bigr)Y}
        (1,0) node (1) {XY}
        (.5,-1) node (2) {S\bigl(XY\bigr)};
        \draw[1cell=.8] 
        (0) edge node {\dL\,1} (1)
        (0) edge[swap] node {\dA} (2)
        (2) edge[swap] node {\dL} (1);
        \draw[2cell]
        (.5,-.5) node[rotate=270, 2label={above,\,\la}] {\Rightarrow}
        ;
      \end{tikzpicture}
      \qquad
      \begin{tikzpicture}[x=45mm,y=20mm,scale=.7]
        \draw[0cell=.8] 
        (0,0) node (0) {XY}
        (1,0) node (1) {X\bigl(YS\bigr)}
        (.5,-1) node (2) {\bigl(XY\bigr)S};
        \draw[1cell=.8] 
        (0) edge node {1\, \dRdot} (1)
        (0) edge[swap] node {\dRdot} (2)
        (2) edge[swap] node {\dA} (1);
        \draw[2cell]
        (.5,-.5) node[rotate=270, 2label={above,\,\rho}] {\Rightarrow}
        ;
      \end{tikzpicture}
    \]
  \end{itemize}
\item[Braiding and Sylleptic Data]\ 
  \begin{itemize}
  \item It has a pseudonatural braid equivalence with components
    \[
      \dB\colon XY \fto{\simeq} YX.
    \] 
  \item It has two invertible modifications, denoted $R_{-|--}$ and $R_{--|-}$, and
  \item an invertible modification $\syl$ as follows.
    \[
      \begin{tikzpicture}[x=25mm,y=16mm,scale=.7]
        \draw[0cell=.8] 
        (.3,0) node (0) {\bigl(XY\bigr)Z}
        (1,1) node (1) {\bigl(YX\bigr)Z}
        (2,1) node (2) {Y\bigl(XZ\bigr)}
        (2.7,0) node (3) {Y\bigl(ZX\bigr)}
        (1,-1) node (4) {X\bigl(YZ\bigr)}
        (2,-1) node (5) {\bigl(YZ\bigr)X};
        \draw[1cell=.8] 
        (0) edge node {\dB \,1} (1)
        (1) edge node {\dA} (2)
        (2) edge node {1\,\dB} (3)
        (0) edge[swap] node {\dA} (4)
        (4) edge[swap] node {\dB} (5)
        (5) edge[swap] node {\dA} (3);
        \draw[2cell]
        (1.5,0) node[rotate=270, 2label={above,\,R_{X \mid YZ}}] {\Rightarrow}
        ;
        \begin{scope}[shift={(3.5,0)}]
          \draw[0cell=.8] 
          (.3,0) node (0) {X\bigl(YZ\bigr)}
          (1,1) node (1) {X\bigl(ZY\bigr)}
          (2,1) node (2) {\bigl(XZ\bigr)Y}
          (2.7,0) node (3) {\bigl(ZX\bigr)Y}
          (1,-1) node (4) {\bigl(XY\bigr)Z}
          (2,-1) node (5) {Z\bigl(XY\bigr)};
          \draw[1cell=.8] 
          (0) edge node {1\,\dB} (1)
          (1) edge node {\dAdot} (2)
          (2) edge node {\dB\,1} (3)
          (0) edge[swap] node {\dAdot} (4)
          (4) edge[swap] node {\dB} (5)
          (5) edge[swap] node {\dAdot} (3);
          \draw[2cell]
          (1.5,0) node[rotate=270, 2label={above,\,R_{XY \mid Z}}] {\Rightarrow}
          ;
        \end{scope} 
        \begin{scope}[shift={(3.25,-1.5)}]
          \draw[0cell=.8] 
          (-1,-1) node (0) {XY}
          (1,-1) node (1) {XY}
          (0,0) node (2) {YX};
          \draw[1cell=.8] 
          (0) edge[swap] node {1} (1)
          (0) edge node {\dB} (2)
          (2) edge node {\dB} (1);
          \draw[2cell]
          (0,-.5) node[rotate=270, 2label={above,\,\syl}] {\Rightarrow}
          ;
        \end{scope}
      \end{tikzpicture}
    \]
  \end{itemize}
\end{description} 
These data satisfy three axioms for the monoidal structure, four axioms for
the braided structure, two axioms for the sylleptic structure, and one
final axiom for the symmetric structure.
\end{defn}

In our application, $\otimes$ is a 2-functor and each of $\pi$, $\mu$, $\rho$, $R_{- \mid --}$, $R_{-- \mid -}$, and $\syl$ is an identity (see \cref{sec:2Ddata}).
Thus, the relevant axioms simplify considerably.
The only nontrivial data in the axioms is that of $\la$, together with the following mates with respect to the adjoint equivalence $(\dA,\dAdot)$.
\begin{itemize}
\item For the pentagonator $\pi$, the axioms use ten mates with respect to $(\dA,\dAdot)$, denoted $\pi_i$ for $1 \leq i \leq 10$.
  See \cite[12.1.4 and~12.5.1]{JY212Dim} for a complete description.
\item For both hexagonators, $R^1$, $R^2$, and $R^3$ denote various mates with respect to $(\dA,\dAdot)$.
  See \cite[12.1.8 and~12.1.11]{JY212Dim} for a complete description.
\end{itemize}

Recall the unit and counit
\[
  \etaa = (\epz')^\inv \cn 1 \To \dAdot \dA \andspace
  \epza \cn \dA \dAdot \To 1
\]
from \cref{eq:etaa} and \cref{eq:epza}, respectively.
Each of the mates above is obtained by a suitable pasting with one of the unit, counit, or their inverses. 
For example, $\pi_3$ at $X,Y,W,Z$ is the following mate.
\begin{equation}\label{eq:pi3}
  \begin{tikzpicture}[x=20mm,y=20mm,scale=.8,vcenter]
    \smallnodes
    \draw[0cell] 
    (180-18:1)+(-.5,0) node (a) {
      \bigl( XY \bigr)\bigl( WZ \bigr)
    }
    (180+1.0*72-18:1) node (b) {
      \bigl(\bigl( XY \bigr) W \bigr)Z
    }
    (180+2.0*72-18:1) node (c) {
      \bigl(X\bigl( YW \bigr)  \bigr)Z
    }
    (180+3.0*72-18:1)+(0.5,0) node (d) {
      X\bigl(\bigl( YW \bigr) Z \bigr)
    }
    (180+4.0*72-18:1) node (e) {
      X\bigl(Y\bigl( WZ \bigr)  \bigr)
    }
    (d)+(190:1) node[scale=1] (y) {
      X\bigl(\bigl( YW \bigr) Z \bigr)
    }
    (a)+(-10:1) node[scale=1] (x) {
      \bigl( XY \bigr)\bigl( WZ \bigr)
    }
    ;
    \draw[1cell] 
    (a) edge['] node {\dAdot} (b)
    (b) edge node {\dA 1} (c)
    (c) edge['] node {\dA} (d)
    (a) edge node {\dA} (e)
    (e) edge node {1 \dAdot} (d)
    (y) edge node (y1) {1} (d)
    (c) edge['] node {\dA} (y)
    (y) edge node {1\dA} (e)
    (a) edge node (x1) {1} (x)
    (b) edge['] node {\dA} (x)
    (x) edge node {\dA} (e)
    ;
    \draw[2cell] 
    (e)+(0,-1.2) node[anno] {\pi^\inv = 1}
    node[between=x1 and b at .4, shift={(-.1,0)}, rotate=-70, 2label={above,\epzainv}] {\Rightarrow}
    node[between=y1 and e at .3, shift={(-.05,-.1)}, rotate=-70, 2label={above,\etaainv}] {\Rightarrow}
    ;
  \end{tikzpicture}
\end{equation}
The other mates are similar.
Since each mate involves only unit or counit components $\etaa$ and $\epza$, the proof of \cref{thm:permcat-smb} uses the Coherence Theorem~\ref{thm:iterated-tensor-coherence} for iterated tensor products to verify that the relevant pastings are equal.

Now we list the axioms for a symmetric monoidal bicategory, in the special case that
\begin{itemize}
\item $\B$ is a 2-category, 
\item $\otimes$ is a 2-functor,
\item each of $\dA$, $\dL$, $\dR$, $\dB$ is 2-natural, and
\item $\pi$, $\mu$, $\rho$, $R_{--|-}$, $R_{-|--}$, and $\syl$ are all identities.
\end{itemize}
We mark diagram regions with \annoarg{$=$}, \annoarg{$R=1$}, \annoarg{$\natA$}, etc., to indicate identities that follow from the above assumptions.
Of the ten general axioms, two of the monoidal axioms are completely trivial (involving only $\pi$, $\rho$, and $\mu$), as is the symmetry axiom (involving only $\syl$).
There are seven remaining axioms.

\ \\
\noindent\textbf{Left Normalization Axiom}
\begin{equation}\label{eq:left-norm-axiom}
\begin{tikzpicture}[scale=.7,vcenter]
\smallnodes
\def\m{.05} 
\def\a{.5} \def\b{1} \def\bp{1.04} \def\c{.3}
\def\t{1.4} \def\v{1} \def\u{.5} \def\q{.95} \def\m{.5} \def\z{60}
\draw[font=\huge] (0,0) node [rotate=0] (eq) {$=$};
\newcommand{\boundary}{
\draw[0cell=\smallzero] 
(0,0) node (x11) {\bigl(X\bigl(SY\bigr)\bigr)Z}
($(x11)+(-\b,-\u)$) node (x21) {\bigl(\bigl(XS\bigr)Y\bigr)Z}  
($(x11)+(\b,-\u)$) node (x22) {\bigl(XY\bigr)Z}
($(x11)+(-\b,-\v-\u)$) node (x31) {\bigl(XY\bigr)Z} 
($(x11)+(\b,-\v-\u)$) node (x32) {X\bigl(YZ\bigr)}
($(x11)+(0,-2*\v)$) node (x41) {X\bigl(YZ\bigr)} 
;
\draw[1cell] 
(x31) edge node[pos=.3] {\bigl(\dRdot 1\bigr)1} (x21)
(x21) edge[bend left] node {\dA1} (x11)
(x11) edge[bend left] node {\bigl(1\dL\bigr)1} (x22)
(x22) edge node (as) {\dA} (x32)
(x31) edge[out=-90,in=180] node[swap,pos=.4] {\dA} (x41) 
(x41) edge[out=0,in=-90] node[swap,pos=.6] {1} (x32)
;
}
\begin{scope}[xscale=2.6,yscale=3,shift={(-\b-3*\m/4,\v)}]
\boundary
\draw[0cell=\smallzero] 
($(x11)+(0,-\u)$) node (A) {X\bigl(\bigl(SY\bigr)Z\bigr)}
($(x11)+(0,-\q)$) node (B) {X\bigl(S\bigl(YZ\bigr)\bigr)}
($(x11)+(0,-\t)$) node (C) {\bigl(XS\bigr)\bigl(YZ\bigr)}
;
\draw[1cell] 
(x11) edge node[swap] {\dA} (A)
(A) edge node[swap] {1\dA} (B)
(A) edge[bend left=20] node[pos=.3] (onelone) {1\bigl(\dL 1\bigr)} (x32)
(C) edge node {\dA} (B) 
(B) edge[bend right=15] node[pos=.6] {1\dL} (x32)
(x21) edge node[swap,pos=.4] {\dA} (C) 
(x41) edge[bend left=\z] node {\dRdot \bigl(1_Y 1_Z\bigr)} (C)
(x41) edge[bend right=\z] node[swap,pos=.45] {\dRdot 1_{YZ}} (C)
;
\draw[2cell] 
node[between=x22 and A at .6, shift={(0,.7)}, anno] {\natA}
node[between=B and as at .4, shift={(0,-.2)}, rotate=-135, 2label={above,1\lambda}] {\Rightarrow}
node[between=x21 and A at .5, shift={(0,.6)}, anno] {\pi = 1}
node[between=x31 and C at .2, shift={(.3,.7)}, anno] {\natA}
node[between=C and x41 at .5, anno] {=}
node[between=C and x32 at .5, shift={(0,-.5)}, anno] {\mu = 1}
;
\end{scope}  
\begin{scope}[xscale=1.8,yscale=3,shift={(\b+\m,\v)}]
\boundary
\draw[1cell]
(x31) edge[bend left=15] node {1_{XY}1_Z} (x22)
(x31) edge[bend right=15] node[swap,pos=.6] {1_{(XY)Z}} (x22)
(x31) edge node[swap] (ass) {\dA} (x32)
;
\draw[2cell] 
node[between=x21 and x22 at .4, shift={(0,.5)}, anno] {\mu 1 = 1}
node[between=x31 and x22 at .45, rotate=-45, anno] {=}
node[between=x31 and x32 at .6, shift={(0,.4)}, rotate=-90, anno] {=}
node[between=ass and x41 at .5, shift={(-.0,0)}, rotate=-90, anno] {=}
;
\end{scope}  
\end{tikzpicture}
\end{equation}

\noindent\textbf{(3,1)-Crossing Axiom}
\begin{equation}\label{eq:31-cross-axiom}
\begin{tikzpicture}[x=23mm,y=13.6mm,vcenter,scale=.61]
  \smallnodes
  \def\r{1.2}
  \def\s{1.1}
  \def\a{40}
  \def\e{20}
  \def\v{.75}
  \def\h{1.25}
  \draw[0cell=\smallzero] 
  (-\h,0) node (l) {X\bigl(\bigl(YZ\bigr)W\bigr)}
  (l) ++(0,\v) node (lt) {X\bigl(Y\bigl(ZW\bigr)\bigr)}
  (lt) ++(45:\s) node (ltt) {X\bigl(Y\bigl(WZ\bigr)\bigr)}
  (l) ++(0,-\v) node (lb) {\bigl(X\bigl(YZ\bigr)\bigr)W}
  (lb) ++(-45:\s) node (lbb) {W\bigl(X\bigl(YZ\bigr)\bigr)}
  (\h,0) node (r) {X\bigl(W\bigl(YZ\bigr)\bigr)}
  (r) ++(0,\v) node (rt) {X\bigl(\bigl(WY\bigr)Z\bigr)}
  (rt) ++(135:\s) node (rtt) {X\bigl(\bigl(YW\bigr)Z\bigr)}
  (r) ++(0,-\v) node (rb) {\bigl(XW\bigr)\bigl(YZ\bigr)}
  (rb) ++(-135:\s) node (rbb) {\bigl(WX\bigr)\bigl(YZ\bigr)}  
  (lt) ++(180-\a+\e:\r) node (lt2) {\bigl(XY\bigr)\bigl(ZW\bigr)}
  (ltt) ++(180-\a-\e:\r) node (ltt2) {\bigl(XY\bigr)\bigl(WZ\bigr)}
  (rtt) ++(\a+\e:\r) node (rtt2) {\bigl(X\bigl(YW\bigr)\bigr)Z}
  (rt) ++(\a-\e:\r) node (rt2) {\bigl(X\bigl(WY\bigr)\bigr)Z}
  (rb) ++(-\a+\e:\r) node (rb2) {\bigl(\bigl(XW\bigr)Y\bigr)Z}
  (rbb) ++(-\a-\e:\r) node (rbb2) {\bigl(\bigl(WX\bigr)Y\bigr)Z}
  (lbb) ++(180+\a+\e:\r) node (lbb2) {W\bigl(\bigl(XY\bigr)Z\bigr)}
  (lb) ++(180+\a-\e:\r) node (lb2) {\bigl(\bigl(XY\bigr)Z\bigr)W}
  node[between=ltt2 and rtt2 at .5] (mt) {\bigl(\bigl(XY\bigr)W\bigr)Z}
  node[between=lbb2 and rbb2 at .5] (mb) {\bigl(W\bigl(XY\bigr)\bigr)Z}
  (lt2.east) ++(0:-.02) node (lt2') {}
  (ltt2.south) ++(-90:.00) node (ltt2') {}
  (rb.south) ++(0:.14) node (rb') {}
  (rbb.east) ++(-.05,.05) node (rbb') {}
  ;
  \draw[1cell] 
  (lt) edge['] node {1\bigl(1\dB\bigr)} (ltt)
  (ltt) edge['] node {1\dAdot} (rtt)
  (rtt) edge['] node {1\bigl(\dB 1\bigr)} (rt)
  (rt) edge['] node {1\dA} (r)
  (lt) edge node {1\dAdot} (l)
  (l) edge node {1_X\,\dB_{YZ,W}} (r)
  (r) edge['] node {\dAdot} (rb)
  (rb) edge['] node {\dB_{X,W} 1_{YZ}} (rbb)
  (l) edge node {\dAdot} (lb)
  (lb) edge node {\dB} (lbb)
  (lbb) edge node {\dAdot} (rbb)
  (lt2) edge node (1be) {1_{XY}\,\dB_{ZW}} (ltt2)
  (ltt2) edge node {\dAdot} (mt)
  (mt) edge node {\dA1} (rtt2)
  (rtt2) edge node {\bigl(1\dB_{Y,W}\bigr)1} (rt2)
  (rt2) edge node {\dAdot 1} (rb2)
  (rb2) edge node (be11) {\bigl(\dB_{X,W} 1\bigr)1} (rbb2)
  (lt2) edge['] node {\dAdot} (lb2)
  (lb2) edge['] node {\dB_{(XY)Z,W}} (lbb2)
  (lbb2) edge['] node {\dAdot} (mb)
  (mb) edge['] node {\dAdot 1} (rbb2)
  (lt2) edge['] node {\dA} (lt)
  (ltt2) edge node {\dA} (ltt)
  (rtt2) edge['] node {\dA} (rtt)
  (rt2) edge node {\dA} (rt)
  (lb) edge['] node {\dAdot 1} (lb2)
  (lbb) edge node {1\dAdot} (lbb2)
  (rbb) edge['] node {\dAdot} (rbb2)
  (rb) edge node {\dAdot} (rb2)
  (lt2') edge[',bend right=20,looseness=1] node[pos=.75] (11be) {\bigl(11\bigr)\,\dB} (ltt2')
  (rb') edge[bend left=30,looseness=1] node[pos=.15] (be11) {\dB\,\bigl(11\bigr)} (rbb')
  ;
  \draw[2cell] 
  (lt2) ++(38:1) node[shift={(-45:.0)}, rotate=-45, anno] (T1) {=}
  (rb) ++(245:.5) node[shift={(0,0)}, rotate=135, anno] (T2) {=}
  node[between=lt and rt at .5, shift={(0,-.2)}, rotate=225, 2label={below left,1R^1_{Y,Z|W}}] {\Rightarrow}
  node[between=lb and rb at .5, anno] {R = 1}
  (mt) ++(0,-.45) node[rotate=-135, 2label={above left,\pi_3^\inv}] {\Rightarrow}
  (mb) ++(0,.45) node[rotate=-90, 2label={above,\pi_1}] {\Rightarrow}
  (l) ++(-.6,0) node[rotate=180, 2label={below,\pi_2}] {\Rightarrow}
  (r) ++(.6,0) node[rotate=180, 2label={below,\pi_4^\inv}] {\Rightarrow}
  node[between=rt and rtt2 at .4, shift={(.1,0)}, anno] {\natA}
  node[between=lb and lbb2 at .5, shift={(0,0)}, anno] {\natB}
  (lt) ++(85:.5) node[anno] {\natA}
  (rbb2) ++(80:.6) node[anno] {\natAdot}
  ;
  \draw[font=\Huge] (0,-3) node [rotate=90] (eq) {$=$};
  \begin{scope}[yscale=1]
  \def\sh{++(0,-6)}
  \def\S{++(0,1)}  
  \def\T{++(0,.4)}  
  \draw[0cell=\smallzero] 
  (lt2)\sh\T node (lt2B) {\bigl(XY\bigr)\bigl(ZW\bigr)}
  (ltt2)\sh node (ltt2B) {\bigl(XY\bigr)\bigl(WZ\bigr)}
  (rtt2)\sh node (rtt2B) {\bigl(X\bigl(YW\bigr)\bigr)Z}
  (rt2)\sh\T node (rt2B) {\bigl(X\bigl(WY\bigr)\bigr)Z}
  (rb2)\sh\T\S node (rb2B) {\bigl(\bigl(XW\bigr)Y\bigr)Z}
  (rbb2)\sh\T\S\T node (rbb2B) {\bigl(\bigl(WX\bigr)Y\bigr)Z}
  (lbb2)\sh\T\S\T node (lbb2B) {W\bigl(\bigl(XY\bigr)Z\bigr)}
  (lb2)\sh\T\S node (lb2B) {\bigl(\bigl(XY\bigr)Z\bigr)W}
  (mt)\sh node (mtB) {\bigl(\bigl(XY\bigr)W\bigr)Z}
  (mb)\sh\T\S\T node (mbB) {\bigl(W\bigl(XY\bigr)\bigr)Z}
  ;
  \draw[1cell] 
  (lt2B) edge node (1be) {1_{XY}\,\dB_{Z,W}} (ltt2B)
  (ltt2B) edge node {\dAdot} (mtB)
  (mtB) edge node {\dA1} (rtt2B)
  (rtt2B) edge node {\bigl(1\dB_{Y,W}\bigr)1} (rt2B)
  (rt2B) edge node {\dAdot 1} (rb2B)
  (rb2B) edge node (be11) {\bigl(\dB_{X,W} 1\bigr)1} (rbb2B)
  (lt2B) edge['] node {\dAdot} (lb2B)
  (lb2B) edge['] node {\dB_{(XY)Z,W}} (lbb2B)
  (lbb2B) edge['] node {\dAdot} (mbB)
  (mbB) edge['] node {\dAdot 1} (rbb2B)
  (mtB) edge node[pos=.33] {\dB_{XY,W}\,1_Z} (mbB)
  ;
  \draw[2cell] 
  node[between=ltt2B and lbb2B at .5, anno] {R = 1}
  node[between=rtt2B and rbb2B at .5, shift={(0,0)}, rotate=225,
    2label={below left,R^2_{X,Y|W}1}] {\Rightarrow}
  ;  
  \end{scope}
\end{tikzpicture}
\end{equation}

\noindent\textbf{(1,3)-Crossing Axiom}
\begin{equation}\label{eq:13-cross-axiom}
\begin{tikzpicture}[x=23mm,y=13.6mm,vcenter,scale=.61]
  \smallnodes
  \def\r{1.2}
  \def\s{1.1}
  \def\a{40}
  \def\e{20}
  \def\v{.75}
  \def\h{1.25}
  \draw[0cell=\smallzero] 
  (-\h,0) node (l) {\bigl(X\bigl(YZ\bigr)\bigr)W}
  (l) ++(0,\v) node (lt) {\bigl(\bigl(XY\bigr)Z\bigr)W}
  (lt) ++(45:\s) node (ltt) {\bigl(\bigl(YX\bigr)Z\bigr)W}
  (l) ++(0,-\v) node (lb) {X\bigl(\bigl(YZ\bigr)W\bigr)}
  (lb) ++(-45:\s) node (lbb) {\bigl(\bigl(YZ\bigr)W\bigr)X}
  (\h,0) node (r) {\bigl(\bigl(YZ\bigr)X\bigr)W}
  (r) ++(0,\v) node (rt) {\bigl(Y\bigl(ZX\bigr)\bigr)W}
  (rt) ++(135:\s) node (rtt) {\bigl(Y\bigl(XZ\bigr)\bigr)W}
  (r) ++(0,-\v) node (rb) {\bigl(YZ\bigr)\bigl(XW\bigr)}
  (rb) ++(-135:\s) node (rbb) {\bigl(YZ\bigr)\bigl(WX\bigr)}
  (lt) ++(180-\a+\e:\r) node (lt2) {\bigl(XY\bigr)\bigl(ZW\bigr)}
  (ltt) ++(180-\a-\e:\r) node (ltt2) {\bigl(YX\bigr)\bigl(ZW\bigr)}
  (rtt) ++(\a+\e:\r) node (rtt2) {Y\bigl(\bigl(XZ\bigr)W\bigr)}
  (rt) ++(\a-\e:\r) node (rt2) {Y\bigl(\bigl(ZX\bigr)W\bigr)}
  (rb) ++(-\a+\e:\r) node (rb2) {Y\bigl(Z\bigl(XW\bigr)\bigr)}
  (rbb) ++(-\a-\e:\r) node (rbb2) {Y\bigl(Z\bigl(WX\bigr)\bigr)}
  (lbb) ++(180+\a+\e:\r) node (lbb2) {\bigl(Y\bigl(ZW\bigr)\bigr)X}
  (lb) ++(180+\a-\e:\r) node (lb2) {X\bigl(Y\bigl(ZW\bigr)\bigr)}
  node[between=ltt2 and rtt2 at .5] (mt) {Y\bigl(X\bigl(ZW\bigr)\bigr)}
  node[between=lbb2 and rbb2 at .5] (mb) {Y\bigl(\bigl(ZW\bigr)X\bigr)}
  (lt2.east) ++(0:-.02) node (lt2') {}
  (ltt2.south) ++(-90:.00) node (ltt2') {}
  (rb.south) ++(0:.0) node (rb') {}
  (rbb.east) ++(-.05,.05) node (rbb') {}  
  ;
  \draw[1cell] 
  (lt) edge['] node {\bigl(\dB 1\bigr)1} (ltt)
  (ltt) edge['] node {\dA1} (rtt)
  (rtt) edge['] node {\bigl(1\dB\bigr)1} (rt)
  (rt) edge['] node {\dAdot 1} (r)
  (lt) edge node {\dA1} (l)
  (l) edge node {\dB_{X,YZ}\,1_W} (r)
  (r) edge['] node {\dA} (rb)
  (rb) edge[',bend right=20] node[pos=.2] {1_{YZ} \dB_{X,W}} (rbb)
  (l) edge node {\dA} (lb)
  (lb) edge node {\dB} (lbb)
  (lbb) edge node {\dA} (rbb)
  (lt2) edge node (1be) {\dB_{X,Y}\,1_{ZW}} (ltt2)
  (ltt2) edge node {\dA} (mt)
  (mt) edge node {1\dAdot} (rtt2)
  (rtt2) edge node {1\bigl(\dB_{X,Z}1\bigr)} (rt2)
  (rt2) edge node {1\dA} (rb2)
  (rb2) edge node (be11) {1\bigl(1\dB_{X,W}\bigr)} (rbb2)
  (lt2) edge['] node {\dA} (lb2)
  (lb2) edge['] node {\dB_{X,Y(ZW)}} (lbb2)
  (lbb2) edge['] node {\dA} (mb)
  (mb) edge['] node {1\dA} (rbb2)
  (lt2) edge['] node {\dAdot} (lt)
  (ltt2) edge node {\dAdot} (ltt)
  (rtt2) edge['] node {\dAdot} (rtt)
  (rt2) edge node {\dAdot} (rt)
  (lb2) edge node {1\dAdot} (lb)
  (lbb2) edge['] node {\dAdot 1} (lbb)
  (rbb) edge[',bend right=20] node {\dA} (rbb2)
  (rb2) edge['] node {\dAdot} (rb)
  (lt2') edge[',bend right=30,looseness=1] node[pos=.75] (be11) {\dB\,\bigl(11\bigr)} (ltt2')
  (rb') edge[bend left=0,looseness=1] node[pos=.15,scale=.7] (11be) {\bigl(11\bigr)\,\dB} (rbb')
  ;
  \draw[2cell] 
  (lt2) ++(35:1) node[shift={(0,0)}, rotate=-45, anno] (T1) {=}
  (rb) ++(220:.55) node[shift={(0,0)}, rotate=135, anno] (T2) {=}
  node[between=lt and rt at .5, shift={(0,-.2)}, rotate=225,
    2label={below left,R^2_{X|Y,Z}1}] {\Rightarrow}
  node[between=lb and rb at .5, shift={(0,.1)}, anno] {R = 1}
  (mt) ++(0,-.45) node[rotate=-135, 2label={above left,\pi_5^\inv}] {\Rightarrow}
  (mb) ++(0,.45) node[rotate=-90, 2label={above,\pi_{10}}] {\Rightarrow}
  (l) ++(-.6,0) node[rotate=180, 2label={below,\pi_3^\inv}] {\Rightarrow}
  (r) ++(.6,0) node[rotate=180, 2label={below,\pi_6^\inv}] {\Rightarrow}
  node[between=rt and rtt2 at .4, shift={(.1,0)}, anno] {\natAdot}
  node[between=lb and lbb2 at .4, shift={(-.2,0)}, anno] {\natB}
  (lt) ++(95:.5) node[anno] {\natAdot}
  (rb2) ++(190:.7) node[anno] {\natAdot}
  ;
  \draw[0cell=\smallzero]
  (rbb2) ++(80:.8) node[scale=.6] (P) {Y \bigl(Z \bigl(WX\bigr)\bigr)}
  ;
  \draw[1cell=.7]
  (rbb2) edge['] node {1} (P)
  (P) edge['] node {\dAdot} (rbb)
  (rb2) edge node[pos=.8] {1\bigl(1\dB\bigr)} (P)
  ;
  \draw[2cell=.7]
  (rbb2)+(120:.5) node[rotate=155, 2label={below,\epzainv}] {\Rightarrow}
  ;
  \draw[font=\Huge] (0,3) node [rotate=90] (eq) {$=$};
  \begin{scope}[yscale=1]
    \def\sh{++(0,6)}
    \def\shT{++(0,5.6)}
    \def\shTS{++(0,4.6)}
    \def\shTST{++(0,4.2)}
  \draw[0cell=\smallzero] 
  (lt2)\shTS node (lt2B) {\bigl(XY\bigr)\bigl(ZW\bigr)}
  (ltt2)\shTST node (ltt2B) {\bigl(YX\bigr)\bigl(ZW\bigr)}
  (rtt2)\shTST node (rtt2B) {Y\bigl(\bigl(XZ\bigr)W\bigr)}
  (rt2)\shTS node (rt2B) {Y\bigl(\bigl(ZX\bigr)W\bigr)}
  (rb2)\shT node (rb2B) {Y\bigl(Z\bigl(XW\bigr)\bigr)}
  (rbb2)\sh node (rbb2B) {Y\bigl(Z\bigl(WX\bigr)\bigr)}
  (lbb2)\sh node (lbb2B) {\bigl(Y\bigl(ZW\bigr)\bigr)X}
  (lb2)\shT node (lb2B) {X\bigl(Y\bigl(ZW\bigr)\bigr)}
  (mt)\shTST node (mtB) {Y\bigl(X\bigl(ZW\bigr)\bigr)}
  (mb)\sh node (mbB) {Y\bigl(\bigl(ZW\bigr)X\bigr)}
  ;
  \draw[1cell] 
  (lt2B) edge node (1be) {\dB_{X,Y}\,1_{ZW}} (ltt2B)
  (ltt2B) edge node {a} (mtB)
  (mtB) edge node {1\dAdot} (rtt2B)
  (rtt2B) edge node {1\bigl(\dB_{X,Z}1\bigr)} (rt2B)
  (rt2B) edge node {1\dA} (rb2B)
  (rb2B) edge node (be11) {1\bigl(1\dB_{X,W}\bigr)} (rbb2B)
  (lt2B) edge['] node {\dA} (lb2B)
  (lb2B) edge['] node {\dB_{X,Y(ZW)}} (lbb2B)
  (lbb2B) edge['] node {\dA} (mbB)
  (mbB) edge['] node {1\dA} (rbb2B)
  (mtB) edge node[pos=.33] {1_B\,\dB_{X,ZW}} (mbB)
  ;
  \draw[2cell] 
  node[between=ltt2B and lbb2B at .5, shift={(0,0)}, anno] {R = 1}
  node[between=rtt2B and rbb2B at .5, shift={(0,0)}, rotate=225, 2label={below left,1R^1_{X|Z,W}}] {\Rightarrow}
  ;
  \end{scope}
\end{tikzpicture}
\end{equation}

\noindent\textbf{(2,2)-Crossing Axiom}
\begin{equation}\label{eq:22-cross-axiom}
  \begin{tikzpicture}[xscale=10,yscale=1.2,baseline={(eq.base)},scale=.61]
    \smallnodes
\def\xs{\tiny}
\def\m{.6} \def\v{1} \def\u{1.5} \def\t{-4.7}
\def\c{.2} \def\d{.1} \def\e{.7} \def\f{.26} \def\g{.02} \def\h{1} \def\i{.1}
\draw[font=\Huge] (0,0) node [rotate=90] (eq) {$=$};
\newcommand{\boundary}{
\draw[0cell=\smallzero] 
(0,0) node (x1) {\bigl(X\bigl(YZ\bigr)\bigr)W}
($(x1)+(0,\v)$) node (x2) {\bigl(X\bigl(ZY\bigr)\bigr)W} 
($(x2)+(0,\v)$) node (x3) {\bigl(\bigl(XZ\bigr)Y\bigr)W}
($(x3)+(\i,\v)$) node (x4) {\bigl(XZ\bigr)\bigl(YW\bigr)}
($(x1)+(\h,0)$) node (x9) {Z\bigl(\bigl(WX\bigr)Y\bigr)}
($(x9)+(0,\v)$) node (x8) {Z\bigl(\bigl(XW\bigr)Y\bigr)}
($(x8)+(0,\v)$) node (x7) {Z\bigl(X\bigl(WY\bigr)\bigr)} 
($(x7)+(-\i,\v)$) node (x6) {\bigl(ZX\bigr)\bigl(WY\bigr)}
($(x4)!.5!(x6)$) node (x5) {\bigl(ZX\bigr)\bigl(YW\bigr)} 
($(x1)+(\g,-\v)$) node (y1) {\bigl(\bigl(XY\bigr)Z\bigr)W} 
($(x9)+(-\g,-\v)$) node (y4) {Z\bigl(W\bigl(XY\bigr)\bigr)}
($(y1)!.33!(y4)$) node (y2) {\bigl(XY\bigr)\bigl(ZW\bigr)}
($(y1)!.67!(y4)$) node (y3) {\bigl(ZW\bigr)\bigl(XY\bigr)} 
;
\draw[1cell] 
(x1) edge node {\bigl(1\dB_{Y,Z}\bigr)1} (x2)
(x2) edge node {\dAdot 1} (x3)
(x3) edge node[pos=.2] {\dA} (x4)
(x4) edge node {\dB_{X,Z}\bigl(11\bigr)} (x5)
(x5) edge node {\bigl(11\bigr)\dB_{Y,W}} (x6)
(x6) edge node[pos=.7] {\dA} (x7)
(x7) edge node {1\dAdot} (x8)
(x8) edge node {1\bigl(\dB_{X,W}1\bigr)} (x9)
(x1) edge node[swap,pos=.2] {\dAdot 1} (y1) 
(y1) edge node[swap] {\dA} (y2)
(y2) edge node[swap] {\dB_{XY,ZW}} (y3)
(y3) edge node[swap] {\dA} (y4)
(y4) edge node[swap,pos=.8] {1\dAdot} (x9)
;}
\begin{scope}[shift={(-\h/2,1.2*\u)},yscale=1.2]
\smallnodes
\boundary 
\draw[0cell=\smallzero]
($(x1)+(.33,\v/2)$) node (z1) {\bigl(Z\bigl(XY\bigr)\bigr)W}
($(x1)+(.67,\v/2)$) node (z4) {Z\bigl(\bigl(XY\bigr)W\bigr)} 
($(x3)!.33!(x7)$) node (z2) {\bigl(\bigl(ZX\bigr)Y\bigr)W}
($(x3)!.67!(x7)$) node (z3) {Z\bigl(X\bigl(YW\bigr)\bigr)} 
;
\draw[1cell] 
(y1) edge node {\dB 1} (z1)
(z1) edge node {\dA} (z4)
(z1) edge node[swap] {\dAdot 1} (z2)
(x3) edge node[swap] {\bigl(\dB 1\bigr)1} (z2)
(z2) edge node[swap] {\dA} (x5)
(x5) edge node[swap] {\dA} (z3)
(z3) edge node[swap] {1\bigl(1\dB\bigr)} (x7)
(z3) edge node[swap] {1\dAdot} (z4)
(z4) edge node {1\dB} (y4)
;
\draw[2cell]
node[between=x4 and z2 at .5, shift={(0,0)}, anno] {\natA}
node[between=z3 and x6 at .5, shift={(0,0)}, anno] {\natA}
node[between=x1 and z1 at .4, shift={(0,1)}, anno] {R1 = 1}
node[between=z4 and x9 at .6, shift={(0,1)}, anno] {1R = 1}
node[between=z1 and z4 at .5, shift={(0,2)}, rotate=-90, 2label={above,\pi_8}] {\Rightarrow}
node[between=y2 and y3 at .5, shift={(0,1.5)}, anno] {R = 1}
;
\end{scope}  
\begin{scope}[shift={(-\h/2,\t)},yscale=1.4]
\boundary 
\draw[0cell=\smallzero]
($(x5)+(0,-.7)$) node (z0) {\bigl(XZ\bigr)\bigl(WY\bigr)} 
($(x3)+(\f,\c)$) node (za1) {X\bigl(Z\bigl(YW\bigr)\bigr)} 
($(za1)+(-\d,-\e)$) node (za2) {X\bigl(\bigl(ZY\bigr)W\bigr)} 
($(za1)+(\d,-\e)$) node (za3) {X\bigl(Z\bigl(WY\bigr)\bigr)}
($(za2)+(0,-\v)$) node (za4) {X\bigl(\bigl(YZ\bigr)W\bigr)}
($(za3)+(0,-\v)$) node (za5) {X\bigl(\bigl(ZW\bigr)Y\bigr)}
($(za4)+(\d,-\e)$) node (za6) {X\bigl(Y\bigl(ZW\bigr)\bigr)}
($(x3)+(1-\f,\c)$) node (zb1) {\bigl(\bigl(ZX\bigr)W\bigr)Y} 
($(zb1)+(-\d,-\e)$) node (zb2) {\bigl(\bigl(XZ\bigr)W\bigr)Y} 
($(zb1)+(\d,-\e)$) node (zb3) {\bigl(Z\bigl(XW\bigr)\bigr)Y}
($(zb2)+(0,-\v)$) node (zb4) {\bigl(X\bigl(ZW\bigr)\bigr)Y}
($(zb3)+(0,-\v)$) node (zb5) {\bigl(Z\bigl(WX\bigr)\bigr)Y}
($(zb4)+(\d,-\e)$) node (zb6) {\bigl(\bigl(ZW\bigr)X\bigr)Y}
;
\draw[1cell] 
(x4) edge node[font=\xs,pos=.6] {\bigl(11\bigr)\dB} (z0) 
(z0) edge node[font=\xs,pos=.4] {\dB\bigl(11\bigr)} (x6)
(za4) edge node[',font=\xs,pos=.75] {1\bigl(\dB 1\bigr)} (za2) 
(za2) edge node[pos=.3] {1\dA} (za1)
(za1) edge node[font=\xs,pos=.7] {1\bigl(1\dB\bigr)} (za3)
(za4) edge node[swap,pos=.3] {1\dA} (za6) 
(za6) edge node[swap,pos=.7] {1\dB} (za5)
(za5) edge node[swap,pos=.6] {1\dA} (za3)
(x1) edge node[swap] {\dA} (za4) 
(x2) edge node[pos=.6] {\dA} (za2) 
(za1) edge node[pos=.7] {\dAdot} (x4) 
(za3) edge node[swap] {\dAdot} (z0) 
(za5) edge node[swap] {\dAdot} (zb4) 
(za6) edge node[swap,pos=.1] {\dAdot} (y2) 
(zb4) edge node[pos=.6] {\dAdot 1} (zb2) 
(zb2) edge node[font=\xs,pos=.4] {\bigl(\dB 1\bigr)1} (zb1)
(zb1) edge node {\dA1} (zb3)
(zb3) edge node[font=\xs,swap,pos=.25] {\bigl(1\dB\bigr)1} (zb5)
(zb4) edge node[swap,pos=.3] {\dB 1} (zb6) 
(zb6) edge node[swap,pos=.7] {\dA1} (zb5)
(z0) edge node[swap] {\dAdot} (zb2) 
(x6) edge node[pos=.3] {\dAdot} (zb1)
(zb3) edge node[pos=.4] {\dA} (x8)
(zb5) edge node[swap] {\dA} (x9)
(y3) edge node[swap,pos=.9] {\dAdot} (zb6)
;
\draw[2cell]
node[between=x5 and z0 at .5, shift={(-.0,0)}, rotate=90, anno] {=}
node[between=x3 and za1 at .4, shift={(0,-.2)}, 2label={above,\pi_7}] {\Rightarrow}
node[between=x7 and zb1 at .4, shift={(0,-.2)}, rotate=180, 2label={below,\pi_3}] {\Rightarrow}
node[between=za1 and z0 at .5, shift={(0,-.1)}, anno] {\natAdot}
node[between=zb1 and z0 at .5, shift={(0,-.1)}, anno] {\natAdot}
node[between=x2 and za4 at .5, shift={(0,0)}, anno] {\natAdot}
node[between=x8 and zb5 at .5, shift={(0,0)}, anno] {\natA}
node[between=za4 and za5 at .5, shift={(0,.8)}, anno] {1R = 1}
node[between=za5 and zb4 at .5, shift={(0,1.5)}, rotate=-45, 2label={above,\pi_9}] {\Rightarrow}
node[between=zb4 and zb5 at .3, shift={(-.2,.8)}, rotate=-90, 2label={above,R^1_{X|Z,W}1}] {\Rightarrow}
node[between=za6 and zb6 at .5, shift={(0,0)}, anno] {R = 1}
node[between=y1 and y2 at .4, shift={(0,.8)}, rotate=225, 2label={below,\pi_7^{-1}}] {\Rightarrow}
node[between=y3 and y4 at .6, shift={(0,.8)}, rotate=-45, 2label={above,\pi_3^{-1}}] {\Rightarrow}
;
\end{scope}     
\end{tikzpicture}
\end{equation}

\noindent\textbf{Yang-Baxter Axiom}
\begin{equation}\label{eq:YB-axiom}
  \begin{tikzpicture}[x=17mm,y=17mm,baseline={(eq.base)},scale=.7]
    \smallnodes
\def\m{1.4} \def\h{1}
\def\a{45} \def\d{1.2} \def\e{.75}
\draw[font=\Huge] (0,-.707-\d-.25*\e) node [rotate=0] (eq) {$=$};
\newcommand{\boundary}{
\draw[0cell=\smallzero] 
(0,0) node (x1) {\bigl(XY\bigr)Z}
($(x1)+(-3*\a:1)$) node (x2) {\bigl(YX\bigr)Z} 
($(x2)+(-2*\a:\d)$) node (x3) {Y\bigl(XZ\bigr)} 
($(x3)+(-2*\a:\e)$) node (x4) {Y\bigl(ZX\bigr)}
($(x4)+(-2*\a:\d)$) node (x5) {\bigl(YZ\bigr)X}
($(x5)+(-1*\a:1)$) node (x6) {\bigl(ZY\bigr)X}
($(x6)+(-0*\a:1)$) node (x7) {Z\bigl(YX\bigr)}
($(x1)+(0*\a:1)$) node (y1) {X\bigl(YZ\bigr)} 
($(y1)+(-1*\a:1)$) node (y2) {X\bigl(ZY\bigr)}
($(y2)+(-2*\a:\d)$) node (y3) {\bigl(XZ\bigr)Y}
($(y3)+(-2*\a:\e)$) node (y4) {\bigl(ZX\bigr)Y}
($(y4)+(-2*\a:\d)$) node (y5) {Z\bigl(XY\bigr)}
;
\draw[1cell] 
(x1) edge['] node {\dB_{X,Y}1} (x2)
(x2) edge['] node {\dA} (x3)
(x3) edge['] node[pos=.5] {1\dB_{X,Z}} (x4)
(x4) edge['] node[pos=.5] {\dAdot} (x5)
(x5) edge['] node {\dB_{Y,Z}1} (x6)
(x6) edge['] node {\dA} (x7)
(x1) edge node {\dA} (y1) 
(y1) edge node {1\dB_{Y,Z}} (y2)
(y2) edge node {\dAdot} (y3)  
(y3) edge node[pos=.5] {\dB_{X,Z}1} (y4) 
(y4) edge node {\dA} (y5) 
(y5) edge node {1\dB_{X,Y}} (x7) 
;}
\begin{scope}[shift={(-\h-\m,0)},rotate=0*\a]
\boundary 
\draw[0cell=\smallzero]
node[between=x5 and y1 at .4,shift={(.4,0)}] (z) {\bigl(YZ\bigr)X} 
;
\draw[1cell] 
(y1) edge node[swap,pos=.7] {\dB} (z)
(z) edge node[pos=.5] {\dA} (x4)
(z) edge node[pos=.5] (one) {1} (x5)
(z) edge[bend right=0] node[pos=.35] {\dB 1} (x6)
(y2) edge[bend right=0] node[pos=.4] {\dB} (x6)
;
\draw[2cell]
node[between=x1 and z at .5, shift={(-.2,0)}, anno] {R = 1}
node[between=x4 and one at .5, shift={(0,0)}, rotate=-0, 2label={below,\etaainv}] {\Rightarrow}
node[between=one and x6 at .25, shift={(0,0)}, rotate=0, anno] {=}
node[between=y2 and z at .5, anno] {\natB}
node[between=z and y5 at .55, shift={(0,-.5)}, rotate=-0, 2label={above,\bigl(R^1_{X|Z,Y}\bigr)^{-1}}] {\Rightarrow}
;
\end{scope}  
\begin{scope}[shift={(\m,0)}]
\boundary 
\draw[0cell=\smallzero]
node[between=x2 and x7 at .4,shift={(.4,0)}] (z) {\bigl(YX\bigr)Z}
;
\draw[1cell] 
(x1) edge node[pos=.65] {\dB 1} (z)
(x2) edge node[pos=.5] (one) {1} (z)
(x3) edge node[',pos=.5] {\dAdot} (z)
(z) edge node[',pos=.5] {\dB} (x7)
(x1) edge node[pos=.5] {\dB} (y5)
;
\draw[2cell]
node[between=z and x6 at .5, shift={(-.2,0)}, rotate=0, 2label={above,R^1_{Y,X|Z}}] {\Rightarrow}
node[between=x3 and one at .5, shift={(-30:0)}, rotate=0, 2label={above,\hspace{-2ex}\etaainv}] {\Rightarrow}
node[between=x1 and one at .6, shift={(-.1,0)}, anno] {=}
node[between=z and y5 at .45, shift={(0,0)}, anno] {\natB}
node[between=y2 and z at .45, shift={(0,.1)}, rotate=0, 2label={above,\bigl(R^3_{X,Y|Z}\bigr)^{-1}}] {\Rightarrow}
;
\end{scope}  
\end{tikzpicture}
\end{equation}

\noindent\textbf{(2,1)-Syllepsis Axiom}
\begin{equation}\label{eq:21-syl-axiom}
  \begin{tikzpicture}[x=30mm,y=30mm,scale=.5,baseline={(eq.base)}]
    \smallnodes
\def\w{1.3}
\def\a{360/7}
\draw[font=\Huge] (0,0) node [rotate=0] (eq) {$=$};
\newcommand{\boundary}{
\draw[0cell=\smallzero] 
(90:1) node (x1) {X\bigl(YZ\bigr)}
(90+\a:1) node (x2) {X\bigl(ZY\bigr)} 
(90+2*\a:1) node (x3) {\bigl(XZ\bigr)Y} 
(90+3*\a:1) node (x4) {\bigl(ZX\bigr)Y}
(90+4*\a:1) node (x5) {Z\bigl(XY\bigr)}
(90+5*\a:1) node (x6) {\bigl(XY\bigr)Z}
(90+6*\a:1) node (x7) {X\bigl(YZ\bigr)}
;
\draw[1cell] 
(x1) edge node (one) {1} (x7) 
(x1) edge['] node {1\dB_{Y,Z}} (x2)
(x2) edge node[swap] {\dAdot} (x3)
(x3) edge node[swap] (bAC1) {\dB_{X,Z}1} (x4)
(x4) edge node[swap] (a) {\dA} (x5)
(x5) edge['] node[pos=.5] {\dB_{Z,XY}} (x6)
(x6) edge['] node[pos=.5] {\dA} (x7)
;}
\begin{scope}[shift={(-\w,0)}]
\boundary 
\draw[0cell=\smallzero]
($(x2)+(-18:.9)$) node (z1) {\bigl(XY\bigr)Z} 
($(z1)+(180+3*180/7:.7)$) node (z2) {Z\bigl(XY\bigr)} 
;
\draw[1cell] 
(x1) edge node[swap,pos=.4] {\dAdot} (z1)
(z1) edge node[swap,pos=.5] (aABC) {\dA} (x7)
(z1) edge node[pos=.5,swap] {1} (x6)
(z1) edge node[swap,pos=.6] {\dB} (z2)
(z2) edge node[pos=.5] (B) {\dB} (x6)
(z2) edge node[swap] {\dAdot} (x4)
(z2) edge node[pos=.6] {1} (x5)
;
\draw[2cell]
node[between=x3 and z1 at .5, shift={(-51:0)}, anno] {R = 1}
node[between=z2 and a at .55, shift={(0,0)}, rotate=20, 2label={above,\epza}] {\Rightarrow}
node[between=x5 and B at .5, shift={(25:.1)}, rotate=110, anno] {=}
node[between=z1 and B at .6, shift={(231:.05)}, anno] {\syl = 1}
node[between=aABC and x6 at .3, shift={(0,0)}, rotate=110, anno] {=}
node[between=x1 and aABC at .6, shift={(0,0)}, rotate=70, 2label={above,\epza}] {\Rightarrow}
;
\end{scope}  
\begin{scope}[shift={(\w,0)}]
\boundary 
\draw[0cell=\smallzero]
($(x5)+(180-40:.9)$) node (z2) {\bigl(XZ\bigr)Y} 
($(z2)+(2*180/7:.7)$) node (z1) {X\bigl(ZY\bigr)} 
;
\draw[1cell] 
(x1) edge node[swap,pos=.4] (1B) {1\dB} (z1)
(z1) edge node[swap,pos=.4] {1\dB} (x7)
(x2) edge node[swap,pos=.5] {1} (z1)
(x2) edge['] node[pos=.4] (abdot) {\dAdot} (z2)
(x3) edge node[pos=.5] {1} (z2)
(x4) edge node[swap,pos=.5] {\dB 1} (z2)
(z2) edge node[swap,pos=.5] {\dA} (z1)
;
\draw[2cell]
node[between=x6 and z2 at .5, shift={(180-51:0)}, anno] {R = 1}
node[between=bAC1 and z2 at .55, shift={(-60:.05)}, anno] {\syl 1 = 1}
node[between=x3 and abdot at .6, shift={(-60:.05)}, rotate=30, anno] {=}
node[between=abdot and z1 at .55, shift={(-.02,-.075)}, rotate=80, 2label={above,\epza}] {\Rightarrow}
node[between=x2 and 1B at .65, shift={(0,.0)}, rotate=10, anno] {=}
node[between=1B and x7 at .45, shift={(0,-.0)}, anno] {1\syl = 1}
;
\end{scope}  
\end{tikzpicture}
\end{equation}

\noindent\textbf{(1,2)-Syllepsis Axiom}

\begin{equation}\label{eq:12-syl-axiom}
  \begin{tikzpicture}[x=30mm,y=30mm,scale=.5,baseline={(eq.base)}]
  \smallnodes
\def\w{1.3}
\def\a{360/7}
\draw[font=\Huge] (0,0) node [rotate=0] (eq) {$=$};
\newcommand{\boundary}{
\draw[0cell=\smallzero] 
(90:1) node (x1) {Y\bigl(XZ\bigr)}
(90+\a:1) node (x2) {\bigl(YX\bigr)Z} 
(90+2*\a:1) node (x3) {\bigl(XY\bigr)Z} 
(90+3*\a:1) node (x4) {\bigl(XY\bigr)Z}
(90+4*\a:1) node (x5) {X\bigl(YZ\bigr)}
(90+5*\a:1) node (x6) {\bigl(YZ\bigr)X}
(90+6*\a:1) node (x7) {Y\bigl(ZX\bigr)}
;
\draw[1cell] 
(x7) edge['] node (one) {1\dB_{Z,X}} (x1)
(x2) edge['] node {\dB_{Y,X}1} (x3)
(x1) edge node[swap] {\dAdot} (x2)
(x4) edge['] node[swap] (bAC1) {1} (x3)
(x4) edge node[swap] (a) {\dA} (x5)
(x5) edge['] node[pos=.5] {\dB_{X,YZ}} (x6)
(x6) edge['] node[pos=.5] {\dA} (x7)
;}
\begin{scope}[shift={(\w,0)},rotate=-3*360/7]
\boundary 
\draw[0cell=\smallzero]
($(x1)+(-18-360/7:.9)$) node (z1) {\bigl(YZ\bigr)X} 
($(z1)+(8*180/7:.7)$) node (z2) {X\bigl(YZ\bigr)} 
;
\draw[1cell] 
(x7) edge node[',pos=.5] (aABC) {\dAdot} (z1)
(z1) edge node[swap,pos=.5] {\dB} (z2)
(z2) edge node[swap] {\dAdot} (x3)
(x6) edge node[pos=.4] {1} (z1)
(x5) edge node[pos=.5] (B) {\dB} (z1)
(x5) edge node[',pos=.5] {1} (z2)
(x4) edge node[',pos=.6] {\dA} (z2)
;
\draw[2cell]
node[between=x2 and z1 at .5, shift={(-51:0)}, anno] {R = 1}
node[between=bAC1 and z2 at .45, shift={(0,0)}, rotate=80, 2label={above,\etaainv\hspace{-.8ex}}] {\Rightarrow}
node[between=z2 and a at .55, shift={(0,0)}, rotate=20, anno] {=}
node[between=z2 and B at .6, shift={(-90:0.09)}, anno] {\syl = 1}
node[between=x6 and B at .6, shift={(231:.05)}, rotate=30, anno] {=}
node[between=aABC and x6 at .3, shift={(90:0.08)}, rotate=90, 2label={above,\etaainv}] {\Rightarrow}
;
\end{scope}  
\begin{scope}[shift={(-\w,0)}]
\tikzset{rotate=-3*360/7}
\boundary 
\draw[0cell=\smallzero]
($(x5)+(180-40:.9)$) node (z2) {\bigl(YX\bigr)Z} 
($(z2)+(2*180/7:.7)$) node (z1) {Y\bigl(XZ\bigr)} 
;
\draw[1cell] 
(z1) edge node[pos=.5] (1B) {1} (x1)
(z1) edge node[swap,pos=.4] {1\dB} (x7)
(z1) edge node[pos=.5] {\dAdot} (x2)
(z2) edge['] node[pos=.4] (abdot) {1} (x2)
(z2) edge node[',pos=.5] {\dB 1} (x3)
(x4) edge node[swap,pos=.5] {\dB 1} (z2)
(z2) edge node[swap,pos=.5] {\dA} (z1)
;
\draw[2cell]
node[between=x6 and z2 at .5, shift={(180-51:0)}, anno] {R = 1}
node[between=bAC1 and z2 at .4, shift={(-60:.05)}, anno] {\syl 1 = 1}
node[between=x3 and abdot at .6, shift={(0:.05)}, rotate=110, anno] {=}
node[between=abdot and z1 at .35, shift={(0:0)}, rotate=60, 2label={above,\etaainv\hspace{-1.2ex}}] {\Rightarrow}
node[between=x2 and 1B at .7, shift={(0,.0)}, rotate=100, anno] {=}
node[between=1B and x7 at .4, shift={(0,-.0)}, anno] {1\syl = 1}
;
\end{scope}  
\end{tikzpicture}
\end{equation}

\section{Symmetric monoidal 2-functors}
\label{sec:sm2fun-defn}

In this section we give the definition of symmetric monoidal 2-functor
\[
  H \cn (\B,\otimes) \to (\C,\otimes)
\]
under the simplifying assumption that each of the monoidal products $\otimes$ is 2-functorial.
We refer the reader to  \cite{GPS95Coherence,McCru00Balanced,SP2011Classification} for further discussion of the general structures.
Also see \cite{gurski11,GO2012Infinite} for an equivalent presentation and general coherence theorems.

\begin{defn}\label{defn:sm2fun}
  Suppose $(\B,\otimes)$ and $(\C,\otimes)$ are symmetric monoidal 2-categories.
  A \emph{symmetric monoidal 2-functor}
  \[
    H \cn \B \to \C
  \]
  consists of a 2-functor $H$ together with the following data.  We abbreviate the monoidal products as juxtaposition and we write $H(X) = \ol{X}$. 
  \begin{description}
  \item[Pseudonatural equivalences]\
    \begin{itemize}
    \item It has a monoidal constraint $\Phi\cn \ol{X}\ \ol{Y} \to \ol{XY}$.
    \item It has a unit constraint $\Psi\cn S \to \ol{S}$.
    \end{itemize}
  \item[Invertible modifications]\
    \begin{itemize}
    \item It has an associativity modification $\om$ with
    \item left and right unity modifications $\ga^\dL$ and $\ga^\dR$ as follows.
      \[
        \begin{tikzpicture}[x=30mm,y=15mm,vcenter]
          \draw[0cell=.8] 
          (0,0) node (a) {\bigl(  \ol{X} \  \ol{Y} \bigr) \;  \ol{Z}}
          (1,0) node (b) {\ol{X} \; \bigl(  \ol{Y} \  \ol{Z} \bigr)}
          (1,-1) node (c) {\ol{X} \  \ol{Y  Z} }
          (1,-2) node (d) {\ol{X  \bigl( Y  Z \bigr)}}
          (0,-1) node (b') {\ol{X  Y}  \  \ol{Z}}
          (0,-2) node (c') {\ol{\bigl( X  Y \bigr)  Z}}
          ;
          \draw[1cell=.8] 
          (a) edge node {\dA} (b)
          (b) edge node {1 \Phi} (c)
          (c) edge node {\Phi} (d)
          (a) edge['] node {\Phi 1} (b')
          (b') edge['] node {\Phi} (c')
          (c') edge['] node {\ol{\dA}} (d)
          ;
          \draw[2cell]
          node[between=b' and c at .5, rotate=45, 2label={above,\om}] {\Rightarrow}
          ;
        \end{tikzpicture}
        \qquad
        \begin{tikzpicture}[x=25mm,y=15mm,vcenter]
          \draw[0cell=.8] 
          (0,0) node (la) {S \  \ol{X}}
          (1,0) node (lb) { \ol{X}}
          (0,-1) node (la') { \ol{S} \   \ol{X}}
          (1,-1) node (lb') {\ol{S X}}
          (0,-1.75) node (ra) { \ol{X} \  S}
          (1,-1.75) node (rb) { \ol{X}}
          (0,-2.75) node (ra') { \ol{X} \  \ol{S}}
          (1,-2.75) node (rb') {\ol{X S}}
          ;
          \draw[1cell=.8] 
          (la) edge node {\dL} (lb)
          (la') edge['] node {\Phi} (lb')
          (la) edge['] node {\Psi 1} (la')
          (lb') edge['] node {\ol{\dL}} (lb)
          (ra) edge node {\dR} (rb)
          (ra') edge['] node {\Phi} (rb')
          (ra) edge['] node {1 \Psi} (ra')
          (rb') edge['] node {\ol{\dR}} (rb)
          ;
          \draw[2cell] 
          node[between=la and lb' at .5, rotate=-90, 2label={above,\;\ga^\dL}] {\Rightarrow}
          node[between=ra and rb' at .5, rotate=90, 2label={below,\;\ga^\dR}] {\Rightarrow}
          ;
        \end{tikzpicture}
      \]
    \item a symmetry modification $\ga^\dB$ as follows.
      \[
        \begin{tikzpicture}[x=25mm,y=15mm]
          \draw[0cell=.8] 
          (0,0) node (sa) { \ol{X} \  \ol{Y}}
          (1,0) node (sb) { \ol{Y} \  \ol{X}}
          (0,-1) node (sa') {\ol{X Y}}
          (1,-1) node (sb') {\ol{Y X}}
          ;
          \draw[1cell=.8] 
          (sa) edge node {\dB} (sb)
          (sa') edge['] node {\ol{\dB}} (sb')
          (sa) edge['] node {\Phi} (sa')
          (sb) edge node {\Phi} (sb')
          ;
          \draw[2cell] 
          node[between=sa and sb' at .5, rotate=45, 2label={above,\ga^\dB}] {\Rightarrow}
          ;
        \end{tikzpicture}
      \]
    \end{itemize}
  \end{description}
  These data satisfy two monoidal axioms \cite[(HTA1) and (HTA2), pp.~17--18]{GPS95Coherence}, two braid axioms \cite[(BHA1) and (BHA2), pp.~141--142]{McCru00Balanced}, and one sylleptic axiom \cite[(SHA1), p.~145]{McCru00Balanced}.
\end{defn}

We give the axioms for a symmetric monoidal 2-functor under the simplifying assumption that the monoidal products on its source and target are 2-functors.
With this assumption, modifications denoted $\om1$ etc. do not require implicit uses of the pseudofunctoriality constraints $\otimes_2$ or $\otimes_0$ in order to have the sources and targets as indicated below.
Moreover, 2-functoriality of $H = \ol{(\ )}$ implies that we have, for example,
\[
  1_{\ol{X}} = \ol{1_X} \andspace \ol{\dA \circ \dA} = \ol{\dA} \circ \ol{\dA}
\]
in the diagrams below.

In the axioms below, we denote the pseudonaturality constraints of $\dA$ and $\Phi$ with subscripts such as $\dA_{1, \Phi, 1}$ or $\Phi_{\dB,1}$.
These constraints will be identities in our application, but we include these, and other data, for completeness.
We will also need the following two mates associated to the data above.
\begin{equation}\label{eq:sm2fun-mates}
  \begin{tikzpicture}[x=30mm,y=15mm,vcenter]
    \draw[0cell=.8] 
    (0,0) node (a) {\bigl(  \ol{X} \   \ol{Y} \bigr) \;  \ol{Z}}
    (-1,0) node (b) { \ol{X} \; \bigl(  \ol{Y} \   \ol{Z} \bigr)}
    (-1,-1) node (c) { \ol{X} \   \ol{Y  Z} }
    (-1,-2) node (d) {\ol{X  \bigl( Y  Z \bigr)}}
    (0,-1) node (b') {\ol{X  Y} \   \ol{Z}}
    (0,-2) node (c') {\ol{\bigl( X  Y \bigr)  Z}}
    ;
    \draw[1cell=.8] 
    (b) edge node {\dAdot} (a)
    (b) edge['] node {1 \; \Phi} (c)
    (c) edge['] node {\Phi} (d)
    (a) edge node {\Phi \; 1} (b')
    (b') edge node {\Phi} (c')
    (d) edge node {\ol{\dAdot}} (c')
    ;
    \draw[2cell]
    node[between=b' and c at .5, rotate=45, 2label={above,\omdot}] {\Rightarrow}
    ;
  \end{tikzpicture}
  \qquad
  \begin{tikzpicture}[x=25mm,y=15mm,vcenter]
    \draw[0cell=.8] 
    (0,-1.75) node (ra) { \ol{X} \  S}
    (1,-1.75) node (rb) { \ol{X}}
    (0,-2.75) node (ra') { \ol{X} \  \ol{S}}
    (1,-2.75) node (rb') {\ol{X S}}
    ;
    \draw[1cell=.8] 
    (rb) edge['] node {\dRdot} (ra)
    (ra') edge['] node {\Phi} (rb')
    (ra) edge['] node {1 \; \Psi} (ra')
    (rb) edge node {\ol{\dRdot}} (rb')
    ;
    \draw[2cell] 
    node[between=ra and rb' at .5, shift={(0,-.1)}, rotate=180, 2label={below,\;\ga^\dR_1}] {\Rightarrow}
    ;
  \end{tikzpicture}
\end{equation}

The five axioms for a symmetric monoidal 2-functor
\[
  H \cn (\B, \otimes) \to (\C, \otimes),
\]
in which both monoidal products $\otimes$ are 2-functorial, are as follows.

\noindent\textbf{Monoidal Axiom}
\begin{equation}\label{eq:sm2fun-monoidal}
  \begin{tikzpicture}[x=30mm,y=30mm,scale=.5,baseline={(eq.base)}]
    \smallnodes
    \def\wl{3} 
    \def\wr{0} 
    \def\h{-.5} 
    \def\m{.8} 
    \draw[font=\Huge] (0,0) node [rotate=0] (eq) {$=$};
    \newcommand{\boundary}{
      \draw[0cell=\smallzero] 
      (0,0) node (a) {
        \bigl( \bigl( \ol{X} \ \ol{Y} \bigr) \; \ol{Z} \bigr) \; \ol{W}
      }
      (a)+(0,.8) node (b) {
        \bigl(\ol{XY} \ \ol{Z}\bigr) \; \ol{W}
      }
      (b)+(.5,.8) node (c) {
        \ol{\bigl(XY\bigr)Z} \ \ol{W}
      }
      (c)+(1,.5) node (d) {
        \ol{\bigl(\bigl(XY\bigr)Z\bigr)W}
      }
      (d)+(1,-.5) node (e) {
        \ol{\bigl(X\bigl(YZ\bigr)\bigr)W}
      }
      (e)+(.5,-.8) node (f) {
        \ol{X\bigl(\bigl(YZ\bigr)W\bigr)}
      }
      (f)+(0,-.8) node (g) {
        \ol{X\bigl(Y\bigl(ZW\bigr)\bigr)}
      }
      (a)+(.3,-.5) node (x) {
        \bigl(\ol{X} \ \ol{Y}\bigr) \; \bigl(\ol{Z} \ \ol{W}\bigr) 
      }
      (x)+(.5,-.5) node (y) {
        \ol{X} \; \bigl(\ol{Y} \; \bigl(\ol{Z} \ \ol{W}\bigr)\bigr)
      }
      (g)+(-.3,-.5) node (w) {
        \ol{X} \; \bigl(\ol{Y \bigl(ZW\bigr)}\bigr)
      }
      (w)+(-.5,-.5) node (z) {
        \ol{X} \; \bigl(\ol{Y} \ \ol{ZW}\bigr)
      }
      ;
      \draw[1cell]
      (a) edge node {\bigl(\Phi   1\bigr)  1} (b)
      (b) edge node {\Phi   1} (c)
      (c) edge node {\Phi} (d)
      (d) edge node {\ol{\dA 1}} (e)
      (e) edge node {\ol{\dA}} (f)
      (f) edge node {\ol{1 \dA}} (g)
      (a) edge['] node[pos=.3] {\dA} (x)
      (x) edge['] node[pos=.4] {\dA} (y)
      (y) edge['] node {1 \bigl(1 \Phi\bigr)} (z)
      (z) edge['] node[pos=.6] {1 \Phi} (w)
      (w) edge['] node[pos=.7] {\Phi} (g)
      ;
    }
    \begin{scope}[shift={(-\wl-\m,\h)}]
      \boundary 
      \draw[0cell=\smallzero]
      (d)+(0,-.7) node (t) {
        \ol{X \bigl(YZ\bigr)} \; \ol{W}
      }
      (t)+(-.25,-.4) node (q) {
        \bigl(\ol{X} \ \ol{YZ}\bigr) \; \ol{W}
      }
      (q)+(0,-1.2) node (r) {
        \ol{X} \; \bigl(\bigl(\ol{Y} \ \ol{Z}\bigr) \; \ol{W}\bigr)
      }
      (q)+(-.6,-.6) node (p) {
        \bigl(\ol{X} \; \bigl(\ol{Y} \ \ol{Z}\bigr)\bigr) \; \ol{W}
      }
      (q)+(.4,-.6) node (s) {
        \ol{X}\;\bigl(\ol{YZ} \  \ol{W}\bigr)
      }
      (s)+(.6,-.3) node (v) {
        \ol{X} \; \ol{\bigl(YZ\bigr)W}
      }
      ;
      \draw[1cell] 
      (a) edge node {\dA1} (p)
      (p) edge node[pos=.4] {\bigl(1\Phi\bigr)1} (q)
      (q) edge node {\dA} (s)
      (s) edge node {1\Phi} (v)
      (v) edge['] node[pos=.3] {1\ol{\dA}} (w)
      (p) edge['] node[pos=.3] {\dA} (r)
      (r) edge['] node {1\bigl( \Phi1 \bigr)} (s)
      (r) edge['] node {1 \dA} (y)
      (v) edge node {\Phi} (f)
      (q) edge['] node[pos=.7] {\Phi 1} (t)
      (c) edge['] node {\ol{\dA} 1} (t)
      (t) edge['] node {\Phi} (e)
      ;
      \draw[2cell]
      node[between=d and t at .5, rotate=-90, 2label={above,\Phi_{\dA,1}}] {\Rightarrow}
      node[between=c and p at .4, rotate=-90, 2label={above,\,\om 1}] {\Rightarrow}
      node[between=q and f at .5, rotate=-90, 2label={above,\,\om}] {\Rightarrow}
      node[between=p and s at .5, shift={(-.1,.15)}, rotate=-90, 2label={above,\dA_{1,\Phi,1}}] {\Rightarrow}
      node[between=a and r at .5, shift={(-.05,-.05)}, rotate=-90, 2label={above,\,\pi}] {\Rightarrow}
      node[between=y and v at .5, rotate=-90, 2label={above,\,1\om}] {\Rightarrow}
      node[between=v and g at .5, shift={(0,.15)}, rotate=-90, 2label={above,\Phi_{1,\dA}}] {\Rightarrow}
      ;
    \end{scope}  
    \begin{scope}[shift={(\wr+\m,\h)}]
      \boundary 
      \draw[0cell=\smallzero]
      (x)+(60:1) node (p') {
        \ol{XY} \; \bigl( \ol{Z} \ \ol{W}\bigr)
      }
      (x)+(0:1) node (r') {
        \bigl( \ol{X} \ \ol{Y} \bigr)\; \ol{ZW}
      }
      (x)+(30:1.6) node (q') {
        \ol{XY} \ \ol{ZW}
      }
      (e)+(225:.8) node (s') {
        \ol{\bigl( XY\bigr) \bigl(ZW\bigr)}
      }
      ;
      \draw[1cell]
      (b) edge node {\dA} (p')
      (x) edge['] node[pos=.7] {\Phi\bigl(11\bigr)} (p')
      (p') edge node[pos=.2] {1\Phi} (q')
      (x) edge node {\bigl(11\bigr)\Phi} (r')
      (r') edge['] node {\Phi 1} (q')
      (q') edge node {\Phi} (s')
      (s') edge node {\ol{\dA}} (g)
      (d) edge node {\ol{\dA}} (s')
      (r') edge node {\dA} (z)
      ;
      \draw[2cell]
      node[between=s' and e at .5, rotate=-90, 2label={above,\,\ol{\pi}}] {\Rightarrow}
      node[between=d and p' at .5, rotate=-90, 2label={above,\,\om}] {\Rightarrow}
      node[between=s' and z at .5, rotate=-90, 2label={above,\,\om}] {\Rightarrow}
      node[between=a and p' at .5, shift={(-.15,0.05)}, rotate=-90, 2label={above,\dA_{\Phi,1,1}}] {\Rightarrow}
      node[between=r' and y at .5, rotate=-90, 2label={above,\dA_{1,1,\Phi}}] {\Rightarrow}
      ;
    \end{scope}  
  \end{tikzpicture}
\end{equation}

\noindent\textbf{Middle Unity Axiom}
\begin{equation}\label{eq:sm2fun-midunity}
  \begin{tikzpicture}[x=30mm,y=30mm,scale=.5,baseline={(eq.base)}]
    \smallnodes
    \def\wl{3} 
    \def\wr{0} 
    \def\h{-1.25} 
    \def\m{.6} 
    \draw[font=\Huge] (0,0) node [rotate=0] (eq) {$=$};
    \newcommand{\boundary}{
      \draw[0cell=\smallzero] 
      (0,0) node (a) {
        \ol{X} \ \ol{Y}
      }
      (a)+(0,2) node (b) {
        \ol{XY}
      }
      (b)+(1,1) node (c) {
        \ol{\bigl(XS\bigr)Y}
      }
      (c)+(1,0) node (d) {
        \ol{X\bigl(SY\bigr)}
      }
      (d)+(1,-1) node (e) {
        \ol{XY}
      }
      (e)+(0,-2) node (x) {
        \ol{X} \ \ol{Y}
      }
      ;
      \draw[1cell]
      (a) edge node {\Phi} (b)
      (b) edge node {\ol{\dRdot 1}} (c)
      (c) edge node {\ol{\dA}} (d)
      (d) edge node {\ol{1\dL}} (e)
      (a) edge['] node {1} (x)
      (x) edge['] node {\Phi} (e)
      ;
    }
    \begin{scope}[shift={(-\wl-\m,\h)}]
      \boundary 
      \draw[0cell=\smallzero]
      (c)+(-.3,-1) node (r) {
        \ol{\bigl(XS\bigr)} \ \ol{Y}
      }
      (d)+(.3,-1) node (u) {
        \ol{X} \ \ol{SY}
      }
      (r)+(.4,-.7) node (s) {
        \bigl(\ol{X} \ \ol{S}\bigr) \; \ol{Y}
      }
      (u)+(-.4,-.7) node (t) {
        \ol{X} \; \bigl(\ol{S} \ \ol{Y}\bigr)
      }
      (s)+(0,-.7) node (p) {
        \bigl(\ol{X} \ S\bigr) \; \ol{Y}
      }
      (t)+(0,-.7) node (q) {
        \ol{X} \; \bigl(S \ \ol{Y}\bigr)
      }
      ;
      \draw[1cell] 
      (a) edge node[pos=.7] {\ol{\dRdot}\ 1} (r)
      (r) edge['] node {\Phi} (c)
      (a) edge['] node {\dRdot 1} (p)
      (p) edge node {\bigl(1 \Psi \bigr) 1} (s)
      (s) edge node {\Phi 1} (r)
      (s) edge node {\dA} (t)
      (t) edge node {1 \Phi} (u)
      (u) edge node {\Phi} (d)
      (u) edge node[pos=.3] {1 \ol{\dL}} (x)
      (p) edge node {\dA} (q)
      (q) edge node {1\dL} (x)
      (q) edge['] node {1\bigl(\Psi 1\bigr)} (t)
      ;
      \draw[2cell]
      node[between=b and r at .5, shift={(-.15,-.15)}, rotate=-90, 2label={above,\Phi_{\dRdot,1}}] {\Rightarrow}
      node[between=u and e at .5, shift={(-.05,-.15)}, rotate=-90, 2label={above,\Phi_{1,\dL}}] {\Rightarrow}
      node[between=r and u at .5, shift={(0,.2)}, rotate=-90, 2label={above,\,\om}] {\Rightarrow}
      node[between=p and t at .45, rotate=-45, 2label={above,\dA_{1,\dL,1}}] {\Rightarrow}
      (p)+(180:.5) node[rotate=0, 2label={above,\ga^{\dR}_1 1}] {\Rightarrow}
      (q)+(10:.45) node[rotate=-90, 2label={above,1 \ga^{\dL}}] {\Rightarrow}
      node[between=p and q at .5, shift={(0,-.25)}, rotate=-90, 2label={above,\,\mu}] {\Rightarrow}
      ;
    \end{scope}  
    \begin{scope}[shift={(\wr+\m,\h)}]
      \boundary 
      \draw[1cell] 
      (b) edge node {1} (e)
      ;
      \draw[2cell]
      node[between=a and e at .5, rotate=-90, 2label={above,\Phi_{1,1}}] {\Rightarrow}
      node[between=c and d at .5, shift={(0,-.35)}, rotate=-90, 2label={above,\ol{\mu}}] {\Rightarrow}
      ;
    \end{scope}
  \end{tikzpicture}
\end{equation}

\noindent\textbf{Left Braid Axiom}
\begin{equation}\label{eq:sm2fun-lbraid}
  \begin{tikzpicture}[x=27mm,y=30mm,scale=.5,baseline={(eq.base)}]
    \smallnodes
    \def\wl{3} 
    \def\wr{0} 
    \def\h{.7} 
    \def\m{.6} 
    \draw[font=\Huge] (0,0) node [rotate=0] (eq) {$=$};
    \newcommand{\boundary}{
      \draw[0cell=\smallzero] 
      (0,0) node (a) {
        \bigl(\ol{X}\ \ol{Y}\bigr)\; \ol{Z}
      }
      (a)+(1,.5) node (b) {
        \ol{X} \; \bigl(\ol{Y} \ \ol{Z}\bigr)
      }
      (b)+(1,0) node (c) {
        \bigl(\ol{Y} \ \ol{Z}\bigr) \; \ol{X}
      }
      (c)+(1,-.5) node (d) {
        \ol{Y} \; \bigl(\ol{Z} \ \ol{X}\bigr)
      }
      (d)+(0,-.8) node (e) {
        \ol{Y} \ \ol{ZX}
      }
      (e)+(0,-.7) node (f) {
        \ol{Y \bigl( ZX \bigr)}
      }
      (f)+(-1,-.5) node (w) {
        \ol{Y \bigl( XZ \bigr)}
      }
      (a)+(0,-.8) node (x) {
        \ol{XY} \ \ol{Z}
      }
      (x)+(0,-.7) node (y) {
        \ol{\bigl(XY\bigr) Z}
      }
      (y)+(1,-.5) node (z) {
        \ol{\bigl(YX\bigr)Z}
      }
      ;
      \draw[1cell] 
      (a) edge node {\dA} (b)
      (b) edge node {\dB} (c)
      (c) edge node {\dA} (d)
      (d) edge node {1\Phi} (e)
      (e) edge node {\Phi} (f)
      (a) edge['] node {\Phi 1} (x)
      (x) edge['] node {\Phi} (y)
      (y) edge['] node {\ol{\dB 1}} (z)
      (z) edge['] node {\ol{\dA}} (w)
      (w) edge['] node {\ol{1\dB}} (f)
      ;
    }
    \begin{scope}[shift={(-\wl-\m,\h)}]
      \boundary 
      \draw[0cell=\smallzero]
      (b)+(0,-.75) node (p) {
        \ol{X} \ \ol{YZ}
      }
      (c)+(0,-.75) node (q) {
        \ol{YZ} \ \ol{X}
      }
      (p)+(0,-.75) node (r) {
        \ol{X\bigl(YZ\bigr)}
      }
      (q)+(0,-.75) node (s) {
        \ol{\bigl(YZ\bigr)X}
      }
      ;
      \draw[1cell] 
      (b) edge['] node {1\Phi} (p)
      (p) edge['] node {\Phi} (r)
      (c) edge node {\Phi 1} (q)
      (q) edge node {\Phi} (s)
      (p) edge node {\dB} (q)
      (y) edge node {\ol{\dA}} (r)
      (r) edge node {\ol{\dB}} (s)
      (s) edge node {\ol{\dA}} (f)
      ;
      \draw[2cell]
      node[between=b and q at .55, rotate=45, 2label={above,\dB_{1,\Phi}}] {\Rightarrow}
      node[between=p and s at .55, rotate=45, 2label={above,\ga^\dB}] {\Rightarrow}
      node[between=a and r at .55, rotate=45, 2label={above,\om}] {\Rightarrow}
      node[between=q and e at .55, rotate=45, 2label={above,\om}] {\Rightarrow}
      node[between=r and w at .5, shift={(-.06,0)}, rotate=90, 2label={below,\,\ol{R}_{X|YZ}}] {\Rightarrow}
      ;
    \end{scope}  
    \begin{scope}[shift={(\wr+\m,\h)}]
      \boundary 
      \draw[0cell=\smallzero]
      (b)+(0,-1) node (p') {
        \bigl(\ol{Y} \ \ol{X}\bigr) \; \ol{Z} 
      }
      (c)+(0,-1) node (q') {
        \ol{Y} \; \bigl(\ol{X} \ \ol{Z}\bigr)
      }
      (p')+(0,-.7) node (r') {
        \ol{YX} \ \ol{Z}
      }
      (q')+(0,-.7) node (s') {
        \ol{Y} \ \ol{XZ}
      }
      ;
      \draw[1cell] 
      (a) edge node {\dB 1} (p')
      (p') edge node {\dA} (q')
      (q') edge node {1 \dB} (d)
      (x) edge['] node {\ol{\dB} 1} (r')
      (s') edge['] node {1 \ol{\dB}} (e)
      (p') edge['] node {\Phi 1} (r')
      (r') edge['] node {\Phi} (z)
      (q') edge node {1 \Phi} (s')
      (s') edge node {\Phi} (w)
      ;
      \draw[2cell]
      node[between=b and q' at .5, rotate=90, 2label={below,\,R_{\ol{X}|\ol{Y}\ol{Z}}}] {\Rightarrow}
      node[between=a and r' at .55, rotate=45, 2label={above,\ga^\dB 1}] {\Rightarrow}
      node[between=q' and e at .6, rotate=45, 2label={above,1 \ga^\dB}] {\Rightarrow}
      node[between=x and z at .55, rotate=45, 2label={above,\Phi_{\dB,1}}] {\Rightarrow}
      node[between=s' and f at .6, rotate=45, 2label={above,\Phi_{1,\dB}}] {\Rightarrow}
      node[between=r' and s' at .5, shift={(0.05,0)}, rotate=45, 2label={above,\om}] {\Rightarrow}
      ;
    \end{scope}  
  \end{tikzpicture}
\end{equation}

\noindent\textbf{Right Braid Axiom}
\begin{equation}\label{eq:sm2fun-rbraid}
  \begin{tikzpicture}[x=27mm,y=30mm,scale=.5,baseline={(eq.base)}]
    \smallnodes
    \def\wl{3} 
    \def\wr{0} 
    \def\h{.7} 
    \def\m{.6} 
    \draw[font=\Huge] (0,0) node [rotate=0] (eq) {$=$};
    \newcommand{\boundary}{
      \draw[0cell=\smallzero] 
      (0,0) node (a) {
        \ol{X} \; \bigl(\ol{Y}\ \ol{Z}\bigr)
      }
      (a)+(1,.5) node (b) {
        (\ol{X} \ \ol{Y}) \; \ol{Z}
      }
      (b)+(1,0) node (c) {
        \ol{Z} \; (\ol{X} \ \ol{Y})
      }
      (c)+(1,-.5) node (d) {
        (\ol{Z} \ \ol{X} )\; \ol{Y}
      }
      (d)+(0,-.8) node (e) {
        \ol{ZX} \ \ol{Y}
      }
      (e)+(0,-.7) node (f) {
        \ol{(ZX)Y}
      }
      (f)+(-1,-.5) node (w) {
        \ol{(XZ)Y}
      }
      (a)+(0,-.8) node (x) {
        \ol{X} \ \ol{YZ}
      }
      (x)+(0,-.7) node (y) {
        \ol{X ( Y Z )}
      }
      (y)+(1,-.5) node (z) {
        \ol{X ( Z Y )}
      }
      ;
      \draw[1cell] 
      (a) edge node {\dAdot} (b)
      (b) edge node {\dB} (c)
      (c) edge node {\dAdot} (d)
      (d) edge node {\Phi1} (e)
      (e) edge node {\Phi} (f)
      (a) edge['] node {1\Phi} (x)
      (x) edge['] node {\Phi} (y)
      (y) edge['] node {\ol{1\dB}} (z)
      (z) edge['] node {\ol{\dAdot}} (w)
      (w) edge['] node {\ol{\dB1}} (f)
      ;
    }
    \begin{scope}[shift={(-\wl-\m,\h)}]
      \boundary 
      \draw[0cell=\smallzero]
      (b)+(0,-.75) node (p) {
        \ol{XY} \ \ol{Z}
      }
      (c)+(0,-.75) node (q) {
        \ol{Z} \ \ol{XY}
      }
      (p)+(0,-.7) node (r) {
        \ol{\bigl(XY\bigr) Z}
      }
      (q)+(0,-.7) node (s) {
        \ol{Z \bigl(XY\bigr)}
      }
      ;
      \draw[1cell] 
      (b) edge['] node {\Phi 1} (p)
      (p) edge['] node {\Phi} (r)
      (c) edge node {1\Phi} (q)
      (q) edge node {\Phi} (s)
      (p) edge node {\dB} (q)
      (y) edge node {\ol{\dAdot}} (r)
      (r) edge node {\ol{\dB}} (s)
      (s) edge node {\ol{\dAdot}} (f)
      ;
      \draw[2cell]
      node[between=b and q at .55, rotate=45, 2label={above,\dB_{\Phi,1}}] {\Rightarrow}
      node[between=p and s at .55, rotate=45, 2label={above,\ga^\dB}] {\Rightarrow}
      node[between=a and r at .55, rotate=45, 2label={above,\omdot}] {\Rightarrow}
      node[between=q and e at .55, rotate=45, 2label={above,\omdot}] {\Rightarrow}
      node[between=r and w at .5, shift={(-.06,0)}, rotate=90, 2label={below,\,\ol{R}_{XY|Z}}] {\Rightarrow}
      ;
    \end{scope}  
    \begin{scope}[shift={(\wr+\m,\h)}]
      \boundary 
      \draw[0cell=\smallzero]
      (b)+(0,-1) node (p') {
        \ol{X} \; \bigl(\ol{Z} \ \ol{Y}\bigr)
      }
      (c)+(0,-1) node (q') {
        \bigl(\ol{X} \ \ol{Z}\bigr) \; \ol{Y}
      }
      (p')+(0,-.7) node (r') {
        \ol{X} \ \ol{ZY}
      }
      (q')+(0,-.7) node (s') {
        \ol{XZ} \ \ol{Y}
      }
      ;
      \draw[1cell] 
      (a) edge node {1 \dB} (p')
      (p') edge node {\dAdot} (q')
      (q') edge node {\dB 1} (d)
      (x) edge['] node {1 \ol{\dB}} (r')
      (s') edge['] node {\ol{\dB} 1} (e)
      (p') edge['] node {1\Phi} (r')
      (r') edge['] node {\Phi} (z)
      (q') edge node {\Phi 1} (s')
      (s') edge node {\Phi} (w)
      ;
      \draw[2cell]
      node[between=b and q' at .5, rotate=90, 2label={below,\,R_{\ol{X}\ol{Y}|\ol{Z}}}] {\Rightarrow}
      node[between=a and r' at .55, rotate=45, 2label={above,1\ga^\dB}] {\Rightarrow}
      node[between=q' and e at .6, rotate=45, 2label={above,\ga^\dB1}] {\Rightarrow}
      node[between=x and z at .55, rotate=45, 2label={above,\Phi_{1,\dB}}] {\Rightarrow}
      node[between=s' and f at .6, rotate=45, 2label={above,\Phi_{\dB,1}}] {\Rightarrow}
      node[between=r' and s' at .5, shift={(0.05,0)}, rotate=45, 2label={above,\omdot}] {\Rightarrow}
      ;
    \end{scope}  
  \end{tikzpicture}
\end{equation}

\noindent\textbf{Sylleptic Axiom}
\begin{equation}\label{eq:sm2fun-syll}
  \begin{tikzpicture}[x=30mm,y=20mm,scale=.5,baseline={(eq.base)}]
    \smallnodes
    \def\wl{2} 
    \def\wr{0} 
    \def\h{0.2} 
    \def\m{.6} 
    \draw[font=\Huge] (0,0) node [rotate=0] (eq) {$=$};
    \newcommand{\boundary}{
      \draw[0cell=\smallzero] 
      (0,0) node (a) {\ol{X} \ \ol{Y}}
      (a)+(1,1) node (b) {\ol{Y} \ \ol{X}}
      (b)+(1,-1) node (c) {\ol{X} \ \ol{Y}}
      (c)+(0,-1) node (d) {\ol{XY}}
      (a)+(0,-1) node (x) {\ol{XY}}
      ;
      \draw[1cell]
      (a) edge node {\dB} (b)
      (b) edge node {\dB} (c)
      (c) edge node {\Phi} (d)
      (a) edge['] node {\Phi} (x)
      (x) edge['] node {1} (d)
      ;
    }
    \begin{scope}[shift={(-\wl-\m,\h)}]
      \boundary 
      \draw[1cell] 
      (a) edge['] node {1} (c)
      ;
      \draw[2cell]
      (b)+(0,-.6) node[rotate=90, 2label={above,\syl}] {\Rightarrow}
      node[between=a and d at .55, rotate=45, 2label={above,\Phi_{1,1}}] {\Rightarrow}
      ;
    \end{scope}  
    \begin{scope}[shift={(\wr+\m,\h)}]
      \boundary 
      \draw[0cell=\smallzero]
      (b)+(0,-1) node (p) {\ol{YX}}
      ;
      \draw[1cell]
      (x) edge node {\ol{\dB}} (p)
      (p) edge node {\ol{\dB}} (d)
      (b) edge node {\Phi} (p)
      ;
      \draw[2cell]
      (p)+(0,-.6) node[rotate=90, 2label={above,\syl}] {\Rightarrow}
      node[between=a and p at .55, rotate=45, 2label={above,\ga^\dB}] {\Rightarrow}
      node[between=p and c at .55, rotate=45, 2label={above,\ga^\dB}] {\Rightarrow}
      ;
    \end{scope}  
  \end{tikzpicture}
\end{equation}

\bibliographystyle{sty/amsalpha2}
\bibliography{sty/Refs}%

\end{document}